\documentclass[a4paper]{article}
\usepackage [a4paper,left=2.5cm,bottom=1.7cm,right=2.5cm,top=1.7cm]{geometry}
\usepackage[english]{babel}
\usepackage{amssymb}
\usepackage{amsmath,amsthm}
\usepackage{subcaption}
\usepackage{tabularx,array}
\setcounter{tocdepth}{3}
\usepackage{graphicx}
\usepackage{enumitem} 

\newtheorem{theorem}{Theorem}
\newtheorem{lemma}{Lemma}

\newtheorem{proposition}{Proposition}
\newtheorem{definition}{Definition}

\newcommand{\keywords}[1]{\par\addvspace\baselineskip
\noindent\enspace\ignorespaces#1}
\usepackage{color}
\usepackage{xcolor}
\newcommand{\modar}{\color{black}}
\newcommand{\modch}{\color{black}}

\begin{document}

\title{Joint estimation for volatility and drift parameters of ergodic jump diffusion processes via contrast function.}
\author{Chiara Amorino$^{*}$, Arnaud Gloter\thanks{Laboratoire de Math\'ematiques et Mod\'elisation d'Evry, CNRS, Univ Evry, Universit\'e Paris-Saclay, 91037, Evry, France.}} 

\maketitle

\begin{abstract}
In this paper we consider an ergodic diffusion process with jumps whose drift coefficient depends on $\mu$ and volatility coefficient depends on $\sigma$, two unknown parameters. We suppose that the process is discretely observed at the instants $(t^n_i)_{i=0,\dots,n}$ with $\Delta_n=\sup_{i=0,\dots,n-1} (t^n_{i+1}-t^n_i) \to 0$. We introduce an estimator of $\theta:=(\mu, \sigma)$, based on a contrast function, which is asymptotically gaussian without requiring any conditions on the rate at which $\Delta_n \to 0$, assuming a finite jump activity.
This extends earlier results where a condition on the step discretization was needed (see \cite{GLM},\cite{Shimizu}) or where only the estimation of the drift parameter was considered (see \cite{Chapitre 1}).
In general situations, our contrast function is not explicit and in practise one has to resort to some approximation. We propose explicit approximations of the
 contrast function, such that the estimation of $\theta$ is
 feasible under the condition that 
 $n\Delta_n^k \to 0$ where $k>0$ can be arbitrarily large.
 This extends the results obtained by Kessler \cite{Kessler} in the case of continuous processes. 
 \keywords{Drift estimation, volatility estimation, ergodic properties, high frequency data, L\'evy-driven SDE, thresholding methods.}

\end{abstract}

\section{Introduction}
Recently, diffusion processes with jumps are becoming 
powerful tools to model various stochastic phenomena in many areas, for
example, physics, biology, medical sciences, social sciences, economics, and so
on. In finance, jump-processes were introduced to model the dynamic of exchange rates (\cite{Bates96}), asset prices (\cite{Merton76},\cite{Kou02}), or volatility processes (\cite{BarShe01},\cite{EraJohPol03}). Utilization of jump-processes in neuroscience, instead, can be found for instance in \cite{DitGre13}. Therefore, inference problems for such models from various types of data should be studied, in particular, inference from discrete observation should be desired since the actual data may be obtained discretely. \\
In this work, our aim is to estimate jointly the drift and the volatility parameter $(\mu, \sigma)= : \theta$ from a discrete sampling of the process $X^\theta$ solution to
\begin{equation*}
X_t^\theta=X_0^\theta+ \int_0^t b(\mu,X_s^\theta) ds + \int_0^t a(\sigma, X_s^\theta) dW_s + 
\int_0^t \int_{\mathbb{R}\backslash \left \{0 \right \}} \gamma(X_{s-}^\theta) z \tilde{\mu}(ds,dz),
\end{equation*}
where $W$ is a one dimensional Brownian motion and $\tilde{\mu}$ a compensated Poisson random measure, with a finite jump activity. 
We assume that the process is sampled at the times
$(t^n_i)_{i=0,\dots,n}$ where the sampling step $\Delta_n:=\sup_{i=0,\dots,n-1} t^n_{i+1}-t^n_i$ goes to zero. A crucial point for applications in the high frequency setting is to impose minimal
conditions on the sampling step size. This will be one of our main objectives in this
paper, for the joint estimation of $\mu$ and $\sigma$.\\
It is known that, as a consequence of the presence of a Gaussian component, it is impossible to estimate the drift parameter on a finite horizon time; we therefore assume that $t_n^n \to \infty$ and we suppose to have an ergodic process $X^\theta$.

 The topic of high frequency estimation for discretely observed diffusions in the case without jumps is well developed, by now. Florens-Zmirou has introduced, in \cite{Florens Zmirou}, an estimator for both the drift and the diffusion parameters under the fast sampling assumption $n\Delta_n^2 \to 0$. Yoshida \cite{Yoshida92} has then suggested a correction of the contrast function of \cite{Florens Zmirou} that releases the condition on the step discretization to $n \Delta_n^3 \to 0$.
In Kessler \cite{Kessler}, the author proposes an explicit modification of the Euler scheme contrast that allows him to build an estimator which is asymptotically normal under the condition $n\Delta_n^k \to 0$ where $k \ge 2$ is arbitrarily large. The result found by Kessler, therefore, holds for any arbitrarily slow polynomial decay to zero of the sampling step. 

When a jump component is added, less results are known. Shimizu \cite{Shimizu_SISP06} proposes parametric estimation of drift, diffusion and jump coefficients showing the asymptotic normality of the estimators under some explicit conditions relating the sampling step and the jump intensity of the process; such conditions on $\Delta_n$ are more restrictive as the intensity of jumps near zero is high. In the situation where the jump intensity is finite, the conditions of \cite{Shimizu_SISP06} reduces to $n \Delta_n^2 \to 0$. In \cite{GLM}, the condition on the sampling step is relaxed to $n \Delta_n^3 \to 0$, when one estimates the drift parameter only. In \cite{Mies} a jump-filtering technique similar to one used in \cite{GLM} is employed in order to derive a nonparametric estimator for the drift which is robust to symmetric jumps of infinite variance and infinite variation, and which attains the same asymptotic variance as for a continuous diffusion process.
Also in \cite{Chapitre 1} only the estimation of the drift parameter is studied. In such a case, the sampling step $(t_i^n)_{i=0,\dots,n}$ can be irregular, no condition on the rate at which $\Delta_n \to 0$ is needed and the assumption that the jumps of the process are summable, present in \cite{GLM}, is suppressed.

In this paper, we consider the joint estimation of the drift and the diffusion parameters with a jump intensity which is finite. Since for the applications it is important that assumptions on the rate at which $\Delta_n$ should tend to zero are less stringent as possible, our aim is to weaken the conditions on the decay of the sampling step in a way comparable to Kessler's work \cite{Kessler}, but in the framework of jump-diffusion processes. We therefore want to extend \cite{Chapitre 1} looking for the same results, but for the joint estimation of the drift and the diffusion parameters instead of focusing on the drift parameter only.\\
The joint estimation of the two parameters introduces some significant difficulties: since the drift and the volatility parameters are not estimated at the same rate, we have to deal with asymptotic properties in two different regimes. \\
Compared to previous results in which the parameters are estimated jointly (see \cite{Shimizu_SISP06}), we show that it is possible to remove any condition on the rate at which $\Delta_n$ has to go to zero. \\
Moreover, we consider a discretization step which is not uniform. This case, to our knowledge, has never been studied before for the joint estimation of the drift and the volatility of a diffusion with jumps.

A natural approach to estimate the unknown parameters would be to use a maximum
likelihood estimation, but the likelihood function based on the discrete sample is
not tractable in this setting, since it depends on the transition densities of X which are
not explicitly known. 
 \\
To overcome this difficulty several methods have been developed. For instance, in \cite{AitYu06} and \cite{LiChen16} closed form expansions of the transition density of jump-diffusions are studied while in \cite{JakSor17} the asymptotic behaviour of estimating functions is considered in the high frequency observation framework. They give condition to ensure the rate optimality and the efficiency.

Considering again the case of high frequency observation, a widely-used method is to consider pseudo-likelihood function, for instance based on the high frequency approximation of the dynamic of the process by the dynamic of the Euler scheme. This leads to explicit contrast functions with Gaussian structures (see e.g. \cite{Shimizu},\cite{Shimizu_SISP06},\cite{Masuda_AOS13}). 

In Kessler's paper the idea is to replace, in the Euler scheme contrast function, the contribution of the drift and the diffusion by two quantities $m$ and $m_2$ (or their explicit approximations with arbitrarily high order when $\Delta_n \to 0$); with
\begin{equation}
m (\mu, \sigma, x) := \mathbb{E}[X^\theta_{t_{i+1}}|X^\theta_{t_{i}}=x] \quad \mbox{and}
\label{eq: m kessler}
\end{equation}
$$m_2(\mu, \sigma, x) := \mathbb{E}[(X^\theta_{t_{i+1}}- m (\mu, \sigma, X^\theta_{t_{i}}))^2 |X^\theta_{t_{i}}=x].$$

In presence of jumps, the contrasts functions in \cite{Shimizu} (see also \cite{Shimizu_SISP06}, \cite{GLM}) resort to a filtering procedure in order to suppress the contribution of jumps and recover the continuous part of the process.

The contrast function we introduce is based on both the ideas described here above. Indeed, we define it as 
\begin{equation}
U_n(\mu, \sigma):= \sum_{i =0}^{n -1} [\frac{(X_{t_{i+1}} - m (\mu, \sigma, X_{t_i}))^2}{m_2(\mu, \sigma, X_{t_i})} + \log(\frac{m_2(\mu, \sigma, X_{t_i})}{\Delta_{n,i}})] \varphi_{\Delta_{n,i}^\beta}(X_{t_{i+1}} - X_{t_i})1_{\left \{|X_{t_i}| \le \Delta_{n,i}^{-k} \right \} },
\label{eq: contrast intro}
\end{equation}
where the function $\varphi$ is a smooth version of the indicator function that vanishes when the 
increments of the data are too large compared to the typical increments of a continuous diffusion process, and thus can be used to filter the contribution of the jumps. The idea is to use the size of $X_{t_{i+1}} - X_{t_i}$ in order to judge if a jump occurred or not in the interval $[t_i, t_{i + 1})$. The increment of $X$ with continuous transition could hardly exceed the threshold $\Delta_{n,i}^\beta$, therefore we can judge the existence of a jump in the interval if $|X_{t_{i+1}} - X_{t_i}| > \Delta_{n,i}^\beta $, for $\beta \in (\frac{1}{4}, \frac{1}{2})$. \\ 
The last indicator in \eqref{eq: contrast intro} avoids the possibility that $| X_{t_i}|$ is too big, the constant $k$ is positive and will be chosen later, related to the developments of $m$ and $m_2$ that are the natural extension to the case with jumps of the quantities proposed in \cite{Kessler}. Indeed, we have defined them as
$$m (\mu, \sigma,x): =\frac{\mathbb{E}[X_{t_{i+1}}^\theta \varphi_{\Delta_{n,i}^\beta}(X_{t_{i+1}}^\theta - X_{t_i}^\theta)|X_{t_i}^\theta = x]}{\mathbb{E}
[\varphi_{\Delta_{n,i}^\beta}(X_{t_{i+1}}^\theta - X_{t_i}^\theta)|X_{t_i}^\theta = x]} \quad \mbox{and} $$
$$ m_2 (\mu, \sigma,x): =\frac{\mathbb{E}[(X_{t_{i+1}}^\theta - m (\mu, \sigma,X_{t_i}))^2 \varphi_{\Delta_{n,i}^\beta}(X_{t_{i+1}}^\theta - X_{t_i}^\theta)|X_{t_i}^\theta = x]}{\mathbb{E}
[\varphi_{\Delta_{n,i}^\beta}(X_{t_{i+1}}^\theta - X_{t_i}^\theta)|X_{t_i}^\theta = x]}.$$

The rates for the estimation of the two parameters is not the same, which implies we have to deal with two different scaling of the contrast function, which lead us to the study of the asymptotic properties of the contrast function in two different asymptotic schemes.

The main result of our paper is that the estimator $\hat{\theta}_n := (\hat{\mu}_n, \hat{\sigma}_n)$ associated to the proposed contrast function converges with some explicit asymptotic variances.
Comparing to earlier results (\cite{Shimizu}, \cite{Shimizu_SISP06}, \cite{GLM}, \cite{Chapitre 1}), the sampling step $(t_i^n)_{i=0,\dots,n}$ can be irregular, no condition is needed on the rate at which $\Delta_n \to 0$ and the parameters of drift and diffusion are jointly estimated.

Moreover, we provide explicit approximations of $m_2$ that allows us to circumvent the fact that the contrast function is non explicit (explicit approximations of $m$ are given in \cite{Chapitre 1}). We give an expansion of $m_2$ exact up to order $\Delta^2_n$, which involves the jump intensity near zero, and is valid for any smooth truncation function $\varphi$.
 With the specific choices of $\varphi$ being
 oscillating functions,  
in particular, we remove the contribution of the jumps and we  are able to prove  explicit developments of the function $m_2$  valid up to any order. Together with the approximation of the function $m$ showed in Proposition 2 of \cite{Chapitre 1},  this allows us to approximate our contrast function, at arbitrary high order, by a completely explicit one, as it was in the paper by Kessler \cite{Kessler} in the continuous case. \\
This yields to a consistent and asymptotic normal estimator under the condition
$n \Delta_n^{k} \to 0$, where $k$ is related to the oscillating properties of the function $\varphi$. 
As $k$ can be chosen arbitrarily high, up to a proper choice of $\varphi$, our method allows to estimate the drift and the diffusion parameters, under the assumption that the sampling step tends to zero at some polynomial rate. 

Furthermore, we implement numerically our main results building two approximations of $m$ and $m_2$ from which we deduce two approximations of the contrast that we minimize in order to get the joint estimator of the parameters. We compare the estimators we find with the estimator that would result from the choice of an Euler scheme approximation for $m$ and $m_2$. From our simulations it appears that our joint estimator performs better than the Euler one, especially for the estimation of the parameter $\sigma$. 

The outline of the paper is the following. In Section \ref{S:Model} we introduce the model and we state the assumptions we need. The Section \ref{S:Construction_and_main} contains the construction of the estimator and our main results while in Section \ref{S:practical}
 we explain how to use in practical the contrast function for the joint estimation of the drift and the diffusion parameters, dealing with its approximation. We provide numerical results in Section \ref{S: Applications}. {\modch In Section \ref{S: Perspectives} we discuss about possible forward generalization of the obtained results, while in Section \ref{S: Limit theorems} we state useful propositions that we will use repeatedly in the following sections.} Section \ref{S:Proof_Main} is devoted to the proof of our main results while in Section \ref{S: Proof_limit} of the Appendix we prove the propositions stated in the sixth section. We conclude giving the proofs of some technical results in the Sections \ref{Ss:proof_der_m_m_2}--\ref{Ss:dev_m_2} of the Appendix.  
 
\section{Model, assumptions}\label{S:Model} 
We want to estimate the unknown parameter $\theta = (\mu, \sigma)$ in the stochastic differential equation with jumps 
\begin{equation}
X_t^\theta= X_0^\theta + \int_0^t b(\mu, X_s^\theta)ds + \int_0^t a(\sigma,X_s^\theta)dW_s + \int_0^t \int_{\mathbb{R} \backslash \left \{0 \right \} }
\gamma(X_{s^-}^\theta)z \tilde{\mu}(ds,dz), \quad t \in \mathbb{R}_+, 
\label{eq: model}
\end{equation}
where $\theta$ belongs to $\Theta := \Pi \times \Sigma$, a compact set of $\mathbb{R}^2$; $W=(W_t)_{t \ge 0}$ is a one dimensional Brownian motion and $\mu$ is a Poisson random measure associated to the L\'evy process $L=(L_t)_{t \ge 0}$ such that $L_t:= \int_0^t \int_\mathbb{R} z \tilde{\mu} (ds, dz)$. The compensated measure $\tilde{\mu}= \mu - \bar{\mu}$ is defined on $[0, \infty) \times \mathbb{R}$, the compensator is $\bar{\mu}(dt,dz): = F(dz) dt $, where conditions on the Levy measure $F$ will be given later. \\
We denote by $(\Omega, \mathcal{F}, \mathbb{P})$ the probability space on which $W$ and $\mu$ are defined and we assume that the initial condition $X_0^\theta$, $W$ and $L$ are independent. 

\subsection{Assumptions}
We suppose that the functions $b: \Pi \times \mathbb{R} \rightarrow \mathbb{R}$, $a : \Sigma \times \mathbb{R} \rightarrow \mathbb{R}$ and $\gamma: \mathbb{R} \rightarrow \mathbb{R}$ satisfy the following assumptions: \\
\\
\textbf{A1}: \textit{The functions $\gamma(x)$, $b(\mu, x)$ for all $\mu \in \Pi$ and $a(\sigma, x)$ for all $\sigma \in \Sigma$ are globally Lipschitz. Moreover, the Lipschitz constants of $b$ and $a$ are uniformly bounded on $\Pi$ and $\Sigma$, respectively.} \\
\\
Under Assumption 1 the equation (\ref{eq: model}) admits a unique non-explosive c\`adl\`ag adapted solution possessing the strong Markov property, cf \cite{Applebaum} (Theorems 6.2.9. and 6.4.6.). \\
The next assumption was used in \cite{18 GLM} to prove the irreducibility of the process $X^\theta$. \\
\\
\textbf{A2}: \textit{For all $\theta \in \Theta$ there exists a constant $t > 0$ such that $X_t^\theta$ admits a density $p_t^\theta(x,y)$ with respect to the Lebesgue measure on $\mathbb{R}$; bounded in $y \in \mathbb{R}$ and in $x \in K$ for every compact $K \subset \mathbb{R}$. Moreover, for every $x \in \mathbb{R}$ and every open ball $U \in \mathbb{R}$, there exists a point $z = z(x, U) \in supp(F) $ such that $\gamma(x)z \in U$.} \\
\\
Assumption 2 ensures, together with the Assumption 3 below, the existence of unique invariant distribution $\pi^\theta$, as well as the ergodicity of the process $X^\theta$. \\
\\
\textbf{A3 (Ergodicity)}: \textit{ (i) For all $q >0$, $\int_{|z|> 1} |z|^q F(z) dz < \infty$. \\
(ii) For all $\mu \in \Pi$ there exists $C > 0$ such that $xb(\mu,x)\le -C|x|^2$, if $|x| \rightarrow \infty$. \\
(iii)  $|\gamma(x)| / |x| \rightarrow 0$ as $|x|\rightarrow \infty$. \\
(iv) For all $\sigma \in \Sigma$ we have $|a(\sigma,x)| / |x| \rightarrow 0$ as $|x|\rightarrow \infty$. \\
(v) $\forall \theta \in \Theta$, $\forall q > 0$ we have $\mathbb{E}|X_0^\theta|^q < \infty$. } \\
\\
\textbf{A4 (Jumps)}: \textit{ 1. The jump coefficient $\gamma$ is bounded from below, that is $\inf_{x \in \mathbb{R}}|\gamma(x)|:= \gamma_{min} >0$. \\
2. The L\'evy measure $F$ is absolutely continuous with respect to the Lebesgue measure and we denote $F(z) = \frac{F(dz)}{dz}$. \\
3. $F$ is such that  $F(z) = \lambda F_0(z)$ and $\int_\mathbb{R} F_0(z)dz= 1$. } \\
\\
Assumption 4.1 is useful to compare size of jumps of $X$ and $L$. The Assumption $5$ ensures the existence of the contrast function we will define in next section.\\
\\
\textbf{A5 (Non-degeneracy)}: \textit{There exists some $c > 0$, such that $\inf_{x, \sigma}a^2(\sigma, x) \ge  c > 0$.}\\
\\
From now on we denote the true parameter value by $\theta_0$, an interior point of the parameter space $\Theta$ that we want to estimate. We shorten $X$ for $X^{\theta_0}$. \\
\\
We will use some moment inequalities for jump diffusions, gathered in the following lemma that follows from Theorem 66 of \cite{Protter GLM} and Proposition 3.1 in \cite{Shimizu}. 
\begin{lemma}
Let $X$ satisfies Assumptions 1-4. Let $L_t:= \int_0^t \int_\mathbb{R} z \tilde{\mu}(ds, dz)$ and let $\mathcal{F}_s := \sigma \left \{ (W_u)_{0 < u \le s}, (L_u)_{0 < u \le s}, X_0 \right \}$. \\
Then, for all $t > s > 0$, \\
1) for all $p \ge 2$, $\mathbb{E}[|X_t - X_s|^p]^\frac{1}{p} \le c |t-s|^\frac{1}{p}$, \\
2) for all $p \ge 2$, $p \in \mathbb{N}$, $\mathbb{E}[|X_t - X_s|^p|\mathcal{F}_s] \le c|t-s|(1 + |X_s|^p)$, \\
3) for all $p \ge 2$, $p \in \mathbb{N}$, $\sup_{h \in [0,1]} \mathbb{E}[|X_{s+h}|^p|\mathcal{F}_s] \le c(1 + |X_s|^p)$.\\
\label{lemma: Moment inequalities}
\end{lemma}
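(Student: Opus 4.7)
The strategy is standard: decompose the increment $X_t - X_s$ into its drift, Brownian, and compensated-jump components, apply the appropriate $L^p$-estimates to each, and use the drift/ergodicity conditions of A3 to prove a Lyapunov-type moment bound. I would prove (3) first, since it provides the $\mathcal{F}_s$-conditional moment control needed in the other two parts.

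To establish (3), I would apply Itô's formula to $u \mapsto (1+|X_u|^2)^{p/2}$ on the interval $[s,s+h]$. The drift term produces a contribution controlled by A3(ii), which, when combined with the sublinear growth of $a$ and $\gamma$ (assumptions A3(iii)–(iv)) and the moment condition $\int_{|z|>1}|z|^q F(dz) < \infty$ from A3(i), yields a differential inequality of the form
\[
\frac{d}{du}\mathbb{E}[(1+|X_u|^2)^{p/2}|\mathcal{F}_s] \le C(1 + \mathbb{E}[(1+|X_u|^2)^{p/2}|\mathcal{F}_s]).
\]
The Brownian and compensated Poisson stochastic integrals are true martingales by A3(i),(v) and Lemma~A1-type integrability. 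Grönwall's inequality over $h \in [0,1]$ then delivers (3).

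For (2), I would write
\[
X_t - X_s = \int_s^t b(\mu_0, X_u)\,du + \int_s^t a(\sigma_0, X_u)\,dW_u + \int_s^t\!\!\int_{\mathbb{R}\setminus\{0\}} \gamma(X_{u-})z\,\tilde{\mu}(du,dz)
=: I_1 + I_2 + I_3.
\]
Jensen's inequality plus A1 gives $\mathbb{E}[|I_1|^p|\mathcal{F}_s] \le (t-s)^{p-1}\int_s^t \mathbb{E}[|b(\mu_0,X_u)|^p|\mathcal{F}_s]\,du$, and the BDG inequality gives $\mathbb{E}[|I_2|^p|\mathcal{F}_s] \le c_p(t-s)^{p/2-1}\int_s^t \mathbb{E}[|a(\sigma_0,X_u)|^p|\mathcal{F}_s]\,du$. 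For $I_3$, Kunita's inequality yields a sum of two terms, one controlled by $(t-s)\int |z|^p F(dz)$ (finite by A3(i)) and one of order $(t-s)^{p/2}$. In every case the integrand is bounded via the Lipschitz/linear-growth estimates of A1 and A3 by $c(1+|X_u|^p)$, and step (3) converts this into $c(1+|X_s|^p)$. For $t-s \le 1$ and $p \ge 2$, all three contributions collapse into $c|t-s|(1+|X_s|^p)$, yielding (2); general $|t-s|$ follows by chaining over unit subintervals.

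Finally, (1) follows by taking unconditional expectations in (2): under A3(v) and the ergodicity provided by A2–A3, $\sup_{s\ge 0}\mathbb{E}[|X_s|^p] < \infty$, so $\mathbb{E}[|X_t-X_s|^p] \le c|t-s|$ and the $p$-th root yields the stated bound. The main technical step is the jump term: one must verify that Kunita's inequality is applicable with the growth of $\gamma$ (A1) and the polynomial moments of $F$ (A3(i)), and that both parts of Kunita's bound combine to give the linear-in-$|t-s|$ scaling that makes (1) sharper than what one would get from a purely Gaussian SDE.
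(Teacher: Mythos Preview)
The paper does not give its own proof of this lemma; it simply records that the result follows from Theorem~66 of \cite{Protter GLM} and Proposition~3.1 of \cite{Shimizu}. Your argument is precisely the standard one that underlies those references: a Lyapunov/Gr\"onwall bound via It\^o's formula on $(1+|X_u|^2)^{p/2}$ for part (3), the drift/Brownian/jump decomposition with Jensen, BDG and Kunita estimates for part (2), and then part (1) by taking unconditional expectations together with the uniform-in-time moment bound (stated separately in the paper as the third point of Lemma~\ref{lemma: 2.1 GLM}). This is correct, and there is nothing substantive to compare since the paper offers no alternative argument.
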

An important role is played by ergodic properties of solution of equation (\ref{eq: model}) \\
The following Lemma states that Assumptions $1 - 4$ are sufficient for the existence of an invariant measure $\pi^\theta$ such that an ergodic theorem holds and moments of all order exist. 
\begin{lemma}
Under assumptions 1 to 4,
for all $\theta \in \Theta$, $X^\theta$ admits a unique invariant distribution $\pi^\theta$ and the ergodic theorem holds: \\
1)For every measurable function $g: \mathbb{R} \rightarrow \mathbb{R}$ s. t. $\pi^\theta(g) < \infty$, we have a.s. $\lim_{t \rightarrow \infty} \frac{1}{t} \int_0^t g(X_s^\theta) ds = \pi^\theta (g).$ \\
2)For all $q > 0$, $\pi^\theta(|x|^q) < \infty $. \\
3) For all $q >0$, $\sup_{t \ge 0} \mathbb{E}[|X_t^\theta|^q] < \infty$.
\label{lemma: 2.1 GLM}
\end{lemma}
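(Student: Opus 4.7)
The plan is to combine the irreducibility/smoothness given by Assumption 2 with a Foster--Lyapunov drift condition built from Assumption 3, and then invoke the Meyn--Tweedie theory of Harris-recurrent Markov processes, exactly as is done in the cited \cite{18 GLM}. Assumption 2 (existence of a density $p_t^\theta(x,y)$ plus the reachability condition on $\mathrm{supp}(F)$) delivers the $\psi$-irreducibility and the strong Feller property needed to turn a Lyapunov estimate into positive Harris recurrence with a unique invariant probability measure $\pi^\theta$ and an ergodic theorem for $\pi^\theta$-integrable functions, giving part (1).

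For the Lyapunov estimate, I would take $V_q(x) := 1 + |x|^q$ for an arbitrary $q \geq 2$ and apply Itô's formula for jump diffusions to $V_q(X_t^\theta)$. The generator splits as
\[
\mathcal{A}^\theta V_q(x) = b(\mu,x)\, V_q'(x) + \tfrac{1}{2} a(\sigma,x)^2 V_q''(x) + \int_{\mathbb{R}\setminus\{0\}}\bigl[V_q(x+\gamma(x)z)-V_q(x)-\gamma(x)z\,V_q'(x)\bigr] F(dz).
\]
The drift term is bounded above by $-C\, q\, |x|^q$ for $|x|$ large by A3(ii). The diffusion term is $O(|x|^{q-2} a(\sigma,x)^2) = o(|x|^q)$ by A3(iv). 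For the jump part I would split the integral on $\{|z|\leq 1\}$ and $\{|z|>1\}$: on the small-jump set a Taylor expansion of order two combined with $\int_{|z|\leq 1} z^2 F(dz) < \infty$ and A3(iii) yields $o(|x|^q)$; on the large-jump set, the inequality $(x+\gamma(x)z)^q \leq C_q(|x|^q + |\gamma(x)z|^q)$, together with A3(i) $\int_{|z|>1}|z|^q F(dz) < \infty$ and A3(iii) $|\gamma(x)|/|x|\to 0$, again produces a contribution of order $o(|x|^q)$. Collecting these bounds gives a drift condition of the form $\mathcal{A}^\theta V_q(x) \leq -\alpha V_q(x) + \beta$ for constants $\alpha,\beta>0$ depending on $q$ and $\theta$, valid on all of $\mathbb{R}$ after absorbing the behaviour on a compact set.

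From this drift inequality, Gronwall on $\mathbb{E}[V_q(X_t^\theta)]$ combined with A3(v) (finite initial moments of all orders) yields $\sup_{t\geq 0} \mathbb{E}[|X_t^\theta|^q]<\infty$, which is assertion (3). Taking expectations against $\pi^\theta$ (which is stationary, so $\pi^\theta(\mathcal{A}^\theta V_q) = 0$, to be justified via a standard localization argument since $V_q$ may not be in the full domain) gives $\pi^\theta(|x|^q)\leq \beta/\alpha <\infty$, yielding (2). Together with the Harris ergodic theorem set up in the first step, (1) now holds for every $g$ with $\pi^\theta(|g|)<\infty$.

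The main obstacle I expect is the careful treatment of the jump term in $\mathcal{A}^\theta V_q$: one must simultaneously control the small-jump contribution (through the second-order Taylor remainder and the square-integrability of $F$ near zero, which does hold under A4 since a finite Lévy measure $\lambda F_0$ automatically has $\int_{|z|\leq 1} z^2 F(dz) <\infty$) and the potentially polynomially large jumps, using A3(i) to absorb arbitrary power moments of $z$ and A3(iii) to beat them by $|\gamma(x)|/|x| \to 0$. The rigorous justification of $\pi^\theta(\mathcal{A}^\theta V_q)=0$ requires a localization by stopping times $\tau_R = \inf\{t\colon |X_t^\theta|\geq R\}$ and a uniform integrability argument, which is standard but must be spelled out to close the argument for (2).
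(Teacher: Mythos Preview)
Your proposal is correct and follows essentially the same route as the paper's reference: the paper does not give its own proof but simply cites \cite{GLM} (Section 8 of the Supplement), which in turn relies on the Foster--Lyapunov machinery of Masuda \cite{18 GLM}, i.e., exactly the irreducibility plus drift-condition argument you sketch with $V_q(x)=1+|x|^q$. Your identification of the main technical point (controlling the jump part of $\mathcal{A}^\theta V_q$ via the small/large-jump split using A3(i),(iii) and the localization needed for $\pi^\theta(\mathcal{A}^\theta V_q)=0$) is accurate and matches what is done in those references.
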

A proof is in \cite{GLM} (Section 8 of Supplement), it relies mainly on results of \cite{18 GLM}. \\ \\
\textbf{A6 (Identifiability)}: \textit{For all $\mu_1, \mu_2$ in $\Pi$, $\mu_1 = \mu_2$ if and only if $b(\mu_1, x) = b(\mu_2, x)$ for almost all $x$. Moreover, $\forall \sigma_1, \sigma_2$ in $\Sigma$, $\sigma_1=\sigma_2$ if and only if $a(\sigma_1, x) =a(\sigma_2, x)$ for almost all $x$.} \\
\\
\textbf{A7}: \textit{1. The derivatives $\frac{\partial^{k_1 + k_2} b}{ \partial x^{k_1} \partial \theta^{k_2}}$, with $k_1 +k_2 \le 4$ and $k_2 \le 3$, exist and they are bounded if $k_1 \ge 1$. If $k_1 = 0$, for each $k_2 \le 3$ they have polynomial growth. \\
2. The derivatives $\frac{\partial^{k_1 + k_2} a}{ \partial x^{k_1} \partial \theta^{k_2}}$, with $k_1 +k_2 \le 4$ and $k_2 \le 3$, exist and they are bounded if $k_1 \ge 1$. If $k_1 = 0$, for each $k_2 \le 3$ they have polynomial growth. \\
3. The derivatives $\gamma^{(k)}(x)$ exist and they are bounded for each $ 1 \le k \le 4$.} \\
\\
\textbf{A8}: \textit{Let $B$ be $\begin{pmatrix} 
-2 \int_\mathbb{R} (\frac{\partial_\mu b(x, \mu_0)}{a(x, \sigma_0)})^2 \pi(dx) & 0 \\
0 & 4 \int_\mathbb{R} (\frac{\partial_\sigma a(x, \sigma_0)}{a(x, \sigma_0)})^2 \pi(dx) 
\end{pmatrix}$, then $det(B) \neq 0$.} \\
\\

\section{Construction of the estimator and main results}\label{S:Construction_and_main}
Now we present a contrast function for estimating parameters.
\subsection{Construction of contrast function.}
Suppose that we observe a finite sample 
$X_{t_0},  ... , X_{t_n}$ with $0=t_0 \le t_1 \le ... \le t_n= : T_n$,
where $X$ is the solution to \eqref{eq: model} with $\theta = \theta_0$.  
Every observation time point depends also on $n$, but to simplify the notation we suppress this index. We will be working in a high-frequency setting, i.e.
$\Delta_n := \sup_{i =0, ... , n-1} \Delta_{n,i}\longrightarrow 0$ for $n \rightarrow \infty$, with $\Delta_{n,i}: = (t_{i+1} - t_i) $.
We assume that $\lim_{n \rightarrow \infty} T_n = \infty$. \\
In the sequel we will always suppose that the following assumption on the step discretization holds true. \\
\textbf{$A_{\mbox{Step}}$:} there exist two constants $c_1$, $c_2$ such that $c_2 < \frac{\Delta_n}{\Delta_{min}} < c_1$, where we have denoted $\Delta_{min}$ as $\min_{i =0, ... , n-1} \Delta_{n,i} $. \\
{\modch We introduce a version of the gaussian quasi -likelihood in which we have filtered the contribution of the jumps. As in our framework the data is observed discretely, to formulate a criterion to decide if a jump occurred in a particular interval or not we can only use the increment of our process. Such a criterion should depends on $n$ and should be more and more accurate as $n$ tends to infinity. This leads to the following contrast function: }
\begin{definition} \label{D:definition_contrast}
For $\beta \in (\frac{1}{4}, \frac{1}{2})$ we define the contrast function $U_n(\mu, \sigma)$ as
\begin{equation}
U_n(\mu, \sigma):= \sum_{i =0}^{n -1} [\frac{(X_{t_{i+1}} - m (\mu, \sigma, X_{t_i}))^2}{m_2(\mu, \sigma, X_{t_i})} + \log(\frac{m_2(\mu, \sigma, X_{t_i})}{\Delta_{n,i}})] \varphi_{\Delta_{n,i}^\beta}(X_{t_{i+1}} - X_{t_i})1_{\left \{|X_{t_i}| \le \Delta_{n,i}^{-k} \right \} },
\label{eq: contrast function}
\end{equation}
\end{definition}
with
\begin{equation}
m (\mu, \sigma,x): =\frac{\mathbb{E}[X_{t_{i+1}}^\theta \varphi_{\Delta_{n,i}^\beta}(X_{t_{i+1}}^\theta - X_{t_i}^\theta)|X_{t_i}^\theta = x]}{\mathbb{E}
[\varphi_{\Delta_{n,i}^\beta}(X_{t_{i+1}}^\theta - X_{t_i}^\theta)|X_{t_i}^\theta = x]};
\label{eq: definition m}
\end{equation}
\begin{equation}
m_2 (\mu, \sigma,x): =\frac{\mathbb{E}[(X_{t_{i+1}}^\theta - m (\mu, \sigma,X_{t_i}))^2 \varphi_{\Delta_{n,i}^\beta}(X_{t_{i+1}}^\theta - X_{t_i}^\theta)|X_{t_i}^\theta = x]}{\mathbb{E}
[\varphi_{\Delta_{n,i}^\beta}(X_{t_{i+1}}^\theta - X_{t_i}^\theta)|X_{t_i}^\theta = x]}
\label{eq: definition m2}
\end{equation}
and 
$$\varphi_{\Delta_{n,i}^\beta}(X_{t_{i+1}} - X_{t_i}) = \varphi( \frac{X_{t_{i+1}} - X_{t_i}}{\Delta_{n,i}^\beta}).$$ 
The function $\varphi$ is a smooth version of the indicator function, such that
$\varphi(\zeta) = 0$ for each $ \zeta$, with $|\zeta| \ge 2$ and $\varphi(\zeta) = 1$ for each $ \zeta $, with $ |\zeta| \le 1$. \\
The last indicator aims to avoid the possibility that $|X_{t_i}|$ is big. The constant $k$ is positive and it will be chosen later, related to the development of both $m$ and $m_2$. \\
Moreover we remark that $m$ and $m_2$ depend also on $t_i$ and $t_{i+1}$. By the homogeneity of the equation they actually depend on the difference $t_{i+1} - t_i$ but we omit such a dependence in the notation of the two functions here above to make the reading easier. \\
We define an estimator $\hat{\theta}_n$ of $\theta_0$ as 
\begin{equation}
\hat{\theta}_n = (\hat{\mu}_n, \hat{\sigma}_n) \in \mbox{arg}\min_{(\mu, \sigma) \in \Theta} U_n(\mu, \sigma).
\label{eq: def thetan}
\end{equation}
{\modch The idea is to use the size of the increment of the process $X_{t_{i+1}} - X_{t_i}$ in order to judge if a jump occurred or not in the interval $[t_i, t_{i + 1})$.
As it is hard for the increment of $X$ with continuous transition to overcome the threshold $\Delta_{n,i}^\beta$ for $\beta \le \frac{1}{2}$, we can assert the presence of a jump in $[t_i, t_{i + 1})$ if $|X_{t_{i+1}} - X_{t_i}| > \Delta_{n,i}^\beta $. \\
It is worth noting that if $\beta$ is too large, and therefore $\Delta_{n,i}^\beta$ is too small, we can't ignore the probability to have continuous diffusion reaching the threshold $\Delta_{n,i}^\beta$. On the other side, if $\beta$ is too small and therefore $\Delta_{n,i}^\beta$ is too big, it is possible to have an increment smaller than $\Delta_{n,i}^\beta$ even if a jump occurs in the considered interval. That's the reason why the threshold $\beta$ has to be chosen with great care. \\}
In the definition of the contrast function we have taken $\beta > \frac{1}{4}$ because, in Lemma \ref{lemma: conditional expected value} and Proposition \ref{prop: LT} below (and so, as a consequence, in the majority of the theorems of this work), such a technical condition on $\beta$ is required.  \\
We observe that, in general, there is no closed expression for $m$ and $m_2$, hence the contrast is not explicit.  However, it is proved in \cite{Chapitre 1} an explicit development of $m$ in the case where the intensity is finite and in this work we provide as well an explicit development of $m_2$ that lead us to an explicit version of our contrast function. \\
\subsection{Main results}
Before stating our main results, we need some further notations and assumptions.
We introduce the function $R$ defined as follows: for $\delta \ge 0$, we will denote $R(\theta, \Delta_{n,i}^\delta, x)$ for any function $R(\theta, \Delta_{n,i}^\delta, x)= R_{i,n}(\theta, x)$, where
$R_{i,n}: \Theta \times \mathbb{R} \longrightarrow \mathbb{R}$, $(\theta, x) \mapsto R_{i,n}(\theta, x) $ is such that 
\begin{equation}
\exists c > 0 \qquad |R_{i,n}(\theta,x)| \le c(1 + |x|^c)\Delta_{n,i}^\delta
\label{eq: definition R}
\end{equation}
uniformly in $\theta$ and with $c$ independent of $i,n$.  \\
The functions $R$ represent the term of rest and have the following useful property, consequence of the just given definition:
\begin{equation}
R(\theta, \Delta_{n,i}^\delta, x)= \Delta_{n,i}^\delta R(\theta, \Delta_{n,i}^0, x).
\label{propriety power R}
\end{equation}
We point out that it does not involve the linearity of $R$, since the functions $R$ on the left and on the right side are not necessarily the same but only two functions on which the control (\ref{eq: definition R}) holds with $\Delta_{n,i}^\delta $ and $\Delta_{n,i}^0$, respectively. \\ \\
In the sequel, we will need a development for the function $m_2$. We will assume that such a development exists, as stated in the next assumption: \\
\\
\textbf{Ad:} There exist three functions $r(\mu, \sigma, x)$, $r(x)$,  $R(\theta, 1, x)$ and $\delta_1, \delta_2 > 0$ and $k_0 > 0$ such that, for $|x| \le \Delta_{n,i}^{- k_0} $,
\begin{equation}
m_2(\mu, \sigma, x) = \Delta_{n,i} a^2(x, \sigma)(1 + \Delta_{n,i} r(\mu, \sigma, x))+ \Delta_{n,i}^{1 + \delta_1} r(x) + \Delta_{n,i}^{2 + \delta_2} R(\theta, 1, x), 
\label{eq: hp dl m2}
\end{equation}
where $r(\mu, \sigma, x)$ and $r(x)$ are particular functions $R(\theta, 1, x)$, that turns out from the development of $m_2$,
 and the function $r(x)$ does not depend on $\theta$. Moreover, the order of such functions does not change by deriving them with respect to both the parameters, that is for $\vartheta = \mu$ and $\vartheta = \sigma$, $ | {\modch \frac{\partial}{\partial \vartheta}} r(\mu, \sigma, x)| \le c(1 + |x|^c)$ and $|{\modch \frac{\partial}{\partial \vartheta}} R(\theta, 1, x)|\le c(1 + |x|^c)$. \\
\\
Assumption Ad is not restrictive. Examples of frameworks in which Ad holds are introduced in Propositions \ref{prop: dl m2 intensita finita} and \ref{prop: dl alla capitolo 1 m2}, that will be stated in the next section and proven in the appendix.
Let us stress that it is crucial for the proof of the consistency of the estimator that  the second main term of the expansion \eqref{eq: hp dl m2}, $\Delta_{n,i}^{1+\delta_1} r(x)$, does not depend on the parameter $\theta$.

The following theorems give a general consistency result and the asymptotic normality of the estimator $\hat{\theta}_n$.

\begin{theorem}{(Consistency)}
Suppose that Assumptions 1 to 7, $A_{\mbox{Step}}$ and Ad hold. Then the estimator $\hat{\theta}_n$ is consistent in probability:
$$\hat{\theta}_n \xrightarrow{\mathbb{P}} \theta_0, \qquad n \rightarrow \infty.$$
\label{th: consistency}
\end{theorem}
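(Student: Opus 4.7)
My plan is the standard two-scale M-estimation argument for joint drift-volatility inference, adapted to the jump setting via the filtering role of $\varphi_{\Delta_{n,i}^{\beta}}$. Drift and volatility are estimated at the different rates $1/(n\Delta_n)$ and $1/n$ respectively, so I would split the analysis into two stages. Throughout, the ingredients are: the expansion of $m_2$ from Assumption Ad, the expansion $m(\mu,\sigma,x)=x+\Delta_{n,i}b(\mu,x)+R(\theta,\Delta_{n,i}^{2},x)$ from \cite{Chapitre 1}, the ergodic theorem (Lemma~\ref{lemma: 2.1 GLM}), the moment bounds (Lemma~\ref{lemma: Moment inequalities}), and the size separation $\Delta_{n,i}^{\beta}\gg \Delta_{n,i}^{1/2}$ that, because $\beta\in(1/4,1/2)$ and jumps have finite activity (A4), lets $\varphi_{\Delta_{n,i}^\beta}(X_{t_{i+1}}-X_{t_i})$ essentially vanish on intervals containing a jump and equal $1$ elsewhere with overwhelming probability.

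\textbf{Stage 1: consistency of $\hat\sigma_n$.} I would prove that uniformly in $(\mu,\sigma)\in\Theta$,
$$\frac{1}{n}\bigl(U_n(\mu,\sigma)-U_n(\mu,\sigma_0)\bigr)\ \xrightarrow{\mathbb P}\ \Phi(\sigma,\sigma_0):=\int_{\mathbb R}\Bigl[\frac{a^2(\sigma_0,x)}{a^2(\sigma,x)}-1-\log\frac{a^2(\sigma_0,x)}{a^2(\sigma,x)}\Bigr]\pi(dx).$$
Using Ad, the log contributes $\log a^2(\sigma,X_{t_i})-\log a^2(\sigma_0,X_{t_i})$ at order~$1$ because the $\Delta_{n,i}^{1+\delta_1}r(x)$ correction is $\theta$-free and cancels in the difference. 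For the quadratic piece I decompose $X_{t_{i+1}}-m(\mu,\sigma,X_{t_i})=\bigl(X_{t_{i+1}}-m(\mu_0,\sigma_0,X_{t_i})\bigr)+\bigl(m(\mu_0,\sigma_0,X_{t_i})-m(\mu,\sigma,X_{t_i})\bigr)$; on $\{\varphi\ne 0\}$ the increment behaves as in the continuous case, so its conditional second moment is $\Delta_{n,i}a^2(\sigma_0,X_{t_i})(1+o(1))$, and dividing by $m_2(\mu,\sigma,X_{t_i})\simeq\Delta_{n,i}a^2(\sigma,X_{t_i})$ produces the ratio appearing in $\Phi$. The drift-difference cross term is $O(\Delta_{n,i}^{1/2})$ per summand and centered, hence negligible at rate $n$. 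The ergodic theorem then yields $\Phi$. Since $u-1-\log u\ge 0$ with equality iff $u=1$, A6 forces $\sigma=\sigma_0$ as the unique minimizer; uniformity over the compact $\Theta$ follows from the polynomial moment controls on derivatives supplied by A7.

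\textbf{Stage 2: consistency of $\hat\mu_n$.} Given $\hat\sigma_n\xrightarrow{\mathbb P}\sigma_0$, I would study the sharper scaling
$$\frac{1}{n\Delta_n}\bigl(U_n(\mu,\hat\sigma_n)-U_n(\mu_0,\hat\sigma_n)\bigr),$$
which by continuity and Stage~1 is, up to $o_{\mathbb P}(1)$, the same expression with $\sigma=\sigma_0$. The log terms are now $O(1)$ and vanish after division by $n\Delta_n$. Plugging $m(\mu,\sigma_0,x)-m(\mu_0,\sigma_0,x)=-\Delta_{n,i}\bigl(b(\mu,x)-b(\mu_0,x)\bigr)+R(\theta,\Delta_{n,i}^{2},x)$ into the squared difference and expanding, the diagonal term converges by the ergodic theorem to
$$\int_{\mathbb R}\frac{\bigl(b(\mu_0,x)-b(\mu,x)\bigr)^{2}}{a^2(\sigma_0,x)}\,\pi(dx),$$
whereas the cross terms are $\mathcal F_{t_i}$-martingale increments whose $L^2$ mass is $O(1/n)\to 0$. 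By A6 this limit vanishes only at $\mu=\mu_0$, whence $\hat\mu_n\xrightarrow{\mathbb P}\mu_0$.

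\textbf{Main obstacle.} The technical core is controlling the interaction between $\varphi_{\Delta_{n,i}^\beta}(X_{t_{i+1}}-X_{t_i})$ and the jump part: one needs estimates of the form $\mathbb E[(X_{t_{i+1}}-X_{t_i})^{p}\mathbf 1_{\{N_{[t_i,t_{i+1})}\ge 1\}}\varphi_{\Delta_{n,i}^\beta}(\cdot)\mid X_{t_i}=x]=O(\Delta_{n,i}^{1+\epsilon})$ for $p=0,1,2$, which combine the finite-activity A4, the lower bound on $\gamma$ in A4.1, and the separation $\Delta_{n,i}^{\beta}\gg \Delta_{n,i}^{1/2}$; symmetric bounds are needed for the event that a purely continuous increment spuriously exceeds the threshold. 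Once these filtering estimates are in place, the tail indicator $\mathbf 1_{\{|X_{t_i}|\le \Delta_{n,i}^{-k}\}}$ is asymptotically inactive by Markov together with Lemma~\ref{lemma: 2.1 GLM}, and the crucial $\theta$-freeness of $r(x)$ in Ad guarantees cancellation of the would-be obstruction in Stage~1; the analysis then reduces to its continuous-diffusion counterpart, where Kessler-type reasoning produces the two Kullback-type discrepancies uniquely minimized at $\theta_0$.
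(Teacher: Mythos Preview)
Your two-scale strategy is the same as the paper's, and Stage~1 is essentially correct. Stage~2, however, contains a real slip. You assert that in $U_n(\mu,\hat\sigma_n)-U_n(\mu_0,\hat\sigma_n)$ the log terms are $O(1)$ and disappear after division by $n\Delta_n$. They are not: by Ad one has $\log\bigl(m_2(\mu,\sigma,x)/\Delta_{n,i}\bigr)=\log a^2(\sigma,x)+\Delta_{n,i}^{\delta_1}r(x)/a^2(\sigma,x)+\Delta_{n,i}\,r(\mu,\sigma,x)+\dots$, so the $\mu$-difference of each log summand is $\Delta_{n,i}\bigl(r(\mu,\sigma,X_{t_i})-r(\mu_0,\sigma,X_{t_i})\bigr)+o(\Delta_{n,i})$, and after summing and dividing by $T_n$ this converges to $\int_{\mathbb R}\bigl(r(\mu,\sigma,x)-r(\mu_0,\sigma,x)\bigr)\pi(dx)$, which is generically nonzero. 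What actually happens (and what the paper tracks in Lemma~\ref{lemma: conv Un consistenza mu }) is that the same $\Delta_{n,i}\,r(\mu,\sigma,x)$ correction enters $1/m_2$ in the quadratic part with the opposite sign and weight $a^2(\sigma_0,x)/a^2(\sigma,x)$; the combined contribution is $\int\bigl(r(\mu,\sigma,x)-r(\mu_0,\sigma,x)\bigr)\bigl(1-a^2(\sigma_0,x)/a^2(\sigma,x)\bigr)\pi(dx)$, which vanishes only at $\sigma=\sigma_0$. Your final limit is therefore correct once $\sigma=\sigma_0$, but for the wrong reason, and at a generic $\sigma$ your argument would give the wrong limit.

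This feeds into the second gap: the substitution $\hat\sigma_n\rightsquigarrow\sigma_0$ cannot be justified ``by continuity and Stage~1'', since Stage~1 lives at the coarser $1/n$ scale and says nothing about $\frac{1}{T_n}\bigl(U_n(\mu,\sigma)-U_n(\mu_0,\sigma)\bigr)$. You need uniform convergence of this quantity over $(\mu,\sigma)\in\Theta$, i.e.\ tightness of $S_n(\mu,\sigma)$ in $C(\Theta)$ (the paper's Lemma~\ref{lemma: tightness per mu}), and then evaluate along $(\hat\mu_{n_k},\hat\sigma_{n_k})\to(\mu_\infty,\sigma_0)$; only then does the extra $r$-term disappear and the limit reduce to $\int (b(\mu_0,x)-b(\mu,x))^2/a^2(\sigma_0,x)\,\pi(dx)$. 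Once these two points are fixed, your outline coincides with the paper's proof.
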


\begin{theorem}{(Asymptotic normality)}
Suppose that Assumptions 1 to 8, $A_{\mbox{Step}}$ and Ad hold. Then
$$(\sqrt{T_n}(\hat{\mu}_n - \mu_0), \sqrt{n}(\hat{\sigma}_n - \sigma_0) ) \xrightarrow{\mathcal{L}} N(0, K) \quad \mbox{for } n\rightarrow\infty,$$
where $K = \begin{pmatrix} 
( \int_\mathbb{R} (\frac{\partial_\mu b(x, \mu_0)}{a(x, \sigma_0)})^2 \pi(dx))^{-1} & 0 \\
0 & 2( \int_\mathbb{R} (\frac{\partial_\sigma a(x, \sigma_0)}{a(x, \sigma_0)})^2 \pi(dx))^{-1} 
\end{pmatrix}$.
\label{th: normality}
\end{theorem}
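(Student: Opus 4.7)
The plan follows the classical $M$-estimator strategy, adapted to the fact that the two components of $\theta$ are estimated at different rates. By Theorem~\ref{th: consistency} and since $\theta_0$ is interior to $\Theta$, the first-order condition $\nabla U_n(\hat{\theta}_n)=0$ holds with probability tending to one, and a coordinatewise mean-value expansion gives $0 = \nabla U_n(\theta_0) + H_n(\hat{\theta}_n - \theta_0)$, where $H_n$ is the $2\times 2$ matrix whose $j$-th row is $\nabla \partial_j U_n$ evaluated at an intermediate point of the segment from $\hat{\theta}_n$ to $\theta_0$. Introducing the rate matrix $\phi_n := \mathrm{diag}(T_n^{-1/2}, n^{-1/2})$, this rewrites as
\begin{equation*}
\phi_n^{-1}(\hat{\theta}_n - \theta_0) = -(\phi_n H_n \phi_n)^{-1}\, \phi_n \nabla U_n(\theta_0),
\end{equation*}
so that the theorem reduces to two limits: (i) $\phi_n H_n \phi_n \to B^{\ast}$ in probability for some deterministic invertible $B^{\ast}$, and (ii) $\phi_n \nabla U_n(\theta_0) \xrightarrow{\mathcal{L}} \mathcal{N}(0,\Sigma)$ for some diagonal $\Sigma$; together with Slutsky's lemma and the matrix identity $(B^{\ast})^{-1}\Sigma(B^{\ast})^{-1} = K$.

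For step (i), the key ingredients are the expansion of $m$ from Proposition~2 of \cite{Chapitre 1} and the expansion of $m_2$ from Assumption Ad, together with their $\theta$-derivatives, whose orders in $\Delta_{n,i}$ are preserved under differentiation (stated in Ad and underwritten by Assumption~7). One checks that $\partial_\mu m \sim \Delta_{n,i}\partial_\mu b(\mu_0,X_{t_i})$ and $\partial_\sigma m_2 \sim 2\Delta_{n,i}\,a(\sigma_0,X_{t_i})\partial_\sigma a(\sigma_0,X_{t_i})$, while $\partial_\sigma m$ and $\partial_\mu m_2$ are of strictly higher order in $\Delta_{n,i}$. Plugging these into the explicit second derivatives of the summands of $U_n$ and using $\mathbb{E}[(X_{t_{i+1}}-m)^2|\mathcal{F}_{t_i}] \approx m_2$ at $\theta_0$, the dominant parts of the rescaled Hessian become: $\frac{2}{T_n}\sum_i \Delta_{n,i}(\partial_\mu b/a)^2(X_{t_i})$ for the $(1,1)$ entry, which converges by Lemma~\ref{lemma: 2.1 GLM} to $2\,\pi((\partial_\mu b/a)^2)$; $\frac{4}{n}\sum_i(\partial_\sigma a/a)^2(X_{t_i})$ for the $(2,2)$ entry, which converges to $4\,\pi((\partial_\sigma a/a)^2)$ by the ergodic theorem together with $A_{\mathrm{Step}}$; and the cross term $(T_n n)^{-1/2}\partial_\mu\partial_\sigma U_n$, which carries an extra $\sqrt{\Delta_n}$ factor and therefore vanishes in probability. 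A uniform control of these expansions on a shrinking neighbourhood of $\theta_0$, combined with the consistency of $\hat{\theta}_n$, then allows one to replace the intermediate values in $H_n$ by $\theta_0$. The resulting $B^{\ast}$ coincides, up to sign conventions, with the matrix $B$ of Assumption~8, and is invertible by that assumption.

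For step (ii), I would decompose $\phi_n \nabla U_n(\theta_0)$ as a sum of $(\mathcal{F}_{t_i})$-martingale differences plus a negligible remainder. The dominant $\mu$-component is the linear martingale transform $-\frac{2}{\sqrt{T_n}}\sum_i \frac{\partial_\mu b(\mu_0,X_{t_i})}{a^2(\sigma_0,X_{t_i})}\,(X_{t_{i+1}}-m(\theta_0,X_{t_i}))\,\varphi_{\Delta_{n,i}^\beta}(X_{t_{i+1}}-X_{t_i})\,1_{\{|X_{t_i}|\le\Delta_{n,i}^{-k}\}}$, while the dominant $\sigma$-component is the quadratic transform $\frac{2}{\sqrt{n}}\sum_i \frac{\partial_\sigma a(\sigma_0,X_{t_i})}{a(\sigma_0,X_{t_i})}\bigl(1-(X_{t_{i+1}}-m(\theta_0,X_{t_i}))^2/m_2(\theta_0,X_{t_i})\bigr)\varphi_{\Delta_{n,i}^\beta}(X_{t_{i+1}}-X_{t_i})\,1_{\{|X_{t_i}|\le\Delta_{n,i}^{-k}\}}$. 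Three technical points are essential: (a) the filter $\varphi_{\Delta_{n,i}^\beta}$ equals one on jump-free intervals with probability tending to one---because $\beta>1/4$ keeps the purely Gaussian increment below the threshold---and equals zero on any interval carrying a jump, because $\gamma$ is bounded below by $\gamma_{\min}>0$ (Assumption~4.1); (b) the indicator $1_{\{|X_{t_i}|\le \Delta_{n,i}^{-k}\}}$ equals one eventually by Chebyshev and Lemma~\ref{lemma: 2.1 GLM}; (c) after these reductions the two martingale transforms are conditionally orthogonal, because the first is linear and the second is a centred quadratic in a Gaussian increment. A bivariate stable martingale CLT, invoking the propositions foreshadowed in Section~\ref{S: Limit theorems}, then yields the joint Gaussian limit with diagonal $\Sigma$ identified so that $(B^{\ast})^{-1}\Sigma(B^{\ast})^{-1}=K$. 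The main obstacle is precisely this bivariate CLT at two different rates: one must prove the vanishing of the asymptotic cross-covariance, control the interaction of $\varphi$ with the quadratic $\sigma$-score (which is more sensitive to jumps than the linear $\mu$-score) and manage $\sum\Delta_{n,i}$ against $\sum 1$ under the non-uniform sampling; the negligibility of the remainders $\phi_n\nabla U_n(\theta_0)-M_n$ should be absorbed by the uniform rest-function formalism $R(\theta,\Delta_{n,i}^\delta,x)$ introduced after Assumption Ad.
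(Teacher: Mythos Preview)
Your overall architecture matches the paper's proof exactly: a Taylor expansion at $\theta_0$ with the rate matrix $\phi_n=\mathrm{diag}(T_n^{-1/2},n^{-1/2})$, reducing to (i) convergence of the rescaled Hessian to $B$ (the paper's Lemma~\ref{lemma: derivate seconde contrasto}) and (ii) a martingale CLT for the rescaled score (the paper's Lemma~\ref{lemma: normality derivative contrast}), then Slutsky. Your identification of the dominant terms in the Hessian and the target matrices is correct.

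There are, however, two places where your execution diverges from the paper in ways that matter. First, you write that you would ``decompose $\phi_n\nabla U_n(\theta_0)$ as a sum of martingale differences plus a negligible remainder.'' In fact there is no remainder: the definitions \eqref{eq: definition m}--\eqref{eq: definition m2} of $m$ and $m_2$ are chosen precisely so that $\mathbb{E}_i[(X_{t_{i+1}}-m)\varphi]=0$ and $\mathbb{E}_i[(1-(X_{t_{i+1}}-m)^2/m_2)\varphi]=0$ exactly at $\theta_0$, hence $\mathbb{E}_i[\zeta_i(\theta_0)]=\mathbb{E}_i[\tilde\zeta_i(\theta_0)]=0$ identically. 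This is the whole point of the contrast's design and is what allows the result to hold \emph{without any condition on the rate $\Delta_n\to 0$}; treating it as ``martingale plus small bias'' would reintroduce such conditions.

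Second, your point (a)---that $\varphi$ ``equals one on jump-free intervals and equals zero on any interval carrying a jump''---is only a heuristic and is false as stated: a jump of size $|z\gamma(X_{t_i})|<\Delta_{n,i}^\beta$ does not trigger $\varphi=0$, and the filtered increment is not Gaussian. The paper does not argue this way. Instead it proves sharp conditional moment bounds (Lemma~\ref{lemma: conditional expected value}, especially points~1, 2 and~5) such as $\mathbb{E}_i[(X-m)^2\varphi]=\Delta_{n,i}a^2+R(\Delta_{n,i}^{1+\beta})$, $\mathbb{E}_i[(X-m)^4\varphi]=3\Delta_{n,i}^2a^4+R(\Delta_{n,i}^{7/4+\beta})$ and $\mathbb{E}_i[(X-m)^3\varphi]=R(\Delta_{n,i}^{4/3+\beta})$. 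These are what drive the conditional variance computation and, crucially, the vanishing of the cross term \eqref{eq: derivate miste}: your ``linear vs.\ centred quadratic in a Gaussian'' intuition is replaced by the third-moment bound $R(\Delta_{n,i}^{4/3+\beta})$, which after the $(nT_n)^{-1/2}$ scaling gives a factor $\Delta_n^{\beta-1/6}\to 0$. The condition $\beta>1/4$ enters here (and in the Lyapunov check), not through a set-identification of jump intervals.
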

The proof of our main results will be presented in Section \ref{S:Proof_Main}. \\
\\
{\modch It is worth remarking here that, when $\sigma$ is known and only the parametric estimation of the drift is considered, the model \eqref{eq: model} is LAN with Fisher information $I(\mu)= \int_\mathbb{R} \frac{(\dot{b}(\theta,x))^2}{a^2(x)} \pi^\mu (dx)$ (see \cite{GLM}). \\
The H\'ajek$-$LeCam convolution theorem states that any regular estimator in a parametric model which satisfies LAN property is asymptotically equivalent to a sum of two independent random variables, one of which is normal with asymptotic variance equal to the inverse of Fisher information, and the other having arbitrary distribution. The efficient estimators are those with the second component identically equal to zero. Therefore, the estimator $\hat{\mu}_n$ is asymptotically efficient in the sense of the H\'ajek-Le Cam convolution theorem when the volatility parameter $\sigma$ is supposed to be known and also without its knowledge. \\
In the general case we are not aware of any result which proves that our model \eqref{eq: model} is LAN and therefore nothing can be said about the efficiency of the estimator $\hat{\theta}_n$. However, Gobet has proved in \cite{Gobet 2002} that, in absence of jumps, the LAN property of the model is satisfied with a Fisher information which matches with the variance matrix we found, in presence of jumps, for the joint estimation of the drift and the volatility parameters. \\
Such a result gives hope for an eventual efficiency of the estimator $\hat{\theta}_n$ here proposed.}

\section{Practical implementation of the contrast method}\label{S:practical}
In order to use in practice the contrast function \eqref{eq: contrast function}, one need to know the values of the quantities $m(\mu, \sigma, X_{t_i})$ and $m_2(\mu, \sigma, X_{t_i})$.
Even if in most cases, it seems impossible to find an explicit expression for them, explicit or numerical approximations of this functions seem available in many situations.

\subsection{Approximation of the contrast function}
Let us assume that one has at disposal an approximation of the functions $m(\mu, \sigma, x)$ and $m_2(\mu, \sigma, x)$, denoted by $\tilde{m}(\mu, \sigma, x)$ and $\tilde{m}_2(\mu, \sigma, x)$ which satisfy, for $|x|\le {\modch \Delta_{n,i} }^{-k_0}$, the following assumptions. \\
\textbf{$A\rho$ :}
\begin{enumerate}
    \item $| \tilde{m}(\mu, \sigma, x) -m(\mu, \sigma, x) | \leq R(\theta,{\modch \Delta_{n,i} }^{\rho_1},x), \quad | \tilde{m}_2(\mu, \sigma, x) -m_2(\mu, \sigma, x) | \leq R(\theta,{\modch \Delta_{n,i} }^{\rho_2},x),$ \\
where the constants $\rho_1>1$ and $\rho_2 >1$  assess the quality of the approximation.  
\item $| {\modch \frac{\partial^i}{\partial \mu^i}} \tilde{m}(\mu, \sigma, x)   - {\modch \frac{\partial^i}{\partial \mu^i}} m(\mu, \sigma, x) | + | {\modch \frac{\partial^i}{\partial \sigma^i}} \tilde{m}_2(\mu, \sigma, x)   - {\modch \frac{\partial^i}{\partial \sigma^i}} m_2(\mu, \sigma, x) | \le R(\theta,{\modch \Delta_{n,i} }^{1+\epsilon},x), \quad \text{for $i=1,2$},$
for all $|x|\le {\modch \Delta_{n,i} }^{-k_0}$ and where $\epsilon>0$.
\item The bounds on the derivatives of $m$ and $m_2$ gathered in Propositions \ref{prop: dl derivate prime}, \ref{prop: derivate seconde} and \ref{prop: dervate terze} hold true for $\tilde{m}$ and $\tilde{m}_2$ replacing $m$ and $m_2$.
\end{enumerate}
We have to act on the derivatives of the two approximated functions $\tilde{m}$ and $\tilde{m}_2$ as we do on $m$ and $m_2$. That is the reason why we need to add the third technical assumption here above, which assure we can move from the derivatives of the real functions to the approximated ones committing an error which is negligible.
Now, we consider $\widetilde{\theta}_n$ the estimator obtained from minimization of the contrast function \eqref{eq: contrast function} where one has replaced the functions $m(\mu, \sigma, X_{t_i})$ and $m_2(\mu, \sigma, X_{t_i})$ by their approximations $\tilde{m}(\mu, \sigma, x)$ and $\tilde{m}_2(\mu, \sigma, x)$: 
{\modch 
$$\tilde{U}_n(\mu, \sigma):= \sum_{i =0}^{n -1} [\frac{(X_{t_{i+1}} - \tilde{m} (\mu, \sigma, X_{t_i}))^2}{\tilde{m}_2(\mu, \sigma, X_{t_i})} + \log(\frac{\tilde{m}_2(\mu, \sigma, X_{t_i})}{\Delta_{n,i}})] \varphi_{\Delta_{n,i}^\beta}(X_{t_{i+1}} - X_{t_i})1_{\left \{|X_{t_i}| \le \Delta_{n,i}^{-k} \right \} },$$
$$\widetilde{\theta}_n = (\tilde{\mu}_n, \tilde{\sigma}_n) \in \mbox{arg}\min_{(\mu, \sigma) \in \Theta} \tilde{U}_n(\mu, \sigma).$$
}

Then, the result of Theorem \ref{th: normality} can be extended as follows. \\

\begin{proposition}
\label{Prop:normality_approximate_contrast} 
Suppose that Assumptions 1 to 8, Ad, $A_{\mbox{Step}}$ and $A \rho$ hold, with  $0 < k < k_0$, and that $\sqrt{n}\Delta_n^{\rho_1 -1/2} \rightarrow  0$ and $\sqrt{n}\Delta_n^{\rho_2 -1} \rightarrow  0$ as $n \rightarrow \infty$. \\
Then, the estimator $\widetilde{\theta}_n := (\tilde{\mu}_n, \tilde{\sigma}_n)$ is asymptotically normal:
$$(\sqrt{T_n}(\tilde{\mu}_n - \mu_0), \sqrt{n}(\tilde{\sigma}_n - \sigma_0) ) \xrightarrow{\mathcal{L}} N(0, K) \quad \mbox{for } n\rightarrow\infty,$$
where $K$ is the matrix defined in Theorem \ref{th: normality}.
\end{proposition}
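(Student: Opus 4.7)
The plan is to revisit the proof of Theorem \ref{th: normality} and show that the perturbation of the contrast induced by replacing $(m,m_2)$ by $(\tilde m,\tilde m_2)$ is negligible at both scales $\sqrt{T_n}$ and $\sqrt{n}$. First I would extend Theorem \ref{th: consistency} to $\tilde U_n$: since the truncation $1_{\{|X_{t_i}|\le \Delta_{n,i}^{-k}\}}$ with $k<k_0$ ensures that the bounds of $A\rho$ apply termwise, assumption $A\rho$(1) together with $m_2\asymp a^2\Delta_{n,i}$ (from Ad) yields, on the filtered event,
\[
|\tilde U_n(\theta)-U_n(\theta)|\le C\sum_{i=0}^{n-1} (1+|X_{t_i}|^c)\bigl(\Delta_{n,i}^{\rho_1-1/2}+\Delta_{n,i}^{\rho_2-1}\bigr);
\]
after the normalization $1/(n\Delta_n)$ used in the consistency argument this is $o(1)$ uniformly in $\theta$, by Lemma \ref{lemma: 2.1 GLM} and $\rho_1,\rho_2>1$. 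Hence $\tilde\theta_n\xrightarrow{\mathbb P}\theta_0$.

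\textbf{Reduction via Taylor expansion.} Setting $D_n:=\mathrm{diag}(\sqrt{T_n},\sqrt n)$, the first order condition $\nabla\tilde U_n(\tilde\theta_n)=0$ and consistency give, by a standard Taylor expansion,
\[
D_n(\tilde\theta_n-\theta_0)=-\bigl[D_n^{-1}\nabla^2\tilde U_n(\bar\theta_n)D_n^{-1}\bigr]^{-1}\,D_n^{-1}\nabla\tilde U_n(\theta_0),
\]
with $\bar\theta_n$ on the segment $[\theta_0,\tilde\theta_n]$. It then suffices to prove
\[
D_n^{-1}\bigl[\nabla\tilde U_n(\theta_0)-\nabla U_n(\theta_0)\bigr]\xrightarrow{\mathbb P}0 \quad\text{and}\quad D_n^{-1}\bigl[\nabla^2\tilde U_n(\theta)-\nabla^2 U_n(\theta)\bigr]D_n^{-1}\xrightarrow{\mathbb P}0
\]
uniformly on a neighborhood of $\theta_0$, so that the gradient CLT and Hessian LLN established for $U_n$ in Theorem \ref{th: normality} transfer to $\tilde U_n$.

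\textbf{Gradient and Hessian perturbations.} The gradient of the contrast at $\theta_0$ is a sum over $i$ of expressions in $(X_{t_{i+1}}-m)/\sqrt{m_2}$, $\partial_\theta m$, $\partial_\theta m_2$ and $m_2$. Expanding in the small perturbations $\tilde m-m$ and $\tilde m_2-m_2$, I would exploit (i) $m_2\asymp a^2(\sigma_0,X_{t_i})\Delta_{n,i}$ from Ad; (ii) the bound $X_{t_{i+1}}-m=O_p(\sqrt{\Delta_{n,i}})$ on the set $\{\varphi_{\Delta_{n,i}^\beta}\ne 0\}$; and (iii) assumption $A\rho$(1)--(2). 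This produces per-term errors of order $(1+|X_{t_i}|^c)(\Delta_{n,i}^{\rho_1-1/2}+\Delta_{n,i}^{\rho_2-1})$; summing and dividing by $\sqrt n$ (the stronger of the two scalings) gives $\sqrt n(\Delta_n^{\rho_1-1/2}+\Delta_n^{\rho_2-1})\to 0$ by hypothesis, while for the $\mu$-coordinate the weaker scaling $\sqrt{T_n}=\sqrt{n\Delta_n}$ only improves the estimate. For the Hessian, $A\rho$(2)--(3) ensure that the second derivatives of $\tilde m,\tilde m_2$ differ from those of $m,m_2$ by $R(\theta,\Delta_{n,i}^{1+\epsilon},x)$ and satisfy the same polynomial-growth bounds invoked in Propositions \ref{prop: dl derivate prime}--\ref{prop: dervate terze}; the same expansion then delivers a normalized Hessian perturbation of order $o(1)$.

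\textbf{Main obstacle.} The delicate step, in my view, is the $\sigma$-component of the gradient: to obtain the improved exponent $\rho_1-1/2$ (rather than $\rho_1-1$) one must extract a gain of $\sqrt{\Delta_{n,i}}$ from the centering of $X_{t_{i+1}}-m$, which requires keeping track of the terms where $\tilde m-m$ multiplies $X_{t_{i+1}}-m$ and exploiting the higher-order cancellation of $\mathbb E[(X_{t_{i+1}}-m)\varphi_{\Delta_{n,i}^\beta}\mid\mathcal F_{t_i}]$, in the spirit of the martingale-plus-bias decomposition used in Theorem \ref{th: normality}. Once this is established, combining Theorem \ref{th: normality} with the two convergences above yields the asymptotic normality of $D_n(\tilde\theta_n-\theta_0)$ with covariance $K$.
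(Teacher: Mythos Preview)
Your overall strategy (transfer the CLT for $U_n$ to $\tilde U_n$ by showing the normalized gradient and Hessian differences vanish) is workable, but it differs from the paper's route and contains a genuine slip in the scaling argument that, as written, breaks the $\mu$-coordinate.

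\textbf{The scaling error.} You assert that for the $\mu$-coordinate ``the weaker scaling $\sqrt{T_n}=\sqrt{n\Delta_n}$ only improves the estimate''. This is backwards: since $\Delta_n\to 0$, one has $\sqrt{T_n}\ll\sqrt{n}$, so dividing by $\sqrt{T_n}$ is \emph{harsher}, not more lenient. With your claimed uniform per-term bound $(1+|X_{t_i}|^c)(\Delta_{n,i}^{\rho_1-1/2}+\Delta_{n,i}^{\rho_2-1})$, summing $n$ terms and dividing by $\sqrt{T_n}$ yields $\sqrt{n}\,(\Delta_n^{\rho_1-1}+\Delta_n^{\rho_2-3/2})$, and neither exponent is covered by the hypotheses $\sqrt{n}\Delta_n^{\rho_1-1/2}\to0$, $\sqrt{n}\Delta_n^{\rho_2-1}\to0$. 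The fix is that the $\mu$-gradient perturbation actually enjoys \emph{better} per-term bounds than the $\sigma$-gradient, because $\partial_\mu m=O(\Delta_{n,i})$ and $\partial_\mu m_2=O(\Delta_{n,i}^2)$ (Proposition~\ref{prop: dl derivate prime}) contribute an extra factor $\Delta_{n,i}^{1/2}$ relative to the $\sigma$-analogues; a careful term-by-term computation gives non-martingale per-term errors of order $\Delta_{n,i}^{\rho_1}+\Delta_{n,i}^{\rho_2-1/2}$ for the $\mu$-coordinate, which after division by $\sqrt{T_n}$ becomes $\sqrt{n}(\Delta_n^{\rho_1-1/2}+\Delta_n^{\rho_2-1})\to0$. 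So your approach can be rescued, but only by treating the two coordinates separately rather than reducing the $\mu$-case to the $\sigma$-case.

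\textbf{Comparison with the paper.} The paper takes a different and somewhat cleaner route: rather than comparing $\nabla\tilde U_n$ to $\nabla U_n$, it observes that $A\rho$ forces $\tilde m,\tilde m_2$ to satisfy the same developments (Ad, \eqref{eq: dl m}) and derivative bounds (Propositions~\ref{prop: dl derivate prime}--\ref{prop: dervate terze}) as $m,m_2$, so the entire machinery of Sections~\ref{S: contrast conv}--\ref{S: As norm} applies verbatim to $\tilde U_n$. The single new issue is that $\tilde L_n(\theta_0)$ is no longer an exact martingale array (the defining property of $m,m_2$ was precisely $\mathbb{E}_i[(X_{t_{i+1}}-m)\varphi]=0$ and $\mathbb{E}_i[((X_{t_{i+1}}-m)^2-m_2)\varphi]=0$). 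The paper therefore isolates and controls the bias directly: from $|\tilde m-m|\le R(\Delta_{n,i}^{\rho_1})$ one gets $|\mathbb{E}_i[(X_{t_{i+1}}-\tilde m)\varphi]|\le R(\Delta_{n,i}^{\rho_1})$, and similarly $|\mathbb{E}_i[(1-(X_{t_{i+1}}-\tilde m)^2/\tilde m_2)\varphi]|\le R(\Delta_{n,i}^{(\rho_2-1)\land\rho_1})$; combining with the derivative sizes yields $\frac{1}{\sqrt{T_n}}\sum_i|\mathbb{E}_i[\zeta_i]|\to0$ and $\frac{1}{\sqrt{n}}\sum_i|\mathbb{E}_i[\tilde\zeta_i]|\to0$. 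This sidesteps the need to track every cross-term in $\nabla\tilde U_n-\nabla U_n$ and makes transparent why the two rate conditions $\sqrt{n}\Delta_n^{\rho_1-1/2}\to0$ and $\sqrt{n}\Delta_n^{\rho_2-1}\to0$ arise.
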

Proposition \ref{Prop:normality_approximate_contrast} will be proven in Section \ref{S: proof prop approx}. \\
\\
We give below several examples of approximations of $m_2(\mu, \sigma, X_{t_i})$ which can be used, together with the approximations of $m(\mu, \sigma, X_{t_i})$ given in Proposition 2 of \cite{Chapitre 1}, to construct an explicit contrast function.

\subsection{Development of $m_2(\mu, \sigma, x)$.}\label{S: dl m2}
We provide two kinds of expansion for the function $m_2$. First, we prove high order expansions that involve only the continuous part of the generator of the process and necessitate the choice of oscillating functions $\varphi$. Second, we find an expansion up to order $\Delta_n^2$ for any function $\varphi$, and, in particular, show the validity of the condition $Ad$ in a general setting.
 For completeness, we recall also the expansions of the function $m$ found in \cite{Chapitre 1}.
\subsubsection{Arbitrarily high expansion with oscillating truncation functions.}
 We show we can write an explicit development for the function $m_2$, as we did for the function $m$ in Proposition 2 of \cite{Chapitre 1}, taking a particular oscillating function $\varphi$. In this way, it is therefore possible to make the contrast explicit with approximation at any order. We define $A_{K_1}^{(k)}(x) := \bar{A}_c^k(h_1)(x)$ and $A_{K_2}^{(k)}(x) := \bar{A}_c^k(h_2)(x)$, where $\bar{A}_c(f) := \bar{b} f' + \frac{1}{2} a^2 f''$, with $\bar{b}(\mu, y) = b( \mu, y) - \int_{\mathbb{R}} \gamma (y) z F(z) dz$; $K_1$ and $K_2$ we have written here above stand for "Kessler", based on the fact that the development we find is the same obtained in \cite{Kessler} in the case without jumps by the iteration of the continuous generator $\bar{A}_c$. The functions who appear in the definition of $A_{K_1}^{(k)}$ and $A_{K_2}^{(k)}$ are the following: $h_1(y): = (y - x)$, $h_2(y) = y^2$. \\
To get Proposition \ref{prop: dl m2 intensita finita} we need to add the following assumption: \\ \\
\textbf{Af:} We assume that $x \mapsto a(x, \sigma)$, $x \mapsto b(x, \mu)$ and $x \mapsto \gamma(x)$ are $\mathcal{C}^\infty$ functions, they have at most uniform in $\mu$ and $\sigma$ polynomial growth as well as their derivatives.
\begin{proposition}
Assume that Assumptions 1-4 and Af hold and let $\varphi$ be a $\mathcal{C}^\infty$ function that has compact support and such that $\varphi \equiv 1$ on $[-1, 1]$ and $\forall k \in \left \{ 0, ... , M \right \}$, $\int_\mathbb{R} x^k \varphi(x) dx = 0$ for $M \ge 0$. Moreover we suppose that the L\'evy {\modch density} $F$ is $\mathcal{C}^\infty$. Then, for  $|x| \le \Delta_{n,i}^{-k_0} $ with some $k_0 > 0$,
\begin{equation}
m_2(\mu, \sigma,x) = \sum_{k = 1}^{\lfloor \beta (M + 2) \rfloor} A_{K_2}^{(k)}(x) \frac{\Delta_{n,i}^k}{k!} - (x + \sum_{k = 1}^{\lfloor \beta (M + 2) \rfloor} A_{K_1}^{(k)}(x) \frac{\Delta_{n,i}^k}{k!})^2 + R(\theta, \Delta_{n,i}^{\beta  (M+2) }, x).
\label{eq: dl m2 intensita finita}
\end{equation}
Moreover, for $\vartheta = \mu$ or $\vartheta = \sigma$
\begin{equation}
|{\modch \frac{\partial}{\partial \vartheta}} R(\theta,\Delta_{n,i}^{\beta  (M+2) }, x)| \le R(\theta,\Delta_{n,i}^{\beta  (M+2) }, x). 
\label{eq: tesi derivate int finita}
\end{equation}
\label{prop: dl m2 intensita finita}
\end{proposition}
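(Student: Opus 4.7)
My plan is to reduce the statement to an analogue of Proposition 2 of \cite{Chapitre 1} with the test function $h_2(y)=y^2$ in place of $h_1(y)=y-x$, and then to combine this new expansion with the one already available for $m$. Expanding $(X_{t_{i+1}}^\theta-m(\mu,\sigma,x))^2 = (X_{t_{i+1}}^\theta)^2 - 2m(\mu,\sigma,x) X_{t_{i+1}}^\theta + m^2(\mu,\sigma,x)$ inside the conditional expectation in the numerator of \eqref{eq: definition m2} and using the very definition of $m$ collapses the cross term, leaving
\[
m_2(\mu,\sigma,x) = \frac{\mathbb{E}[(X_{t_{i+1}}^\theta)^2 \varphi_{\Delta_{n,i}^\beta}(X_{t_{i+1}}^\theta - X_{t_i}^\theta)|X_{t_i}^\theta=x]}{\mathbb{E}[\varphi_{\Delta_{n,i}^\beta}(X_{t_{i+1}}^\theta - X_{t_i}^\theta)|X_{t_i}^\theta=x]} - m^2(\mu,\sigma,x).
\]
Squaring the expansion of $m$ imported from \cite{Chapitre 1} handles the second term; the localization $|x|\le\Delta_{n,i}^{-k_0}$ ensures the cross terms involving the remainder stay of the required form $R(\theta,\Delta_{n,i}^{\beta(M+2)},x)$.

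The core task is therefore to establish, by transcribing the Chapitre 1 argument with $h_2$ in place of $h_1$, an expansion of the form
\[
\frac{\mathbb{E}[(X_{t_{i+1}}^\theta)^2 \varphi_{\Delta_{n,i}^\beta}|X_{t_i}^\theta=x]}{\mathbb{E}[\varphi_{\Delta_{n,i}^\beta}|X_{t_i}^\theta=x]} = x^2 + \sum_{k=1}^{\lfloor\beta(M+2)\rfloor} A_{K_2}^{(k)}(x)\frac{\Delta_{n,i}^k}{k!} + R(\theta,\Delta_{n,i}^{\beta(M+2)},x).
\]
I would proceed via iterated Ito's formula applied to $h_2(X_s^\theta)\varphi_{\Delta_{n,i}^\beta}(X_s^\theta-x)$ on $[t_i,t_{i+1}]$, splitting the generator $A=A_c+A_d$ at each stage and recombining the pure-drift piece of $A_d$ with $A_c$ to form $\bar{A}_c$. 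The residual jump terms have the form $\int_{\mathbb{R}}[h_2(y+\gamma(y)z)\varphi_{\Delta_{n,i}^\beta}(y+\gamma(y)z-x) - h_2(y)\varphi_{\Delta_{n,i}^\beta}(y-x)]F(z)\,dz$; after the rescaling $u = (y+\gamma(y)z-x)/\Delta_{n,i}^\beta$ and using the vanishing moments $\int u^k \varphi(u)\,du = 0$ for $0\le k\le M$, each such integral gains a factor $\Delta_{n,i}^{\beta(M+2)}$, exactly as in the $h_1$ case. Dividing by the denominator (whose own expansion is a by-product of the same argument, with leading order constant in $\Delta_{n,i}$) produces the displayed development, and subtracting the squared expansion of $m$ yields \eqref{eq: dl m2 intensita finita}.

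For \eqref{eq: tesi derivate int finita}, differentiation in $\vartheta\in\{\mu,\sigma\}$ commutes with the conditional expectations under Assumption Af and acts only on the coefficients $b$ and $a$, whose derivatives retain polynomial growth by Assumption 7; the very same induction then carries through and the post-differentiation remainder is again of type $R(\theta,\Delta_{n,i}^{\beta(M+2)},x)$. The principal technical difficulty is the jump accounting in the middle step: showing that the oscillation-induced cancellations survive uniformly along all iterates of the Ito expansion and over the localization $|x|\le\Delta_{n,i}^{-k_0}$, so that the accumulated remainder stays of the claimed polynomial-in-$x$ order. Because this delicate bookkeeping has already been carried out in \cite{Chapitre 1} for $h_1$, the adaptation to $h_2$ is structural rather than novel, but does require re-tracing those estimates with the new test function.
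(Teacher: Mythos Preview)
Your approach is essentially the same as the paper's: it too rewrites $m_2$ as the quotient $\mathbb{E}[(X_{t_{i+1}}^\theta)^2\varphi\,|\,X_{t_i}^\theta=x]/\mathbb{E}[\varphi\,|\,X_{t_i}^\theta=x]-m^2$, applies the Dynkin iteration of Proposition~2 in \cite{Chapitre 1} to the test function $\bar g(y)=y^2\varphi((y-x)\Delta_{n,i}^{-\beta})$, and exploits the vanishing-moment condition on $\varphi$ to reduce the discrete generator contribution to $-\lambda$ times the function plus a remainder of order $\Delta_{n,i}^{\beta(M+2-p)}$, so that the $(-\lambda)$-series in the numerator cancels with that in the denominator.

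One inaccuracy in your sketch of \eqref{eq: tesi derivate int finita}: differentiation in $\vartheta$ does \emph{not} act only on the coefficients $b$ and $a$. The process $X_u^\theta$ itself depends on $\theta$, so differentiating the Dynkin integral remainder produces an additional term $\mathbb{E}[\partial_X A^{N+1}\bar g(X_{u}^\theta)\,\partial_\vartheta X_{u}^\theta]$. The paper controls this via moment bounds on $\partial_\vartheta X_u^\theta$ (Lemma~\ref{lemma: derivate x}) and notes that the extra $\Delta_{n,i}^{-\beta}$ arising from $\partial_X$ hitting the scaled $\varphi$ can be absorbed by taking the Dynkin order $N$ large enough. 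This is the one place where you would need to do more than simply re-trace \cite{Chapitre 1} with $h_2$ in place of $h_1$.
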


It is proved below Proposition 2 in \cite{Chapitre 1} that a function $\varphi$ which satisfies the assumptions here above exists: it is possible to build it through $\psi$, a function with compact support, $\mathcal{C}^\infty$ and such that
$\psi|_{[-1, 1]}(x)= \frac{x^M}{M !}$. It is enough to define $\varphi(x) : = \frac{\partial^M}{\partial x^M} \psi (x)$ to get $\varphi \equiv 1$ on $[-1, 1]$; $\varphi$ is $\mathcal{C}^\infty$, with compact support and such that for each $l \in \left \{ 0, ... M \right \}$, using the integration by parts,
$\int_\mathbb{R} x^l \varphi(x) dx = 0$. \\
It is thanks to such a choice of an oscillating function $\varphi$ that the contribution of the discontinuous part of the generator disappears and we get the same development found in the continuous case, in Kessler \cite{Kessler}, due only to the continuous generator. \\ 
In the situation where \eqref{eq: dl m2 intensita finita} holds true with $\lfloor \beta (M + 2) \rfloor > 2$, we get a development for $m_2$ as in $Ad$ for $r(x)$ identically $0$ and $r(\mu, \sigma,x)$ being an explicit function.

For completeness, let us recall that under the same assumptions as in Proposition \ref{prop: dl m2 intensita finita},  we have the following expansion for $m$ (see Proposition 2 in \cite{Chapitre 1}):
\begin{equation}\label{E:dev_Kessler_chap1}
m(\mu,\sigma,x)= x + \sum_{k = 1}^{\lfloor \beta (M + 2) \rfloor} A_{K_1}^{(k)}(x) \frac{\Delta_{n,i}^k}{k!} + R(\theta, \Delta_{n,i}^{\beta  (M+2) }, x), \text{ for $|x| \le \Delta_{n,i}^{-k_0} $, with $k_0>0$.}
\end{equation}
{\modch In the definition of the contrast function we can replace the two functions $m$ and $m_2$ with their approximations obtained from \eqref{eq: dl m2 intensita finita} and \eqref{E:dev_Kessler_chap1} :
$$\tilde{m}^{\rho_1} (\mu, \sigma, x) := x + \sum_{k = 1}^{\rho_1} A_{K_1}^{(k)}(x) \frac{\Delta_{n,i}^k}{k!} \quad \mbox{and}$$
$$\tilde{m_2}^{\rho_2} (\mu, \sigma, x) := \sum_{k = 1}^{\rho_2} A_{K_2}^{(k)}(x) \frac{\Delta_{n,i}^k}{k!} - (x + \sum_{k = 1}^{\rho_2} A_{K_1}^{(k)}(x) \frac{\Delta_{n,i}^k}{k!})^2.$$
The errors we commit are, respectively, $R(\theta, \Delta_{n,i}^{\rho_1 }, x)$ and $R(\theta, \Delta_{n,i}^{\rho_2 }, x)$; for $\rho_i \le \lfloor \beta (M + 2) \rfloor$ and $\rho_i \ge 1$, $i = 1, 2$. \\
By application of the Proposition \ref{Prop:normality_approximate_contrast} we can see that the associated estimator is asymptotically gaussian under the assumptions $\sqrt{n}\Delta_n^{\rho_1 - \frac{1}{2}} \rightarrow 0 $ and $\sqrt{n}\Delta_n^{\rho_2 - 1} \rightarrow 0 $, for $n$ going to $\infty$. Since $M$ and thus $\rho_1$ and $\rho_2$ can be chosen arbitrarily large, we see that the sampling step $\Delta_n$ is allowed to go to zero in a arbitrary slowly polynomial rate as function of $n$. That means that a slow sampling step needs to choose a truncation function with more vanishing moments.
}

\subsubsection{Second order expansion with general truncation functions.}
Another situation in which $Ad$ holds is gathered in the following proposition, that will be still proven in the appendix:
\begin{proposition}
Suppose that Assumptions A1 -A5 and A7 hold, that $\beta \in ( \frac{1}{4}, \frac{1}{2})$ and that the L\'evy {\modch density} $F$ is $\mathcal{C}^1$. Then there exists $k_0 > 0$ such that, for $|x| \le \Delta_{n,i}^{- k_0} $, 
\begin{equation}
m_2(\mu, \sigma,x) = \Delta_{n,i} a^2(x, \sigma) + \frac{ \Delta_{n,i}^{1 + 3 \beta}}{\gamma(x)}F(0) \int_\mathbb{R} v^2 \varphi(v) dv + \Delta_{n,i}^2 (3 \bar{b}^2(x, \mu) + h_2(x, \theta)) + \Delta_{n,i}^{(1 + 4 \beta) \land (2 + \beta) \land (3 - 2 \beta)} R(\theta, 1,x);
\label{eq: dl particolare m2}
\end{equation}
where $h_2 = \frac{1}{2} a^2(a')^2 + \frac{1}{2} a^3a'' + a^2\bar{b}' + aa'\bar{b} + \bar{b}^2$.  
Moreover, for both $\vartheta = \mu$ and $\vartheta = \sigma$, ${\modch \frac{\partial}{\partial \vartheta}} R(\theta, 1, x)$ is still a $R(\theta, 1, x)$ function.
\label{prop: dl m2 particolare per appl}
\end{proposition}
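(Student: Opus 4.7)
The plan is to decompose the conditional expectations defining $m_2$ according to whether the Poisson measure has $0$, $1$, or $\ge 2$ jumps in $[t_i, t_{i+1})$. Rewriting the SDE with uncompensated jumps makes the drift $\bar b(\mu, x) = b(\mu, x) - \int \gamma(x) z F(dz)$; denote by $X^c$ the purely continuous diffusion with coefficients $(\bar b, a)$ started from $x$ at $t_i$. On the single-jump event the jump time $\tau$ is uniform on the interval and the mark $Z$ has density $F_0 = F/\lambda$; the event $\{N^i \ge 2\}$ has probability $O(\Delta_{n,i}^2)$. First I would analyse the denominator $\mathbb{E}[\varphi_{\Delta_{n,i}^\beta}(X_{t_{i+1}} - x)\,|\,X_{t_i}=x]$: on $\{N^i=0\}$ the filter $\varphi$ equals $1$ up to a super-polynomial Gaussian-tail correction (because $|X^c_{t_{i+1}} - x| = O_{\mathbb{P}}(\sqrt{\Delta_{n,i}})$ and $\beta<1/2$), while the single-jump contribution, after the substitution $v = \gamma(x)z/\Delta_{n,i}^\beta$ and the local expansion $F_0(z) = F_0(0) + O(|z|)$ legitimated by $F\in\mathcal C^1$, is of order $\Delta_{n,i}^{1+\beta}$. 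Inverting $1 + R(\theta,\Delta_{n,i}^{1+\beta},x)$ and multiplying against the leading $\Delta_{n,i} a^2(x,\sigma)$ of the numerator is what will produce the $\Delta_{n,i}^{2+\beta}$ term of the final remainder.

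For the no-jump contribution to the numerator I apply It\^o's formula to $y\mapsto (y-m)^2$ along $X^c$ to get
\[
\mathbb{E}[(X^c_{t_{i+1}} - m)^2] = (x - m)^2 + \int_{t_i}^{t_{i+1}}\mathbb{E}\bigl[a^2(X^c_s,\sigma) + 2(X^c_s - m)\bar b(X^c_s,\mu)\bigr]ds,
\]
and plug in the expansion $m = x + \Delta_{n,i}\bar b(x,\mu) + R(\theta,\Delta_{n,i}^{(1+2\beta)\land 2},x)$ recalled in \eqref{E:dev_Kessler_chap1}, together with a Taylor expansion of $a^2(X^c_s,\sigma)$ and $\bar b(X^c_s,\mu)$ along $X^c$ obtained by successive applications of $\bar A_c$. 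The variance term contributes the leading $\Delta_{n,i} a^2(x,\sigma)$; the second-order corrections combine to give $\Delta_{n,i}^2\bigl(3\bar b^2(x,\mu) + h_2(x,\theta)\bigr)$, where the particular combination $h_2 = \tfrac12 a^2(a')^2 + \tfrac12 a^3 a'' + a^2\bar b' + a a'\bar b + \bar b^2$ is forced by $\bar A_c(a^2) = 2aa'\bar b + a^2(a')^2 + a^3 a''$ and by the cross term $2(X^c_s-m)\bar b$. Including the filter $\varphi$ on this event only adds a super-polynomial correction that is absorbed in the remainder.

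For the single-jump contribution, using the decomposition $X_{t_{i+1}} - m = \gamma(X_{\tau-})Z + (X^c_{t_{i+1}} - x) - (m-x)$ and writing $\gamma(X_{\tau-}) = \gamma(x) + O(|X^c_\tau - x|)$, the dominant piece is
\[
\lambda\Delta_{n,i}\int_{\mathbb{R}} \gamma^2(x) z^2 \,\varphi\!\left(\tfrac{\gamma(x)z}{\Delta_{n,i}^\beta}\right) F_0(z)\,dz,
\]
which by the substitution $v=\gamma(x)z/\Delta_{n,i}^\beta$ and $F(z) = F(0) + O(|z|)$ becomes $\Delta_{n,i}^{1+3\beta}\gamma(x)^{-1}F(0)\int v^2\varphi(v)\,dv + R(\theta,\Delta_{n,i}^{1+4\beta},x)$, the $1+4\beta$ order being the next Taylor coefficient in $F_0$. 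The subdominant cross terms (between the jump mark, the spatial dependence of $\gamma$, and the Brownian increment) together with the two-or-more-jumps contribution (probability $O(\Delta_{n,i}^2)$, each summand bounded by $\Delta_{n,i}^{2\beta}$ through the filter) generate the third remainder order $\Delta_{n,i}^{3-2\beta}$.

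The principal obstacle will be the careful bookkeeping of these three competing remainders $\Delta_{n,i}^{1+4\beta}$, $\Delta_{n,i}^{2+\beta}$, $\Delta_{n,i}^{3-2\beta}$ and of the cancellations between the $(1-\lambda\Delta_{n,i})$ factors that appear in both the numerator and denominator, together with checking that the $\Delta_{n,i}^{1+3\beta}$ jump term survives the division without generating a spurious drift-dependent second main term (so that Assumption Ad is actually satisfied). Throughout, the confinement $|x| \le \Delta_{n,i}^{-k_0}$ and the moment bounds of Lemma \ref{lemma: Moment inequalities} combined with A7 ensure that all polynomial prefactors in $x$ produced along the way are absorbed into an $R(\theta,1,x)$ function. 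Stability of the order under $\partial_\mu$ and $\partial_\sigma$ follows by differentiation under the expectation, legitimate thanks to the smoothness of $\varphi$ and $F$ and to the bounded derivatives supplied by A7.
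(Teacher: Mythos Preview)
Your jump-count decomposition is a legitimate alternative to the paper's route, which instead applies the Dynkin formula up to third order directly to $h_{i,n}(y)=(y-m)^2\varphi_{\Delta_{n,i}^\beta}(y-x)$ with the full generator $A=\bar A_c+A_d$, obtains the general expansion of Proposition~\ref{prop: dl alla capitolo 1 m2} first, and only then specializes via the $\mathcal C^1$ assumption on $F$ (the mean-value theorem on $F(u\Delta_{n,i}^\beta/\gamma(x))-F(0)$). The Dynkin route treats the filter $\varphi$ as part of the test function from the outset, so the orders of every piece are read off from the sup-norm bounds $\|h_{i,n}^{(l)}\|_\infty\le R(\theta,\Delta_{n,i}^{\beta(2-l)},x)$; in particular the $\Delta_{n,i}^{3-2\beta}$ in the final remainder is nothing but the size of the third-order Dynkin rest (it involves $\|h_{i,n}\|_{\infty,c,4}\le c\Delta_{n,i}^{-2\beta}$). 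Your approach buys a more probabilistic picture and avoids iterating the generator on a $\Delta_{n,i}$-dependent function, at the cost of many cross terms to track by hand.

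There is, however, a concrete bookkeeping error in your proposal. You cite the expansion of $m$ from \eqref{E:dev_Kessler_chap1} with remainder $R(\theta,\Delta_{n,i}^{(1+2\beta)\wedge 2},x)$, but that display is the Kessler-type expansion valid only for oscillating $\varphi$. The relevant expansion here is \eqref{E:dev_m_chap1}, whose remainder (after absorbing the jump integral) is $R(\theta,\Delta_{n,i}^{2-2\beta},x)$ for $\beta>1/4$. When you feed this into the no-jump computation of $\mathbb E[(X^c_{t_{i+1}}-m)^2]$, the cross terms $2\mathbb E[X^c_{t_{i+1}}-x](x-m)$ and $(x-m)^2$ each pick up an error $\Delta_{n,i}\cdot R(\theta,\Delta_{n,i}^{2-2\beta},x)=R(\theta,\Delta_{n,i}^{3-2\beta},x)$: \emph{this} is the true source of the $3-2\beta$ in the remainder, not the subdominant single-jump cross terms (those land at order $2+\beta$) nor the $\{N^i\ge 2\}$ event (that gives $\Delta_{n,i}^{2+2\beta}$, strictly better than $3-2\beta$ for $\beta>1/4$). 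Once you correct the citation and trace the $m$-remainder through, your outline goes through and matches the statement; the $2+\beta$ genuinely arises from the filter Taylor correction on the single-jump event and from the denominator, and the $1+4\beta$ from the $\mathcal C^1$ expansion of $F$, exactly as you say.
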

We observe that, defining $r(x) := F(0) \int_\mathbb{R} v^2 \varphi(v)dv $ and  $r(\mu, \sigma,x) := \frac{3 \bar{b}^2(x, \mu) + h_2(x, \theta)}{a^2(x,\sigma)} $, we get a development as in $Ad$ for $\delta_1 := 3 \beta$. \\
We observe that if $\int_\mathbb{R} v^2 \varphi(v) dv = 0$, we fall back in development of Proposition \ref{prop: dl m2 intensita finita} up to order 2. We therefore see that the choice of an oscillating truncated function $\varphi$ is necessary in order to remove the jump contribution. 

	It is worth noting here that biggest term after the main one is due to the jump part and do not depend on the parameters $\mu$ and $\sigma$. We will see in the sequel that is necessary, in order to prove the consistency of $\hat{\mu}_n$, that this contribution does not depend on the drift parameter. Considering indeed the difference of the contrast computed for two different values of the drift parameter, its presence results irrelevant. 

 We remark that the term with order $1 + 4 \beta$ is negligible compared to the order $2$ terms since in our setting $\beta$ is assumed to be bigger than $\frac{1}{4}$.
	
In Proposition \ref{prop: dl m2 particolare per appl} before $F$ is required to be $\mathcal{C}^1$, such assumption is no longer needed in following more general proposition.	


\begin{proposition}
Suppose that Assumptions A1 -A5 and A7 hold. Then there exists $k_0 > 0$ such that, for $|x| \le \Delta_{n,i}^{- k_0} $, 
$$m_2(\mu, \sigma,x) = \Delta_{n,i} a^2(x, \sigma) +  \frac{\Delta_{n,i}^{1 + 3 \beta}}{\gamma(x)}
\int_\mathbb{R} u^2 \varphi(u) F(u \frac{\Delta_{n,i}^\beta}{\gamma(x)}) du + \Delta_{n,i}^2 (3 \bar{b}^2(x, \mu) + h_2(x, \theta)) +$$
\begin{equation}
+ \frac{\Delta_{n,i}^{2 + \beta}a^2(x, \sigma)}{2 \gamma(x)}\int_{\mathbb{R}} (u \varphi'(u) + u^2 \varphi''(u)) F(\frac{u\Delta_{n,i}^\beta}{\gamma(x)}) du 
+ \Delta_{n,i}^{(3 - 2\beta) \land (2 + \beta)}R(\theta,1 ,x),
\label{eq: dl alla capitolo 1 m2}
\end{equation}
where $h_2 = \frac{1}{2} a^2(a')^2 + \frac{1}{2} a^3a'' + a^2\bar{b}' + aa'\bar{b} + \bar{b}^2$. \\
Moreover, for both $\vartheta = \mu$ and $\vartheta = \sigma$, ${\modch \frac{\partial}{\partial \vartheta}} R(\theta, 1, x)$ is still a $R(\theta, 1, x)$ function.
\label{prop: dl alla capitolo 1 m2}
\end{proposition}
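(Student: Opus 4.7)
The plan is to mirror the strategy behind Proposition \ref{prop: dl m2 particolare per appl}, but to avoid the Taylor expansion of $F$ around zero (the only step which required $F\in\mathcal{C}^1$), keeping instead the integrands in the form $F(u\Delta_{n,i}^\beta/\gamma(x))$ throughout. First I would use the identity
$$m_2(\mu,\sigma,x) = \frac{\mathbb{E}[(X_{t_{i+1}}-x)^2 \varphi_{\Delta_{n,i}^\beta}(X_{t_{i+1}}-X_{t_i}) \mid X_{t_i}=x]}{D(x)} - (m(\mu,\sigma,x)-x)^2,$$
with $D(x):=\mathbb{E}[\varphi_{\Delta_{n,i}^\beta}(X_{t_{i+1}}-X_{t_i})\mid X_{t_i}=x]$, which follows from the definition \eqref{eq: definition m} of $m$. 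The Kessler-type development \eqref{E:dev_Kessler_chap1} gives $(m-x)^2 = \Delta_{n,i}^2 \bar b(\mu,x)^2 + R(\theta,\Delta_{n,i}^{2+\epsilon},x)$, handling one $\bar b^2$ in the final expansion.

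Next, I would decompose both $N$ and $D$ by conditioning on the number $N_\Delta$ of jumps of the Poisson measure in $[t_i,t_{i+1}]$. Since $\lambda<\infty$, $\mathbb{P}(N_\Delta\geq 2)=O(\Delta_{n,i}^2)$ and such contributions go into the remainder. On $\{N_\Delta=0\}$ the process evolves as the continuous SDE $dX_s = \bar b(X_s,\mu)ds + a(X_s,\sigma)dW_s$ (the jump compensator having been absorbed into $\bar b$); applying It\^o's formula to $(X_s-x)^2$ and expanding $\bar b$ and $a^2$ around $x$ to order two gives, on this event,
$$\mathbb{E}[(X_{t_{i+1}}-x)^2 \mathbb{1}_{\{N_\Delta=0\}}\mid X_{t_i}=x] = (1-\lambda\Delta_{n,i})\bigl(\Delta_{n,i}a^2(x,\sigma) + \Delta_{n,i}^2 h_2(x,\theta)\bigr) + R(\theta,\Delta_{n,i}^{5/2},x).$$
Since $\beta<1/2$, the continuous increment $Y^c$ satisfies $|Y^c|\lesssim\sqrt{\Delta_{n,i}\log(1/\Delta_{n,i})}\ll\Delta_{n,i}^\beta$, so $\varphi_{\Delta_{n,i}^\beta}(Y^c)=1$ up to a Gaussian-tail remainder; likewise $\mathbb{E}[\varphi_{\Delta_{n,i}^\beta}\mathbb{1}_{\{N_\Delta=0\}}] = 1-\lambda\Delta_{n,i}+R$.

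On $\{N_\Delta=1\}$, the unique jump occurs at a time $\tau$ uniform in $[t_i,t_{i+1}]$ with size $\gamma(X_{\tau^-})Z$, $Z$ of density $F/\lambda$. I would write $X_{t_{i+1}}-X_{t_i} = Y^c + \gamma(X_{\tau^-})Z$, replace $\gamma(X_{\tau^-})$ by $\gamma(x) + O(\sqrt{\Delta_{n,i}})$, and perform a second-order Taylor expansion of $\varphi((Y^c+\gamma(x)Z)/\Delta_{n,i}^\beta)$ in $Y^c/\Delta_{n,i}^\beta$ around $\gamma(x)Z/\Delta_{n,i}^\beta$. Averaging in $Y^c$ via $\mathbb{E}[Y^c]\sim\Delta_{n,i}\bar b$, $\mathbb{E}[(Y^c)^2]\sim\Delta_{n,i} a^2$, and performing the change of variables $u=\gamma(x)z/\Delta_{n,i}^\beta$ produces the leading jump contribution $\tfrac{\Delta_{n,i}^{1+3\beta}}{\gamma(x)}\int u^2\varphi(u) F(u\Delta_{n,i}^\beta/\gamma(x))du$. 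The explicit $\Delta_{n,i}^{2+\beta}$ correction with integrand $u\varphi'(u)+u^2\varphi''(u)$ emerges by collecting three pieces of the same order: the $\varphi''\cdot\mathbb{E}[(Y^c/\Delta_{n,i}^\beta)^2]$ part in the Taylor expansion of $(\gamma Z)^2\varphi$, the $2Y^c\gamma Z\cdot\mathbb{E}[Y^c/\Delta_{n,i}^\beta]\varphi'$ cross-term, and the leading nontrivial term of $1/D$ applied to the main $\Delta_{n,i} a^2$ contribution of the numerator, where $D = 1 - \lambda\Delta_{n,i} + \tfrac{\Delta_{n,i}^{1+\beta}}{\gamma(x)}\int\varphi F(\cdot)du+O(\Delta_{n,i}^2)$.

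Finally I would assemble the orders: the $\Delta_{n,i}^2(3\bar b^2+h_2)$ piece collects the $h_2$ from the 0-jump It\^o expansion, the $\bar b^2$ from $(\mathbb{E}[Y^c])^2$ inside $\mathbb{E}[(Y^c)^2]$, the $\bar b^2$ from $-(m-x)^2$, and the remaining $\bar b^2$ from the cross-terms between $\Delta_{n,i}\bar b$-type drift pieces of $Y^c$ in numerator and denominator; the $\Delta_{n,i}^2 a^2\lambda$ term coming from $(1-\lambda\Delta_{n,i})\Delta_{n,i} a^2$ is cancelled by $\Delta_{n,i} a^2\cdot\lambda\Delta_{n,i}$ from $1/D$. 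Control of the derivatives in $\mu$ and $\sigma$ is obtained by differentiating under the expectation, legitimate by A7 and compact support of $\varphi$, so that each remainder retains the polynomial-in-$x$ bound \eqref{eq: definition R}. The main obstacle is the careful bookkeeping of the coexisting orders $\Delta_{n,i}^{1+3\beta}$, $\Delta_{n,i}^2$ and $\Delta_{n,i}^{2+\beta}$ (consistent only for $\beta\in(1/4,1/2)$) and checking that every integrand $u\mapsto F(u\Delta_{n,i}^\beta/\gamma(x))$ is tested only against compactly supported functions $\varphi$, $u\varphi'$, $u^2\varphi$, or $u^2\varphi''$, so that local boundedness of $F$ together with the polynomial moment bounds of A3 suffice to control the residual $R(\theta,\Delta_{n,i}^{(3-2\beta)\land(2+\beta)},x)$ without any smoothness assumption on $F$.
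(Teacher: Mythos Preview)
Your route is genuinely different from the paper's. The paper does not condition on the number of jumps at all: it applies Dynkin's formula to third order directly to $h_{i,n}(y)=(y-m(\mu,\sigma,x))^2\varphi_{\Delta_{n,i}^\beta}(y-x)$ for the numerator $\tilde n_{\Delta_{n,i}}(x)$ and to $f_{i,n}(y)=\varphi_{\Delta_{n,i}^\beta}(y-x)$ for the denominator. The splitting is by the generator, $A=\bar A_c+A_d$: the first-order terms come from $h_{i,n}(x)$, $\bar A_c h_{i,n}(x)$, $A_d h_{i,n}(x)$; the $\Delta_{n,i}^2$ and $\Delta_{n,i}^{2+\beta}$ pieces come from $\bar A_c^2 h_{i,n}(x)$, $\bar A_cA_d h_{i,n}(x)$, $A_d\bar A_c h_{i,n}(x)$, $A_d^2 h_{i,n}(x)$; and the triple iterate $A^3 h_{i,n}$ is controlled via weighted sup-norms $\|\cdot\|_{\infty,c,p}$. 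In particular the $\Delta_{n,i}^{2+\beta}$ integrand $2\varphi+u\varphi'+u^2\varphi''$ appears cleanly from $(A_dh_{i,n})''(x)$ inside $\bar A_cA_dh_{i,n}(x)$, with a matching contribution from $A_d\bar A_ch_{i,n}(x)$; the $2\varphi$ part cancels only after dividing by $\tilde d$. Your jump-number conditioning is more elementary and avoids introducing weighted operator norms, but the price is that all cross-terms between the continuous increment, the jump, and the Taylor expansion of $\varphi$ must be tracked by hand; the paper's approach makes each term a single operator evaluation at $x$.

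Your sketch is sound in outline but several details are off. You cannot invoke \eqref{E:dev_Kessler_chap1} for $(m-x)^2$, since that expansion requires an oscillating $\varphi$ with vanishing moments; under the present assumptions you need \eqref{E:dev_m_chap1}, from which $(m-x)^2=\Delta_{n,i}^2\bar b^2+R(\theta,\Delta_{n,i}^{2+\beta},x)$. Your bookkeeping of the four purported sources of $\bar b^2$ is inconsistent: the $\bar b^2$ already sits inside $h_2$ (so ``$(\mathbb E[Y^c])^2$ inside $\mathbb E[(Y^c)^2]$'' is double counting), and $-(m-x)^2$ contributes $-\bar b^2$, not $+\bar b^2$. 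Your account of how the exact integrand $u\varphi'+u^2\varphi''$ emerges is only heuristic: a second-order Taylor expansion of $(Y^c+\gamma(x)Z)^2\varphi((Y^c+\gamma(x)Z)/\Delta_{n,i}^\beta)$ in $Y^c/\Delta_{n,i}^\beta$, after averaging $\mathbb E[(Y^c)^2]\sim\Delta_{n,i}a^2$ and the change of variable $u=\gamma(x)z/\Delta_{n,i}^\beta$, produces $\Delta_{n,i}^{2+\beta}$ contributions with integrands $\varphi$, $u\varphi'$, $u^2\varphi''$ and nontrivial numerical coefficients, which must then be combined with the $\frac{\Delta_{n,i}^{1+\beta}}{\gamma(x)}\int\varphi F\,du$ coming from $D^{-1}$; you should write this cancellation out explicitly. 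Finally, you invoke ``local boundedness of $F$'', which is not assumed: what makes each integral controllable is that $\varphi$, $u\varphi'$, $u^2\varphi$, $u^2\varphi''$ are compactly supported, so that after undoing the change of variables the integrals are bounded by a multiple of $\gamma(x)\Delta_{n,i}^{-\beta}\int F(z)\,dz$.
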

We see that the contributions of the jumps depend on the density $F$ which argument in the integral depend on $\Delta_{n,i}$. If we choose a particular density function $F$ which is null in the neighborhood of $0$ the contribution of the jumps disappears and, in this case, we fall on the development for $m_2$ found by Kessler in the case without jumps (\cite{Kessler}), up to order $\Delta_{n,i}^2$. 

 The expansion \eqref{eq: dl alla capitolo 1 m2} looks cumbersome, however all terms are necessary to get an expansion with a remainder term of explicit order strictly greater than $2$, and valid for any finite intensity $F$. In the particular case where $F$ is $\mathcal{C}^1$, the first three terms in the expansion give the the main terms of the expansions \eqref{eq: dl particolare m2}, while the last integral term,  with order $\Delta_{n,i}^{2+\beta}$, is clearly a rest term. However, in the situation where $F$ may be unbounded near $0$, with $\int F(z) d z <\infty$, the last integral term is
	 only seen to be negligible versus $\Delta_{n,i}^2$.
	 Hence, this last integral term may be non negligible compared to the rest term and is needed in the expansion.

{\modch We remark that the construction of the contrast function requires the knowledge of the law of the jumps, which is typically unknown, since it affects $m$ and $m_2$. However, Propositions \ref{prop: dl m2 intensita finita} and \ref{prop: dl m2 particolare per appl} suggest us some suitable approximation of $m$ ad $m_2$ for which the corrections become explicit. For example, requiring also that $\int_{\mathbb{R}} z F(z) = 0$, $\bar{b}$ turns out being simply $b$ and therefore $\bar{A}_c (f)$ no longer depends on the jump law. It implies that $A_{k_1}$ in Proposition \ref{prop: dl m2 intensita finita} (and thus the expansions \eqref{eq: dl m2 intensita finita} and \eqref{E:dev_Kessler_chap1}) no longer depend on the jump law either. Moreover, Proposition \ref{prop: dl m2 particolare per appl} suggests us to estimate in a non -parametric way the quantity $F(0)$, which would provide an explicit correction as well. Even if we are not aware of any method for the estimation of $F(0)$, there are many papers related to such a topic. The problem of the estimation of $F(x)$ with $x \neq 0$ is connected for example to the works of \cite{Genon Comte} in high frequency and of \cite{Gugu} in low frequency, considering in both cases a L\'evy process. In \cite{Frank} the authors deal instead with a L\'evy driven Ornstein Uhlenbeck (OU) process. Figueroa - L\'opez proposes nonparametric estimators for the L\'evy density and obtains in \cite{Fig} both pointwise and uniform central limit theorems on an interval away from the origin for these estimators.}

 Finally, we recall the expansion of $m$ valid under the Assumptions A1-A4 (see Theorem 2 in \cite{Chapitre 1}),
\begin{equation}\label{E:dev_m_chap1}
m(\mu,\sigma,x)=x+ \Delta_{n, i} \bar{b}(x, \mu)  + \frac{\Delta_{n, i}^{1+2\beta}}{\gamma(x)} \int_{\mathbb{R}}  u\varphi(u) F(\frac{u \Delta_{n, i}^\beta}{\gamma(x)}) du + R(\theta,\Delta_{n, i}^{2-2\beta},x).
\end{equation}	


 Developments of $m_2$ given in Propositions \ref{prop: dl m2 intensita finita} - \ref{prop: dl alla capitolo 1 m2}, together with the developments of $m$ given in \eqref{E:dev_Kessler_chap1}, \eqref{E:dev_m_chap1} will be useful for the applications,
as illustrated in the following section. 

{\modch
\section{Simulation study}
\label{S: Applications}
In this section we provide some numerical results. First of all, in Section 5.1, we consider a second order expansion for $m$ and $m_2$ which holds true for any truncation function $\varphi$. After that, in Section 5.2, some oscillating truncation functions will be employed with the purpose of using an arbitrarily high expansion for $m$ and $m_2$, as in Kessler. As we will see, it will lead us to some estimators very well-performed.
\subsection{Second order expansion}
Let us consider the model 
\begin{equation}
X_t=X_0+ \int_0^t (\theta_1 X_s + \theta_2) ds + \sigma W_t + \gamma 
\int_0^t \int_{\mathbb{R}\backslash \left \{0 \right \}}  z \tilde{\mu}(ds,dz),
\label{eq:OU_in_simulation}
\end{equation}
where the compensator of the jump measure is $\overline{\mu}(ds,dz)=\lambda F_0(z)ds dz$ for $F_0$ the probability density of 
the law $\mathcal{N}(\mu_J,\sigma_J^2)$ with $\mu_J \in \mathbb{R}$, $\sigma_J >0$, $\sigma>0$,  $\theta_1<0$, $\theta_2 \in \mathbb{R}$, $\gamma \ge 0$, $\lambda \ge 0$. \\
We want to explore approximations for $m$ and $m_2$ which make us able to find en explicit version of the contrast function.
According to Theorem 2 of \cite{Chapitre 1}  (see also \eqref{E:dev_m_chap1}) we have, considering a threshold level which is $c\Delta_{n,i}^\beta$, the following development for $m$:
 
$$m (\theta_1, \theta_2, \sigma,x) = x + \Delta_{n,i} \bar{b}(x, \theta_1, \theta_2) + \frac{c^2 \Delta_{n,i}^{1 + 2 \beta}}{\gamma} \int_\mathbb{R} u \varphi(u) F (\frac{u c \Delta_{n,i}^\beta}{\gamma}) du + R(\theta, \Delta_{n,i}^{2 - 2 \beta}, x);$$
which leads us to the approximation 
$$\widetilde{m} (\theta_1, \theta_2,x) = x + \Delta_{n,i} \bar{b}(x, \theta_1, \theta_2) +
 \frac{c^2 \Delta_{n,i}^{1 + 2 \beta}}{\gamma} \int_\mathbb{R} u \varphi(u) F (\frac{u c \Delta_{n,i}^\beta}{\gamma}) du,$$
for $\bar{b}(x, \theta_1, \theta_2) = (\theta_1 x + \theta_2) - \gamma \lambda \mu_j$. The sampling scheme is uniform and therefore $\Delta_{n,i} = \frac{T}{n}$, for each $ i \in \left \{ 0, ... n - 1 \right \}$. It follows $|m (\theta_1, \theta_2, \sigma,x)  - \widetilde{m} (\theta_1, \theta_2,x)| \le R(\theta, \Delta_{n,i}^{2 - 2 \beta}, x) {\modch = R(\theta, \Delta_{n,i}^{\rho_1}, x)}.$ \\
Concerning the approximation of $m_2$, from its development gathered in Proposition \ref{prop: dl alla capitolo 1 m2} we define
$$\widetilde{m}_2(\sigma) := \Delta_{n,i} \sigma^2 + \frac{\Delta_{n,i}^{1 + 3 \beta} c^3}{\gamma} \int_{\mathbb{R}} u^2 \varphi(u ) F(\frac{u c \Delta_{n,i}^\beta}{\gamma}) du,$$
which is such that $|m_2 (\theta_1, \theta_2, \sigma,x)  - \widetilde{m}_2 (\sigma)| \le R(\theta, \Delta_{n,i}^{2}, x) {\modch = R(\theta, \Delta_{n,i}^{\rho_2}, x)}.$ \\
{\modch By application of Proposition \ref{Prop:normality_approximate_contrast} we see that the associated estimator is asymptotically gaussian under the strong assumption $\sqrt{n} \Delta_n^{\frac{3}{2} -2 \beta} \rightarrow 0$, for $n \rightarrow\infty$, with $\beta \in (\frac{1}{4}, \frac{1}{2})$. In the best case $\beta$ is close to $\frac{1}{4}$ and so the condition here above becomes equivalent to $n \Delta_n^2 \rightarrow 0$. Despite such a strong requirement on the discretization step, we expect the proposed method performing better than the Euler schema since the main contribution of the jumps has there already been removed. \\}
We want to compare the estimator $\tilde{\theta}_n$ we get by the minimization of the contrast function obtained by the expansions of $m$ and $m_2$ provided here above with the estimator based on the Euler scheme approximation:
\begin{equation}
\widetilde{m}^{E}(\theta_1, \theta_2, x)= x + (\theta_1 x + \theta_2 - \lambda \gamma \mu_j) \Delta_{n,i}, \qquad  \widetilde{m}_2^{E}( \sigma, x)= \sigma^2 \Delta_{n,i}.
\label{eq: Euler}
\end{equation}
We estimate jointly the parameter $\theta=(\theta_1,\theta_2, \sigma)$ by minimization of the contrast function 
\begin{equation}
U_n(\theta)=\sum_{i=0}^{n-1} [\frac{(X_{t_{i+1}} - \widetilde{m}(\theta_1, \theta_2, X_{t_i}))^2}{\widetilde{m}_2(\sigma, X_{t_i})} + \log(\frac{\widetilde{m}_2(\theta_1, \sigma, X_{t_i})}{\Delta_{n,i}})] \varphi_{c \Delta_{n,i}^\beta}(X_{t_{i+1}}-X_{t_i}),
\label{eq:contrast_OU}
\end{equation}
where $c>0$ will be specified later. \\
We compute the derivatives of the contrast function with respect to the three parameters:
$$ {\modch \frac{\partial}{\partial \theta_1}} U_n(\theta) = \sum_{i = 0}^{n - 1} \frac{2 (X_{t_{i + 1}}- \widetilde{m}(\theta_1, \theta_2, X_{t_i}))X_{t_i}}{\widetilde{m}_2(\sigma)} \varphi_{c \Delta_{n,i}^\beta}(X_{t_{i + 1}} - X_{t_i})= $$
$$ = \frac{2}{\widetilde{m}_2(\sigma)}\sum_{i = 0}^{n - 1} (X_{t_{i + 1}}- X_{t_i}- \Delta_{n,i} \theta_1 X_{t_i} - \Delta_{n,i} \theta_2 + \Delta_{n,i}\gamma \lambda \mu_j - J_1^i) X_{t_i}\varphi_{c \Delta_{n,i}^\beta}(X_{t_{i + 1}} - X_{t_i}),$$
where we have denoted as $J_1^i$ the term in the development of $\widetilde{m}$ turning up from the presence of jumps, which is 
 $\frac{c^2 \Delta_{n,i}^{1 + 2\beta}}{\gamma} \int_\mathbb{R} u \varphi(u) F (\frac{u c\Delta_{n,i}^\beta}{\gamma}) du$. \\
We want ${\modch \frac{\partial}{\partial \theta_1}} U_n(\theta) = 0$, it leads us to the definition of the following estimator:
$$\tilde{\theta}_{1,n} :=  \frac{\sum_{i = 0}^{n - 1} (X_{t_{i + 1}}- X_{t_i}- \Delta_{n,i} \theta_2 + \Delta_{n,i} \gamma \lambda \mu_j - J_1^i)X_{t_i} \varphi_{c \Delta_{n,i}^\beta}(X_{t_{i + 1}} - X_{t_i})}{\sum_{i = 0}^{n - 1} \Delta_{n,i} X_{t_i}^2 \varphi_{c \Delta_{n,i}^\beta}(X_{t_{i + 1}} - X_{t_i})}.$$
In the same way
$${\modch \frac{\partial}{\partial \theta_2}} U_n(\theta) = \sum_{i = 0}^{n - 1} \frac{2 (X_{t_{i + 1}}- \widetilde{m}(\theta_1, \theta_2, X_{t_i}))}{\tilde{m}_2(\sigma)} \varphi_{c \Delta_{n,i}^\beta}(X_{t_{i + 1}} - X_{t_i})= $$
$$ = \frac{2}{\widetilde{m}_2(\sigma)}\sum_{i = 0}^{n - 1} (X_{t_{i + 1}}- X_{t_i}- \Delta_{n,i} \theta_1 X_{t_i} - \Delta_{n,i} \theta_2 + \Delta_{n,i}\gamma \lambda \mu_j - J_1^i)\varphi_{c \Delta_{n,i}^\beta}(X_{t_{i + 1}} - X_{t_i}).$$
Since we want ${\modch \frac{\partial}{\partial \theta_2}} U_n(\theta) = 0$, we define $\tilde{\theta}_{2,n}$ in the following way:
\begin{equation}
\tilde{\theta}_{2,n} :=  \frac{\sum_{i = 0}^{n - 1} (X_{t_{i + 1}}- X_{t_i}- \Delta_{n,i} \theta_1 X_{t_i}) \varphi_{c \Delta_{n,i}^\beta}(X_{t_{i + 1}} - X_{t_i})}{\sum_{i = 0}^{n - 1} \Delta_{n,i} \varphi_{c \Delta_{n,i}^\beta}(X_{t_{i + 1}} - X_{t_i})} + \gamma \lambda \mu_j - \frac{\sum_{i = 0}^{n - 1} J_1^i \varphi_{c \Delta_{n,i}^\beta}(X_{t_{i + 1}} - X_{t_i})}{\sum_{i = 0}^{n - 1} \Delta_{n,i} \varphi_{c \Delta_{n,i}^\beta}(X_{t_{i + 1}} - X_{t_i})}=
\label{eq: deriv rispetto theta2 appli}
\end{equation}
$$= \tilde{\theta}^{\text{Euler}}_n  - \frac{\sum_{i = 0}^{n - 1} J_1^i \varphi_{c \Delta_{n,i}^\beta}(X_{t_{i + 1}} - X_{t_i})}{\sum_{i = 0}^{n - 1} \Delta_{n,i} \varphi_{c \Delta_{n,i}^\beta}(X_{t_{i + 1}} - X_{t_i})};$$
we can see $\tilde{\theta}_{2,n}$ as a corrected version of the estimator $\tilde{\theta}^{\text{Euler}}_n$ that would result considering the Euler scheme approximation for the function $m$, as in \eqref{eq: Euler}.
We observe moreover that, considering a uniform discretization step, the last term in \eqref{eq: deriv rispetto theta2 appli} becomes simply
 $\frac{J_1}{\Delta_n} := \frac{c^2 \Delta_{n}^{ 2\beta}}{\gamma} \int_\mathbb{R} u \varphi(u) F (\frac{u c\Delta_{n}^\beta}{\gamma}) du$. \\
Computing also the derivative of the contrast function with respect to $\sigma^2$, we have 
$$ {\modch \frac{\partial}{\partial \sigma^2}}  U_n(\theta) = \sum_{i = 0}^{n - 1} [\frac{-(X_{t_{i + 1}}- \widetilde{m}(\theta_1, \theta_2, X_{t_i}))^2 \partial_{\sigma^2}\widetilde{m}_2 (\sigma) + \widetilde{m}_2 (\sigma) \partial_{\sigma^2}\widetilde{m}_2 (\sigma)}{\widetilde{m}^2_2 (\sigma)}] \varphi_{c \Delta_{n,i}^\beta}(X_{t_{i + 1}} - X_{t_i}),$$
which is equal to zero if and only if 
$$\sum_{i = 0}^{n - 1} [ -(X_{t_{i + 1}}- \widetilde{m}(\theta_1, \theta_2, X_{t_i}))^2 \Delta_{n,i} + \Delta_{n,i}(\Delta_{n,i} \sigma^2 + J_2^i)] \varphi_{c \Delta_{n,i}^\beta}(X_{t_{i + 1}} - X_{t_i}) = 0,$$
for  $J_2^i := \frac{\Delta_{n,i}^{1 + 3 \beta} c^3}{\gamma} \int_{\mathbb{R}} u^2 \varphi(u ) F(\frac{u c \Delta_{n,i}^\beta}{\gamma}) du,$ which is the part deriving from the jumps in the development of $\widetilde{m}_2 (\sigma)$. It drives us to the estimator
$$\tilde{\sigma}^2_n := \frac{ \sum_{i = 0}^{n - 1} (X_{t_{i + 1}}- \tilde{m}(\theta_1, \theta_2, X_{t_i}))^2 \Delta_{n,i} \varphi_{c \Delta_{n,i}^\beta}(X_{t_{i + 1}} - X_{t_i}) }{\sum_{i = 0}^{n - 1} \Delta_{n,i}^2 \varphi_{c \Delta_{n,i}^\beta}(X_{t_{i + 1}} - X_{t_i})} - \frac{\sum_{i = 0}^{n - 1} \Delta_{n,i} J_2^i \varphi_{c \Delta_{n,i}^\beta}(X_{t_{i + 1}} - X_{t_i})}{\sum_{i = 0}^{n - 1} \Delta_{n,i}^2 \varphi_{c \Delta_{n,i}^\beta}(X_{t_{i + 1}} - X_{t_i})}.$$
Considering an uniform discretization step it is
$$\tilde{\sigma}^2_n := \frac{ \sum_{i = 0}^{n - 1} (X_{t_{i + 1}}- \tilde{m}(\theta_1, \theta_2, X_{t_i}))^2 \varphi_{c \Delta_{n,i}^\beta}(X_{t_{i + 1}} - X_{t_i}) }{ \Delta_{n} \sum_{i = 0}^{n - 1} \varphi_{c \Delta_{n,i}^\beta}(X_{t_{i + 1}} - X_{t_i})} - \frac{J_2}{\Delta_n}.$$
Again, it can be seen as a corrected version of $\tilde{\sigma}^{2, \text{Euler}}_n$, the estimator that would have resulted considering the approximation of the functions $m$ and $m_2$ as defined in \eqref{eq: Euler}. In such a case, not only we would not have seen the contribution of the jumps appearing in the second term here above, but also in the first term we should have replaced $\tilde{m}$ with its Euler approximation. \\
To illustrate the estimation method, we focus on the estimation of the parameters $\theta_2$ and $\sigma^2$ only. \\
For numerical simulations we choose $T = 100$, $n = 50\,000$, $\lambda = 1$,  $\gamma =1 $, $\theta_1 = -1$, $\theta_2 =2$, $\sigma = 0.5$, $d = 2$ and $c= 1.25$. We estimate the bias of the estimators using a Monte Carlo method based on 1000 replications. \\
First, we consider $\beta$ as big as possible, fixing it equal to $0.49$; then we will take $\beta = 0.3$; in both cases the jumps size has common law $\mathcal{N}(4, 0.25)$. \\
\begin{table}[h]
        \centering
        \begin{tabular}{|c|c|c|}
        \hline 
          & Mean  {\modch (dev std)} for $\theta_2=2$ &  Mean  {\modch (dev std)} for $\sigma^2=0.25$ \\ 
          \hline 
          \rule{0pt}{1.3em}
         $\tilde{\theta}^{\text{Euler}}_n$ & 2.00284 {\modch (0.05020)} & 0.26558 {\modch (0.00161)}
            \\ 
            \hline 
            \rule{0pt}{1.3em} $\displaystyle \widetilde{\theta}_n$&  	2.00289 {\modch (0.04830)} &	0.26412 {\modch (0.00156)} \\ 
            \hline 
            \end{tabular} 
            \caption{Monte Carlo estimates of $\theta_2$ and $\sigma^2$ from 1000 samples. We have here fixed $\beta = 0.3$.}
            \label{tab: metodo2 beta small}
    \hfill 
\end{table}    
\begin{table}[h]
\centering
        \begin{tabular}{|c|c|c|}
        \hline 
        & Mean {\modch (dev std)} for $\theta_2=2$ &  Mean  {\modch (dev std)} for $\sigma^2=0.25$ \\ 
        \hline 
        \rule{0pt}{1.3em}
        $\tilde{\theta}^{\text{Euler}}_n$ & 2.00368 {\modch (0.05195)} &	0.25150 {\modch (0.00161)} \\ 
        \hline 
        \rule{0pt}{1.3em} $\displaystyle \widetilde{\theta}_n$& 2.00302 {\modch (0.04992)} &	0.25067 {\modch (0.00159)} \\ 
        \hline 
        \end{tabular} 
\caption{Monte Carlo estimates of $\theta_2$ and $\sigma^2$ from 1 000 samples. We have here fixed $\beta = 0.49$.}
\label{tab:tabella totale}
\end{table}

We see that there is not a big difference in the estimation of $\theta_2$ with and without the correction term. 
The two estimators of $\theta_2$ differs in fact only for the contribution of the jumps  $\frac{J_1}{\Delta_n} = \frac{ c^2 \Delta_{n}^{2 \beta}}{\gamma} \int_\mathbb{R} u \varphi(u) F (\frac{u c\Delta_{n}^\beta}{\gamma}) du$ that is in this case close to zero, because of the natural choice of taking a truncated function which is symmetric. Indeed, even if the density function $F$ is not symmetric, asymptotically the only contribution it gives is due by its value in zero and, therefore, the symmetry of $\varphi$ is enough to ensure the limit of the integral is zero. \\
Regarding $\sigma^2$, as the correction term $\frac{J_2}{\Delta_n}$ is very small the two different estimators provide two means for $\sigma^2$ which are rather similar. However, the estimator we find through our approximated functions $\tilde{m}$ and $\tilde{m}_2$ performs a bit better than the estimator we got through Euler scheme.\\

\subsection{Kessler approximation}
In the previous case we have used the second order expansion from $m$ and $m_2$, available for any truncation function $\varphi$.
Now we still consider \eqref{eq:OU_in_simulation} in which $F_0$ is still the probability density of the law $\mathcal{N}(\mu_j, \sigma_J^2)$, but we use Kessler approximation to remove the bias.
To do that, we take an oscillating truncated function $\varphi$ for which the initial contributions of the discontinuous part of the generator disappear. \\
{\modch Indeed, according to Proposition \ref{prop: dl m2 intensita finita} (respectively Proposition 2 in \cite{Chapitre 1}) we know that, using sufficiently oscillating truncation functions, the expansion of $m_2$ (respectively $m$) is the same found by Kessler in the continuous case. The usefulness of Proposition \ref{prop: dl m2 intensita finita} is to illustrate it is possible to neglect the effect of the truncation function $\varphi$ as the expansions of $m$ and $m_2$ are identical to those of Kessler for the continuous part of the SDE.} Since the Kessler's expansion approximates the first conditional moments of $ \bar{X}_t=\bar{X}_0+ \int_0^t (\theta_1 \bar{X}_s + \theta_2 - \gamma \lambda \mu_J) ds + \sigma W_t$ (see \eqref{eq: m kessler}), which is the continuous part of \eqref{eq:OU_in_simulation} and which is explicit due to the linearity of the model, we decide to use directly the expression of the conditional moment and set
\begin{equation}
\widetilde{m}(\theta_1, \theta_2, x)= (x + \frac{\theta_2}{\theta_1} - \frac{\gamma \lambda \mu_J}{\theta_1}) e^{\theta_1 \Delta_{n,i}}+\frac{\gamma \lambda \mu_J-\theta_2}{\theta_1};
\label{eq:m_tilde_OU}
\end{equation} 
while the approximation of $m_2(\mu, \sigma, x)$ is
\begin{equation}
\widetilde{m}_2(\theta_1, \sigma, x)= \frac{\sigma^2}{2 \theta_1}(e^{2 \theta_1 \Delta_{n,i}} -1).
\label{eq:m2_tilde_OU}
\end{equation}
We want to compare the estimator $\tilde{\theta}_n$ we get by the minimization of the contrast function obtained by the Kessler exact correction of the bias in which we use the approximations \eqref{eq:m_tilde_OU} and \eqref{eq:m2_tilde_OU} for $m$ and $m_2$ with the estimator based on the Euler scheme approximation.
Following Nikolskii \cite{Nikolskii_Book}, we construct oscillating truncation functions in the following way. First, we choose $\varphi^{(0)}: \mathbb{R} \to [0,1]$ a $\mathcal{C}^\infty$ symmetric function with support on $[-2,2]$ such that $\varphi^{(0)}(x)=1$ for $|x| \leq 1$. We let, for $d>1$, $\varphi^{(1)}_d(x)= (d\varphi^{(0)}(x)-\varphi^{(0)}(x/d))/(d-1)$, which is a function equal to $1$ on $[-1,1]$,  vanishing on $[-d,d]^c$ and such that $\int_{\mathbb{R}} \varphi^{(1)}_d(x) dx=0$. For $l \in \mathbb{N}$, $l \ge 1$, and $d>1$, we set 
$\varphi_d^{(l)}(x)=c_d^{-1}\sum_{k=1}^l C_l^k (-1)^{k+1} \frac{1}{k} \varphi^{(1)}_d(x/k)$, where
$c_d=\sum_{k=1}^l C_l^k (-1)^{k+1} \frac{1}{k}$.  One can check that $\varphi_d^{(l)}$ is compactly supported, equal to $1$ on $[-1,1]$,  and that for all 
$k \in  \{0, \dots, l \}, \int_{\mathbb{R}} x^k  \varphi_d^{(l)}(x) dx =0$, for $l \ge 1$. 
With these notations, we estimate the parameter $(\theta_1,\theta_2, \sigma)$ by minimization of the contrast function \eqref{eq:contrast_OU}, implementing the just built truncation function $\varphi^{(l)}_{c \Delta_{n,i}^\beta}(X_{t_{i+1}}-X_{t_i})$,
where $l \in \mathbb{N}$ and $c>0$ will be specified latter.

For numerical simulations, we choose $T=1 \, 000$, $n$ will be chosen equal to $2 \, 000$, $10 \, 000$ and $50 \, 000$, $\theta_1=-1$, $\theta_2=2$, $\sigma=0.5$ and $X_0=x_0=0$. We estimate the bias and standard deviation of our estimators using a Monte Carlo method based on $2 \,000$ replications. 
As a start, we consider a situation without jumps $\lambda=0$ , in which we remove the truncation function $\varphi$ in the contrast, as it is useless in absence of jumps. 
 
\begin{table}[ht]
\begin{center}
\begin{tabular}{|c|c|c|c|}
\hline 
& Mean (std) for $\theta_1=-1$& Mean (std) for $\theta_2=2$ & Mean (std) for $\sigma=0.5$ \\ 
\hline 
$\Delta_n = 0.5$ & & & \\
\rule{0pt}{1.3em}
$\tilde{\theta}^{\text{Euler}}_n$ &-0.7922 (0.0348) &  1.5843 (0.0701) &  0.3980  (0.0062)\\ 
\rule{0pt}{1.3em} 
$\displaystyle \widetilde{\theta}_n$&-1.0063 (0.0578) &  2.0128 (0.1166) &  0.5017 (0.0099) \\ 
\hline 
$\Delta_n = 0.1$ & & &  \\
\rule{0pt}{1.3em}
$\tilde{\theta}^{\text{Euler}}_n$ &-0.9536 (0.0422) &  1.9068 (0.0857) &  0.4761 (0.0034)  \\ 
\rule{0pt}{1.3em} 
$\displaystyle \widetilde{\theta}_n$& -1.0053 (0.0465) &  2.0106 (0.0944) & 0.5003 (0.0037)  \\ 
\hline
$\Delta_n = 0.02$ & & &  \\
\rule{0pt}{1.3em}
$\tilde{\theta}^{\text{Euler}}_n$ &-0.9940 (0.0442) &  1.9888 (0.0895) & 0.4951 (0.0016)  \\ 
\rule{0pt}{1.3em} 
$\displaystyle \widetilde{\theta}_n$& -1.0039 (0.0437) &  2.0084 (0.0886) &  0.5001 (0.0016) \\ 
\hline
\end{tabular} 
\caption{Process without jump \label{T:no_jumps}} 
\end{center}
\end{table}
 
 In Table \ref{T:no_jumps}, we compare the estimator which uses the Kessler exact bias corrections given by \eqref{eq:m_tilde_OU} and \eqref{eq:m2_tilde_OU}, with an estimator based on the Euler scheme approximation.
From Table \ref{T:no_jumps} we see that the estimator $\tilde{\theta}^{\text{Euler}}_n$ based on Euler contrast exhibits some bias  which is completely removed using Kessler's correction. The improvement provided by Kessler approximation is particularly evident for $n$ small.

Next, we set a jump intensity $\lambda=0.1$, with jumps size whose common law is $\mathcal{N}(4,0.25)$ and set $\gamma=1$. 
  Also in this case, we compare the results obtained basing the estimation on Kessler or on Euler approximations for the quantities $m$ and $m_2$.
  Numerical simulations are provided for the truncation function $\varphi^{(0)}$, where $c=2$ and $\beta =0,49$. A plot of this function can be found in Figure \ref{Fig:phi}.

\begin{table}[ht]
\begin{center}
\begin{tabular}{|c|c|c|c |}
\hline 
& Mean (std) for $\theta_1=-1$& Mean (std) for $\theta_2=2$ & Mean (std) for $\sigma=0.5$ \\ 
\hline 
$\Delta_n = 0.5$ & & & \\
\rule{0pt}{1.3em}
$\tilde{\theta}^{\text{Euler}}_n$ &-0.7301 (0.0274) &  1.5796 (0.0512) &  0.4843 (0.0223)\\ 
\rule{0pt}{1.3em} 
$\displaystyle \widetilde{\theta}_n$&-0.9095 (0.0423) &  1.8697 (0.0763) &  0.5967 (0.0234) \\ 
\hline 
$\Delta_n = 0.1$ & & &  \\
\rule{0pt}{1.3em}
$\tilde{\theta}^{\text{Euler}}_n$ &-0.9509 (0.0153) &  1.9213 (0.0338) &  0.4761 (0.0034)  \\ 
\rule{0pt}{1.3em} 
$\displaystyle \widetilde{\theta}_n$& -0.9988 (0.0178) &  1.9974 (0.0390) & 0.5002 (0.0035)  \\ 
\hline
$\Delta_n = 0.02$ & & & \\
\rule{0pt}{1.3em}
$\tilde{\theta}^{\text{Euler}}_n$ &-0.9909 (0.0165) &  1.9858 (0.0361) & 0.4951 (0.0016)  \\ 
\rule{0pt}{1.3em} 
$\displaystyle \widetilde{\theta}_n$& -1.0013 (0.0166) &  2.0028 (0.0374) &  0.5000 (0.0016) \\ 
\hline
\end{tabular} 
\caption{Gaussian jumps with $\lambda=0.1$ \label{T:few_jumps}} 
\end{center}
\end{table}

   Results in Table \ref{T:few_jumps} show that the estimator deriving from Kessler approximation works well, better than the one deriving from Euler approximations. The bias is visibly reduced for all the choices of $n$, especially when $n$ is small. It matches with our theoretical results for which, implementing an oscillating truncation function, the discretization step can goes to zero arbitrarily slow.
We remark that by the choice of a symmetric truncation function one has $\int_{\mathbb{R}} u \varphi^{(0)}(u) d u=0$ and it can be seen that this conditions is sufficient, in the expansion of $m$ and $m_2$, to suppress the largest contribution of the discrete part of the generator.
 
  When the number of jumps is greater, e.g. for $\lambda=10$, we choose once again $c=2$ and $\beta =0,49$ but, unlike before, $T$ is fixed equal to $100 $ and $n$ is $10\, 000$, $50 \, 000$ and $500 \, 000$. Now the law of the jumps is $\mathcal{N}(0,1)$. We compare the results we get by considering different oscillating truncation functions $\varphi^{(0)}$, $\varphi^{(2)}_{1.4}$ and $\varphi^{(4)}_{1.2}$, whose plots are given in Figure \ref{Fig:phi}.
  
  \begin{table}[ht]
\begin{center}
\begin{tabular}{|c|c|c|c |}
\hline 
& Mean (std) for $\theta_1=-1$& Mean (std) for $\theta_2=2$ & Mean (std) for $\sigma=0.5$ \\ 
\hline 
$\Delta_n = 0.01$  & & & \\
\rule{0pt}{1.3em}
$\widetilde{\theta_n}$ using $\varphi^{(0)}$ &-0.9611 (0.0477) &  1.9229 (0.1440) &  1.1037 (0.0229)\\ 
\rule{0pt}{1.3em} 
$\widetilde{\theta_n}$ using $\varphi^{(2)}_{1.4}$&-0.9979 (0.0387) &  1.9967 (0.1157) &  0.3272 (0.0310) \\ 
\rule{0pt}{1.3em}
$\widetilde{\theta_n}$ using $\varphi^{(4)}_{1.2}$& -0.9979 (0.1020) & 1.9924 (0.3123) &  0.5376 (0.1013) \\ 
\hline 
$\Delta_n = 0.002$ & & &  \\
\rule{0pt}{1.3em}
$\widetilde{\theta_n}$ using $\varphi^{(0)}$  &-0.9948 (0.0307) &  1.9890 (0.0926) &  0.7121 (0.0110)  \\ 
\rule{0pt}{1.3em} 
$\widetilde{\theta_n}$ using $\varphi^{(2)}_{1.4}$& -1.0007 (0.0241) &  2.0007 (0.0707) & 0.4863 (0.0031)  \\ 
\rule{0pt}{1.3em}
$\widetilde{\theta_n}$ using $\varphi^{(4)}_{1.2}$& -0.9996 (0.0374) & 1.9994 (0.1116) &  0.5007 (0.0144) \\ 
\hline
$\Delta_n = 0.0002$  & & & \\
\rule{0pt}{1.3em}
$\widetilde{\theta_n}$ using $\varphi^{(0)}$ & -1.0008 (0.0237) &  1.9991 (0.0717) & 0.5331 (0.0027)  \\ 
\rule{0pt}{1.3em} 
$\widetilde{\theta_n}$ using $\varphi^{(2)}_{1.4}$& -1.0009 (0.0217) &  2.0021 (0.0643) &  0.4995 (0.0005) \\ 
\rule{0pt}{1.3em}
$\widetilde{\theta_n}$ using $\varphi^{(4)}_{1.2}$& -0.9996 (0.0226) & 1.9978 (0.0677) & 0.4999 (0.0010) \\ 
\hline
\end{tabular} 
\caption{Gaussian jumps with $\lambda=10$ \label{T:many_jumps}} 
\end{center}
\end{table}
  
  We see in Table \ref{T:many_jumps} that the use of the more oscillating kernels $\varphi^{(2)}_{1.4}$, $\varphi^{(4)}_{1.2}$ yields to a smaller bias than using $\varphi^{(0)}$. 
  
  The estimator we get using $\varphi^{(4)}_{1.2}$ performs particularly well in this situation, it has a negligible bias and a small standard deviation.

\begin{figure}[ht]
	\centering
	\begin{subfigure}[b]{0.3\linewidth}
		\includegraphics[width=\linewidth]{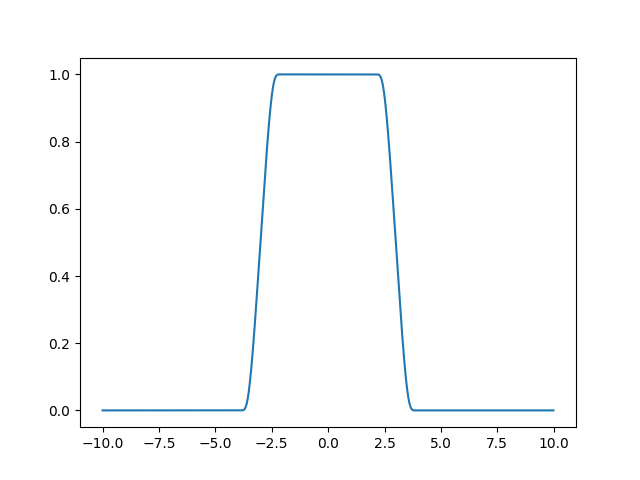}
		\caption{$\varphi^{(0)}$}
	\end{subfigure}
	\begin{subfigure}[b]{0.3\linewidth}
		\includegraphics[width=\linewidth]{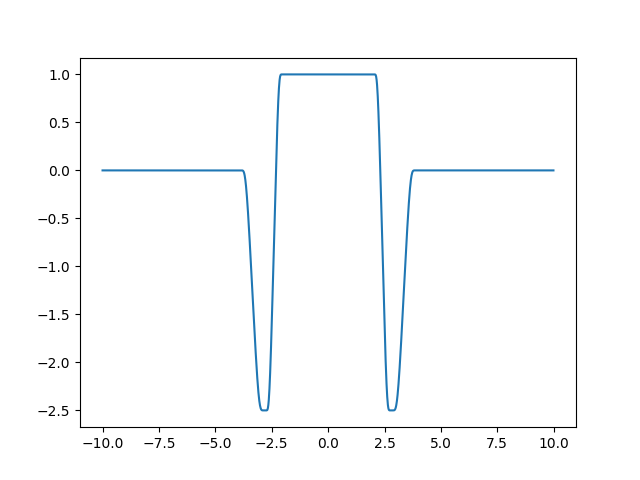}
		\caption{$\varphi^{(2)}_{1.4}$}
	\end{subfigure}
	\begin{subfigure}[b]{0.3\linewidth}
		\includegraphics[width=\linewidth]{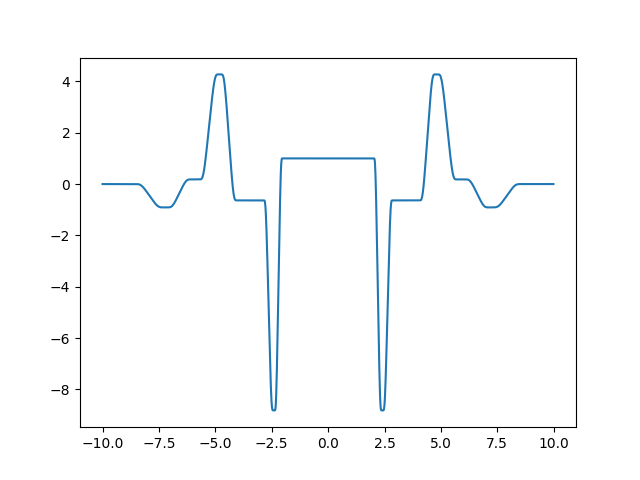}
		\caption{$\varphi^{(4)}_{1.2}$}
	\end{subfigure}
	\caption{Plot of the truncation functions}
\label{Fig:phi}
\end{figure}

}

{\modch
\section{Future perspectives}{\label{S: Perspectives}}

In future perspectives, we plan to generalize the obtained results. There are many remaining questions of interest, both from a theoretical and a numerical point of view. \\
As explained in the introduction, in this paper we extend previous works in which only the drift parameter was considered (see for example \cite{GLM} and \cite{Chapitre 1}). We may wonder what happens if the jump coefficient has another parameter, that we here denote as $\eta$. A first, simple answer is that, if the jumps are centered (which means $\int_\mathbb{R} z F(z) dz = 0$), then the knowledge of the jumps is no longer needed. Hence, as discussed below Proposition \ref{prop: dl alla capitolo 1 m2}, it is enough to use the approximation of $m$ and $m_2$ proposed in Proposition 2 of \cite{Chapitre 1} and in Proposition \ref{prop: dl m2 intensita finita} in order to make the contrast explicit. After that, we can still use our main results to estimate jointly the drift and the volatility parameters and the lack of knowledge of the jump parameter is not a problem. Estimating the three parameters jointly, that is a different story. In \cite{Shimizu}, Shimizu and Yoshida deal with it. They propose a contrast function which has two terms: the first corresponds to the contrast for an usual diffusion process, while the second concerns the discretization of
the likelihood function of a compound Poisson process with L\'evy density. In the abovementioned work they show the asymptotic normality of the estimators under some conditions on the sampling step which are more and more restrictive as the intensity of the jumps is high and, when the intensity is finite, they reduces to $n \Delta_n^2 \rightarrow 0$. \\
One may wonder how to modify the contrast function introduced in \eqref{eq: contrast function} in order to estimate jointly the three parameters under the general balance condition $n \Delta_n \rightarrow  \infty$ and $n \Delta_n^k \rightarrow 0$ for all $ k \ge 2$.
The natural idea is to add to \eqref{eq: contrast function} a term, corresponding to the one introduced by Shimizu and Yoshida, which regards the discretization of the likelihood function of the jumps. In order to weaken the condition on the sampling step we need to introduce in such a term a third unknown quantity that we would define on the specif purpose to make $\partial_\eta U_n$ a triangular array of martingale increments. However, it is not a simple issue. First of all, it is not trivial to understand how to correct the contrast function and, furthermore, it would become much more difficult to investigate the asymptotic behaviour of the new contrast function and of its derivatives.

For practical implementation, the question of approximation of $m$ and $m_2$ is crucial, and therefore one has to face the issue of choosing the threshold level, characterized here by $c$, $\beta$ and $\varphi$. Indeed, though the efficiency of the estimators has been established theoretically, it is known that in general their real performance strongly depends on a choice of tuning parameters; see for example Shimizu \cite{Shimizu threshold}, Iacus and Yoshida \cite{Iacus}. The filter is each time based on only one increment of the data and so, in this sense, this filter can be regarded as a local method. In \cite{Threshold}, Inatsugu and Yoshida introduce a global filtering method, which they call $\alpha$ threshold method. It uses all of the data to more accurately detect increments having jumps, based on the order statistics associated with all increments. Even if the $\alpha$ - threshold method involves the tuning parameter $\alpha$ to determine a selection rule for increments, it is robust against the choice of such a parameter. \\
In this work we use a local threshold method, having in mind a different objective. Here the idea, indeed, is mainly to remove the bias caused by a wrong choice of the threshold parameters rather than determine a good one. On contrary to more conventional threshold methods, the quantities $m$ and $m_2$ depend here by construction on the threshold level and may compensate for too large threshold. However, as seen in Section \ref{S: Applications}, the quantities $m$ and $m_2$ can be numerically very far from the approximations derived through the Euler scheme approximation and through the approximation scheme at higher order (as proposed in Proposition 3). A perspective would be to approximate numerically these two quantities, using for instance a Monte Carlo approach, and provide more accurate corrections than the ones here proposed. 

}

\section{Preliminary results}\label{S: Limit theorems}
Before proving the main statistical results of Section \ref{S:Construction_and_main}, we need to state several propositions which will be useful in the sequel. They will be proven in Section \ref{S: Proof_limit}.

\subsection{Limit theorems}
The asymptotic properties of estimators are deduced from the asymptotic behavior of the contrast function. We therefore state some propositions useful to get the asymptotic behavior of $U_n$.
\begin{proposition}
Suppose that Assumptions 1 to 4 and $A_{\mbox{Step}}$ hold, $\Delta_n \rightarrow 0$ and $T_n \rightarrow \infty$ and $f$ is a differentiable function $\mathbb{R} \times \Theta \rightarrow \mathbb{R}$ such that $|f(x, \theta)| \le c (1 + |x|^c)$, $|\partial_x f(x, \theta) | \le c (1 + |x|^c)$ and, for $\vartheta = \mu$ and $\vartheta = \sigma$, $|\partial_\vartheta f(x, \theta) | \le c (1 + |x|^c)$. Then $x \mapsto f(x, \theta)$ is a $\pi$-integrable function for any $\theta \in \Theta$ and the following convergences hold as $n \rightarrow \infty$ : \\
1.$ |\frac{1}{T_n} \sum_{i=0}^{n-1}\Delta_{n,i}f(X_{t_i}, \theta)1_{\left \{|X_{t_i}| \le \Delta_{n,i}^{-k} \right \} } - \int_\mathbb{R} f(x, \theta) \pi(dx)| \xrightarrow{\mathbb{P}} 0,$ \\
2. $|\frac{1}{T_n} \sum_{i=0}^{n-1}\Delta_{n,i}f(X_{t_i}, \theta)\varphi_{\Delta_{n,i}^\beta}(X_{t_{i+1}} - X_{t_i})1_{\left \{|X_{t_i}| \le \Delta_{n,i}^{-k} \right \} } - \int_\mathbb{R} f(x, \theta) \pi(dx)| \xrightarrow{\mathbb{P}} 0,$ \\
3. $| \frac{1}{n} \sum_{i = 0}^{n - 1} f(X_{t_i}, \theta) 1_{\left \{|X_{t_i}| \le \Delta_{n,i}^{-k} \right \} } - \int_\mathbb{R} f(x, \theta) \pi (dx)|\xrightarrow{\mathbb{P}} 0,$ \\
4. $|\frac{1}{n} \sum_{i = 0}^{n - 1} f(X_{t_i}, \theta)\varphi_{\Delta_{n,i}^\beta}(X_{t_{i+1}} - X_{t_i})1_{\left \{|X_{t_i}| \le \Delta_{n,i}^{-k} \right \} } - \int_\mathbb{R} f(x, \theta) \pi (dx)|\xrightarrow{\mathbb{P}} 0.$ 
\label{prop: ergodic}
\end{proposition}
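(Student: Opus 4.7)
The plan is to deduce all four statements from the continuous-time ergodic theorem of Lemma \ref{lemma: 2.1 GLM}, through a sequence of reductions: removing the truncation indicator $\mathbf{1}_{\{|X_{t_i}|\le\Delta_{n,i}^{-k}\}}$, removing the jump-filter factor $\varphi$, and passing from discrete sums to continuous time integrals. The $\pi$-integrability of $f(\cdot,\theta)$ is immediate, since the polynomial growth $|f(x,\theta)|\le c(1+|x|^c)$ combined with $\pi(|x|^q)<\infty$ for all $q>0$ (Lemma \ref{lemma: 2.1 GLM}, part 2) gives $f(\cdot,\theta)\in L^1(\pi)$ for every $\theta\in\Theta$.

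For the removal of the indicator, Markov's inequality and the uniform moment bound $\sup_t\mathbb{E}|X_t|^q<\infty$ (Lemma \ref{lemma: 2.1 GLM}, part 3) yield, for arbitrarily large $q$,
\begin{equation*}
\mathbb{E}\bigl[|f(X_{t_i},\theta)|\mathbf{1}_{\{|X_{t_i}|>\Delta_{n,i}^{-k}\}}\bigr]\le C\,\Delta_{n,i}^{qk},
\end{equation*}
so this contribution is negligible after normalization upon choosing $q$ large. For the removal of $\varphi$, the factor $1-\varphi_{\Delta_{n,i}^\beta}(X_{t_{i+1}}-X_{t_i})$ vanishes unless $|X_{t_{i+1}}-X_{t_i}|>\Delta_{n,i}^\beta$; conditioning on the absence or presence of jumps in $(t_i,t_{i+1}]$, the event has probability $O(\Delta_{n,i})$ coming from the Poisson term, plus a super-polynomially small contribution from the sub-gaussian tail of the continuous increment (valid since $\beta<1/2$). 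Combining with a Cauchy--Schwarz bound on $f$ via its polynomial growth and the uniform moments, the resulting $L^1$-error is summable and negligible after normalization.

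Once these reductions are performed, statements 1 and 2 reduce to proving $\frac{1}{T_n}\sum_{i=0}^{n-1}\Delta_{n,i}f(X_{t_i},\theta)\to\pi(f)$. This Riemann sum is compared with the continuous time integral $\frac{1}{T_n}\int_0^{T_n}f(X_s,\theta)ds$: the difference
\begin{equation*}
\frac{1}{T_n}\sum_{i=0}^{n-1}\int_{t_i}^{t_{i+1}}\bigl[f(X_{t_i},\theta)-f(X_s,\theta)\bigr]ds
\end{equation*}
is controlled in $L^1$ using the polynomial bound on $\partial_x f$, the increment estimate of Lemma \ref{lemma: Moment inequalities}, Cauchy--Schwarz, and the uniform moments, yielding an error of order $\Delta_n^{1/2}$; then Lemma \ref{lemma: 2.1 GLM} part 1 concludes the convergence in probability.

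For statements 3 and 4, whose normalization is $1/n$ rather than $\Delta_{n,i}/T_n$, the expression is not a Riemann sum and cannot be reduced directly to the first two since $A_{\mbox{Step}}$ only forces $\Delta_{n,i}/\Delta_n$ to lie in a fixed compact interval, not to converge to $1$. The plan is a variance estimate: the mean $\frac{1}{n}\sum\mathbb{E}[f(X_{t_i},\theta)]$ converges to $\pi(f)$ by the ergodic convergence of the semigroup of $X$ along $t_i\to\infty$ together with a Ces\`aro argument (the indices with $t_i$ bounded contribute $o(n)$), while the variance $\frac{1}{n^2}\sum_{i,j}\mathrm{Cov}(f(X_{t_i},\theta),f(X_{t_j},\theta))$ is shown to vanish by exploiting the mixing of the Markov process implied by Assumption 2 together with ergodicity. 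The main obstacle is precisely this variance bound: quantitative mixing estimates for the transition kernel are needed to control the off-diagonal covariance sums, and this is where the argument is most delicate, whereas the control of the diagonal terms, the indicator, the truncation $\varphi$, and the mean follow routinely from the moment estimates and ergodicity already at hand.
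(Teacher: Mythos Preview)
Your plan matches the paper's approach closely. For points 1--2 the paper simply cites Proposition~3 of \cite{Chapitre 1}, but your direct Riemann-sum argument via Lemma~\ref{lemma: 2.1 GLM} is exactly what underlies that reference. For points 3--4 the paper also proceeds by a covariance bound, and the specific ingredient you left open is made precise there: under Assumptions 1--4 the process is exponentially $\beta$-mixing (Masuda \cite{18 GLM}), hence $\alpha$-mixing, and the covariance inequality of Doukhan \cite{Doukhan} gives $|\mathrm{Cov}(f(X_{t_i}),f(X_{t_j}))|\le c\,e^{-\gamma|t_i-t_j|/r}$; the irregular grid is then handled by partitioning $(0,T_n]$ into $n$ equal blocks, using $A_{\mbox{Step}}$ to bound the number of sample points per block, and summing the resulting geometric series to obtain $\mathrm{Var}\bigl(\tfrac1n\sum f(X_{t_i})\bigr)\le c/T_n\to0$.

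One minor remark: the paper's write-up treats only the variance and is silent on why the mean $\tfrac1n\sum\mathbb{E}[f(X_{t_i})]$ converges to $\pi(f)$; your explicit Ces\`aro step via semigroup ergodicity fills a point the paper glosses over. Also, your ``sub-gaussian tail'' for the continuous increment is slightly optimistic---the paper uses only a Chebyshev bound $\mathbb{P}(|\Delta X_i^c|\ge \Delta_{n,i}^\beta)\le c\,\Delta_{n,i}^{(1/2-\beta)r}$ with $r$ arbitrary, which is polynomially small of any order rather than super-polynomial, but this is all that is needed.
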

Statements $1-2$ and $3-4$ of the proposition here above, as well as the first and the second point of Proposition \ref{prop: LT} below, turn out being similar if the sampling step $\Delta_{n,i} = \Delta_n$ considered is uniform. Otherwise, we need these two different convergences because, in order to estimate $\mu$ and $\sigma$ jointly, we have to deal with different scaling of the contrast function.

\begin{proposition}
Suppose that Assumptions 1 to 4 and $A_{\mbox{Step}}$ hold, $\Delta_n \rightarrow 0$ and $T_n \rightarrow \infty$ and $f$: $\mathbb{R} \times \Theta \rightarrow \mathbb{R}$. Moreover we suppose that $\exists c$: $|f(x, \theta)| \le c (1 + |x|^c)$ and that $\beta \in (\frac{1}{4}, \frac{1}{2})$. \\
Then, $\forall \theta \in \Theta$, 
$$1. \quad \frac{1}{T_n} \sum_{i= 0}^{n-1} f(X_{t_i}, \theta) \, (X_{t_{i+1}} - m(\mu, \sigma, X_{t_i}))^2 \, \varphi_{\Delta_{n,i}^\beta}(X_{t_{i+1}} - X_{t_i})1_{\left \{|X_{t_i}| \le \Delta_{n,i}^{-k} \right \} }\xrightarrow{\mathbb{P}} \int_\mathbb{R}f(x, \theta) a^2(x, \sigma_0) \pi(dx).$$
$$2. \quad \frac{1}{n} \sum_{i= 0}^{n-1} \frac{f(X_{t_i}, \theta)}{\Delta_{n,i}} \, (X_{t_{i+1}} - m(\mu, \sigma, X_{t_i}))^2 \, \varphi_{\Delta_{n,i}^\beta}(X_{t_{i+1}} - X_{t_i})1_{\left \{|X_{t_i}| \le \Delta_{n,i}^{-k} \right \} }\xrightarrow{\mathbb{P}} \int_\mathbb{R}f(x, \theta) a^2(x, \sigma_0) \pi(dx).$$
\label{prop: LT}
\end{proposition}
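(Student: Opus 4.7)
The plan is to reduce both assertions to Proposition \ref{prop: ergodic}, through a conditioning-plus-martingale argument that exploits the defining identities of $m$ and $m_2$ at the true parameter. Set $\zeta_i := X_{t_{i+1}} - m(\mu_0, \sigma_0, X_{t_i})$, $D_i := m(\mu_0, \sigma_0, X_{t_i}) - m(\mu, \sigma, X_{t_i})$, and $\varphi_i := \varphi_{\Delta_{n,i}^\beta}(X_{t_{i+1}} - X_{t_i})$. The definitions \eqref{eq: definition m}--\eqref{eq: definition m2}, taken at $\theta_0$, yield $\mathbb{E}[\zeta_i \varphi_i \mid \mathcal{F}_{t_i}] = 0$ and $\mathbb{E}[\zeta_i^2 \varphi_i \mid \mathcal{F}_{t_i}] = m_2(\mu_0, \sigma_0, X_{t_i}) \, \mathbb{E}[\varphi_i \mid \mathcal{F}_{t_i}]$, while \eqref{E:dev_m_chap1} gives $|D_i| + |m(\mu_0,\sigma_0,X_{t_i}) - X_{t_i}| \le c \Delta_{n,i}(1+|X_{t_i}|^c)$ on $\{|X_{t_i}| \le \Delta_{n,i}^{-k}\}$.

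Expanding $(X_{t_{i+1}} - m(\mu,\sigma,X_{t_i}))^2 = \zeta_i^2 + 2 \zeta_i D_i + D_i^2$, the second and third summands are discarded first. The $D_i^2$ piece, after the $1/T_n$ normalisation, is bounded by $c \Delta_n \cdot T_n^{-1}\sum_i \Delta_{n,i}(1+|X_{t_i}|^c) = O(\Delta_n)$ in probability by Proposition \ref{prop: ergodic}(1). The cross term $2 \zeta_i D_i \varphi_i$ is a triangular array of $\mathcal{F}_{t_i}$-martingale increments (from $\mathbb{E}[\zeta_i \varphi_i \mid \mathcal{F}_{t_i}]=0$ and the $\mathcal{F}_{t_i}$-measurability of $D_i$) whose conditional variance is at most $D_i^2 \, \mathbb{E}[\zeta_i^2 \varphi_i \mid \mathcal{F}_{t_i}] = O(\Delta_{n,i}^3 (1+|X_{t_i}|^c))$; summing and dividing by $T_n^2$ produces an $L^2$ norm of order $\sqrt{\Delta_n}$, which vanishes.

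It then remains to analyse $T_n^{-1} \sum_i f(X_{t_i}, \theta) \zeta_i^2 \varphi_i 1_{\{|X_{t_i}| \le \Delta_{n,i}^{-k}\}}$. Splitting $\zeta_i^2 \varphi_i$ into its $\mathcal{F}_{t_i}$-conditional expectation plus a martingale difference and writing $\zeta_i = (X_{t_{i+1}} - X_{t_i}) - (m(\mu_0,\sigma_0,X_{t_i}) - X_{t_i})$, one checks that, modulo a term of order $\Delta_{n,i}^2$, the conditional expectation reduces to $\mathbb{E}[(X_{t_{i+1}} - X_{t_i})^2 \varphi_i \mid \mathcal{F}_{t_i}]$. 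The crucial input here is Lemma \ref{lemma: conditional expected value}, which under $\beta > 1/4$ yields
\begin{equation*}
\mathbb{E}[(X_{t_{i+1}} - X_{t_i})^2 \varphi_i \mid \mathcal{F}_{t_i}] = \Delta_{n,i} a^2(X_{t_i}, \sigma_0) + R(\theta_0, \Delta_{n,i}^{1+\epsilon}, X_{t_i})
\end{equation*}
for some $\epsilon > 0$. Substituting, the deterministic contribution becomes $T_n^{-1} \sum_i f(X_{t_i}, \theta) \Delta_{n,i} a^2(X_{t_i}, \sigma_0) 1_{\{\cdots\}}$ up to a remainder of order $\Delta_n^{\epsilon}$, and Proposition \ref{prop: ergodic}(1) delivers the announced limit $\int_{\mathbb{R}} f(x,\theta) a^2(x,\sigma_0) \pi(dx)$.

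To control the remaining martingale difference, the support of $\varphi$ enforces $|\zeta_i| \sqrt{\varphi_i} \le c \Delta_{n,i}^\beta$, whence $\zeta_i^2 \varphi_i \le c \Delta_{n,i}^{2\beta}$ and the conditional variance is at most $c \Delta_{n,i}^{1+2\beta}(1+|X_{t_i}|^c)$; the $L^2$ norm of the $T_n^{-1}$-rescaled sum is thus bounded by $c \sqrt{\Delta_n^{2\beta}/T_n} \to 0$. Statement~(2) follows by the same computation: under $A_{\mbox{Step}}$ the weights $(n \Delta_{n,i})^{-1}$ and $T_n^{-1}$ are comparable uniformly in $i$, so the entire argument transfers by substituting Proposition \ref{prop: ergodic}(1) with Proposition \ref{prop: ergodic}(3) for the leading deterministic part. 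The main obstacle of the whole proof is precisely the conditional-moment expansion encoded in Lemma \ref{lemma: conditional expected value}: extracting the continuous-part contribution $\Delta_{n,i} a^2(X_{t_i}, \sigma_0)$ from the filtered squared increment while showing that the drift and jump parts are genuinely of remainder size is where the threshold condition $\beta > 1/4$ becomes essential.
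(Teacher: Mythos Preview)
Your argument is correct, but it takes a longer path than the paper's. You first pass to the true parameter by writing $X_{t_{i+1}} - m(\mu,\sigma,X_{t_i}) = \zeta_i + D_i$ with $\zeta_i$ centered at $\theta_0$, then handle the $D_i^2$ and cross terms separately, and finally split $\zeta_i^2\varphi_i$ into its compensator plus a martingale remainder. The paper bypasses this entirely because Lemma \ref{lemma: conditional expected value} is stated (and proved) for \emph{arbitrary} $(\mu,\sigma)$, not just $(\mu_0,\sigma_0)$: equation \eqref{eq: Ei al quadrato} already gives $\mathbb{E}_i[(X_{t_{i+1}} - m(\mu,\sigma,X_{t_i}))^2 \varphi_i] = \Delta_{n,i} a^2(X_{t_i},\sigma_0) + R(\theta,\Delta_{n,i}^{1+\beta},X_{t_i})$ without any decomposition. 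The paper then applies the standard triangular-array criterion (Lemma 9 of \cite{Genon Catalot}) directly to $s_i^n := T_n^{-1} f(X_{t_i},\theta)(X_{t_{i+1}} - m(\mu,\sigma,X_{t_i}))^2\varphi_i 1_{\{\cdots\}}$, using \eqref{eq: Ei al quadrato} for $\sum \mathbb{E}_i[s_i^n]$ and \eqref{eq: Ei quarta} for $\sum \mathbb{E}_i[(s_i^n)^2]$, which gives a sharper $O(\Delta_{n,i}^2)$ fourth-moment bound than your support-based estimate $\zeta_i^2|\varphi_i| \le c\Delta_{n,i}^{2\beta}$. Your route has the merit of making the martingale structure at $\theta_0$ fully explicit and of avoiding the fourth-moment computation in \eqref{eq: Ei quarta}, at the cost of three extra terms to dispatch; the paper's route is shorter precisely because the heavy lifting is pushed into Lemma \ref{lemma: conditional expected value}.
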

The proof relies on the following lemma. In the sequel we will denote $\mathbb{E}_i[.]$ for $\mathbb{E}[. | \mathcal{F}_{t_i}]$, where $(\mathcal{F}_s)_s$ is the filtration defined in Lemma \ref{lemma: Moment inequalities}. 
\begin{lemma}
Suppose that Assumptions 1 to 4 hold. Moreover we suppose that $\beta \in (\frac{1}{4}, \frac{1}{2}) $. Then
\begin{equation}
1. \mathbb{E}_i[(X_{t_{i+1}} - m(\mu, \sigma, X_{t_i}))^2 \, \varphi^2_{\Delta_{n,i}^\beta}(X_{t_{i+1}} - X_{t_i})] = \Delta_{n,i} a^2(X_{t_i}, \sigma_0) + R(\theta, \Delta_{n,i}^{1 + \beta}, X_{t_i} ),
\label{eq: Ei al quadrato}
\end{equation}
\begin{equation}
2. \mathbb{E}_i[(X_{t_{i+1}} - m(\mu, \sigma, X_{t_i}))^4 \, \varphi^4_{\Delta_{n,i}^\beta}(X_{t_{i+1}} - X_{t_i})] = 3\Delta^2_{n,i} a^4(X_{t_i}, \sigma_0) + R(\theta, \Delta_{n,i}^{ \frac{7}{4} + \beta}, X_{t_i} ),
\label{eq: Ei quarta}
\end{equation}
\begin{equation}
3.\mbox{For } k \ge 1, \quad |\mathbb{E}_i[(X_{t_{i+1}} - m(\mu, \sigma, X_{t_i})) \, \varphi^k_{\Delta_{n,i}^\beta}(X_{t_{i+1}} - X_{t_i})] |\le R(\theta, \Delta_{n,i}, X_{t_i} ),
\label{eq: stima Ei prima}
\end{equation}
\begin{equation}
4.\mbox{For } k \ge 2, \, \forall k'> 0, \quad  \mathbb{E}_i[|X_{t_{i+1}} - m(\mu, \sigma, X_{t_i})|^k \, |\varphi_{\Delta_{n,i}^\beta}(X_{t_{i+1}} - X_{t_i})|^{k'}] \le R(\theta, \Delta_{n,i}^{\frac{k}{2} \land (1 + \beta k)}, X_{t_i} ).
\label{eq: x - m prop 2}
\end{equation}
$$5. \forall k'> 0, \quad \mathbb{E}_i[(X_{t_{i + 1}} - m(\mu_0, \sigma_0, X_{t_i}))^3 |\varphi_{\Delta_{n,i}^\beta}(X_{t_{i+1}} - X_{t_i})|^{k'}] = R(\theta_0, \Delta_{n,i}^{\frac{4}{3} + \beta}, X_{t_i}).$$
\label{lemma: conditional expected value}
\end{lemma}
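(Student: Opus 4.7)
The proof rests on three ingredients reused across all five parts. \textbf{(i)} The expansion \eqref{E:dev_m_chap1} yields $m(\mu,\sigma,x)-x=\Delta_{n,i}\bar b(x,\mu)+R(\theta,\Delta_{n,i}^{1+2\beta},x)$; in particular $m-X_{t_i}=O(\Delta_{n,i})$. \textbf{(ii)} Let $N_i$ be the number of jumps of the driving Poisson measure on $[t_i,t_{i+1})$; finite activity gives $\mathbb{P}(N_i\ge 1|\mathcal{F}_{t_i})\le c\Delta_{n,i}$, and on $\{N_i=0\}$ the process coincides with the continuous diffusion with coefficients $(\bar b,a)$, so classical It\^o and Burkholder--Davis--Gundy estimates together with Lemma \ref{lemma: Moment inequalities} apply. \textbf{(iii)} On $\{\varphi_{\Delta_{n,i}^\beta}(X_{t_{i+1}}-X_{t_i})\ne 0\}$ one automatically has $|X_{t_{i+1}}-X_{t_i}|\le 2\Delta_{n,i}^{\beta}$, and the defining identity \eqref{eq: definition m} gives, at $\theta=\theta_0$, the exact cancellation $\mathbb{E}_i[(X_{t_{i+1}}-m(\mu_0,\sigma_0,X_{t_i}))\,\varphi_{\Delta_{n,i}^\beta}(X_{t_{i+1}}-X_{t_i})]=0$.

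I would first prove \textbf{part 4}, which furnishes the raw moment bound used in the other items. Splitting $\mathbb{E}_i[\cdot]$ on $\{N_i=0\}\cup\{N_i\ge 1\}$: on the no-jump event, Lemma \ref{lemma: Moment inequalities} yields $\mathbb{E}_i[|X_{t_{i+1}}-X_{t_i}|^k\mathbf{1}_{N_i=0}]\le c\Delta_{n,i}^{k/2}(1+|X_{t_i}|^c)$; on $\{N_i\ge 1\}$ the truncation in (iii) gives $|X_{t_{i+1}}-X_{t_i}|^k\le(2\Delta_{n,i}^\beta)^k$, which combined with $\mathbb{P}(N_i\ge 1|\mathcal{F}_{t_i})\le c\Delta_{n,i}$ produces $c\Delta_{n,i}^{1+\beta k}(1+|X_{t_i}|^c)$; expanding $(X_{t_{i+1}}-m)^k$ by the binomial theorem and using $m-X_{t_i}=O(\Delta_{n,i})$ preserves these exponents, so taking the minimum yields the bound $\Delta_{n,i}^{k/2\land(1+\beta k)}$. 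For \textbf{part 3} at $k=1$, the cancellation in (iii) handles $\theta=\theta_0$ and \eqref{E:dev_m_chap1} gives $|m(\mu_0,\sigma_0,x)-m(\mu,\sigma,x)|\le c\Delta_{n,i}(1+|x|^c)$ for general $\theta$. For $k\ge 2$ I decompose $X_{t_{i+1}}-m=(X_{t_{i+1}}-X_{t_i})-(m-X_{t_i})$ and compute $\mathbb{E}_i[(X_{t_{i+1}}-X_{t_i})\varphi^k]$ via the splitting (ii): an It\^o expansion of $X_s-X_{t_i}$ on $\{N_i=0\}$ gives $\bar b(X_{t_i},\mu_0)\Delta_{n,i}+R(\theta,\Delta_{n,i}^{1+\beta},X_{t_i})$, while the jump contribution is at most $(2\Delta_{n,i}^\beta)\cdot c\Delta_{n,i}=O(\Delta_{n,i}^{1+\beta})$; the leading drift term cancels against $m-X_{t_i}$, leaving an $R(\theta,\Delta_{n,i},X_{t_i})$ remainder.

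For \textbf{part 1} I expand $(X_{t_{i+1}}-m)^2=(X_{t_{i+1}}-X_{t_i})^2-2(m-X_{t_i})(X_{t_{i+1}}-X_{t_i})+(m-X_{t_i})^2$: the last two terms contribute $O(\Delta_{n,i}^2)$ via (i) and part 3, and the first is computed by It\^o on $(X_s-X_{t_i})^2$ restricted to $\{N_i=0\}$, giving $\int_{t_i}^{t_{i+1}}\mathbb{E}_i[a^2(X_s,\sigma_0)\varphi^2]\,ds=a^2(X_{t_i},\sigma_0)\Delta_{n,i}+R(\theta_0,\Delta_{n,i}^{1+\beta},X_{t_i})$, plus a jump contribution bounded by $(2\Delta_{n,i}^\beta)^2\cdot c\Delta_{n,i}=O(\Delta_{n,i}^{1+2\beta})$, negligible since $\beta>\tfrac14$. \textbf{Part 2} follows the same template at the fourth moment: the continuous part contributes the classical $3a^4(X_{t_i},\sigma_0)\Delta_{n,i}^2$, the jump part contributes $O(\Delta_{n,i}^{1+4\beta})$, and the rate $\Delta_{n,i}^{7/4+\beta}$ arises from the drift--diffusion cross term controlled via H\"older interpolation between parts 1 and 4. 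Finally, \textbf{part 5} exploits the $\theta_0$ cancellation in (iii), the pointwise bound $|X_{t_{i+1}}-m(\mu_0,\sigma_0,X_{t_i})|\le c\Delta_{n,i}^\beta$ available on the support of $\varphi^{k'}$, and a refined H\"older interpolation between the second- and fourth-moment estimates of parts 1 and 4, which together yield the sharper exponent $\tfrac43+\beta$.

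The main technical obstacle is the bookkeeping of the remainders $R(\theta,\Delta_{n,i}^\delta,x)$ through these repeated decompositions, and especially obtaining the sharp exponent $\tfrac43+\beta$ of part 5, which is reachable only through the $\theta_0$-specific cancellation coupled with a delicate H\"older interpolation; throughout, polynomial $x$-growth is tracked via Lemma \ref{lemma: Moment inequalities} so that all remainders fit the form \eqref{eq: definition R}.
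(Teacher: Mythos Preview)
Your approach is genuinely different from the paper's and, where fully executed, is correct. The paper never splits on the jump-count event $\{N_i=0\}$; instead it writes
\[
X_{t_{i+1}}-m(\mu,\sigma,X_{t_i})=\int_{t_i}^{t_{i+1}}a(X_s,\sigma_0)\,dW_s+B_{i,n},
\]
with $B_{i,n}$ collecting the drift, compensated-jump, and $R(\theta,\Delta_{n,i},X_{t_i})$ pieces, and then expands each power into Brownian/$B_{i,n}$ cross terms. Part~1 comes from $\mathbb{E}_i[(\int a\,dW)^2]=\Delta_{n,i}a^2+R(\Delta_{n,i}^{3/2})$ plus the $(\varphi-1)$-trick using $\mathbb{P}_i(|\Delta_iX|\ge\Delta_{n,i}^\beta)\le R(\Delta_{n,i})$, while the cross term $\int a\,dW\cdot B_{i,n}$ is handled by Cauchy--Schwarz with $\mathbb{E}_i[B_{i,n}^2\varphi^2]\le R(\Delta_{n,i}^{1+2\beta})$ (Lemma~\ref{lemma: estim jumps}), giving exactly $R(\Delta_{n,i}^{1+\beta})$. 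Parts~2 and~5 arise from the binomial expansion of $(\int a\,dW+B_{i,n})^k$: the exponent $\tfrac{7}{4}+\beta$ is the H\"older bound on $(\int a\,dW)^3B_{i,n}$ with conjugate pair $(4/3,4)$, and $\tfrac{4}{3}+\beta$ is the bound on $(\int a\,dW)^2B_{i,n}$ with pair $(3/2,3)$. Part~3 is obtained directly from $\mathbb{E}_i[\int a\,dW]=0$ plus the $(\varphi^k-1)$-trick, without invoking the $\theta_0$-cancellation you use.

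Your jump-splitting is cleaner conceptually and, when carried through, actually yields remainders $R(\Delta_{n,i}^{9/4\wedge(1+4\beta)})$ in Part~2 and $R(\Delta_{n,i}^{2\wedge(1+3\beta)})$ in Part~5, both \emph{stronger} than the paper's exponents for $\beta\in(\tfrac14,\tfrac12)$. The gap in your write-up is that you do not carry it through: you instead appeal to a vague ``H\"older interpolation between parts~1 and~4'', which by itself gives only $R(\Delta_{n,i}^{3/2})$ for the third moment (too weak, since $\tfrac{4}{3}+\beta>\tfrac{3}{2}$ when $\beta>\tfrac{1}{6}$) and does not produce $\tfrac{7}{4}+\beta$ either. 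To close Parts~2 and~5 in your framework you should argue directly: on $\{N_i=0\}$ the continuous-diffusion moments $\mathbb{E}_i[(X^c_{t_{i+1}}-X_{t_i})^3]=O(\Delta_{n,i}^2)$ and $\mathbb{E}_i[(X^c_{t_{i+1}}-X_{t_i})^4]=3a^4\Delta_{n,i}^2+O(\Delta_{n,i}^{9/4})$ follow from It\^o's formula, and on $\{N_i\ge1\}$ the truncation bound $(2\Delta_{n,i}^\beta)^k\cdot c\Delta_{n,i}$ applies. Also make explicit that on $\{N_i=0\}$ the compensator $-\int\gamma(X_{s^-})\!\int zF(z)\,dz\,ds$ is still present, so the effective drift is $\bar b$, not $b$; and that $\{N_i=0\}\in\sigma(\mu|_{[t_i,t_{i+1}]})$ is independent of $(X_{t_i},W|_{[t_i,t_{i+1}]})$, which is what lets you factor $\mathbb{E}_i[\,\cdot\,\mathbf{1}_{N_i=0}]=\mathbb{P}(N_i=0)\,\mathbb{E}_i[\,\cdot\,]_{\text{cont}}$.
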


We observe that the first and the second points here above are particular cases of the the fourth one, in which we get some better estimation. In particular, we can identify in detail the main term. \\
Concerning the fifth point, instead, we remark that for $k = 3$ we don't have the main contribution of the Brownian part anymore, which gave us the rest function of size $\Delta_{n,i}^{\frac{k}{2}}$ in \eqref{eq: x - m prop 2}. In this case the main term of the development is given by the square of the Brownian integral times the jump part, which magnitude can be estimated by $\frac{4}{3} + \beta $.  \\ \\
In next lemma we consider the derivatives of $\varphi$, getting an improvement of the estimations here above. It relies on the fact that, from the definition we gave of such a function, we know its derivatives are different from zero only if the increments of our process are smaller than $2 \Delta_{n,i}^\beta$ (as it was for $\varphi$) and bigger than $\Delta_{n,i}^\beta$ (extra bound that we did not get using $\varphi$). Having therefore no longer only an upper bound but also a lower bound for $X_{t_{i + 1}} - X_{t_i}$, it is now possible to prove a better version of \eqref{eq: x - m prop 2}:

\begin{lemma}
Suppose that Assumptions A1-A5 A7 and Ad hold. Then $\forall p \ge 1$, $\forall k \ge 1$ and $\forall r > 0$,
$$\mathbb{E}_i[|X_{t_{i + 1}}^\theta - m(\mu, \sigma, X_{t_i})|^p |\varphi^{(k)}_{\Delta_{n,i}^\beta}(X_{t_{i + 1}}^\theta - X_{t_i}^\theta)|^{r}] \le R(\theta, h^{1 + \beta p}, X_{t_i}).$$
\label{lemma: x-m con varphi'}
\end{lemma}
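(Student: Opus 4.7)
The decisive new ingredient compared to \eqref{eq: x - m prop 2} is that, since $\varphi \equiv 1$ on $[-1,1]$ and vanishes outside $[-2,2]$, every derivative $\varphi^{(k)}$ with $k \ge 1$ is supported on the annulus $\{1 \le |u| \le 2\}$. Hence
$$\varphi^{(k)}_{\Delta_{n,i}^\beta}(X_{t_{i+1}}^\theta - X_{t_i}^\theta) \ne 0 \quad\Longrightarrow\quad \Delta_{n,i}^\beta \,\le\, |X_{t_{i+1}}^\theta - X_{t_i}^\theta| \,\le\, 2\Delta_{n,i}^\beta.$$
Call this event $\mathcal{E}_i$. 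The plan is to combine a pointwise bound on the integrand on $\mathcal{E}_i$ with a probability estimate $\mathbb{P}_i(\mathcal{E}_i) \lesssim \Delta_{n,i}(1+|X_{t_i}|^c)$.

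First, the pointwise bound. The triangle inequality yields
$$|X_{t_{i+1}}^\theta - m(\mu,\sigma,X_{t_i})| \le |X_{t_{i+1}}^\theta - X_{t_i}^\theta| + |X_{t_i}^\theta - m(\mu,\sigma,X_{t_i})|.$$
By the expansion \eqref{E:dev_m_chap1} one has $|m(\mu,\sigma,x) - x| \le C\Delta_{n,i}(1+|x|^c)$, and since $\beta < 1/2 < 1$ the term $\Delta_{n,i}^\beta$ dominates $\Delta_{n,i}$ for small $\Delta_{n,i}$. Thus on $\mathcal{E}_i$,
$$|X_{t_{i+1}}^\theta - m(\mu,\sigma,X_{t_i})|^p \,|\varphi^{(k)}_{\Delta_{n,i}^\beta}(X_{t_{i+1}}^\theta-X_{t_i}^\theta)|^{r} \;\le\; C\,(1+|X_{t_i}|^{c})\,\Delta_{n,i}^{\beta p}\,\mathbf{1}_{\mathcal{E}_i},$$
using also $\|\varphi^{(k)}\|_\infty < \infty$.

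Next, the probability of $\mathcal{E}_i$. Write
$$X_{t_{i+1}}^\theta - X_{t_i}^\theta \;=\; \underbrace{\int_{t_i}^{t_{i+1}}\! b(\mu,X_s^\theta)\,ds + \int_{t_i}^{t_{i+1}}\! a(\sigma,X_s^\theta)\,dW_s}_{=: C_i} \;+\; \underbrace{\int_{t_i}^{t_{i+1}}\!\!\int_{\mathbb R\setminus\{0\}} \gamma(X_{s^-}^\theta)\,z\,\tilde\mu(ds,dz)}_{=: J_i}.$$
Let $N_i$ denote the number of jumps of the Poisson measure in $[t_i,t_{i+1}]$; by A4 we have $\mathbb{P}_i(N_i \ge 1) \le \lambda\Delta_{n,i}$. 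On $\{N_i = 0\}$, $J_i$ reduces to the deterministic compensator term of order $\Delta_{n,i}(1+|X_{t_i}|^c)$, so $|X_{t_{i+1}}^\theta - X_{t_i}^\theta| \ge \Delta_{n,i}^\beta$ forces $|C_i| \ge \tfrac12\Delta_{n,i}^\beta$ for $\Delta_{n,i}$ small. By Bernstein's exponential martingale inequality applied to the Brownian integral (whose quadratic variation is of order $\Delta_{n,i}$) together with Lemma \ref{lemma: Moment inequalities}, and because $\beta < 1/2$, one obtains
$$\mathbb{P}_i(|C_i| \ge \tfrac12\Delta_{n,i}^\beta) \;\le\; C\,\Delta_{n,i}^q(1+|X_{t_i}|^c) \quad \text{for every } q > 0.$$
Combining the two cases yields $\mathbb{P}_i(\mathcal{E}_i) \le C\Delta_{n,i}(1+|X_{t_i}|^c)$. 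Multiplying with the pointwise bound gives
$$\mathbb{E}_i\big[|X_{t_{i+1}}^\theta - m(\mu,\sigma,X_{t_i})|^p\,|\varphi^{(k)}_{\Delta_{n,i}^\beta}(X_{t_{i+1}}^\theta-X_{t_i}^\theta)|^{r}\big] \;\le\; C\,(1+|X_{t_i}|^{c})\,\Delta_{n,i}^{1+\beta p},$$
which is the required $R(\theta,\Delta_{n,i}^{1+\beta p},X_{t_i})$ bound.

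The main obstacle is the super-polynomial bound on $\mathbb{P}_i(|C_i| \ge \Delta_{n,i}^\beta/2)$ conditional on no jump; everything else is straightforward. The cleanest way is via the exponential inequality for continuous martingales, exploiting the gap $\beta < 1/2$ between the threshold $\Delta_{n,i}^\beta$ and the typical size $\Delta_{n,i}^{1/2}$ of the Brownian increment, so that this contribution is absorbed by the factor $\Delta_{n,i}$ coming from the jump probability.
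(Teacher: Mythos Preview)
Your approach is correct and in fact more direct than the paper's. The paper decomposes $X_{t_{i+1}} - m$ as $\int a\,dW_s + B_{i,n}$ (the splitting \eqref{eq: reformulation X - m}) and treats the Brownian, drift, and jump contributions separately via H\"older and moment bounds; the dominant contribution comes from the jump piece, which after a further case distinction on $\{|\Delta X_i^J|\gtrless 4\Delta_{n,i}^\beta\}$ produces exactly $R(\theta,\Delta_{n,i}^{1+\beta p},X_{t_i})$. You bypass this decomposition by exploiting the \emph{upper} bound $|\Delta X_i|\le 2\Delta_{n,i}^\beta$ on $\mathcal{E}_i$ together with $|x-m(\mu,\sigma,x)|\le R(\theta,\Delta_{n,i},x)$ to control $|X_{t_{i+1}}-m|$ pointwise, and then simply factor out $\mathbb{P}_i(\mathcal{E}_i)$. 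This avoids the H\"older $\epsilon$-losses that the paper must afterwards argue are harmless.

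One imprecision: on $\{N_i=0\}$ the compensator contribution $-\int_{t_i}^{t_{i+1}}\gamma(X_{s^-})\bigl(\int_{\mathbb R}zF(dz)\bigr)\,ds$ is \emph{not} deterministic, since it depends on the random path $(X_s)$; so the sentence ``$J_i$ reduces to the deterministic compensator term'' is inaccurate as written. The fix is immediate: fold this term into $C_i$ (it is of the same type as the drift integral) and run your deviation argument on the enlarged $C_i$. Alternatively, the bound $\mathbb{P}_i(|\Delta X_i|\ge\Delta_{n,i}^\beta)\le R(\theta,\Delta_{n,i},X_{t_i})$ is exactly \eqref{eq: stima prob incrementi}, already established in the paper via Chebyshev with arbitrarily high moments on $\Delta X_i^c$ plus the finite-intensity bound on $\Delta X_i^J$; no exponential martingale inequality is needed.
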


Considering only the jump part, the following result holds:
\begin{lemma}
Suppose that Assumptions A1-A4 holds. Then, $\forall q \ge 1$ we have
$$\mathbb{E}_i[|\Delta X_i^J\varphi_{\Delta_{n,i}^\beta}(\Delta_i X)|^q] = R(\theta_0, \Delta_{n, i}^{(1 + \beta q) \land q}, X_{t_i}),$$
where $\Delta X_i^J := \int_{t_i}^{t_{i + 1}} \int_\mathbb{R} z \gamma(X_{s^-}) \tilde{\mu}(ds, dz)$.
\label{lemma: estim jumps}
\end{lemma}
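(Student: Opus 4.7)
The idea is to decompose $\Delta X_i^J$ into the pathwise sum of jumps and the compensator, split the conditional expectation according to whether at least one jump occurs in $(t_i,t_{i+1}]$, and use the support property of $\varphi_{\Delta_{n,i}^\beta}$ (which vanishes outside $\{|\cdot| \le 2\Delta_{n,i}^\beta\}$) to control the remaining terms.

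First I would write $\Delta X_i^J = J_i - K_i$, with $J_i := \int_{t_i}^{t_{i+1}} \int_\mathbb{R} z \gamma(X_{s^-}) \mu(ds,dz)$ collecting the actual jumps and $K_i := \int_{t_i}^{t_{i+1}} \int_\mathbb{R} z \gamma(X_{s^-}) F(z)\, dz\, ds$ the compensator, which Assumptions A1 and A3(i) pathwise bound by $|K_i| \le C \Delta_{n,i}(1+\sup_{s \in [t_i,t_{i+1}]}|X_s|)$. Let $N_i$ be the number of jumps of the underlying Poisson measure in $(t_i,t_{i+1}]$; by A4, $\mathbb{P}_i(N_i \ge 1) \le \lambda \Delta_{n,i}$. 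I split the expectation as $A_1+A_2'+A_2''$, corresponding respectively to $\{N_i=0\}$, $\{N_i \ge 1\} \cap \{|\Delta X_i^J| \le 3\Delta_{n,i}^\beta\}$, and $\{N_i \ge 1\} \cap \{|\Delta X_i^J| > 3\Delta_{n,i}^\beta\}$.

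On $\{N_i=0\}$ one has $J_i=0$, hence $|\Delta X_i^J|^q = |K_i|^q$, and Lemma~\ref{lemma: Moment inequalities} gives $A_1 \le R(\theta_0, \Delta_{n,i}^q, X_{t_i})$; since $\Delta_{n,i}<1$ and $q \ge (1+\beta q)\wedge q$, this absorbs into the target bound. For $A_2'$, the pointwise estimate $|\Delta X_i^J|^q \varphi^q \le C\Delta_{n,i}^{\beta q}$ together with $\mathbb{P}_i(N_i \ge 1) \le \lambda \Delta_{n,i}$ yields $A_2' \le R(\theta_0, \Delta_{n,i}^{1+\beta q}, X_{t_i})$.

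The delicate term is $A_2''$. On its support, $\varphi \ne 0$ forces $|\Delta_i X| \le 2\Delta_{n,i}^\beta$ while $|\Delta X_i^J| > 3\Delta_{n,i}^\beta$, so decomposing $\Delta_i X = \Delta X_i^c + \Delta X_i^J$ with $\Delta X_i^c := \int_{t_i}^{t_{i+1}} b(\mu_0,X_s)ds + \int_{t_i}^{t_{i+1}} a(\sigma_0,X_s) dW_s$ yields $|\Delta X_i^c| > \Delta_{n,i}^\beta$. Cauchy--Schwarz then gives $A_2'' \le \mathbb{E}_i[|\Delta X_i^J|^{2q}]^{1/2} \, \mathbb{P}_i(|\Delta X_i^c| > \Delta_{n,i}^\beta)^{1/2}$. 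The first factor is polynomially bounded in $|X_{t_i}|$ by Lemma~\ref{lemma: Moment inequalities}, while a BDG estimate for the continuous part combined with Markov's inequality produces $\mathbb{P}_i(|\Delta X_i^c| > \Delta_{n,i}^\beta) \le C \Delta_{n,i}^{p(1-2\beta)}(1+|X_{t_i}|^c)$ for every integer $p \ge 1$. Since $\beta < 1/2$, taking $p$ arbitrarily large shows $A_2''$ is negligible with respect to every polynomial rate in $\Delta_{n,i}$, in particular bounded by $R(\theta_0, \Delta_{n,i}^{(1+\beta q)\wedge q}, X_{t_i})$.

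The main obstacle is the control of $A_2''$: one must translate the support constraint of $\varphi$ into a geometric condition on the continuous part of the increment and then absorb the resulting probability via Gaussian-type tails, a step that relies crucially on $\beta < 1/2$. The wedge $(1+\beta q)\wedge q$ appearing in the statement precisely reflects the two regimes: the compensator contribution $\Delta_{n,i}^q$ dominates for small $q$, while a single truncated jump contribution $\Delta_{n,i}^{1+\beta q}$ dominates for large $q$.
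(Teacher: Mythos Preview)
Your proof is correct and rests on the same key mechanism as the paper's—the compact support of $\varphi$ forces the continuous part of the increment to be large whenever a sizeable jump coexists with $\varphi\ne 0$, and that event is then killed by BDG/Markov tails since $\beta<1/2$. The decomposition, however, is different. The paper splits on the event $N_n^i:=\{|\Delta L_s|\le 4\Delta_{n,i}^\beta/\gamma_{\min}\text{ for all }s\in(t_i,t_{i+1}]\}$ (all L\'evy jumps small versus at least one large); on $N_n^i$ it further separates the small-jump martingale part (controlled by a BDG-type inequality for purely discontinuous martingales, yielding $\Delta_{n,i}^{1+\beta q}$) from the large-jump compensator (yielding $\Delta_{n,i}^q$), and it treats only $q\in[1,2)$ explicitly, deferring $q\ge 2$ to \cite{Chapitre 1}. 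Your split on $\{N_i=0\}$ versus $\{N_i\ge 1\}$, followed by a threshold on $|\Delta X_i^J|$ itself, is more direct, handles all $q\ge 1$ uniformly, and makes the origin of the two competing rates $\Delta_{n,i}^q$ (pure compensator when no jump occurs) and $\Delta_{n,i}^{1+\beta q}$ (one jump, truncated) particularly transparent.

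Two minor imprecisions worth fixing: (i) Markov plus BDG on $\Delta X_i^c$ gives the exponent $p(\tfrac12-\beta)$, not $p(1-2\beta)$; this is harmless since both tend to infinity with $p$. (ii) Lemma~\ref{lemma: Moment inequalities} as stated bounds moments of the full increment $X_t-X_s$, not of $\Delta X_i^J$ alone; for $\mathbb{E}_i[|\Delta X_i^J|^{2q}]$ you should invoke Kunita's inequality (used elsewhere in the paper), which indeed gives $\mathbb{E}_i[|\Delta X_i^J|^{2q}]\le R(\theta_0,\Delta_{n,i},X_{t_i})$ for $2q\ge 2$.
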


Other estimation about the expected value of the jump part in the presence of an indicator function which is $0$ if the increments are bigger than $c \Delta_{n,i}^{\beta}$ are gathered in Lemma 4 of \cite{Chapitre 2}. \\ \\
Using the lemmas stated here above, it is possible to prove the following proposition, that will be proved in Section \ref{S: Proof_limit} and which is useful to show the tightness of the contrast function.
\begin{proposition}
Suppose that Assumptions 1 to 4 and $A_{\mbox{Step}}$ hold, $\Delta_n \rightarrow 0$ and $T_n \rightarrow \infty$ and $g_{i,n}$ is a differentiable function $\mathbb{R} \times \Theta \rightarrow \mathbb{R}$ such that $|g_{i,n}(x, \theta)| \le c (1 + |x|^c)$ and, for $\vartheta = \mu$ and $\vartheta = \sigma$, $|\partial_\vartheta g_{i,n}(x, \theta) | \le c (1 + |x|^c)$. We define
$$S_n(\theta) := \frac{1}{T_n} \sum_{i = 0}^{n - 1} (X_{t_{i + 1}} - m(\mu, \sigma, X_{t_i})) \varphi_{\Delta_{n,i}^\beta}(X_{t_{i + 1}} - X_{t_i})g_{i,n}(X_{t_i}, \theta).$$
Then $S_n(\theta)$ is tight in $(C(\Theta), \left \| . \right \|_\infty)$.
\label{prop: punto 2 conv deriv seconda}
\end{proposition}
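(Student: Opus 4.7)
The plan is to verify Kolmogorov's tightness criterion for continuous processes on the compact parameter set $\Theta \subset \mathbb{R}^2$. It is enough to exhibit $p > 2 = \dim \Theta$ and a constant $C$ independent of $n$ such that $\sup_n \mathbb{E}[|S_n(\theta^\star)|] \le C$ for some fixed $\theta^\star \in \Theta$ and $\mathbb{E}[|S_n(\theta_1)-S_n(\theta_2)|^p] \le C|\theta_1-\theta_2|^p$ uniformly in $n,\theta_1,\theta_2$. Since $g_{i,n}$ and $m$ are differentiable in $\theta$ with derivatives of order $c(1+|X_{t_i}|^c)$ and $c\Delta_{n,i}(1+|X_{t_i}|^c)$ respectively (using the hypothesis on $g_{i,n}$ and Propositions \ref{prop: dl derivate prime}--\ref{prop: dervate terze}), the map $\theta\mapsto S_n(\theta)$ is $\mathcal{C}^1$. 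By the fundamental theorem of calculus and Jensen's inequality, the modulus bound reduces to a uniform bound $\sup_{\theta\in\Theta}\mathbb{E}[|\nabla_\theta S_n(\theta)|^p]\le C$, which I would establish with $p=4$.

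First I would bound $\mathbb{E}[|S_n(\theta)|^4]$ uniformly in $\theta$. Setting $Y_i(\theta):=(X_{t_{i+1}}-m(\mu,\sigma,X_{t_i}))\varphi_{\Delta_{n,i}^\beta}(X_{t_{i+1}}-X_{t_i})g_{i,n}(X_{t_i},\theta)$, I decompose $T_n S_n(\theta)=M_n(\theta)+B_n(\theta)$, where $M_n:=\sum_i(Y_i-\mathbb{E}_iY_i)$ is a $(\mathcal{F}_{t_i})$-martingale and $B_n:=\sum_i\mathbb{E}_iY_i$. By Lemma \ref{lemma: conditional expected value}(3), $|\mathbb{E}_i[(X_{t_{i+1}}-m)\varphi]|\le R(\theta,\Delta_{n,i},X_{t_i})$, hence $|\mathbb{E}_iY_i|\le c(1+|X_{t_i}|^c)\Delta_{n,i}$ and $\mathbb{E}[|\tfrac{1}{T_n}B_n|^4]\le C$ by Jensen applied to the weighted average with weights $\Delta_{n,i}/T_n$, combined with Lemma \ref{lemma: 2.1 GLM}. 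For $M_n$, the Burkholder--Davis--Gundy inequality together with Lemma \ref{lemma: conditional expected value}(4) --- whose bound $\mathbb{E}_i[|X_{t_{i+1}}-m|^q\varphi^q]\le R(\theta,\Delta_{n,i}^{(q/2)\wedge(1+\beta q)},X_{t_i})$ relies on $\beta>1/4$ --- yields $\sum_i\mathbb{E}_i[Y_i^2]=O(T_n)$ and $\sum_i\mathbb{E}[|Y_i|^4]=O(\Delta_n T_n)$, so $\mathbb{E}[|\tfrac{1}{T_n}M_n|^4]\le C/T_n^2\to 0$.

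The gradient is, for $\vartheta\in\{\mu,\sigma\}$,
$$\partial_\vartheta S_n(\theta)=\frac{1}{T_n}\sum_{i=0}^{n-1}\bigl[-\partial_\vartheta m(\mu,\sigma,X_{t_i})\,\varphi_{\Delta_{n,i}^\beta}(X_{t_{i+1}}-X_{t_i})\,g_{i,n}(X_{t_i},\theta)+(X_{t_{i+1}}-m)\,\varphi_{\Delta_{n,i}^\beta}(X_{t_{i+1}}-X_{t_i})\,\partial_\vartheta g_{i,n}(X_{t_i},\theta)\bigr].$$
The second sum has exactly the form of $S_n$ with $g_{i,n}$ replaced by $\partial_\vartheta g_{i,n}$, which shares the polynomial-growth hypothesis, so the previous martingale/bias argument applies verbatim. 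The first sum is a Riemann-type sum whose modulus is bounded by $\frac{1}{T_n}\sum_i c(1+|X_{t_i}|^c)^2\Delta_{n,i}$, uniformly bounded in $L^4$ via Jensen and the moment estimates of Lemma \ref{lemma: 2.1 GLM}. Together this yields $\sup_\theta\mathbb{E}[|\nabla_\theta S_n(\theta)|^4]\le C$, and Kolmogorov's criterion with $p=4>\dim\Theta$ concludes.

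The delicate point is that the predictable bias $\mathbb{E}_i[(X_{t_{i+1}}-m(\mu,\sigma,X_{t_i}))\varphi]$ is only of order $\Delta_{n,i}$ (not $\Delta_{n,i}^{1+\varepsilon}$), because $m(\mu,\sigma,\cdot)$ is evaluated at an arbitrary $\theta\in\Theta$ while $X$ follows the dynamics under $\theta_0$; however this suffices, since after normalization by $T_n$ and summation over $n\asymp T_n/\Delta_n$ terms the Riemann-sum structure is recovered. This uniform-in-$\theta$ control is precisely what Lemma \ref{lemma: conditional expected value}(3) provides, and the condition $\beta>1/4$ from the model setup is what makes the conditional variance of the martingale sum summable to order $T_n$ rather than larger.
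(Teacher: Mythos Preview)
Your argument is correct and reaches the conclusion by a route that is organized differently from the paper's. The paper splits $S_n(\theta)=S_{n1}(\theta)+S_{n2}(\theta)$, where $S_{n1}$ is built from $X_{t_{i+1}}-m(\mu_0,\sigma_0,X_{t_i})$ evaluated at the \emph{true} parameter, so that $\mathbb{E}_i[(X_{t_{i+1}}-m(\theta_0,X_{t_i}))\varphi]=0$ by the very definition of $m$; this makes $S_{n1}(\theta_1)-S_{n1}(\theta_2)$ a genuine $(\mathcal{F}_{t_i})$-martingale, and Burkholder's inequality applies with no bias term to subtract. The remaining piece $S_{n2}$, which carries $m(\theta_0,\cdot)-m(\theta,\cdot)$, is then handled by the cruder criterion $\sup_n\mathbb{E}[\sup_\theta|\partial_\vartheta S_{n2}|]<\infty$. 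You instead skip the centering at $\theta_0$ and differentiate $S_n$ directly, reducing Kolmogorov's modulus condition to the uniform bound $\sup_\theta\mathbb{E}[|\nabla_\theta S_n(\theta)|^4]\le C$; the price is that your Term~2 is not centered (since $\theta\neq\theta_0$ in general), which forces the explicit martingale/bias decomposition and the appeal to Lemma~\ref{lemma: conditional expected value}(3) to control the predictable part. The paper's centering trick keeps the martingale step cleaner, while your approach is more systematic and makes the role of Lemma~\ref{lemma: conditional expected value}(3) explicit; both rely on the same moment bounds from Lemma~\ref{lemma: conditional expected value} and the derivative estimates on $m$ from Proposition~\ref{prop: dl derivate prime}.
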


\subsection{Derivatives of $m$ and $m_2$}
We now state some propositions which concern the derivatives of $m$ and $m_2$ that will be useful in the sequel. {\modch We introduce the following notation for the derivative operators $\partial_\vartheta := \frac{\partial}{\partial \vartheta}$, for $\vartheta = \mu$ and $\vartheta = \sigma $.}
\begin{proposition}
Suppose that Assumptions A1-A5 A7 and Ad hold. Then, for $|x| \le \Delta_{n,i}^{-k_0}$ and $\forall \epsilon >0$, we have
$$1. \, \partial_\mu m(\mu, \sigma, X_{t_i}) = \Delta_{n,i} \partial_\mu b(X_{t_i}, \mu) + R(\theta, \Delta_{n,i}^{\frac{5}{2} - \beta - \epsilon}, X_{t_i}),$$
$$2. \,|\partial_\sigma m(\mu, \sigma, X_{t_i})| \le R(\theta, \Delta_{n,i}, X_{t_i}),  $$
$$3. \,|\partial_\mu m_2(\mu, \sigma, X_{t_i})| \le R(\theta, \Delta_{n,i}^2, X_{t_i}),$$
$$4.\,\partial_\sigma m_2(\mu, \sigma, X_{t_i}) = 2\Delta_{n,i} \partial_\sigma a(X_{t_i}, \sigma) a(X_{t_i}, \sigma) + R(\theta, \Delta_{n,i}^{1 + \beta}, X_{t_i}).$$
\label{prop: dl derivate prime}
\end{proposition}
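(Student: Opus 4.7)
The plan is to differentiate the defining formulas \eqref{eq: definition m} and \eqref{eq: definition m2} directly under the conditional expectation, using the tangent processes $Y^\mu_t := \partial_\mu X^\theta_t$ and $Y^\sigma_t := \partial_\sigma X^\theta_t$ which, under Assumption A7, solve linear SDEs with $Y^{\mu,\sigma}_{t_i}=0$ and source terms $\partial_\mu b(\mu,X_s)\,ds$ and $\partial_\sigma a(\sigma,X_s)\,dW_s$ respectively. Writing $h := \Delta_{n,i}$ and iterating the corresponding linear equations on $[t_i,t_{i+1}]$ yields the moment bounds $\|Y^\mu_h\|_{L^p_i} \le Ch(1+|X_{t_i}|^c)$ and $\|Y^\sigma_h\|_{L^p_i}\le C\sqrt{h}(1+|X_{t_i}|^c)$, together with
\[
\mathbb{E}_i[Y^\mu_h] = h\,\partial_\mu b(X_{t_i},\mu) + R(\theta,h^2,X_{t_i}), \qquad \mathbb{E}_i[Y^\sigma_h]=R(\theta,h^{3/2},X_{t_i}),
\]
the latter because the leading term of $Y^\sigma_h$ is a stochastic integral with zero conditional mean, while the drift term contributes $O(h^{3/2})$ via Cauchy--Schwarz.

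For items 1--2, write $m=N/D$ with $N,D$ as in \eqref{eq: definition m} and apply the quotient rule. Differentiating under the expectation gives $\partial_\vartheta N = \mathbb{E}_i[Y^\vartheta_h \varphi_{h^\beta}(X_{t_{i+1}}-X_{t_i})] + h^{-\beta}\mathbb{E}_i[X_{t_{i+1}}\varphi'_{h^\beta}(X_{t_{i+1}}-X_{t_i})Y^\vartheta_h]$, and analogously for $\partial_\vartheta D$. The first contribution is handled by writing $\varphi_{h^\beta}=1-(1-\varphi_{h^\beta})$: the correction is controlled by Cauchy--Schwarz since $\{1-\varphi_{h^\beta}\ne 0\}\subset\{|X_{t_{i+1}}-X_{t_i}|\ge h^\beta\}$, whose probability is $O(h)$ via the jump intensity, which extracts the main term $h\,\partial_\mu b$ for item 1 and the $O(h)$ estimate for item 2. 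The $\varphi'$-contribution is the delicate part: after cancelling the $X_{t_i}$-piece between $\partial_\vartheta N$ and $N\partial_\vartheta D/D$, only $X_{t_{i+1}}-X_{t_i}$ (of size $\sqrt{h}$) enters; a H\"older inequality with exponents $p,q$ large and $r$ close to $1$, combined with $\|Y^\mu_h\|_{L^q_i}=O(h)$ and the support constraint $\{|X_{t_{i+1}}-X_{t_i}|\ge h^\beta\}$, yields the bound $h^{-\beta}\cdot h^{1/2}\cdot h\cdot h^{1-\epsilon'}=h^{5/2-\beta-\epsilon}$ of item 1, and the analogous computation with $\|Y^\sigma_h\|_{L^q_i}=O(\sqrt{h})$ is absorbed in $R(\theta,h,X_{t_i})$, giving item 2.

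For items 3--4, write $m_2 = \mathbb{E}_i[(X_{t_{i+1}}-m)^2\varphi_{h^\beta}]/D$ and differentiate. The crucial cancellation $\mathbb{E}_i[(X_{t_{i+1}}-m)\varphi_{h^\beta}]=0$, immediate from the definition of $m$, eliminates the $\partial_\vartheta m$-terms arising from the derivative of the square, and is also what makes item 3 hold at order $h^2$. For $\vartheta=\sigma$, the It\^o expansions $X_{t_{i+1}}-X_{t_i}\approx a(X_{t_i},\sigma)(W_{t_{i+1}}-W_{t_i})$ and $Y^\sigma_h\approx \partial_\sigma a(X_{t_i},\sigma)(W_{t_{i+1}}-W_{t_i})$ together with the It\^o isometry produce
\[
2\,\mathbb{E}_i[(X_{t_{i+1}}-m)Y^\sigma_h\varphi_{h^\beta}] = 2h\,\partial_\sigma a(X_{t_i},\sigma)a(X_{t_i},\sigma) + R(\theta,h^{1+\beta},X_{t_i}),
\]
giving item 4 once the $\varphi'$-terms (controlled via Lemma \ref{lemma: x-m con varphi'}) and the factor $M\partial_\sigma D/D^2$ are shown to be absorbed. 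For $\vartheta=\mu$, the decomposition $Y^\mu_h = h\,\partial_\mu b(X_{t_i},\mu)+\zeta_h$ with $\|\zeta_h\|_{L^2_i}=O(h^{3/2})$ shows that the deterministic part of $Y^\mu_h$ contributes $0$ to $\mathbb{E}_i[(X_{t_{i+1}}-m)Y^\mu_h\varphi_{h^\beta}]$ by the cancellation, while the fluctuation gives a term of order $\sqrt{h}\cdot h^{3/2}=h^2$ by Cauchy--Schwarz; the $\varphi'$-contribution, bounded using $(X_{t_{i+1}}-m)^2\le Ch^{2\beta}$ on the support of $\varphi'$ and Lemma \ref{lemma: x-m con varphi'}, is of order $R(\theta,h^{2+\beta-\epsilon},X_{t_i})$ and absorbed, proving item 3.

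The main obstacle is the sharp control of the $\varphi'$-terms, where the amplification factor $h^{-\beta}$ arising from $\partial_z\varphi_{h^\beta}(z) = h^{-\beta}\varphi'(z/h^\beta)$ must be balanced against the smallness of the truncation event $\{|X_{t_{i+1}}-X_{t_i}|\ge h^\beta\}$, whose probability is only $O(h)$ due to the jump part. The loss of an arbitrarily small $\epsilon>0$ in the exponent $5/2-\beta-\epsilon$ of item 1 reflects precisely that the relevant moments of $Y^\mu_h$ and of $X_{t_{i+1}}-X_{t_i}$ are controlled only in $L^p_i$ for every finite $p$, forcing the third H\"older exponent $r$ to be strictly greater than $1$.
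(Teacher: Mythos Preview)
Your approach---differentiating $m=N/D$ and $m_2$ under the expectation via the tangent processes $\partial_\mu X_h^\theta$, $\partial_\sigma X_h^\theta$, bounding the $\varphi'$-contributions by H\"older against the event $\{|X_h^\theta-x|\ge h^\beta\}$, exploiting the exact identity $\mathbb{E}[(X_h^\theta-m)\varphi_{h^\beta}\mid X_0^\theta=x]=0$ to kill the $\partial_\vartheta m$-terms in item~3, and extracting the main term of item~4 from the It\^o isometry of $\int a\,dW\cdot\int\partial_\sigma a\,dW$---is precisely the route taken in the paper. Items 2--4 are proved there via Lemma~\ref{lemma: derivate x}, Proposition~\ref{prop: truc moche h}, and Lemma~\ref{lemma: x-m con varphi'} exactly as you outline; for item~1 the paper simply cites Proposition~8 of \cite{Chapitre 1}.

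One genuine gap in your sketch of item~1: you attribute a factor $h^{1/2}$ to $\|X_{t_{i+1}}-X_{t_i}\|_{L^p_i}$ with $p$ large. In the presence of jumps this is wrong---Lemma~\ref{lemma: Moment inequalities}(1) gives only $\|X_{t_{i+1}}-X_{t_i}\|_{L^p}\le c\,h^{1/p}$, which degenerates to order $0$ as $p\to\infty$. Your three-factor H\"older therefore delivers at best $h^{-\beta}\cdot h^{1/p}\cdot h\cdot h^{1/r}$ with $1/p+1/q+1/r=1$, i.e.\ $h^{2-\beta-\epsilon}$, not $h^{5/2-\beta-\epsilon}$. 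To reach the sharp exponent you must split the increment into its Brownian part (for which $\|\cdot\|_{L^p}=O(h^{1/2})$ uniformly in $p$) and its jump part, and treat the latter separately on the support of $\varphi'$ using that $|X_{t_{i+1}}-X_{t_i}|\le 2h^\beta$ there, in the spirit of Lemma~\ref{lemma: estim jumps} or Lemma~\ref{lemma: x-m con varphi'}.
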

{\modch We denote $\partial^2_\vartheta := \frac{\partial^2}{\partial \vartheta^2}$ and $\partial^2_{\mu \sigma} := \frac{\partial^2}{\partial \mu \partial \sigma}$.} Estimation on the second derivatives are gathered in the following proposition:

\begin{proposition}
Suppose that Assumptions A1 - A5, A7 and Ad hold. Then
\begin{equation}
|\partial^2_{\mu\sigma} m(\mu, \sigma, X_{t_i})| \le R(\theta, \Delta_{n,i}^\frac{3}{2}, X_{t_i}), \qquad |\partial^2_{\sigma} m(\mu, \sigma, X_{t_i})| \le R(\theta, \Delta_{n,i}, X_{t_i}),
\label{eq: deriv seconde miste e sigma m}
\end{equation}
\begin{equation}
\partial^2_\mu m(\mu, \sigma, X_{t_i}) = \Delta_{n,i} \partial^2_\mu b(\mu, X_{t_i}) + R(\theta, \Delta_{n,i}^\frac{3}{2}, X_{t_i}),
\label{eq: derivata seconda mu m}
\end{equation}
\begin{equation}
|\partial^2_{\mu\sigma} m_2(\mu, \sigma, X_{t_i})| \le R(\theta, \Delta_{n,i}^2, X_{t_i}), \qquad
|\partial^2_{\mu} m_2(\mu, \sigma, X_{t_i})| \le R(\theta, \Delta_{n,i}^2, X_{t_i}),
\label{eq: deriv seconde miste e mu m2}
\end{equation}
\begin{equation}
\partial^2_{\sigma} m_2(\mu, \sigma, X_{t_i}) = 2 \Delta_{n,i} \partial_\sigma a(\sigma, X_{t_i}) a(\sigma, X_{t_i}) + R(\theta, \Delta_{n,i}^\frac{3}{2}, X_{t_i}).
\label{eq: deriv seconda sigma m2}
\end{equation}
\label{prop: derivate seconde}
\end{proposition}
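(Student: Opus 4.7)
The plan is to extend the argument used for Proposition \ref{prop: dl derivate prime} to second derivatives. Writing $m=N/D$ with
\[
N(\theta)=\mathbb{E}_i\bigl[X^\theta_{t_{i+1}}\varphi_{\Delta_{n,i}^\beta}(X^\theta_{t_{i+1}}-X^\theta_{t_i})\bigr],\qquad D(\theta)=\mathbb{E}_i\bigl[\varphi_{\Delta_{n,i}^\beta}(X^\theta_{t_{i+1}}-X^\theta_{t_i})\bigr],
\]
the quotient rule expresses $\partial^2_{\vartheta_1\vartheta_2} m$ in terms of the first and second $\theta$-derivatives of $N$ and $D$. The same reduction applies to $m_2$: using that $m$ itself and its first derivatives are already controlled by \eqref{eq: definition m} and Proposition \ref{prop: dl derivate prime}, the chain rule handles the $\sigma$- and $\mu$-derivatives of the factor $(X^\theta_{t_{i+1}}-m(\mu,\sigma,X_{t_i}))^2$ inside the numerator of $m_2$.

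Under Assumption A7 the flow $\theta\mapsto X^\theta$ is $\mathcal{C}^2$, and the tangent processes $\partial_\vartheta X^\theta$, $\partial^2_{\vartheta_1\vartheta_2} X^\theta$ solve the linearised SDEs obtained by formally differentiating \eqref{eq: model} once and twice. This legitimises differentiating under the conditional expectation, and each derivative produces a sum of terms with either a derivative hitting the factor $X^\theta_{t_{i+1}}$ or hitting $\varphi_{\Delta_{n,i}^\beta}$ (producing $\varphi'_{\Delta_{n,i}^\beta}$ and $\varphi''_{\Delta_{n,i}^\beta}$, accompanied by first- and second-order tangent processes evaluated at the endpoints). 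Because $\mu$ enters only through the drift $b$, a first-order Itô--Taylor expansion gives $\partial_\mu X^\theta_{t_{i+1}}=\Delta_{n,i}\partial_\mu b(\mu,X_{t_i})+R(\theta,\Delta_{n,i}^{3/2},X_{t_i})$ and similarly for the second $\mu$-derivative; the $\sigma$-tangent process is dominated by the Brownian integral $\int_{t_i}^{t_{i+1}}\partial_\sigma a(\sigma,X^\theta_s)\,dW_s$, of order $\Delta_{n,i}^{1/2}$ but with vanishing conditional mean.

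I then identify the leading contribution term by term using Itô--Taylor expansions and bound the remainders with Lemmas \ref{lemma: conditional expected value} and \ref{lemma: x-m con varphi'}. For \eqref{eq: derivata seconda mu m}, the only configuration producing size $\Delta_{n,i}$ is the one where both $\mu$-derivatives land on $X^\theta_{t_{i+1}}$, yielding $\Delta_{n,i}\partial^2_\mu b(\mu,X_{t_i})$; all other configurations, including those where the derivatives hit $\varphi$, lie in $R(\theta,\Delta_{n,i}^{3/2},X_{t_i})$. For \eqref{eq: deriv seconde miste e sigma m} the cancellation due to the vanishing mean of the $\sigma$-tangent Brownian increment produces the announced rates $\Delta_{n,i}$ for $\partial^2_\sigma m$ and $\Delta_{n,i}^{3/2}$ for the mixed derivative. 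For the $m_2$ bounds \eqref{eq: deriv seconde miste e mu m2}--\eqref{eq: deriv seconda sigma m2} I start from the expansion of $m_2$ provided by Assumption Ad and differentiate the leading term $\Delta_{n,i}a^2(x,\sigma)$ directly; the remainder is controlled thanks to the stability of the $R$-class under the derivatives $\partial_\mu$ and $\partial_\sigma$ guaranteed by Ad.

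The main obstacle will be the bookkeeping of the terms in which one or both $\vartheta$-derivatives land on $\varphi_{\Delta_{n,i}^\beta}$: each such derivative brings a formal factor $\Delta_{n,i}^{-\beta}$, and one must verify that this factor is compensated by the localisation onto $\{|X^\theta_{t_{i+1}}-X^\theta_{t_i}|\in[\Delta_{n,i}^\beta,2\Delta_{n,i}^\beta]\}$ provided by the support of $\varphi'$ and $\varphi''$. This is exactly the role of Lemma \ref{lemma: x-m con varphi'}, which converts each product $|X^\theta_{t_{i+1}}-X^\theta_{t_i}|^p|\varphi^{(k)}_{\Delta_{n,i}^\beta}|^r$ into a rest of size $\Delta_{n,i}^{1+\beta p}$ and provides just enough gain for all second-derivative error terms to fall into $R(\theta,\Delta_{n,i}^{3/2},X_{t_i})$ or $R(\theta,\Delta_{n,i}^{2},X_{t_i})$ as required. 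In the $\partial^2_\mu m$ case the narrow gap between the target rest $\Delta_{n,i}^{3/2}$ and the leading term $\Delta_{n,i}\partial^2_\mu b$ is where the tightest accounting is needed.
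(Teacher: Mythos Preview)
Your approach to the second derivatives of $m$ is close to the paper's: both start from $m=N/D$, expand via the quotient rule into a finite list of terms involving the tangent processes $\partial_\vartheta X^\theta$, $\partial^2_{\vartheta_1\vartheta_2}X^\theta$, and bound each term separately. One difference worth noting is the localisation tool: for the many terms in which a derivative lands on $\varphi$ but no factor $|X^\theta_{t_{i+1}}-m|$ is present (for instance $\mathbb{E}[X_h^\theta(\partial_\mu X_h^\theta)(\partial_\sigma X_h^\theta)\varphi'']$), Lemma~\ref{lemma: x-m con varphi'} does not directly apply. The paper handles these with Proposition~\ref{prop: truc moche h}, which gives $\mathbb{E}[|Z_h||\varphi^{(k)}_{h^\beta}(X^\theta_h-x)|]=R(\theta,h^{1-\epsilon},x)$ for any $Z_h$ with polynomial moments; you would need this or an equivalent device to close the argument for $m$.

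There is, however, a genuine gap in your treatment of $m_2$. You propose to obtain \eqref{eq: deriv seconde miste e mu m2}--\eqref{eq: deriv seconda sigma m2} by differentiating the expansion in Assumption~Ad twice and invoking ``the stability of the $R$-class under the derivatives $\partial_\mu$ and $\partial_\sigma$ guaranteed by Ad''. But Ad only asserts control on the \emph{first} parameter derivatives of $r(\mu,\sigma,x)$ and $R(\theta,1,x)$; it says nothing about $\partial^2_\vartheta r$ or $\partial^2_\vartheta R$. Differentiating \eqref{eq: hp dl m2} twice therefore leaves uncontrolled terms such as $\Delta_{n,i}^2 a^2(x,\sigma)\partial^2_\mu r(\mu,\sigma,x)$ and $\Delta_{n,i}^{2+\delta_2}\partial^2_\sigma R(\theta,1,x)$, and the argument does not go through. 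The paper avoids this by \emph{not} differentiating Ad at all: it computes $\partial^2_{\vartheta_1\vartheta_2}m_2$ directly from the definition \eqref{eq: definition m2}, expands it (as for $m$) into an explicit list of twelve terms, and bounds each one using the moment estimates on the tangent processes (Lemma~\ref{lemma: derivate x}), Lemma~\ref{lemma: x-m con varphi'}, and Proposition~\ref{prop: truc moche h}. For $\partial^2_\sigma m_2$ the leading contributions are identified by isolating the principal parts of $\partial_\sigma X^\theta_h$ and $\partial^2_\sigma X^\theta_h$ as stochastic integrals of $\partial_\sigma a$ and $\partial^2_\sigma a$; for $\partial^2_\mu m_2$ and $\partial^2_{\mu\sigma}m_2$ every term is shown to be $R(\theta,h^2,x)$. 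This direct route is more laborious but does not rely on any second-derivative stability of the Ad remainders.
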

Deriving once again, the orders do not get worse. Indeed, the following estimations hold.
\begin{proposition}
Suppose that Assumptions A1 - A5, A7 and Ad hold. Then
$$1. \quad |\partial^3_{\mu} m_2(\mu, \sigma, X_{t_i})| \le R(\theta, \Delta_{n,i}^2, X_{t_i}); \qquad 2. \quad |\partial^3_{\sigma \mu \sigma} m_2(\mu, \sigma, X_{t_i})| \le R(\theta, \Delta_{n,i}^2, X_{t_i}),$$
$$3. \quad |\partial^3_{\mu \mu \sigma} m_2(\mu, \sigma, X_{t_i})| \le R(\theta, \Delta_{n,i}^2, X_{t_i}); \qquad 4. \quad |\partial^3_{\sigma} m_2(\mu, \sigma, X_{t_i})| \le R(\theta, \Delta_{n,i}, X_{t_i}),$$
$$5. \quad |\partial^3_{\mu} m(\mu, \sigma, X_{t_i})| \le R(\theta, \Delta_{n,i}, X_{t_i}); \qquad 6. \quad |\partial^3_{\sigma \mu \sigma} m(\mu, \sigma, X_{t_i})| \le R(\theta, \Delta_{n,i}^\frac{3}{2}, X_{t_i}),$$
$$7. \quad |\partial^3_{\mu \mu \sigma} m(\mu, \sigma, X_{t_i})| \le R(\theta, \Delta_{n,i}^\frac{3}{2}, X_{t_i}); \qquad 8. \quad |\partial^3_{\sigma} m(\mu, \sigma, X_{t_i})| \le R(\theta, \Delta_{n,i}, X_{t_i}).$$
\label{prop: dervate terze}
\end{proposition}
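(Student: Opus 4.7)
The plan is to extend the arguments of Propositions \ref{prop: dl derivate prime} and \ref{prop: derivate seconde} by performing one additional differentiation within the same framework. The key observation is that the target third-derivative orders coincide exactly with the corresponding second-derivative orders obtained in Proposition \ref{prop: derivate seconde}, so the task reduces to showing that a further $\mu$- or $\sigma$-differentiation does not degrade the order.

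First, I would write $m = N_1/D$ and $m_2 = N_2/D$, where $N_1, N_2, D$ denote the conditional expectations appearing in \eqref{eq: definition m} and \eqref{eq: definition m2}, and apply the Leibniz rule to express each third derivative as a finite combination of $\partial^j_\theta N_1, \partial^j_\theta N_2, \partial^j_\theta D$ times negative powers of $D$. Since $\varphi \equiv 1$ whenever the increment of $X$ is smaller than $\Delta_{n,i}^\beta$, an event of probability close to $1$, one shows that $D$ is bounded away from $0$ uniformly in $(x,\theta)$, so the factors $1/D$ and their parameter derivatives are harmless. Differentiating inside the conditional expectation, the derivatives of $N_1, N_2, D$ are expressed through the derivative flows $Y^{(\mu, k)}_t := \partial^k_\mu X^\theta_t$ and $Y^{(\sigma, k)}_t := \partial^k_\sigma X^\theta_t$, which solve linear SDEs whose coefficients are controlled by Assumption A7. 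Standard Gronwall-plus-BDG moment estimates on $[t_i, t_{i+1}]$ yield, for any $p \ge 2$ and any $k \ge 1$,
\[
\| Y^{(\mu, k)}_{t_{i+1}}\|_{L^p} \le c\, \Delta_{n,i} (1 + |x|^c), \qquad \| Y^{(\sigma, k)}_{t_{i+1}}\|_{L^p} \le c\, \sqrt{\Delta_{n,i}}\, (1 + |x|^c),
\]
because each $\mu$-derivative extracts a factor $\Delta_{n,i}$ from the time integral of the drift while each $\sigma$-derivative extracts $\sqrt{\Delta_{n,i}}$ from the Brownian integral. Plugging these bounds into the Leibniz expansion, together with the estimates \eqref{eq: stima Ei prima}--\eqref{eq: x - m prop 2} of Lemma \ref{lemma: conditional expected value} for the expectations of $(X_{t_{i+1}} - m)^p \varphi^r(\cdot)$ and the development of $m_2$ granted by Ad, yields the stated orders.

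The reason the orders stabilise under further differentiation is twofold. For $\mu$-derivatives, Assumption A7 provides uniform bounds on $\partial^k_\mu b$ for $k \le 3$, so iterated $\mu$-differentiation keeps producing new terms with the same $\Delta_{n,i}$ factor extracted from the drift integral, without introducing any growing factor. For $\sigma$-derivatives, an additional $\partial_\sigma$ either extracts another $\sqrt{\Delta_{n,i}}$ from the diffusion, which pairs with a matching $(X_{t_{i+1}} - m)$ factor to preserve the order, or simply differentiates a previously extracted leading term (e.g. the $\Delta_{n,i} a^2$ of \eqref{eq: hp dl m2}) whose $\sigma$-derivatives are again uniformly controlled by A7.

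The main obstacle, as in the proof of Proposition \ref{prop: derivate seconde}, will be the terms in which a parameter derivative lands on the kernel $\varphi_{\Delta_{n,i}^\beta}(X_{t_{i+1}} - X_{t_i})$, producing factors of the form $\varphi^{(k)}_{\Delta_{n,i}^\beta}(\cdot)\, \Delta_{n,i}^{-k\beta}$ multiplied by products of the derivative flows $Y^{(\cdot, \cdot)}_{t_{i+1}}$, with an a priori singular factor $\Delta_{n,i}^{-k\beta}$. Lemma \ref{lemma: x-m con varphi'} is precisely designed to neutralise this singularity: since $\varphi^{(k)}$ is supported on $\{\Delta_{n,i}^\beta \le |X_{t_{i+1}} - X_{t_i}| \le 2\Delta_{n,i}^\beta\}$, the bound $\mathbb{E}_i[|X^\theta_{t_{i+1}} - m|^p |\varphi^{(k)}_{\Delta_{n,i}^\beta}|^r] \le R(\theta, \Delta_{n,i}^{1+\beta p}, X_{t_i})$ supplies arbitrarily many powers of $\Delta_{n,i}^\beta$ provided $p$ is taken large enough. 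A careful case-by-case bookkeeping then closes the argument. The most delicate cases should be the mixed derivatives $\partial^3_{\sigma\mu\sigma}$ and $\partial^3_{\mu\mu\sigma}$ of both $m$ and $m_2$, where the three derivatives can be distributed across $(X_{t_{i+1}} - m)^2$, $\varphi$, and $1/D$ in several ways, and one must verify that no combination produces a term larger than the stated order; here it helps to group surviving terms according to which of the leading expansions (\eqref{eq: hp dl m2} for $m_2$ and \eqref{E:dev_m_chap1} for $m$) drives the contribution.
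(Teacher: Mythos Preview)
Your approach is correct and follows essentially the same strategy as the paper: express the third derivatives via the quotient rule, control the denominator $\mathbb{E}[\varphi]$ and its derivatives using Lemma~\ref{lemma: esperance varphi} and Proposition~\ref{prop: truc moche h}, use the moment bounds on the parameter-derivative flows of $X$ from Lemma~\ref{lemma: derivate x}, and argue that when a derivative hits $\varphi$ the singular factor $h^{-\beta}$ is absorbed by the accompanying $\partial_\vartheta X_h^\theta$. The paper's organization is slightly more economical in that it differentiates directly the twelve terms $\tilde I_j^n$ already isolated in the proof of Proposition~\ref{prop: derivate seconde}, rather than re-expanding $N_2/D$ from scratch, but the substance is the same. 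One small correction: your displayed bounds $\|Y^{(\mu,k)}\|_{L^p}\le c\,\Delta_{n,i}$ and $\|Y^{(\sigma,k)}\|_{L^p}\le c\,\sqrt{\Delta_{n,i}}$ are stated only for pure derivatives, whereas the mixed cases (points 2, 3, 6, 7) also require the mixed-flow bounds of Lemma~\ref{lemma: derivate x}, e.g.\ $\|\partial^2_{\mu\sigma}X_h\|_{L^p}\le c\,h^{3/2}$ and $\|\partial^3_{\mu\mu\sigma}X_h\|_{L^p}\le c\,h^{3/2}$; these are exactly what make the mixed third derivatives land at order $h^{3/2}$ (for $m$) and $h^2$ (for $m_2$), so be sure to invoke them explicitly.
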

{\modch The notation here above proposed for the third derivatives is the natural extension of the one introduced in Proposition \ref{prop: derivate seconde} for the second derivatives.} \\
Propositions \ref{prop: dl derivate prime}, \ref{prop: derivate seconde} and \ref{prop: dervate terze} will be proved in the appendix.

\section{Proof of main results}\label{S:Proof_Main}
We first of all study the asymptotic behaviour of the contrast, from which we find the consistency of our estimator. \\
We underline that, to get the consistency of the drift parameter, the normalization of the contrast function is different than the normalization we use to find the consistency of $\hat{\sigma}_n$. Even if it doesn't seem a natural choice, it works well on the basis of Proposition \ref{prop: LT}.
\subsection{Contrast's convergence}\label{S: contrast conv}
To prove the contrast convergences, the development \eqref{eq: hp dl m2} of $m_2$ will be useful. We have shown in \cite{Chapitre 1} (see \eqref{E:dev_m_chap1} also) that under Assumptions $(A1)$-$(A4)$ the following development of $m(\mu, \sigma, x)$ holds  :
\begin{equation}
m(\mu, \sigma, x) = x + \Delta_{n, i}b(x, \mu)  + R^J(\Delta_{n,i}, x) + r_1(\mu, \sigma, x),
\label{eq: dl m}
\end{equation}
where $r_1(\mu, \sigma, x)$ is a particular $R(\theta, \Delta_{n,i}^{1 + \delta}, X_{t_i})$ function (with $\delta > 0$) and $R^J(\Delta_{n,i}, x)= - \Delta_{n,i}\int_\mathbb{R}z \gamma(x)[1 - \varphi_{\Delta_{n,i}^\beta}(\gamma(x)z)]F(z)dz$; the $J$ underlines that it turns out from a jump term. It has the same properties of the function R defined in Section \ref{S: dl m2} but it does not depend on $\theta$.  \\
Let us now prove the consistency of $\hat{\theta}_n$. The first step are the following lemmas:
\begin{lemma}
Suppose that A1 - A5, $A_{\mbox{Step}}$ and Ad hold. Moreover we suppose that $\beta \in (\frac{1}{4}, \frac{1}{2})$. Then
\begin{equation}
\frac{1}{n} U_n(\mu, \sigma) \xrightarrow{\mathbb{P}} \int_\mathbb{R}[\frac{c(x, \sigma_0)}{c(x, \sigma)} +\log(c(x, \sigma))] \pi(dx),
\label{eq: conv Un consistenza sigma}
\end{equation}
where $c(x, \sigma) = a^2(x, \sigma)$ and $\pi$ is the invariant distribution defined in Lemma \ref{lemma: 2.1 GLM}.
\label{lemma: conv Un consistenza sigma}
\end{lemma}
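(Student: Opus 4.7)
The plan is to split the normalised contrast into a quadratic and a logarithmic piece,
$$\frac{1}{n}U_n(\mu,\sigma)=A_n(\mu,\sigma)+B_n(\mu,\sigma),$$
with $A_n$ collecting the squared-residual term and $B_n$ the $\log(m_2/\Delta_{n,i})$ term, and then to treat each piece separately using the ergodic results of Section \ref{S: Limit theorems} together with the expansion of $m_2$ provided by Assumption Ad. Throughout the argument we only need pointwise convergence in $(\mu,\sigma)$, and the indicator $1_{\{|X_{t_i}|\le\Delta_{n,i}^{-k}\}}$ will be used to turn remainder terms with polynomial growth in $X_{t_i}$ into quantities of size $\Delta_{n,i}^{\alpha}$ for some $\alpha>0$, provided $k$ is chosen small enough.

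For the logarithmic piece, Assumption Ad together with A5 (so that $a^{2}\geq c>0$) gives
$$\frac{m_{2}(\mu,\sigma,X_{t_i})}{\Delta_{n,i}}=a^{2}(X_{t_i},\sigma)\bigl(1+\eta_{n,i}\bigr),$$
where $\eta_{n,i}$ gathers the $\Delta_{n,i}\,r(\mu,\sigma,\cdot)$, $\Delta_{n,i}^{\delta_1}r(\cdot)/a^{2}$ and $\Delta_{n,i}^{1+\delta_2}R(\theta,1,\cdot)/a^{2}$ contributions. On $\{|X_{t_i}|\le\Delta_{n,i}^{-k}\}$, and for $k$ small enough relative to $\delta_1,\delta_2$ and to the polynomial exponents in the definition of $R$, one has $|\eta_{n,i}|\le C\Delta_{n,i}^{\alpha}$ uniformly in $i$. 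Taylor-expanding $\log(1+\eta_{n,i})$ then gives
$$B_n(\mu,\sigma)=\frac{1}{n}\sum_{i=0}^{n-1}\log\bigl(a^{2}(X_{t_i},\sigma)\bigr)\,\varphi_{\Delta_{n,i}^{\beta}}(X_{t_{i+1}}-X_{t_i})\,1_{\{|X_{t_i}|\le\Delta_{n,i}^{-k}\}}+o_{\mathbb{P}}(1).$$
The function $x\mapsto\log(a^{2}(x,\sigma))$ has polynomial growth by A5 and A7, so Proposition \ref{prop: ergodic} point 4 yields $B_n(\mu,\sigma)\xrightarrow{\mathbb{P}}\int_{\mathbb{R}}\log c(x,\sigma)\,\pi(dx)$.

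For the quadratic piece, the same factorisation gives
$$\frac{1}{m_{2}(\mu,\sigma,X_{t_i})}=\frac{1}{\Delta_{n,i}a^{2}(X_{t_i},\sigma)}\bigl(1-\eta_{n,i}+O(\eta_{n,i}^{2})\bigr).$$
The principal part is
$$\frac{1}{n}\sum_{i=0}^{n-1}\frac{(X_{t_{i+1}}-m(\mu,\sigma,X_{t_i}))^{2}}{\Delta_{n,i}\,a^{2}(X_{t_i},\sigma)}\,\varphi_{\Delta_{n,i}^{\beta}}(X_{t_{i+1}}-X_{t_i})\,1_{\{|X_{t_i}|\le\Delta_{n,i}^{-k}\}},$$
which by Proposition \ref{prop: LT} point 2 applied with $f(x,\theta)=1/a^{2}(x,\sigma)$ converges in probability to $\int_{\mathbb{R}}\bigl(a^{2}(x,\sigma_0)/a^{2}(x,\sigma)\bigr)\pi(dx)=\int_{\mathbb{R}}c(x,\sigma_0)/c(x,\sigma)\,\pi(dx)$. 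The correction term carries an extra factor $\eta_{n,i}=O(\Delta_{n,i}^{\alpha})$ on the indicator event, and Lemma \ref{lemma: conditional expected value} part 1 (together with the tower property) shows that $\mathbb{E}_i[(X_{t_{i+1}}-m)^{2}\varphi^{2}_{\Delta_{n,i}^{\beta}}]/\Delta_{n,i}$ is bounded by $c(1+|X_{t_i}|^c)$, so that the correction is $o_{\mathbb{P}}(1)$ by a Markov-type argument. Adding the two limits gives the lemma.

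The main obstacle is the bookkeeping between the polynomial growth in $X_{t_i}$ of the various $R$-remainders and the truncation level $\Delta_{n,i}^{-k}$ imposed by the indicator: for every exponent $c$ appearing in a given remainder one must ensure $\Delta_{n,i}^{\delta-kc}\to 0$. Since $k$ is a free parameter, this is achieved by choosing $k<\min(\delta_1,\delta_2,1)/c_{\max}$ where $c_{\max}$ is the largest polynomial exponent encountered in the finitely many remainder terms produced by Ad and by the expansion \eqref{eq: dl m} of $m$; then standard estimates based on Lemma \ref{lemma: Moment inequalities} and Lemma \ref{lemma: conditional expected value} complete the argument.
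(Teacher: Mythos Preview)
Your proposal is correct and follows essentially the same approach as the paper: expand $1/m_2$ and $\log(m_2/\Delta_{n,i})$ via Assumption Ad, apply Proposition \ref{prop: LT} point 2 with $f(x,\theta)=1/a^2(x,\sigma)$ for the principal quadratic term and Proposition \ref{prop: ergodic} point 4 for the principal logarithmic term, and show the correction terms vanish using Lemma \ref{lemma: conditional expected value} part 1 and the tower property.

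The only noteworthy difference is in how the remainders are controlled. You bound $\eta_{n,i}$ deterministically on the event $\{|X_{t_i}|\le\Delta_{n,i}^{-k}\}$, which forces the bookkeeping on $k$ that you flag as the ``main obstacle''. The paper instead bounds each remainder term in $L^1$: it writes the correction pieces as $\Delta_{n}^{\delta}\cdot\frac{1}{n}\sum_i R(\theta,1,X_{t_i})$ and uses $\sup_t\mathbb{E}|X_t|^q<\infty$ from Lemma \ref{lemma: 2.1 GLM} (together with the polynomial growth of $R$) to get $\mathbb{E}[|R(\theta,1,X_{t_i})|]\le c$ directly, so the sums are $O(\Delta_n^{\delta})$ in $L^1$ without any constraint on $k$ beyond $k<k_0$ (needed for Ad to apply). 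This is slightly cleaner and sidesteps your $c_{\max}$ accounting entirely, but your route is also valid.
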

Lemma \ref{lemma: conv Un consistenza sigma} is useful to prove the consistency of $\hat{\sigma}_n$, while we will use next lemma to show the consistency of $\hat{\mu}_n$
\begin{lemma}
Suppose that A1 - A5, $A_{\mbox{Step}}$ and Ad hold. Moreover we suppose that $\beta \in (\frac{1}{4}, \frac{1}{2})$ and that $2 \delta_1 > 1$. Then
\begin{equation}
\frac{1}{T_n} (U_n(\mu, \sigma) - U_n(\mu_0, \sigma))  \xrightarrow{\mathbb{P}} \int_\mathbb{R}\frac{(b(x, \mu_0)- b(x, \mu))^2}{c(x, \sigma)} \pi(dx) + \int_\mathbb{R} [r(\mu, \sigma, x) - r(\mu_0, \sigma, x)](1 - \frac{c(x, \sigma_0)}{c(x, \sigma)}) \pi(dx),
\label{eq: conv Un consistenza mu}
\end{equation}
where $r(\mu, \sigma, x)$ is the particular $R(\theta, 1, x)$ function who turns out from the development \eqref{eq: hp dl m2} of $m_2$.
\label{lemma: conv Un consistenza mu }
\end{lemma}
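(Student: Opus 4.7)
The plan is to split the difference into four pieces using the identity $(X_{t_{i+1}}-m(\mu,\sigma,X_{t_i}))^2 = \zeta_i^2 + 2\zeta_i \Delta m_i + (\Delta m_i)^2$, where I set $\zeta_i := X_{t_{i+1}}-m(\mu_0,\sigma,X_{t_i})$, $\Delta m_i := m(\mu_0,\sigma,X_{t_i})-m(\mu,\sigma,X_{t_i})$, and $\Phi_i := \varphi_{\Delta_{n,i}^\beta}(X_{t_{i+1}}-X_{t_i})\mathbf{1}_{\{|X_{t_i}|\le \Delta_{n,i}^{-k}\}}$. This yields
$$ U_n(\mu,\sigma) - U_n(\mu_0,\sigma) = S_n^{(1)} + S_n^{(2)} + S_n^{(3)} + L_n, $$
where $S_n^{(1)}=\sum_i (\Delta m_i)^2 \Phi_i / m_2(\mu,\sigma,X_{t_i})$, $S_n^{(2)}=2\sum_i \zeta_i \Delta m_i \Phi_i / m_2(\mu,\sigma,X_{t_i})$, $S_n^{(3)}=\sum_i \zeta_i^2 \Phi_i [m_2(\mu,\sigma,X_{t_i})^{-1}-m_2(\mu_0,\sigma,X_{t_i})^{-1}]$, and $L_n=\sum_i \Phi_i \log(m_2(\mu,\sigma,X_{t_i})/m_2(\mu_0,\sigma,X_{t_i}))$; I will analyze each normalized piece $S_n^{(k)}/T_n$ and $L_n/T_n$ separately.

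For $S_n^{(1)}/T_n$, the expansion \eqref{E:dev_m_chap1} gives $\Delta m_i = \Delta_{n,i}(b(X_{t_i},\mu_0)-b(X_{t_i},\mu)) + R(\theta,\Delta_{n,i}^{2-2\beta},X_{t_i})$, while Ad gives $m_2(\mu,\sigma,X_{t_i}) \approx \Delta_{n,i} a^2(X_{t_i},\sigma)$; squaring and dividing produces a main term of order $\Delta_{n,i}(b(X_{t_i},\mu_0)-b(X_{t_i},\mu))^2/a^2(X_{t_i},\sigma)$, to which Proposition \ref{prop: ergodic} part 2 applies to yield the first summand of the announced limit. For $S_n^{(2)}$, I further decompose $\zeta_i = (X_{t_{i+1}}-m(\mu_0,\sigma_0,X_{t_i})) + (m(\mu_0,\sigma_0,X_{t_i})-m(\mu_0,\sigma,X_{t_i}))$: the first piece, multiplied by $\varphi_{\Delta_{n,i}^\beta}(X_{t_{i+1}}-X_{t_i})$, is a triangular martingale increment array (by the defining property \eqref{eq: definition m} of $m$ at the true parameter), and together with Lemma \ref{lemma: conditional expected value} part 1 it contributes a martingale whose quadratic variation is of order $T_n$, hence $O(1/\sqrt{T_n})$ after division by $T_n$; the second piece is bounded by $R(\theta,\Delta_{n,i}^{2-2\beta},X_{t_i})$ by \eqref{E:dev_m_chap1} and contributes $O(\Delta_n^{1-2\beta}) \to 0$. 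Thus $S_n^{(2)}/T_n \xrightarrow{\mathbb{P}} 0$.

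The essential structural observation for $S_n^{(3)}$ and $L_n$ is that, by Ad, the second main term $\Delta_{n,i}^{1+\delta_1}r(x)$ of the expansion of $m_2$ does not depend on $\mu$, so it cancels in both $m_2(\mu,\sigma,x)-m_2(\mu_0,\sigma,x)$ and in $(m_2(\mu,\sigma,x)-m_2(\mu_0,\sigma,x))/m_2(\mu_0,\sigma,x)$. This yields the leading order identities $1/m_2(\mu,\sigma,x)-1/m_2(\mu_0,\sigma,x) = -(r(\mu,\sigma,x)-r(\mu_0,\sigma,x))/a^2(x,\sigma) + \text{rem.}$ and $\log(m_2(\mu,\sigma,X_{t_i})/m_2(\mu_0,\sigma,X_{t_i})) = \Delta_{n,i}(r(\mu,\sigma,X_{t_i})-r(\mu_0,\sigma,X_{t_i})) + o(\Delta_{n,i})$, where the hypothesis $2\delta_1 > 1$ is exactly what makes the remainder terms arising from the $\Delta_{n,i}^{\delta_1}r(x)/a^2$ contribution negligible after summation and normalization by $T_n$. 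Multiplying the first identity by $\zeta_i^2 \Phi_i$ and applying Proposition \ref{prop: LT} part 1 (with $\mu=\mu_0$) gives $S_n^{(3)}/T_n \xrightarrow{\mathbb{P}} -\int_\mathbb{R}(r(\mu,\sigma,x)-r(\mu_0,\sigma,x))c(x,\sigma_0)/c(x,\sigma)\,\pi(dx)$, and Proposition \ref{prop: ergodic} part 2 gives $L_n/T_n \xrightarrow{\mathbb{P}} \int_\mathbb{R}(r(\mu,\sigma,x)-r(\mu_0,\sigma,x))\,\pi(dx)$. Summing the four limits and factoring $r(\mu,\sigma,x)-r(\mu_0,\sigma,x)$ out of the contributions from $S_n^{(3)}$ and $L_n$ produces the factor $1-c(x,\sigma_0)/c(x,\sigma)$ and gives the announced limit. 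The main technical hurdle is the cross term $S_n^{(2)}$, for which the naive pointwise bound $|\zeta_i \Delta m_i|/m_2(\mu)=O(|\zeta_i|)$ is not small enough after summation and division by $T_n$: the martingale structure of $\zeta_i$ under $\theta_0$, combined with the $R(\theta,\Delta_{n,i}^{2-2\beta},\cdot)$ perturbation for the $\sigma$ versus $\sigma_0$ discrepancy, is what allows the term to vanish in probability.
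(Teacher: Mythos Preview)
Your proof is correct and follows essentially the same line as the paper's, with a different but equivalent regrouping of terms. The paper first expands $1/m_2$ and $\log(m_2/\Delta_{n,i})$ via \eqref{eq: 1/m2}--\eqref{eq: log m2} and then splits the difference into six pieces $I_1^n,\dots,I_6^n$; the main quadratic piece $I_1^n$ (which corresponds to your $S_n^{(1)}+S_n^{(2)}$ with $m_2$ replaced by its leading term $\Delta_{n,i}c(X_{t_i},\sigma)$) is not analysed in the paper but referred to Lemma~4 of \cite{Chapitre 1}, while $I_3^n,I_4^n$ produce the $r(\mu,\sigma,\cdot)-r(\mu_0,\sigma,\cdot)$ limit and $I_2^n,I_5^n,I_6^n$ are shown to vanish. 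You instead decompose the squared increment algebraically first, isolate the martingale cross term $S_n^{(2)}$ explicitly (using the defining centering property of $m(\mu_0,\sigma_0,\cdot)$), and only afterwards expand the $1/m_2$ difference for $S_n^{(3)}$ and the $\log$ ratio for $L_n$. Your route is slightly more self-contained because it exhibits the martingale mechanism behind the vanishing of the cross term directly rather than invoking the external lemma; the paper's route makes the parallel with the proof of Lemma~\ref{lemma: conv Un consistenza sigma} more visible. Both rely on exactly the same ingredients: the expansion \eqref{eq: dl m} of $m$, Assumption~Ad (in particular the $\mu$-independence of $r(x)$ and the condition $2\delta_1>1$ so that $\bar r>1$), Proposition~\ref{prop: ergodic}, Proposition~\ref{prop: LT}, and Lemma~\ref{lemma: conditional expected value}.
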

\subsubsection{Proof of Lemma \ref{lemma: conv Un consistenza sigma}}
\begin{proof}
We first of all observe that by the equation \eqref{eq: hp dl m2} we have, for $|X_{t_i}| \le \Delta_{n,i}^{- k}$,
$$\frac{1}{m_2(\mu, \sigma, X_{t_i})} = \frac{1}{\Delta_{n,i} c(X_{t_i}, \sigma)( 1 + \Delta_{n,i}^{\delta_1} \frac{r(X_{t_i})}{c(X_{t_i}, \sigma)} + \Delta_{n,i}r(\mu, \sigma, X_{t_i}) + R(\theta, \Delta_{n,i}^{1 + \delta_2}, X_{t_i}))} = $$
\begin{equation}
= \frac{1}{\Delta_{n,i} c(X_{t_i}, \sigma)}(1 - \Delta_{n,i}^{\delta_1} \frac{r(X_{t_i})}{c(X_{t_i}, \sigma)} - \Delta_{n,i}r(\mu, \sigma, X_{t_i}) + R(\theta, \Delta_{n,i}^{2 \delta_1 \land(1 + \delta_2) \land 2}, X_{t_i})).
\label{eq: 1/m2}
\end{equation}
In the sequel we will just write $\bar{r}$ for $2 \delta_1 \land(1 + \delta_2) \land 2$. We observe that, as a consequence of the Assumption $Ad$, we have for both $\vartheta = \mu$ and $\vartheta = \sigma$, $|\partial_\vartheta R(\theta, \Delta_{n,i}^{\bar{r}}, X_{t_i})| \le R(\theta, \Delta_{n,i}^{\bar{r}}, X_{t_i}) $, where the two rest functions are not necessarily the same.\\
Similarly, 
$$\log(\frac{m_2(\mu, \sigma, X_{t_i})}{\Delta_{n,i}}) = \log(c(X_{t_i}, \sigma)) + \log( 1 + \Delta_{n,i}^{\delta_1} \frac{r(X_{t_i})}{c(X_{t_i}, \sigma)} + \Delta_{n,i}r(\mu, \sigma, X_{t_i})+ R(\theta, \Delta_{n,i}^{1 + \delta_2}, X_{t_i})) =$$
\begin{equation}
= \log(c(X_{t_i}, \sigma)) + \Delta_{n,i}^{\delta_1} \frac{r(X_{t_i})}{c(X_{t_i}, \sigma)} +\Delta_{n,i}r(\mu, \sigma, X_{t_i}) + R(\theta, \Delta_{n,i}^{\bar{r}}, X_{t_i}).
\label{eq: log m2}
\end{equation}
Using both \eqref{eq: 1/m2} and \eqref{eq: log m2} in the definition of $U_n(\mu, \sigma)$, we have to show that
$$\frac{1}{n} \sum_{i = 0}^{n - 1} \frac{(X_{t_{i+1}} - m(\mu, \sigma, X_{t_i}))^2}{\Delta_{n,i}c(X_{t_i}, \sigma)}\varphi_{\Delta_{n,i}^\beta}(X_{t_{i+1}} - X_{t_i})1_{\left \{|X_{t_i}| \le \Delta_{n,i}^{-k} \right \} }(1 - \Delta_{n,i}^{\delta_1} \frac{r(X_{t_i})}{c(X_{t_i}, \sigma)} - \Delta_{n,i}r(\mu, \sigma, X_{t_i}) + R(\theta, \Delta_{n,i}^{\bar{r}}, X_{t_i})) +$$
$$ +\frac{1}{n} \sum_{i = 0}^{n - 1}(\log(c(X_{t_i}, \sigma)) + \Delta_{n,i}^{\delta_1} \frac{r(X_{t_i})}{c(X_{t_i}, \sigma)} + \Delta_{n,i}r(\mu, \sigma, X_{t_i}) + R(\theta, \Delta_{n,i}^{\bar{r}}, X_{t_i}))\varphi_{\Delta_{n,i}^\beta}(X_{t_{i+1}} - X_{t_i})1_{\left \{|X_{t_i}| \le \Delta_{n,i}^{-k} \right \} } = : \sum_{j = 1}^8 I_j^n$$
converges to the right hand side of \eqref{eq: conv Un consistenza sigma}.
We know that $I_1^n \xrightarrow{\mathbb{P}} \int_\mathbb{R} (\frac{c(x, \sigma_0)}{c(x, \sigma)}) \pi(dx)$ because of Proposition \ref{prop: LT}.
Using the third point of Proposition \ref{prop: ergodic}, $I_5^n \xrightarrow{\mathbb{P}} \int_\mathbb{R} \log(c(x, \sigma)) \pi(dx)$.
All the other terms converge to zero in norm $1$ and so in probability. Indeed, passing through the conditional expectation and using the first point of Lemma \ref{lemma: conditional expected value} we have
$$\mathbb{E}[|I_2^n|] \le \frac{1}{n}\sum_{i = 0}^{n - 1} \mathbb{E}[|\frac{\Delta_{n,i}^{\delta_1} r( X_{t_i})}{\Delta_{n,i}c^2(X_{t_i}, \sigma)}\mathbb{E}_i[(X_{t_{i+1}} - m(\mu, \sigma, X_{t_i}))^2\varphi_{\Delta_{n,i}^\beta}(X_{t_{i+1}} - X_{t_i})]1_{\left \{|X_{t_i}| \le \Delta_{n,i}^{-k} \right \} }|] \le $$
$$\le \frac{\Delta_n^{\delta_1}}{n} \sum_{i = 0}^{n - 1}\mathbb{E}[|R(\theta, 1, X_{t_i})|] \le c \Delta_n^{\delta_1},$$
reminding that $r(X_{t_i})$ is a function $R(\theta, 1, X_{t_i})$ by its definition and having used the property \eqref{propriety power R} on R, its polynomial growth and the third point of Lemma \ref{lemma: 2.1 GLM}. In the same way we obtain
$$\mathbb{E}[|I_3^n|] \le c \Delta_n \qquad \mbox{and} \qquad \mathbb{E}[|I_4^n|] \le c \Delta_n^{\bar{r}},$$
that goes to zero since $\bar{r} = 2 \delta_1 \land(1 + \delta_2) \land 2$ is always positive. \\
Concerning $I_6^n$, as a consequence of the definition of $r(x)$ and the fact that $\Delta_{n,i} \le \Delta_n$ we have again
$$\mathbb{E}[|I_6^n|] \le \frac{\Delta_n^{\delta_1}}{n} \sum_{i = 0}^{n - 1}\mathbb{E}[|R(\theta, 1, X_{t_i})|] \le c \Delta_n^{\delta_1}, $$
which converges to zero for $n \rightarrow \infty$. Again, acting in the same way we have $$\mathbb{E}[|I_7^n|] \le c \Delta_n \qquad \mbox{and} \qquad \mathbb{E}[|I_8^n|] \le c \Delta_n^{\bar{r}}.$$
Convergence \eqref{eq: conv Un consistenza sigma} follows.
\end{proof}
\subsubsection{Proof of Lemma \ref{lemma: conv Un consistenza mu }}
\begin{proof}
Using again \eqref{eq: 1/m2} and \eqref{eq: log m2} we have that
$$\frac{1}{T_n} (U_n(\mu, \sigma) - U_n(\mu_0, \sigma))= \frac{1}{T_n} \sum_{i = 0}^{n - 1} [\frac{(X_{t_{i+1}} - m(\mu, \sigma, X_{t_i}))^2}{\Delta_{n,i}c(X_{t_i}, \sigma)} - \frac{(X_{t_{i+1}} - m(\mu_0, \sigma, X_{t_i}))^2}{\Delta_{n,i}c(X_{t_i}, \sigma)} ]\varphi_{\Delta_{n,i}^\beta}(\Delta_i X)1_{\left \{|X_{t_i}| \le \Delta_{n,i}^{-k} \right \} }+$$
$$+ \frac{1}{T_n} \sum_{i = 0}^{n - 1} \Delta_{n,i}^{\delta_1} \frac{r(X_{t_i})}{c(X_{t_i}, \sigma)} [\frac{(X_{t_{i+1}} - m(\mu, \sigma, X_{t_i}))^2}{\Delta_{n,i}c(X_{t_i}, \sigma)} - \frac{(X_{t_{i+1}} - m(\mu_0, \sigma, X_{t_i}))^2}{\Delta_{n,i}c(X_{t_i}, \sigma)} ]\varphi_{\Delta_{n,i}^\beta}(\Delta_i X)1_{\left \{|X_{t_i}| \le \Delta_{n,i}^{-k} \right \} } +$$
$$+ \frac{1}{T_n} \sum_{i = 0}^{n - 1} \Delta_{n,i} r(\mu,\sigma, X_{t_i}) [1 - \frac{(X_{t_{i+1}} - m(\mu, \sigma, X_{t_i}))^2}{\Delta_{n,i}c(X_{t_i}, \sigma)}]\varphi_{\Delta_{n,i}^\beta}(\Delta_i X)1_{\left \{|X_{t_i}| \le \Delta_{n,i}^{-k} \right \} }+$$
$$- \frac{1}{T_n} \sum_{i = 0}^{n - 1} \Delta_{n,i} r(\mu_0,\sigma, X_{t_i}) [1 - \frac{(X_{t_{i+1}} - m(\mu_0, \sigma, X_{t_i}))^2}{\Delta_{n,i}c(X_{t_i}, \sigma)}]\varphi_{\Delta_{n,i}^\beta}(\Delta_i X)1_{\left \{|X_{t_i}| \le \Delta_{n,i}^{-k} \right \} } +$$
$$+ \frac{1}{T_n} \sum_{i = 0}^{n - 1} [R((\mu, \sigma), \Delta_{n,i}^{\bar{r}}, X_{t_i}) + R((\mu, \sigma), \Delta_{n,i}^{\bar{r}}, X_{t_i}) \frac{(X_{t_{i+1}} - m(\mu, \sigma, X_{t_i}))^2}{\Delta_{n,i}c(X_{t_i}, \sigma)}]\varphi_{\Delta_{n,i}^\beta}(\Delta_i X)1_{\left \{|X_{t_i}| \le \Delta_{n,i}^{-k} \right \} }+$$
$$+ \frac{1}{T_n} \sum_{i = 0}^{n - 1}[R((\mu_0, \sigma), \Delta_{n,i}^{\bar{r}}, X_{t_i}) + R((\mu_0, \sigma), \Delta_{n,i}^{\bar{r}}, X_{t_i}) \frac{(X_{t_{i+1}} - m(\mu_0, \sigma, X_{t_i}))^2}{\Delta_{n,i}c(X_{t_i}, \sigma)}]\varphi_{\Delta_{n,i}^\beta}(\Delta_i X)1_{\left \{|X_{t_i}| \le \Delta_{n,i}^{-k} \right \} }= : \sum_{j = 1}^6 I_j^n,$$
where we have introduced the notation $\Delta_i X : = X_{t_{i+1}} - X_{t_i} $ and we recall that $\bar{r} = 2 \delta_1 \land (1 + \delta_2) \land 2$. \\
We have already proved in Lemma 4 of \cite{Chapitre 1} that $I_1^n \xrightarrow{\mathbb{P}} \int_\mathbb{R}\frac{(b(x, \mu_0)- b(x, \mu))^2}{c(x, \sigma)} \pi(dx)$. We observe that $I_2^n$ differs from $I_1^n$ only from the presence of $\Delta_{n,i}^{\delta_1} \frac{r(X_{t_i})}{c(X_{t_i}, \sigma)}$ and so, since $\delta_1$ is positive, acting exactly as we did in order to prove the convergence of $I_1^n$ it is possible to show that the added $\Delta_{n,i}^{\delta_1}$ make $I_2^n$ converge to zero in probability. \\
Concerning $I_3^n$,
\begin{equation}
\frac{1}{T_n} \sum_{i = 0}^{n - 1} \Delta_{n,i} r(\mu,\sigma, X_{t_i})\varphi_{\Delta_{n,i}^\beta}(\Delta_i X)1_{\left \{|X_{t_i}| \le \Delta_{n,i}^{-k} \right \} } \xrightarrow{\mathbb{P}} \int_\mathbb{R} r(\mu,\sigma, x) \pi(dx)
\label{eq: conv r}
\end{equation}
as a consequence of the second point of Proposition \ref{prop: ergodic}. Moreover, using the third point of Proposition \ref{prop: LT}, we have that
\begin{equation}
\frac{1}{T_n} \sum_{i = 0}^{n - 1} \Delta_{n,i} r(\mu,\sigma, X_{t_i})\frac{(X_{t_{i+1}} - m(\mu, \sigma, X_{t_i}))^2}{\Delta_{n,i}c(X_{t_i}, \sigma)}\varphi_{\Delta_{n,i}^\beta}(\Delta_i X)1_{\left \{|X_{t_i}| \le \Delta_{n,i}^{-k} \right \} } \xrightarrow{\mathbb{P}} \int_\mathbb{R} r(\mu,\sigma, x) \frac{c(x, \sigma_0)}{c (x, \sigma)} \pi(dx).
\label{eq: conv r con x-m}
\end{equation}
From \eqref{eq: conv r} and \eqref{eq: conv r con x-m} it follows
$$I_3^n \xrightarrow{\mathbb{P}} \int_\mathbb{R} r(\mu,\sigma, x)[1 - \frac{c(x, \sigma_0)}{c (x, \sigma)}] \pi(dx).$$
Acting on $I_4^n$ exactly like we did on $I_3^n$ we get 
$$I_4^n \xrightarrow{\mathbb{P}} \int_\mathbb{R} r(\mu_0,\sigma, x)[1 - \frac{c(x, \sigma_0)}{c (x, \sigma)}] \pi(dx).$$
Concerning $I_5^n$, it is
$$\frac{1}{T_n} \sum_{i = 0}^{n - 1} R(\theta, \Delta_{n,i}^{\bar{r}}, X_{t_i})\varphi_{\Delta_{n,i}^\beta}(\Delta_i X)1_{\left \{|X_{t_i}| \le \Delta_{n,i}^{-k} \right \} }\le \Delta_{n}^{\bar{r} -1} \frac{1}{n}\sum_{i = 0}^{n - 1} R(\theta, 1, X_{t_i})\varphi_{\Delta_{n,i}^\beta}(\Delta_i X),$$
which converges to zero in norm $1$ and so in probability because of the boundedness of $\varphi$, the polynomial growth of $R$, the fact that $\frac{1}{T_n} = O(\frac{1}{n \Delta_n})$ and that $r-1$ is always positive since we have assumed $2 \delta_1 > 1$.
Moreover, passing through the conditional expectation and using the first point of Lemma \ref{lemma: conditional expected value} we have that
$$\frac{1}{T_n} \sum_{i = 0}^{n - 1} \mathbb{E}[R(\theta, \Delta_{n,i}^{\bar{r}}, X_{t_i}) \mathbb{E}_i[\frac{(X_{t_{i+1}} - m(\mu, \sigma, X_{t_i}))^2}{\Delta_{n,i}c(X_{t_i}, \sigma)}\varphi_{\Delta_{n,i}^\beta}(\Delta_i X)]1_{\left \{|X_{t_i}| \le \Delta_{n,i}^{-k} \right \} }] \le \Delta_n^{\bar{r} - 1} \frac{1}{n} \sum_{i = 0}^{n - 1} \mathbb{E}[R(\theta, 1, X_{t_i})] \le c \Delta_n^{ \bar{r} - 1}.$$
We have therefore proved that the second part of $I_5^n$ converges to $0$ in norm $1$ and therefore in probability. It follows $I_5^n \xrightarrow{\mathbb{P}} 0$ and, acting exactly in the same way, we have also $I_6^n \xrightarrow{\mathbb{P}} 0$.
It yields \eqref{eq: conv Un consistenza mu}.
\end{proof}

\subsection{Consistency of the estimator.}{\label{S: consistency}}
In order to prove the consistency of $\hat{\theta}_n$, we need that the convergences \eqref{eq: conv Un consistenza sigma} and \eqref{eq: conv Un consistenza mu} take place in probability uniformly in both the parameters, we want therefore to show the uniformity of the convergence in $\theta$. \\
We regard $\frac{U_n(\mu, \sigma)}{n}$ and $S_n(\mu, \sigma) : = \frac{1}{T_n}( U_n(\mu, \sigma) - U_n(\mu_0, \sigma))$ as random elements taking values in $(C(\Theta), \left \| . \right \|_\infty)$. It suffices to prove the tightness of these sequences; to do it we need some estimations for the derivatives of $m$ and $m_2$ with respect to both the parameters, which are stated in Proposition \ref{prop: dl derivate prime}, that will be proved in the Appendix. Such a proposition will be also useful to study the asymptotic behavior of the derivatives of the contrast function. We observe that, as a consequence of \eqref{eq: dl m} and Proposition \ref{prop: dl derivate prime}, for $\vartheta = \mu$ or $\vartheta = \sigma$ it is $|\partial_\vartheta r_1(\mu, \sigma, x) | \le R(\theta, \Delta_{n,i}, X_{t_i})$.\\
\begin{lemma}
Suppose that Assumption  A1-A5, Ad, $A_{\mbox{Step}}$ and A7 are satisfied. Then, the sequence $\frac{U_n(\mu, \sigma)}{n}$ is tight in $(C(\Theta), \left \| . \right \|_\infty)$.
\label{lemma: tightness per sigma}
\end{lemma}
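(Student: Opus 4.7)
The plan is to reduce tightness of $\{U_n/n\}$ in $(C(\Theta), \|\cdot\|_\infty)$ to an Arzelà--Ascoli-type criterion: since $\Theta \subset \mathbb{R}^2$ is compact, it suffices to verify (i) that $\{U_n(\theta_0)/n\}$ is tight at some $\theta_0 \in \Theta$, and (ii) that $\sup_n \mathbb{E}\bigl[\sup_{\theta \in \Theta} |\nabla_\theta (U_n(\theta)/n)|\bigr] < \infty$. Part (i) is immediate from Lemma \ref{lemma: conv Un consistenza sigma}, which gives pointwise convergence in probability of $U_n/n$ to a finite limit. The substantive work lies in (ii).

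For (ii), I would differentiate the generic summand of $U_n$ with respect to $\vartheta \in \{\mu, \sigma\}$, obtaining three contributions, each multiplied by the factor $\varphi_{\Delta_{n,i}^\beta}(\Delta_i X)\,1_{\{|X_{t_i}| \le \Delta_{n,i}^{-k}\}}$:
$$A_i^\vartheta = -\frac{2(X_{t_{i+1}} - m)\,\partial_\vartheta m}{m_2}, \qquad B_i^\vartheta = -\frac{(X_{t_{i+1}} - m)^2\,\partial_\vartheta m_2}{m_2^2}, \qquad C_i^\vartheta = \frac{\partial_\vartheta m_2}{m_2}.$$
I would insert the derivative bounds of Proposition \ref{prop: dl derivate prime} together with the non-degeneracy A5 (which ensures $m_2 \gtrsim \Delta_{n,i}$ on the truncation set), then pass to the conditional expectation $\mathbb{E}_i[\cdot]$ via Lemma \ref{lemma: conditional expected value} (using \eqref{eq: stima Ei prima} for $A_i^\vartheta$ and \eqref{eq: x - m prop 2} for $B_i^\vartheta$), and control the surviving polynomial factors in $X_{t_i}$ through the moment bound of Lemma \ref{lemma: 2.1 GLM}. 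For $\vartheta = \mu$, each contribution gains an extra factor of $\Delta_{n,i}$ through $\partial_\mu m \sim \Delta_{n,i}$ or $\partial_\mu m_2 \lesssim \Delta_{n,i}^2$, so $\mathbb{E}[\sup_\theta |\partial_\mu(U_n/n)|] = O(\Delta_n)$. For $\vartheta = \sigma$, $A_i^\sigma$ is again $O_{L^1}(\Delta_n)$, and $B_i^\sigma, C_i^\sigma$ are genuinely $O(1)$ per term, but their $L^1$ norm after averaging by $n$ is controlled by integrable polynomial moments of $X_{t_i}$.

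The uniformity in $\theta \in \Theta$ is ensured by A7, which guarantees that the implicit constants in the $R$-notation of Proposition \ref{prop: dl derivate prime} are uniform in $\theta$; this justifies pulling $\sup_\theta$ inside the sum before taking expectations. The main technical obstacle is the $\sigma$-derivative: contrary to the $\mu$-case, the terms $B_i^\sigma$ and $C_i^\sigma$ do not benefit from any extra smallness in $\Delta_{n,i}$, and one must carefully combine the non-degeneracy $m_2 \gtrsim \Delta_{n,i}$ with the conditional estimate $\mathbb{E}_i[(X_{t_{i+1}} - m)^2 \varphi^2] \lesssim \Delta_{n,i}(1 + |X_{t_i}|^c)$ to show that $|B_i^\sigma|\,\varphi$ is dominated by a function of $X_{t_i}$ with uniform polynomial growth. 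Once this is done, summing and dividing by $n$ yields the required uniform $L^1$ bound, and tightness follows from Markov's inequality applied to the increments $|(U_n/n)(\theta_1) - (U_n/n)(\theta_2)| \le \|\nabla_\theta (U_n/n)\|_\infty \, |\theta_1 - \theta_2|$.
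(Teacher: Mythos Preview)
Your strategy matches the paper's: differentiate the contrast, decompose into the three terms $A_i^\vartheta, B_i^\vartheta, C_i^\vartheta$, then bound $\sup_n \frac{1}{n}\mathbb{E}[\sup_\theta |\partial_\vartheta U_n|]$ using Proposition~\ref{prop: dl derivate prime}, the development~\eqref{eq: hp dl m2}, and Lemma~\ref{lemma: conditional expected value}. The tightness criterion you invoke is exactly the one the paper uses (Corollary~B.1 in \cite{Shimizu thesis}).

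There is one genuine slip. For $A_i^\vartheta$ you cite \eqref{eq: stima Ei prima}, but that inequality controls $|\mathbb{E}_i[(X_{t_{i+1}}-m)\varphi^k]|$, not $\mathbb{E}_i[|X_{t_{i+1}}-m|\,\varphi^k]$. Once $\sup_\theta$ is inside the expectation you need the latter, and moreover $m=m(\mu,\sigma,X_{t_i})$ depends on the parameter over which you are taking the supremum, so the centering itself moves with~$\theta$. The paper fixes this in \eqref{eq: stima sup x-m}--\eqref{eq: fine stima sup x-m}: split $X_{t_{i+1}}-m(\mu,\sigma,X_{t_i}) = (X_{t_{i+1}}-m(\mu_0,\sigma,X_{t_i})) + (m(\mu_0,\sigma,X_{t_i})-m(\mu,\sigma,X_{t_i}))$, apply Cauchy--Schwarz together with~\eqref{eq: Ei al quadrato} to the first piece (which gives $R(\theta,\Delta_{n,i}^{1/2},X_{t_i})$, not $\Delta_{n,i}$), and bound the second piece by $R(\theta,\Delta_{n,i},X_{t_i})$ via the development~\eqref{eq: dl m}. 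The same manoeuvre is needed for $\sup_\theta(X_{t_{i+1}}-m)^2\varphi$ in~$B_i^\vartheta$; see~\eqref{eq: stima sup (x-m)^2}. As a consequence your claimed rate $\mathbb{E}[\sup_\theta|\partial_\mu(U_n/n)|]=O(\Delta_n)$ should read $O(\Delta_n^{1/2})$, and your claim that $A_i^\sigma$ is $O_{L^1}(\Delta_n)$ should also be $O(\Delta_n^{1/2})$. Neither affects the conclusion, but the argument as written does not close without this correction.
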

\begin{proof}
The tightness is implied by $\sup_n \frac{1}{n} \mathbb{E}[\sup_{\mu, \sigma}|\partial_\vartheta U_n(\mu, \sigma)|] < \infty$ (see Corollary B.1 in \cite{Shimizu thesis}), for $\vartheta= \mu$ and $\vartheta= \sigma$. It is
\begin{equation}
\partial_\vartheta U_n(\mu, \sigma) = \sum_{i = 0}^{n - 1} [\frac{-2 \partial_\vartheta m (\mu, \sigma, X_{t_i})(X_{t_{i + 1}} - m(\mu, \sigma, X_{t_i}))}{m_2(\mu, \sigma, X_{t_i})} - \frac{\partial_\vartheta m_2 (\mu, \sigma, X_{t_i})(X_{t_{i + 1}} - m(\mu, \sigma, X_{t_i}))^2}{m^2_2(\mu, \sigma, X_{t_i})} + 
\label{eq: partial theta Un}
\end{equation}
$$ +\frac{\partial_\vartheta  m_2 (\mu, \sigma, X_{t_i})}{ m_2 (\mu, \sigma, X_{t_i})}]\varphi_{\Delta_{n,i}^\beta}(\Delta_i X)]1_{\left \{|X_{t_i}| \le \Delta_{n,i}^{-k} \right \} }.$$
Using the first and the third point of Proposition \ref{prop: dl derivate prime} and the development \eqref{eq: hp dl m2} of $m_2$ it follows
$$\mathbb{E}[\sup_{\mu, \sigma}|\partial_\mu U_n(\mu, \sigma)|] \le \sum_{i = 0}^{n - 1} \mathbb{E}[\sup_{\mu,\sigma} |R(\theta, 1, X_{t_i})(X_{t_{i + 1}} - m(\mu, \sigma, X_{t_i}))\varphi_{\Delta_{n,i}^\beta}(\Delta_i X)|1_{i,n} ] + $$
\begin{equation}
+ \sum_{i = 0}^{n - 1} \mathbb{E}[\sup_{\mu ,\sigma} |R(\theta, 1, X_{t_i})(X_{t_{i + 1}} - m(\mu, \sigma, X_{t_i}))^2\varphi_{\Delta_{n,i}^\beta}(\Delta_i X)| 1_{i,n}] + \sum_{i = 0}^{n - 1}\mathbb{E}[\sup_{\mu, \sigma} |R(\theta, \Delta_{n,i}, X_{t_i})|1_{i,n}], 
\label{eq: deriv Un mu}
\end{equation}
where we have used $1_{i,n}$ instead of $1_{\left \{|X_{t_i}| \le \Delta_{n,i}^{-k} \right \} }$ to shorten the notation. \\
We observe that 
$$\mathbb{E}[\sup_{\mu, \sigma}|R(\theta, 1, X_{t_i})(X_{t_{i + 1}} - m(\mu, \sigma, X_{t_i}))\varphi_{\Delta_{n,i}^\beta}(\Delta_i X)|1_{i,n}] \le $$
$$ \le \mathbb{E}[(\sup_{\mu, \sigma}|R(\theta, 1, X_{t_i})|)(\sup_{\mu, \sigma}|(X_{t_{i + 1}} - m(\mu, \sigma, X_{t_i}))\varphi_{\Delta_{n,i}^\beta}(\Delta_i X)|)1_{i,n}] \le $$
$$ \le \mathbb{E}[(\sup_{\mu, \sigma}|R(\theta, 1, X_{t_i})| 1_{i,n})(|(X_{t_{i + 1}} - m(\mu_0, \sigma, X_{t_i}))\varphi_{\Delta_{n,i}^\beta}(\Delta_i X)|)] +$$
\begin{equation}
+ c\mathbb{E}[(\sup_{\mu ,\sigma}|R(\theta, 1, X_{t_i})|)(\sup_{\mu, \sigma}| m(\mu, \sigma, X_{t_i}) - m(\mu_0, \sigma, X_{t_i}))|)1_{i,n}].
\label{eq: stima sup x-m}
\end{equation}
We can now use Cauchy-Schwartz inequality and \eqref{eq: Ei al quadrato} in Lemma \ref{lemma: conditional expected value} on the first, while on the second we use the development \eqref{eq: dl m} of $m$ getting that \eqref{eq: stima sup x-m} is upper bounded by 
$$c\mathbb{E}[R(\theta, \Delta_{n,i}, X_{t_i})]^\frac{1}{2} + c \mathbb{E}[(\sup_{\mu, \sigma}|R(\theta, 1, X_{t_i})|)(\sup_{\mu, \sigma}|\Delta_{n,i}(b(X_{t_i}, \mu) - b(X_{t_i}, \mu_0)) + r_1(\mu, \sigma, X_{t_i}) - r_1(\mu_0, \sigma, X_{t_i})|)] \le$$
\begin{equation}
\le c \Delta_n^\frac{1}{2} + c\mathbb{E}[\sup_{\mu, \sigma}|R(\theta, \Delta_{n,i}, X_{t_i})|] \le  c \Delta_n^\frac{1}{2} +  c \Delta_n \le  c \Delta_n^\frac{1}{2}, 
\label{eq: fine stima sup x-m}
\end{equation}
where we have also used the boundedness of $\varphi$, the fact that $R$ has polynomial growth uniformly in $\theta$ and the third point of Lemma \ref{lemma: 2.1 GLM} to say that our process has finite moments. \\
In the same way,
\begin{equation}
\mathbb{E}[\sup_{\mu ,\sigma}|R(\theta, 1, X_{t_i})(X_{t_{i + 1}} - m(\mu, \sigma, X_{t_i}))^2\varphi_{\Delta_{n,i}^\beta}(\Delta_i X)|1_{i,n}] \le \mathbb{E}[\sup_{\mu, \sigma}|R(\theta, \Delta_{n,i}, X_{t_i})|] + \mathbb{E}[\sup_{\mu ,\sigma}|R(\theta, \Delta_{n,i}^2, X_{t_i})|] \le  c \Delta_n. 
\label{eq: stima sup (x-m)^2}
\end{equation}
Replacing \eqref{eq: fine stima sup x-m} and \eqref{eq: stima sup (x-m)^2} in \eqref{eq: deriv Un mu} it follows 
$$\sup_n \frac{1}{n} \mathbb{E}[\sup_{\mu, \sigma}|\partial_\mu U_n(\mu, \sigma)|] \le c \Delta_n^\frac{1}{2} \le c < \infty.$$
We can act in the same way on $\partial_\sigma U_n(\mu, \sigma)$. Considering this time the second and the fourth point of Proposition \ref{prop: dl derivate prime} and still using the development \eqref{eq: hp dl m2} of $m_2$ and \eqref{eq: Ei al quadrato} in Lemma \ref{lemma: conditional expected value} it follows 
$$\sup_n \frac{1}{n} \mathbb{E}[\sup_{\mu,\sigma}|\partial_\sigma U_n(\mu, \sigma)|] \le \sup_n \frac{1}{n} \sum_{i = 0}^{n - 1} \mathbb{E}[\sup_{\mu ,\sigma}|R(\theta, \Delta_{n,i}^\frac{1}{2}, X_{t_i}) +R(\theta, 1, X_{t_i})|] \le c < \infty. $$
The tightness is therefore proved.
\end{proof}

\begin{lemma}
Suppose that Assumption  A1-A5, A7, $A_{\mbox{Step}}$ and Ad are satisfied.  We suppose moreover that $\delta_1$ in the Assumption Ad is such that $2 \delta_1 > 1$. Then, the sequence $S_n(\mu, \sigma) = \frac{1}{T_n}( U_n(\mu, \sigma) - U_n(\mu_0, \sigma))$ is tight in $(C(\Theta), \left \| . \right \|_\infty)$.
\label{lemma: tightness per mu}
\end{lemma}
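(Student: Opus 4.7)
The plan is to follow the scheme of Lemma \ref{lemma: tightness per sigma} and invoke Corollary B.1 of \cite{Shimizu thesis}: since $S_n(\mu_0, \sigma_0) = 0$, tightness of $\{S_n\}$ in $(C(\Theta), \|\cdot\|_\infty)$ reduces to proving $\sup_n \mathbb{E}[\sup_{(\mu, \sigma) \in \Theta} |\partial_\vartheta S_n|] < \infty$ for $\vartheta \in \{\mu, \sigma\}$. Because $U_n(\mu_0, \sigma)$ is independent of $\mu$, $\partial_\mu S_n = T_n^{-1} \partial_\mu U_n(\mu, \sigma)$; applying the mean-value theorem in the $\mu$-direction to $\partial_\sigma U_n(\mu, \sigma) - \partial_\sigma U_n(\mu_0, \sigma)$ yields $\partial_\sigma S_n(\mu, \sigma) = T_n^{-1} \int_{\mu_0}^{\mu} \partial^2_{\mu\sigma} U_n(\mu', \sigma)\, d\mu'$. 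Hence it suffices to show $\mathbb{E}[\sup_{(\mu, \sigma)} |\partial_\mu U_n|] = O(T_n)$ and $\mathbb{E}[\sup_{(\mu', \sigma)} |\partial^2_{\mu\sigma} U_n|] = O(T_n)$.

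The essential refinement over Lemma \ref{lemma: tightness per sigma}, where the direct Cauchy--Schwartz estimate gives only $O(n\sqrt{\Delta_n})$ (sufficient when normalizing by $n$ but not by $T_n = n\Delta_n$), is the decomposition $X_{t_{i+1}} - m(\mu, \sigma, X_{t_i}) = Z_i + D_i(\mu, \sigma)$, with $Z_i := X_{t_{i+1}} - m(\mu_0, \sigma_0, X_{t_i})$ independent of $(\mu, \sigma)$ and satisfying the exact martingale property $\mathbb{E}_i[Z_i \varphi_{\Delta_{n,i}^\beta}(\Delta_i X)] = 0$ directly from the definition \eqref{eq: definition m} of $m$, while $D_i(\mu, \sigma) := m(\mu_0, \sigma_0, X_{t_i}) - m(\mu, \sigma, X_{t_i})$ is deterministic given $X_{t_i}$ and bounded by $c\, \Delta_{n,i}\, (1 + |X_{t_i}|^c)$ uniformly in $(\mu, \sigma)$, using the expansion \eqref{eq: dl m} and the cancellation of the $(\mu, \sigma)$-free jump term $R^J$. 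Substituting this into the expression \eqref{eq: partial theta Un} of $\partial_\mu U_n$ and combining with $\partial_\mu m = O(\Delta_{n,i})$, $\partial_\mu m_2 = O(\Delta_{n,i}^2)$ from Proposition \ref{prop: dl derivate prime} and $m_2 = \Theta(\Delta_{n,i})$ from \eqref{eq: hp dl m2}, the $D_i$-parts, the quadratic contribution $(X_{t_{i+1}} - m)^2\varphi$ (controlled by \eqref{eq: Ei al quadrato}) and the $\partial_\mu m_2 / m_2$ term each contribute $O(\Delta_{n,i})$ per summand to $\mathbb{E}[\sup \cdot]$, giving $O(T_n)$ in total.

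The remaining piece is a martingale-type sum $V(\mu, \sigma) := \sum_i \xi_i(\mu, \sigma)\, Z_i\, \varphi_{\Delta_{n,i}^\beta}(\Delta_i X)$, where $\xi_i(\mu, \sigma) = -2 \partial_\mu m(\mu, \sigma, X_{t_i}) / m_2(\mu, \sigma, X_{t_i})$ is smooth in $(\mu, \sigma)$ with derivatives of uniform polynomial growth in $X_{t_i}$, and $\mathbb{E}_i[Z_i^2 \varphi^2] = O(\Delta_{n,i})$ by \eqref{eq: Ei al quadrato}. Exploiting the $L^2$-modulus bound $\mathbb{E}[(V(\mu, \sigma) - V(\mu', \sigma'))^2] \le c\, T_n\, \|(\mu, \sigma) - (\mu', \sigma')\|^2$, a standard chaining argument on the compact $\Theta \subset \mathbb{R}^2$ produces $\mathbb{E}[\sup |V|] = O(\sqrt{T_n}) = o(T_n)$. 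The analysis of $\partial^2_{\mu\sigma} U_n$ is entirely analogous, now using Proposition \ref{prop: derivate seconde} for the orders of the second derivatives of $m$ and $m_2$. The hypothesis $2\delta_1 > 1$ enters when treating the $(\mu, \sigma)$-free rest $\Delta_{n,i}^{\delta_1} r(X_{t_i})/c(X_{t_i}, \sigma)$ appearing in \eqref{eq: 1/m2}: its contribution to $\partial_\vartheta S_n$ is at worst of order $\Delta_n^{2\delta_1 - 1}$, which is $o(1)$ precisely when $2\delta_1 > 1$. The main obstacle is the uniform chaining control of the martingale sum $V$, which goes beyond the direct Cauchy--Schwartz estimate of Lemma \ref{lemma: tightness per sigma}; all other pieces reduce directly to the conditional moment bounds of Lemma \ref{lemma: conditional expected value} and the derivative bounds of Propositions \ref{prop: dl derivate prime} and \ref{prop: derivate seconde}.
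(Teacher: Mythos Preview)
Your proposal is correct and takes a genuinely different route from the paper. The paper reuses the decomposition $S_n=\sum_{j=1}^6 I_j^n$ from the proof of Lemma~\ref{lemma: conv Un consistenza mu }, obtained by expanding $1/m_2$ and $\log(m_2/\Delta_{n,i})$ via \eqref{eq: 1/m2}--\eqref{eq: log m2}; the principal piece $I_1^n$ is then further split through the SDE dynamics \eqref{eq: reformulation x -m}, isolating the Brownian integral, and the Kolmogorov moment criterion \eqref{eq: 1 criterion tightness}--\eqref{eq: 2 criterion tightness} (with exponent $m>2$) is applied to the stochastic-integral pieces $I_{1,2}^n,I_{1,3}^n$. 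You instead differentiate $S_n$ directly, decompose $X_{t_{i+1}}-m(\mu,\sigma,X_{t_i})=Z_i+D_i(\mu,\sigma)$ with $Z_i=X_{t_{i+1}}-m(\mu_0,\sigma_0,X_{t_i})$ an exact martingale increment, and control the resulting $\theta$-indexed martingale sum $V$ by chaining; this is essentially the mechanism the paper employs later in Proposition~\ref{prop: punto 2 conv deriv seconda}, transported to the present lemma. Your organization is more streamlined because it never expands $1/m_2$; a side effect is that the hypothesis $2\delta_1>1$---which the paper needs precisely to control the rest $R(\theta,\Delta_{n,i}^{\bar r},X_{t_i})$ in its handling of $I_5^n,I_6^n$---does not genuinely enter your argument, so your paragraph invoking it is somewhat artificial. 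Conversely, the paper's decomposition has the advantage of paralleling the convergence proof of Lemma~\ref{lemma: conv Un consistenza mu }, so the same objects are analysed twice. Both routes ultimately rest on the same Kolmogorov/chaining control (higher-moment increment bounds) of a martingale sum indexed by $\theta$.
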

\begin{proof}
We take again the notation used in the proof of Lemma \ref{lemma: conv Un consistenza mu }, for which $S_n(\mu, \sigma) = \sum_{j = 1}^6 I_j^n$. Since the sum of tight sequences is still tight, we will proceed showing that they are all tight. We start with $I_3^n$; acting as we did in Lemma \ref{lemma: tightness per sigma}, we prove that $\sup_n \mathbb{E}[\sup_{\mu ,\sigma}|\partial_\vartheta I_3^n|] < \infty$. 
We observe that, for $\vartheta = \mu$ and $\vartheta = \sigma$, 
$$\partial_\vartheta I_3^n = \frac{1}{T_n} \sum_{i = 0}^{n - 1} \Delta_{n,i}[\partial_\vartheta r(\mu, \sigma, X_{t_i})(1 - \frac{(X_{t_{i+1}} - m(\mu, \sigma, X_{t_i}))^2}{\Delta_{n,i}c(X_{t_i}, \sigma)})  + $$
$$ - r(\mu, \sigma, X_{t_i}) \partial_\vartheta (\frac{(X_{t_{i+1}} - m(\mu, \sigma, X_{t_i}))^2}{\Delta_{n,i}c(X_{t_i}, \sigma)})]\varphi_{\Delta_{n,i}^\beta}(\Delta_i X)1_{\left \{|X_{t_i}| \le \Delta_{n,i}^{-k} \right \} }= : I_{3,1}^n + I_{3,2}^n.$$
On $I_{3,1}^n$ we use the first point of Lemma \ref{lemma: conditional expected value} and that $|\partial_\vartheta r(\mu, \sigma, X_{t_i})| \le R(\theta, 1, X_{t_i})$ as stated in  Assumption Ad to get
\begin{equation}
\sup_n \mathbb{E}[\sup_{\mu ,\sigma}|I_{3,1}^n|]  \le c + \sup_n \frac{1}{n\Delta_n} \sum_{i =0}^{n -1} \mathbb{E}[\mathbb{E}_i[\sup_{\mu ,\sigma}|R(\theta,1, X_{t_i})(X_{t_{i+1}} - m(\mu, \sigma, X_{t_i}))^2\varphi_{\Delta_{n,i}^\beta}(\Delta_i X)|1_{i,n}]] \le c
\label{eq: stima I21 consistenza mu}
\end{equation}
where we have used the polynomial growth of $R$, the third point of Lemma \ref{lemma: 2.1 GLM} and \eqref{eq: stima sup (x-m)^2} and the notation $1_{i,n}$ instead of $1_{\left \{ |X_{t_i}| \le \Delta_{n,i}^{-k} \right \}}$. \\
Concerning $I_{3,2}^n$, the derivatives of $\frac{(X_{t_{i+1}} - m(\mu, \sigma, X_{t_i}))^2}{\Delta_{n,i}c(X_{t_i}, \sigma)}$ with respect to $\mu$ and $\sigma$ are different but in both cases they are upper bounded, using the first and the second point of Proposition \ref{prop: dl derivate prime}, by $|R(\theta, 1, X_{t_i})(X_{t_{i+1}} - m(\mu, \sigma, X_{t_i}))\varphi_{\Delta_{n,i}^\beta}(\Delta_i X)|$. We can therefore use \eqref{eq: stima sup x-m} and \eqref{eq: fine stima sup x-m}, getting
\begin{equation}
\sup_n \mathbb{E}[\sup_{\mu, \sigma}|I_{3,2}^n|]  \le c \sup_n \Delta_n^\frac{1}{2} \le c < \infty.
\label{eq: stima I22 consistenza mu}
\end{equation}
From \eqref{eq: stima I21 consistenza mu} and \eqref{eq: stima I22 consistenza mu} it follows the tightness of $I_3^n$. Acting exactly in the same way on $I_4^n$ it is clear it is tight too. Concerning $I_5^n$ and $I_6^n$, recalling that the function $R(\theta, \Delta_{n,i}^{\bar{r}}, X_{t_i})$ turns out from \eqref{eq: 1/m2} and it is such that its derivatives with respect to both the parameters remains of the same order, we observe it is possible to act like we did on $I_3^n$ getting
$$\sup_n \mathbb{E}[\sup_{\mu, \sigma}|I_{5}^n|] \le c\Delta_n^{ \bar{r} - 1} + c \Delta_n^{ \bar{r} - \frac{1}{2}} < \infty,$$
since we have chosen $2 \delta_1 > 1$ and so $\bar{r} - 1$ is positive.
Clearly the same estimation hold for $I_6^n$.\\
We now prove that $I_1^n$ is tight. To do it we observe that, using the development \eqref{eq: dl m} and the dynamic \eqref{eq: model} of the process $X$ we have
\begin{equation}
X_{t_{i + 1}} - m(\mu, \sigma, X_{t_i}) = \int_{t_i}^{t_{i + 1}} b(X_s, \mu_0) ds + \int_{t_i}^{t_{i + 1}} a(\sigma_0,X_s)dW_s + \int_{t_i}^{t_{i + 1}} \int_{\mathbb{R} \backslash \left \{0 \right \} }
\gamma(X_{s^-})z \tilde{\mu}(ds,dz)+
\label{eq: reformulation x -m}
\end{equation}
$$ - R^J(\Delta_{n,i}, X_{t_i})- \Delta_{n, i}b(X_{t_i}, \mu) - r_1(\mu, \sigma, X_{t_i}).$$
It is worth noting that only the last two terms here above depend on $\mu$ and so replacing \eqref{eq: reformulation x -m} in $I_1^n$ some terms are deleted by compensation. Therefore we can define
$$I_{1,1}^n : = \frac{1}{T_n} \sum_{i = 0}^{n - 1} \frac{\varphi_{\Delta_{n,i}^\beta}(\Delta_i X)1_{\left \{|X_{t_i}| \le \Delta_{n,i}^{-k} \right \} }}{\Delta_{n,i} c (\sigma, X_{t_i})} [\Delta_{n,i}^2 (b^2(X_{t_i}, \mu) - b^2(X_{t_i}, \mu_0)) + r_1^2(\mu, \sigma, X_{t_i}) - r_1^2(\mu_0, \sigma, X_{t_i}) +$$
$$ + 2\Delta_{n,i}b(X_{t_i}, \mu)r_1(\mu, \sigma, X_{t_i})- 2\Delta_{n,i}b(X_{t_i}, \mu_0)r_1(\mu_0, \sigma, X_{t_i})+ 2[\int_{t_i}^{t_{i + 1}} b(X_s, \mu_0) ds + \Delta X_i^J +$$
$$ - R^J(\Delta_{n,i}, X_{t_i})][\Delta_{n,i} (b(X_{t_i}, \mu) - b(X_{t_i}, \mu_0)) + r_1(\mu, \sigma, X_{t_i}) - r_1(\mu_0, \sigma, X_{t_i})],$$
where we have denoted by $\Delta X_i^J$ the jump part in $\Delta X_i$, that is $\int_{t_i}^{t_{i + 1}} \int_{\mathbb{R} \backslash \left \{0 \right \} }
\gamma(X_{s^-})z \tilde{\mu}(ds,dz)$. \\
Moreover we define
$$I_{1,2}^n: = \frac{1}{T_n} \sum_{i = 0}^{n - 1} \frac{2(b(X_{t_i}, \mu) - b(X_{t_i}, \mu_0))\int_{t_i}^{t_{i + 1}} a(\sigma_0,X_s)dW_s}{c (\sigma, X_{t_i})}\varphi_{\Delta_{n,i}^\beta}(\Delta_i X)1_{\left \{|X_{t_i}| \le \Delta_{n,i}^{-k} \right \} },$$
$$I_{1,3}^n: = \frac{1}{T_n} \sum_{i = 0}^{n - 1} \frac{2(r_1(\mu, \sigma, X_{t_i}) - r_1(\mu_0, \sigma, X_{t_i}))\int_{t_i}^{t_{i + 1}} a(\sigma_0,X_s)dW_s}{ \Delta_{n,i} c (\sigma, X_{t_i})}\varphi_{\Delta_{n,i}^\beta}(\Delta_i X)1_{\left \{|X_{t_i}| \le \Delta_{n,i}^{-k} \right \} }.$$
It is $I_1^n = I_{1,1}^n + I_{1,2}^n + I_{1,3}^n$. We are going to prove that $I_{1,1}^n$ is tight showing that the expected value of the derivatives is bounded, like we have already done. On $I_{1,2}^n$ and $I_{1,3}^n$ we will use instead the Kolmogorov criterion for which, if for some positive constant $H$ independent of $n$ and for $m \ge r >2$, $S_n$ is a sequence such that
\begin{equation}
\mathbb{E}[(S_n(\theta))^m] \le H \qquad \forall \theta \in \Theta ,
\label{eq: 1 criterion tightness}
\end{equation}
\begin{equation}
\mathbb{E}[(S_n(\theta_1)- S_n(\theta_2))^m] \le H|\mu_1 - \mu_2|^r + H|\sigma_1 - \sigma_2|^r \qquad \forall \theta_1, \theta_2 \in \Theta,
\label{eq: 2 criterion tightness}
\end{equation}
then $S_n$ is tight. \\
Let us start considering $I_{1,1}^n$: we want to show that $\sup_n \mathbb{E}[\sup_{\mu, \sigma}|\partial_\vartheta I_{1,1}^n|] < \infty$. 
We observe it is 
$$\partial_\mu I_{1,1}^n = \frac{1}{T_n} \sum_{i = 0}^{n - 1} \frac{\varphi_{\Delta_{n,i}^\beta}(\Delta_i X)1_{\left \{|X_{t_i}| \le \Delta_{n,i}^{-k} \right \} }}{\Delta_{n,i} c (\sigma, X_{t_i})} [\Delta_{n,i}^2 (2b \,\partial_\mu b)(X_{t_i}, \mu) + (2 r_1 \partial_\mu r_1)(\mu, \sigma, X_{t_i}) + 2\Delta_{n,i}((\partial_\mu b)(X_{t_i}, \mu) r_1(\mu, \sigma, X_{t_i}) + $$
$$+ b(X_{t_i}, \mu)(\partial_\mu r_1)(\mu, \sigma, X_{t_i})) + 2(\int_{t_i}^{t_{i + 1}} b(X_s, \mu_0) ds + \Delta X_i^J - R^J(\Delta_{n,i}, X_{t_i}))(\Delta_{n,i} \partial_\mu b(X_{t_i}, \mu) + \partial_ \mu r_1(\mu, \sigma, X_{t_i}))];$$
$$\partial_\sigma I_{1,1}^n = \frac{1}{T_n} \sum_{i = 0}^{n - 1} \frac{\varphi_{\Delta_{n,i}^\beta}(\Delta_i X)1_{\left \{|X_{t_i}| \le \Delta_{n,i}^{-k} \right \} }}{\Delta_{n,i} c (\sigma, X_{t_i})} [2 r_1 \partial_\sigma r_1(\mu, \sigma, X_{t_i}) -2 r_1 \partial_\sigma r_1(\mu_0, \sigma, X_{t_i}) + 2\Delta_{n,i}(b(X_{t_i}, \mu) \partial_\sigma r_1(\mu, \sigma, X_{t_i}) + $$
$$ -b(X_{t_i}, \mu_0) \partial_\sigma r_1(\mu_0, \sigma, X_{t_i})) + 2(\int_{t_i}^{t_{i + 1}} b(X_s, \mu_0) ds + \Delta X_i^J -
 R^J(\Delta_{n,i}, X_{t_i}))(\partial_ \sigma r_1(\mu_0, \sigma, X_{t_i}) - \partial_ \sigma r_1(\mu, \sigma, X_{t_i}))] + $$
 $$ - \frac{\partial_\sigma c(X_{t_i}, \sigma) \varphi_{\Delta_{n,i}^\beta}(\Delta_i X)1_{\left \{|X_{t_i}| \le \Delta_{n,i}^{-k} \right \} }}{\Delta_{n,i} c^2 (\sigma, X_{t_i})}[\Delta_{n,i}^2 (b^2(X_{t_i}, \mu) - b^2(X_{t_i}, \mu_0)) + r_1^2(\mu, \sigma, X_{t_i}) -  r_1^2(\mu_0, \sigma, X_{t_i}) +$$
 $$ + 2\Delta_{n,i}b(X_{t_i}, \mu)r_1(\mu, \sigma, X_{t_i}) - 2\Delta_{n,i}b(X_{t_i}, \mu_0)r_1(\mu_0, \sigma, X_{t_i}) + 2(\int_{t_i}^{t_{i + 1}} b(X_s, \mu_0) ds + \Delta X_i^J + $$
 $$ - R^J(\Delta_{n,i}, X_{t_i}))(\Delta_{n,i} (b(X_{t_i}, \mu) - b(X_{t_i}, \mu_0)) + r_1(\mu, \sigma, X_{t_i}) - r_1(\mu_0, \sigma, X_{t_i}))].$$
Using the polynomial growth of $b$ and recalling that $r_1$ is the particular $R(\theta, \Delta_{n,i}^{1 + \delta}, X_{t_i})$ function that turns out from the development \eqref{eq: dl m} of $m$ and it is such that $|\partial_\vartheta r_1(\mu, \sigma, X_{t_i})| \le R(\theta, \Delta_{n,i},X_{t_i})$ as a consequence of the first two points of Proposition \ref{prop: dl derivate prime}, we get
$$|\partial_\vartheta I_{1,1}^n| \le \frac{c}{n\Delta_n} \sum_{i = 0}^{n - 1} |\varphi_{\Delta_{n,i}^\beta}(\Delta_i X)|[R(\theta, \Delta_{n,i}, X_{t_i}) + R(\theta, \Delta_{n,i}^{1+ \delta}, X_{t_i}) + $$
$$ + 2(|\int_{t_i}^{t_{i + 1}} b(X_s, \mu_0) ds| + |\Delta X_i^J| + R^J(\Delta_{n,i}, X_{t_i}))(R(\theta, 1, X_{t_i}) + R(\theta, \Delta_{n,i}^{ \delta}, X_{t_i}))].$$
Using Lemma \ref{lemma: estim jumps}, the boundedness of $\varphi$, the fact that $\frac{1}{T_n} = O(\frac{1}{n \Delta_n})$ and that $2\mathbb{E}_i[|\int_{t_i}^{t_{i + 1}} b(X_s, \mu_0) ds|]$ is a $R(\theta_0, \Delta_{n,i}, X_{t_i})$, it follows 
$$\sup_n \mathbb{E}[\sup_{\mu, \sigma}|\partial_\vartheta I_{1,1}^n| ] \le \sup_n (\frac{1}{n\Delta_n} \sum_{i = 0}^{n - 1} \mathbb{E}[\sup_{\mu, \sigma}|R(\theta, \Delta_{n,i}, X_{t_i}) + (R(\theta_0, \Delta_{n,i}, X_{t_i}) + R^J(\Delta_{n,i}, X_{t_i}))R(\theta, 1, X_{t_i})|] +   $$
$$+ \frac{1}{n\Delta_n} \sum_{i = 0}^{n - 1} \mathbb{E}[(\sup_{\mu, \sigma}|R(\theta, 1, X_{t_i})|)\mathbb{E}_i[|\Delta X_i^J\varphi_{\Delta_{n,i}^\beta}(\Delta_i X)| ]]) \le c, $$
where in the last inequality we have used Lemma \ref{lemma: estim jumps} here above, the polynomial growth of $R$ uniform in $\theta$ and the third point of Lemma \ref{lemma: 2.1 GLM}. $I_{1,1}^n$ is therefore tight. \\
We now show that \eqref{eq: 1 criterion tightness} and \eqref{eq: 2 criterion tightness} hold on $I_{1,2}^n$. Indeed, using Burkholder and Jensen inequalities, we get $\mathbb{E}[|I_{1,2}^n(\theta_1) - I_{1,2}^n(\theta_2)|^m] \le$
\begin{equation}
\le \frac{c}{n^m \Delta_n^m} n^{\frac{m}{2} - 1} \sum_{i = 0}^{n - 1} \mathbb{E}[|\frac{b(X_{t_i}, \mu_1) - b(X_{t_i}, \mu_0)}{c(\sigma_1, X_{t_i})} - \frac{b(X_{t_i}, \mu_2) - b(X_{t_i}, \mu_0)}{c(\sigma_2, X_{t_i})}|^m |\int_{t_i}^{t_{i + 1}} a(\sigma_0,X_s)dW_s|^m |\varphi_{\Delta_{n,i}^\beta}(\Delta_i X)|^m ].
\label{eq: stima inizio I12 Kolmogorov}
\end{equation}
We observe that, as a consequence of the finite-increments theorem, we have \begin{equation}
|\frac{b(X_{t_i}, \mu_1) - b(X_{t_i}, \mu_0)}{c(\sigma_1, X_{t_i})} - \frac{b(X_{t_i}, \mu_2) - b(X_{t_i}, \mu_0)}{c(\sigma_2, X_{t_i})}|^m \le |\frac{\partial_\mu b(X_{t_i}, \tilde{\mu})}{c(X_{t_i}, \tilde{\sigma})}(\mu_1 - \mu_2) + 
\label{eq: Lagrange}
\end{equation}
$$ - \frac{(b(X_{t_i}, \tilde{\mu}) - b(X_{t_i}, \mu_0)) \partial_\sigma c(X_{t_i}, \tilde{\sigma})}{c(X_{t_i}, \tilde{\sigma})}(\sigma_1 - \sigma_2)|^m \le R(\theta, 1, X_{t_i})|\mu_1 - \mu_2|^m + R(\theta, 1, X_{t_i})|\sigma_1 - \sigma_2|^m,$$
where actually the functions $R$ are calculated in a point $\tilde{\theta}: = (\tilde{\mu}, \tilde{\sigma})$, with $\tilde{\mu} \in (\mu_1, \mu_2)$ and $ \tilde{\sigma} \in (\sigma_1, \sigma_2)$ but, since the property \eqref{eq: definition R} of $R$ is uniform in $\theta$, we have chosen to write it simply as $R(\theta, 1, X_{t_i})$. Using also the boundedness of $\varphi$, we get that \eqref{eq: stima inizio I12 Kolmogorov} is upper bounded by 
$$\frac{c n^{\frac{m}{2} - 1}}{n^m \Delta_n^m}  \sum_{i = 0}^{n - 1} \mathbb{E}[|\int_{t_i}^{t_{i + 1}} a(\sigma_0,X_s)dW_s|^m R(\theta, 1, X_{t_i})]|\mu_1 - \mu_2)
|^m + \mathbb{E}[|\int_{t_i}^{t_{i + 1}} a(\sigma_0,X_s)dW_s|^m R(\theta, 1, X_{t_i})]|\sigma_1 - \sigma_2|^m. $$
Using Burkholder-Davis-Gundy inequality we have, $\forall p \ge 2$, 
\begin{equation}
\mathbb{E}[(\int_{t_i}^{t_{i + 1}} a(\sigma,X_s)dW_s)^p] \le \mathbb{E}[(\int_{t_i}^{t_{i + 1}} a^2(\sigma,X_s)ds)^\frac{p}{2}] \le \mathbb{E}[R(\theta, \Delta_{n,i}, X_{t_i})^\frac{p}{2}] = c\Delta_{n,i}^\frac{p}{2},
\label{eq: BDG}
\end{equation}
where in the last inequality we have used the polynomial growth of $a$ and the third point of Lemma \ref{lemma: 2.1 GLM}. \\
From Holder inequality and \eqref{eq: BDG} it therefore follows
$$\mathbb{E}[|I_{1,2}^n(\theta_1) - I_{1,2}^n(\theta_2)|^m] \le \frac{c}{(n \Delta_n)^\frac{m}{2}}|\mu_1 - \mu_2|^m + \frac{c}{(n \Delta_n)^\frac{m}{2}}|\sigma_1 - \sigma_2|^m \le c|\mu_1 - \mu_2|^m + c|\sigma_1 - \sigma_2|^m,$$
where we have also used that $n\Delta_n \rightarrow\infty$ for $n\rightarrow \infty$. For $r: = m$ \eqref{eq: 2 criterion tightness} is proved. \\
Concerning \eqref{eq: 1 criterion tightness}, acting in the same way we get
$$\mathbb{E}[|I_{1,2}^n(\theta)|^m] \le \frac{cn^{\frac{m}{2} - 1}}{n^m \Delta_n^m}  \sum_{i = 0}^{n - 1} \mathbb{E}[R(\theta_1, 1, X_{t_i})^m |\int_{t_i}^{t_{i + 1}} a(\sigma_0,X_s)dW_s|^m |\varphi_{\Delta_{n,i}^\beta}(\Delta_i X)|^m ] \le \frac{c}{(n \Delta_n)^\frac{m}{2}} \le c.$$
$I_{1,2}^n$ is hence tight. The tightness of $I_{1,3}^n$ is obtained acting exactly in the same way, remarking that
$$|\frac{r_1(\mu_0,\sigma_1, X_{t_i}) - r_1(\mu_1,\sigma_1, X_{t_i})}{ \Delta_{n,i} c(\sigma_1, X_{t_i})} - \frac{r_1(\mu_0,\sigma_2, X_{t_i}) - r_1(\mu_2,\sigma_2, X_{t_i})}{ \Delta_{n,i} c(\sigma_2, X_{t_i})}|^m \le |\frac{\partial_\mu r_1(\tilde{\mu},\tilde{\sigma}, X_{t_i})}{\Delta_{n,i} c(X_{t_i}, \tilde{\sigma})}(\mu_1 - \mu_2) + $$
\begin{equation}
+ [\frac{\partial_\sigma r_1(\mu_0,\tilde{\sigma}, X_{t_i}) - \partial_\sigma r_1(\tilde{\mu},\tilde{\sigma}, X_{t_i})}{ \Delta_{n,i} c(\tilde{\sigma}, X_{t_i})} - \frac{\partial_\sigma c(\tilde{\sigma}, X_{t_i}) (r_1(\mu_0,\tilde{\sigma}, X_{t_i}) - r_1(\tilde{\mu}, \tilde{\sigma}, X_{t_i}))}{ \Delta_{n,i} c(\tilde{\sigma}, X_{t_i})}](\sigma_1 - \sigma_2)|^m \le
\label{eq: kolm estim i13}
\end{equation}
$$\le R(\theta, 1, X_{t_i})|\mu_1 - \mu_2|^m + R(\theta, 1, X_{t_i})|\sigma_1 - \sigma_2|^m,$$
as a consequence of the fact that $(r_1(\mu, \sigma, X_{t_i}))^m$  and $(\partial_\vartheta r_1(\mu, \sigma, X_{t_i}))^m$ are respectively upper bounded by $R(\theta, \Delta_{n,i}^{m(1 + \delta)}, X_{t_i})$ and $R(\theta, \Delta_{n,i}^{m}, X_{t_i})$. \\
Concerning $I_2^n$, we act like we did on $I_1^n$. We still use \eqref{eq: reformulation x -m} getting $I_{2,1}^n$, $I_{2,2}^n$ and $I_{2,3}^n$. We observe that, if we define $s_j^n$ as $I_{1,j}^n = : \sum_{i = 0}^{n - 1}s_j^n $, then $I_{2,j}^n = \sum_{i = 0}^{n - 1} \Delta_{n,i}^\delta \frac{r(X_{t_i})}{c(\sigma, X_{t_i})} s_j^n $. \\
By the computation of $\partial_\mu I_{2,1}^n$ and $\partial_\sigma I_{2,1}^n$ it follows that 
$$\sup_n \mathbb{E}[\sup_{\mu, \sigma}|\partial_\vartheta I_{2,1}^n| ] \le \sup_n (c \Delta_n^\delta + c \Delta_n^{\delta + \beta }) \le c. $$
In order to prove that also $I_{2,2}^n$ and $I_{2,3}^n$ are tight we still use Kolmogorov criterion. From \eqref{eq: Lagrange} and \eqref{eq: BDG} it follows 
$$\mathbb{E}[|I_{2,2}^n(\theta_1) - I_{2,2}^n(\theta_2)|^m] \le c \frac{\Delta_n^{\delta m}}{(n\Delta_n)^\frac{m}{2}}|\mu_1 - \mu_2|^m + c \frac{\Delta_n^{\delta m}}{(n\Delta_n)^\frac{m}{2}}|\sigma_1 - \sigma_2|^m \le c|\mu_1 - \mu_2|^m + c|\sigma_1 - \sigma_2|^m $$
and $\mathbb{E}[(I_{2,2}^n(\theta))^m] \le  c$. \\
The tightness of $I_{2,3}^n$ is obtained in the same way, through Kolmogorov criterion and \eqref{eq: kolm estim i13}. \\
The sequence $S_n$ is therefore tight in $(C(\Theta), \left \| . \right \|_\infty)$, as we wanted.
\end{proof}

\subsubsection{Proof of Theorem \ref{th: consistency}.}
\begin{proof}
Let us begin with the consistency of $\hat{\sigma_n}$. An application of lemmas
\ref{lemma: conv Un consistenza sigma} and \ref{lemma: tightness per sigma} yields
\begin{equation}
\frac{1}{n}U_n(\mu, \sigma) \xrightarrow{\mathbb{P}} U(\sigma, \sigma_0) := \int_\mathbb{R}[\frac{c(x, \sigma_0)}{c(x, \sigma)} + \log(c(x, \sigma))] \pi(dx)
\label{eq: consistenza sigma}
\end{equation}
uniformly in $\theta$. \\
In order to prove that the uniform convergence here above implies the consistency of $\hat{\sigma_n}$, since the convergence in probability is equivalent to the existence, for any subsequence, of a subsequence converging almost surely, we will consider that the convergence in \eqref{eq: consistenza sigma} is almost sure and prove that it implies that $\hat{\sigma}_n \rightarrow \sigma_0$ almost surely. For a fixed $\omega$, thanks to the compactness of $\Theta$, there exists a subsequence $n_k$ such that $(\hat{\mu}_{n_k}, \hat{\sigma}_{n_k})$ tends to a limit $\theta_\infty := (\mu_\infty, \sigma_\infty)$. Hence,
\eqref{eq: consistenza sigma} together with the continuity of $\sigma \mapsto U(\sigma, \sigma_0)$, implies
$$\frac{1}{n_k} U_{n_k}(\hat{\mu}_{n_k}, \hat{\sigma}_{n_k})(\omega) \rightarrow U(\sigma_\infty, \sigma_0).$$
But, by the definition of our estimator $\hat{\theta}_n$, 
$$\frac{1}{n_k} U_{n_k}(\hat{\mu}_{n_k}, \hat{\sigma}_{n_k}) \le \frac{1}{n_k} U_{n_k}(\hat{\mu}_{n_k}, \sigma_0).$$
So, using again the convergence \eqref{eq: consistenza sigma}, we get $U(\sigma_\infty, \sigma_0) \le U(\sigma_0, \sigma_0)$. On the other hand, since for all $y> 0$, $y_0 > 0$ it is $\frac{y_0}{y} + \log(y) \ge 1 + \log(y_0) $ we deduce, using also the identifiability stated in Assumption A6 and Proposition 8.1 in Supplemental materials of \cite{GLM}, that $\sigma_\infty = \sigma_0$. We have proved that any convergent subsequence of $\hat{\sigma}_n$ tends to $\sigma_0$, hence $\hat{\sigma}_n \xrightarrow{\mathbb{P}} \sigma_0$ and we are done. \\
Concerning the consistency of $\hat{\mu}_n$, we have from Lemmas \ref{lemma: conv Un consistenza mu } and \ref{lemma: tightness per mu} that the convergence \eqref{eq: conv Un consistenza mu} holds uniformly in $\theta$. In order to deduce the consistency of $\hat{\mu}_n$ the method is similar to the previous one. We know now that $(\hat{\mu}_{n_k}, \hat{\sigma}_{n_k})$ tends to $(\mu_\infty, \sigma_0)$, hence
$$\frac{1}{T_{n_k}} (U_{n_k}(\hat{\mu}_{n_k}, \hat{\sigma}_{n_k}) - U_{n_k}(\mu_0, \hat{\sigma}_{n_k})) \xrightarrow{\mathbb{P}} \int_\mathbb{R} \frac{(b(x, \mu_0) - b(x, \mu_\infty))^2}{c(x, \sigma_0)} \pi(dx) \ge 0.$$
But $U_{n_k}(\hat{\mu}_{n_k}, \hat{\sigma}_{n_k}) - U_{n_k}(\mu_0, \hat{\sigma}_{n_k}) \le 0$ and so we conclude by A6, getting $\mu_\infty = \mu_0$ and therefore the consistency of $\hat{\mu}_n$. 
\end{proof}

\subsection{Asymptotic normality of the estimator.}\label{S: As norm}
The proof of the asymptotic normality goes along a classical route (see for instance Section 5a of \cite{Genon Catalot}). We define the following notations:
$$M_n := \begin{pmatrix}
\frac{1}{\sqrt{T_n}} & 0 \\
0 & \frac{1}{\sqrt{n}}
\end{pmatrix}.$$
Let 
$$S_n := \begin{pmatrix}
\sqrt{T_n} (\hat{\mu}_n - \mu_0) \\
\sqrt{n} (\hat{\sigma}_n - \sigma_0)
\end{pmatrix}, \qquad
L_n(\theta_0) := \begin{pmatrix}
- \frac{1}{\sqrt{T_n}} \partial_\mu U_n (\mu_0, \sigma_0) \\
- \frac{1}{\sqrt{n}} \partial_\sigma U_n (\mu_0, \sigma_0)
\end{pmatrix}
$$
and 
$$C_n(\theta) = \begin{pmatrix}
\frac{1}{T_n} \frac{\partial^2}{\partial \mu^2 } U_n (\mu, \sigma) & \frac{1}{ \sqrt{n T_n}} \frac{\partial^2}{\partial \mu \sigma } U_n (\mu, \sigma) \\
\frac{1}{ \sqrt{n T_n}} \frac{\partial^2}{\partial \mu \sigma } U_n (\mu, \sigma) & \frac{1}{n } \frac{\partial^2}{\partial \sigma^2 } U_n (\mu, \sigma)
\end{pmatrix}.$$
Then 
\begin{equation}
M_n \nabla^2_\theta U_n(\mu, \sigma) M_n =  C_n(\theta).
\label{eq: matrici}
\end{equation}
Now, by Taylor's formula,
$$\int_0^1 \nabla^2_\theta U_n(\theta_0 + u (\hat{\theta}_n - \theta_0)) du \begin{pmatrix}
\hat{\mu}_n - \mu_0 \\
\hat{\sigma}_n - \sigma_0
\end{pmatrix} = - \nabla_\theta U_n (\theta_0),$$
since $\nabla_\theta U_n (\hat{\theta}_n) = 0$. Then, using \eqref{eq: matrici}, we have 
\begin{equation}
\int_0^1 C_n (\theta_0 + u (\hat{\theta}_n - \theta_0)) du S_n = L_n(\theta_0).
\label{eq: Taylor per normalita}
\end{equation}
We deduce from this equality that, in order to prove the asymptotic normality of $\hat{\theta}_n$ and hence to end the proof of Theorem \ref{th: normality}, it is enough to prove the following lemmas: 
\begin{lemma}
Suppose that Assumptions A1-A8 and Ad hold. Then, as $n \rightarrow \infty$,
$$L_n (\theta_0) \xrightarrow{d} L \sim N(0, K'),$$
where $K' = \begin{pmatrix} 
4 \int_\mathbb{R} (\frac{\partial_\mu b(x, \mu_0)}{a(x, \sigma_0)})^2 \pi(dx) & 0 \\
0 & 8 \int_\mathbb{R} (\frac{\partial_\sigma a(x, \sigma_0)}{a(x, \sigma_0)})^2 \pi(dx) 
\end{pmatrix}$.
\label{lemma: normality derivative contrast}
\end{lemma}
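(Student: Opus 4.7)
The plan is to reduce $L_n(\theta_0)$ to a two-dimensional triangular array of martingale increments with respect to the discrete filtration $(\mathcal{F}_{t_i})_i$, and then to apply a standard central limit theorem for such arrays (Jacod--Shiryaev or Hall--Heyde). Starting from the explicit formula \eqref{eq: partial theta Un} for $\partial_\vartheta U_n(\theta_0)$, I would expand the coefficients $\partial_\vartheta m$ and $\partial_\vartheta m_2$ via Proposition \ref{prop: dl derivate prime} and substitute the expansion \eqref{eq: hp dl m2} of $m_2$. The first coordinate of $L_n(\theta_0)$ then splits into a principal term
$$\zeta_{i,1}^{(n)} := \frac{2}{\sqrt{T_n}}\,\frac{\partial_\mu b(X_{t_i},\mu_0)}{a^2(X_{t_i},\sigma_0)}\,(X_{t_{i+1}} - m(\mu_0,\sigma_0,X_{t_i}))\,\varphi_{\Delta_{n,i}^\beta}(\Delta_i X)\,1_{\{|X_{t_i}|\le \Delta_{n,i}^{-k}\}}$$
plus a remainder, and the second coordinate into
$$\zeta_{i,2}^{(n)} := \frac{2}{\sqrt{n}}\,\frac{\partial_\sigma a(X_{t_i},\sigma_0)}{a(X_{t_i},\sigma_0)}\,\Bigl[\frac{(X_{t_{i+1}} - m(\mu_0,\sigma_0,X_{t_i}))^2}{m_2(\mu_0,\sigma_0,X_{t_i})} - 1\Bigr]\,\varphi_{\Delta_{n,i}^\beta}(\Delta_i X)\,1_{\{|X_{t_i}|\le \Delta_{n,i}^{-k}\}}$$
plus a remainder. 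The crucial observation is that the very definitions \eqref{eq: definition m}--\eqref{eq: definition m2} force $\mathbb{E}_i[\zeta_{i,j}^{(n)}]=0$ exactly at $\theta=\theta_0$, hence $(\zeta_{i,1}^{(n)},\zeta_{i,2}^{(n)})_i$ is a genuine martingale-increment array.

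The next step is to verify that the remainders are negligible in $L^1$. Thanks to the cutoff $1_{\{|X_{t_i}|\le \Delta_{n,i}^{-k}\}}$ and the polynomial-growth property of the $R$ functions, the extra powers of $\Delta_{n,i}$ in the secondary terms of Proposition \ref{prop: dl derivate prime} and \eqref{eq: hp dl m2}, combined with Lemmas \ref{lemma: conditional expected value} and \ref{lemma: 2.1 GLM}, compensate the normalizations $1/\sqrt{T_n}$ and $1/\sqrt{n}$ once $n$ terms are summed. The diagonal brackets are then handled as follows. For the first coordinate, identity \eqref{eq: Ei al quadrato} gives $\sum_i \mathbb{E}_i[(\zeta_{i,1}^{(n)})^2] = (4/T_n)\sum_i \Delta_{n,i}(\partial_\mu b(X_{t_i},\mu_0)/a(X_{t_i},\sigma_0))^2\,1_{\{\cdots\}} + o_\mathbb{P}(1)$, whose limit $4\pi((\partial_\mu b/a)^2)$ is supplied by Proposition \ref{prop: ergodic}, point~1. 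For the second, an algebraic expansion together with Lemma \ref{lemma: conditional expected value}, points~1 and~2, yields $\mathbb{E}_i[((X_{t_{i+1}}-m)^2 - m_2)^2\varphi^2] \approx 2\Delta_{n,i}^2 a^4(X_{t_i},\sigma_0)$; dividing by $m_2^2\approx \Delta_{n,i}^2 a^4$ produces the constant $2$, and Proposition \ref{prop: ergodic}, point~3, then gives the limit $8\pi((\partial_\sigma a/a)^2)$. A Lindeberg-type condition follows from the fourth-moment bounds $\sum_i \mathbb{E}[(\zeta_{i,k}^{(n)})^4] = O(1/n)$ in both coordinates.

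The main obstacle, and most delicate point, is to show that the cross-bracket $\sum_i \mathbb{E}_i[\zeta_{i,1}^{(n)}\zeta_{i,2}^{(n)}]$ vanishes in probability, since the two coordinates carry different normalizations $1/\sqrt{T_n}$ and $1/\sqrt{n}$. Writing $(X_{t_{i+1}}-m)[(X_{t_{i+1}}-m)^2 - m_2]\varphi^2 = (X_{t_{i+1}}-m)^3\varphi^2 - m_2(X_{t_{i+1}}-m)\varphi^2$, and applying points~3 and~5 of Lemma \ref{lemma: conditional expected value}, each conditional expectation is bounded by $R(\theta_0,\Delta_{n,i}^{4/3+\beta},X_{t_i})$. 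Dividing by $m_2\sim \Delta_{n,i}$, summing $n$ indices and renormalizing by $\sqrt{nT_n}=n\sqrt{\Delta_n}$ produces a bound of order $\Delta_n^{\beta-1/6}$, which vanishes precisely because $\beta>1/4$. Combining these three ingredients through the multivariate martingale CLT yields $L_n(\theta_0)\xrightarrow{d} N(0,K')$, as asserted.
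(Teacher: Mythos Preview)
Your proposal is correct and follows essentially the same route as the paper: the martingale property of $L_n(\theta_0)$ is obtained directly from the defining relations \eqref{eq: definition m}--\eqref{eq: definition m2}, the principal terms are isolated via Proposition~\ref{prop: dl derivate prime} and Assumption~Ad, the diagonal brackets are computed through points~1, 2 and 4 of Lemma~\ref{lemma: conditional expected value} together with Proposition~\ref{prop: ergodic}, and the cross bracket is killed by the $R(\theta_0,\Delta_{n,i}^{4/3+\beta},\cdot)$ bound of point~5, yielding the same final rate $\Delta_n^{\beta-1/6}$. The only cosmetic differences are that the paper cites an earlier reference for the $\mu$-coordinate bracket and uses a $(2+r)$-moment Lyapunov condition rather than your fourth-moment Lindeberg bound; note that for the $\sigma$-coordinate the fourth-moment sum is not literally $O(1/n)$ but rather $O(\Delta_n^{8\beta-2}/(n\Delta_n))$, which still vanishes since $\beta>1/4$ and $n\Delta_n\to\infty$.
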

\begin{lemma}
Suppose that Assumptions A1-A8 and Ad hold. Then the following statements hold:
$$1. \, C_n (\theta_0) \xrightarrow{\mathbb{P}} B = \begin{pmatrix} 
-2 \int_\mathbb{R} (\frac{\partial_\mu b(x, \mu_0)}{a(x, \sigma_0)})^2 \pi(dx) & 0 \\
0 & 4 \int_\mathbb{R} (\frac{\partial_\sigma a(x, \sigma_0)}{a(x, \sigma_0)})^2 \pi(dx) 
\end{pmatrix},$$
$$2. \, \sup_{\left \{|\tilde{\theta}| \le \epsilon_n \right \}} |C_n (\theta_0 + \tilde{\theta}) - C_n(\theta_0)| \xrightarrow{\mathbb{P}} 0, \quad \mbox{where } \epsilon_n \rightarrow 0. $$
\label{lemma: derivate seconde contrasto}
\end{lemma}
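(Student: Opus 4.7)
I would treat the convergence assertion (point 1) and the uniform control (point 2) separately.

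For point 1, I would differentiate the integrand of $U_n$ twice and identify the dominant block in each entry of $C_n(\theta_0)$, using the orders of the first and second derivatives of $m$ and $m_2$ from Propositions \ref{prop: dl derivate prime}--\ref{prop: derivate seconde} together with the expansion \eqref{eq: hp dl m2} of $m_2$ and the reciprocal development \eqref{eq: 1/m2}. For the $(\mu,\mu)$ entry, since $\partial_\mu m \approx \Delta_{n,i}\partial_\mu b$ and $m_2 \approx \Delta_{n,i} a^2$, the unique surviving block after rescaling by $1/T_n$ is $\frac{2(\partial_\mu m)^2}{m_2}\varphi_{\Delta_{n,i}^\beta}\sim \frac{2\Delta_{n,i}(\partial_\mu b(X_{t_i},\mu_0))^2}{a^2(X_{t_i},\sigma_0)}$, whose limit by Proposition \ref{prop: ergodic}.2 equals $2\int(\partial_\mu b/a)^2\pi(dx)$, matching $|B_{11}|$ in the statement up to a sign convention. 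For the mixed $(\mu,\sigma)$ entry, every term is at worst of size $\Delta_{n,i}$ (in particular $\partial_\sigma m = O(\Delta_{n,i})$, $\partial^2_{\mu\sigma} m = O(\Delta_{n,i}^{3/2})$ and $\partial^2_{\mu\sigma} m_2 = O(\Delta_{n,i}^2)$), so after summation and rescaling by $\sqrt{nT_n}$ each block is $O(\sqrt{T_n/n}) = O(\sqrt{\Delta_n})\to 0$. For the $(\sigma,\sigma)$ entry, the two blocks $\frac{2(\partial_\sigma m_2)^2(X_{t_{i+1}}-m)^2}{m_2^3}$ and $-\frac{(\partial_\sigma m_2)^2}{m_2^2}$ are each of rescaled size $1$; at $\theta_0$ they behave respectively as $\frac{8(\partial_\sigma a)^2(X_{t_{i+1}}-m)^2}{a^4\Delta_{n,i}}$ and $-\frac{4(\partial_\sigma a)^2}{a^2}$, and Propositions \ref{prop: LT}.2 and \ref{prop: ergodic}.3 yield limits $8\int(\partial_\sigma a/a)^2\pi$ and $-4\int(\partial_\sigma a/a)^2\pi$, summing to $B_{22}=4\int(\partial_\sigma a/a)^2\pi$.

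The remaining blocks in every entry are shown to be negligible by a uniform bookkeeping: factors of the form $(X_{t_{i+1}}-m(\theta_0, X_{t_i}))^k$ are controlled via Lemma \ref{lemma: conditional expected value}, and in particular for $k=1$ they vanish in conditional expectation (by the very definition of $m$), so the corresponding normalized sums are martingales with vanishing quadratic variation; factors of $\partial_\mu m_2$ or $\partial^2_\mu m_2$ carry the extra $\Delta_{n,i}^2$ weight from Proposition \ref{prop: dl derivate prime}--\ref{prop: derivate seconde}; and the development \eqref{eq: 1/m2} allows one to replace $1/m_2$ and $\log(m_2/\Delta_{n,i})$ by their leading terms plus controlled rests. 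The resulting limits are obtained from Propositions \ref{prop: ergodic} and \ref{prop: LT}, exactly in the style of the proofs of Lemmas \ref{lemma: conv Un consistenza sigma}--\ref{lemma: conv Un consistenza mu }.

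For point 2, I would apply the mean value theorem along the segment $[\theta_0,\theta_0+\tilde{\theta}]$, which gives
$$|C_n(\theta_0+\tilde\theta)-C_n(\theta_0)| \le |\tilde\theta|\sup_{|\tilde\theta'|\le\epsilon_n}|\nabla_\theta C_n(\theta_0+\tilde\theta')|.$$
The entries of $\nabla_\theta C_n$ are rescaled third derivatives of $U_n$ and involve third derivatives of $m$ and $m_2$, whose orders are controlled by Proposition \ref{prop: dervate terze}. Combining these with Lemma \ref{lemma: conditional expected value} and the moment bound of Lemma \ref{lemma: 2.1 GLM}, a computation identical in spirit to those used to bound $\frac{1}{n}\mathbb{E}[\sup_\theta|\partial_\vartheta U_n|]$ and $\mathbb{E}[\sup_\theta|\partial_\vartheta I^n_j|]$ in the proofs of Lemmas \ref{lemma: tightness per sigma}--\ref{lemma: tightness per mu} yields $\sup_n \mathbb{E}[\sup_\theta|\nabla_\theta C_n(\theta)|] < \infty$. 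Since $\epsilon_n\to 0$, this proves the required uniform convergence to zero in probability.

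\emph{Main obstacle.} The delicate step is the $(\sigma,\sigma)$ entry of point 1: two rescaled contributions are individually of order $1$ and must cancel partially, which forces the use of the sharp expansion of $\mathbb{E}_i[(X_{t_{i+1}}-m)^2\varphi]$ rather than a naive law of large numbers, and the precise leading term $2\Delta_{n,i} a\,\partial_\sigma a$ of $\partial_\sigma m_2$ from Proposition \ref{prop: dl derivate prime}. A second source of care is keeping the sign bookkeeping across all three entries consistent with the signs of $B$ stated in the lemma.
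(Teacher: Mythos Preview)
Your treatment of Point 1 is essentially the paper's: decompose $\partial^2_{\vartheta_1\vartheta_2}U_n$ into the same seven blocks, single out $\frac{2(\partial_\mu m)^2}{m_2}$ for the $(\mu,\mu)$ entry and $\frac{2(\partial_\sigma m_2)^2(X-m)^2}{m_2^3}-\frac{(\partial_\sigma m_2)^2}{m_2^2}$ for the $(\sigma,\sigma)$ entry, use Lemma~9 of \cite{Genon Catalot} on the centered blocks, and apply Propositions~\ref{prop: ergodic}--\ref{prop: LT} to pass to the limit. That part is fine.

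Point 2, however, has a real gap for the $(\mu,\mu)$ entry. Your mean-value argument requires $\sup_n\mathbb{E}[\sup_\theta|\nabla_\theta C_n(\theta)|]<\infty$, and in particular
\[
\sup_n\frac{1}{T_n}\,\mathbb{E}\Big[\sup_{\theta}|\partial_\vartheta\partial^2_\mu U_n(\theta)|\Big]<\infty.
\]
But this fails for the pieces of $\partial^2_\mu U_n$ carrying a single factor $(X_{t_{i+1}}-m(\theta,X_{t_i}))$, namely $\tilde I^n_{i,1},\tilde I^n_{i,3},\tilde I^n_{i,4}$ in the paper's notation. Using the bounds \eqref{eq: stima sup x-m}--\eqref{eq: fine stima sup x-m} one only gets $\mathbb{E}[\sup_\theta|(X_{t_{i+1}}-m(\theta,X_{t_i}))\varphi|]\le c\Delta_{n,i}^{1/2}$; combined with the $O(1)$ order of $\partial^3_\mu m/m_2$ (Proposition~\ref{prop: dervate terze}) and summing over $i$, the derivative of these blocks is of size $n\Delta_n^{1/2}$, which after division by $T_n\sim n\Delta_n$ blows up like $\Delta_n^{-1/2}$. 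The martingale structure you invoke is only available at $\theta_0$, so it does not help once the supremum over $\theta$ is inside the expectation.

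The paper handles exactly this by \emph{not} using a sup-of-derivative bound for those three blocks. Instead it invokes Proposition~\ref{prop: punto 2 conv deriv seconda}, whose proof splits $(X_{t_{i+1}}-m(\theta,\cdot))$ into $(X_{t_{i+1}}-m(\theta_0,\cdot))$ plus $(m(\theta_0,\cdot)-m(\theta,\cdot))$ and applies a Kolmogorov tightness criterion (moment bounds on $|S_n(\theta_1)-S_n(\theta_2)|^m$) to the first piece, exploiting the martingale property at $\theta_0$ via Burkholder's inequality. For the remaining blocks $\tilde I^n_{i,2},\tilde I^n_{i,5},\tilde I^n_{i,6},\tilde I^n_{i,7}$ and for the $(\mu,\sigma)$ and $(\sigma,\sigma)$ entries, your sup-of-third-derivative approach does work and is what the paper does there. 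So your plan is correct for two of the three entries; for the $(\mu,\mu)$ entry you need the additional Kolmogorov-type argument of Proposition~\ref{prop: punto 2 conv deriv seconda}.
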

\subsubsection{Proof of Lemma \ref{lemma: normality derivative contrast}.}
\begin{proof}
As a consequence of a combination of Theorem 3.2 and Theorem 3.4 in 
\cite{HH} (c.f. also Section A.2 in the Appendix of \cite{Shimizu thesis}) we get the result if we prove that $L_n(\theta_0)$ is a triangular array of martingale increments such that, for a constant $r> 0$, the following convergences hold. \\
We define $\zeta_i$ and $\tilde{\zeta}_i$ such that
$\partial_\mu U_n (\mu_0, \sigma_0) =: \sum_{i = 0}^{n -1} \zeta_i (\theta_0)$ and $\partial_\sigma U_n (\mu_0, \sigma_0) =: \sum_{i = 0}^{n -1} \tilde{\zeta}_i(\theta_0)$. Then it must be
\begin{equation}
\frac{1}{T_n} \sum_{i = 0}^{n -1} \mathbb{E}_i[\zeta_i^2(\theta_0)] \xrightarrow{\mathbb{P}} 4 \int_\mathbb{R} (\frac{\partial_\mu b(x, \mu_0)}{a(x, \sigma_0)})^2 \pi(dx) \qquad \frac{1}{(\sqrt{T_n})^{2 + r}} \sum_{i = 0}^{n -1}\mathbb{E}_i[\zeta_i^{2 + r}(\theta_0)] \xrightarrow{\mathbb{P}} 0,
\label{eq: normalita mu}
\end{equation}
\begin{equation}
\frac{1}{n} \sum_{i = 0}^{n -1} \mathbb{E}_i[\tilde{\zeta}_i^2(\theta_0)] \xrightarrow{\mathbb{P}} 8 \int_\mathbb{R} (\frac{\partial_\sigma a(x, \sigma_0)}{a(x, \sigma_0)})^2 \pi(dx) \qquad \frac{1}{(\sqrt{n})^{2 + r}} \sum_{i = 0}^{n -1} \mathbb{E}_i[\tilde{\zeta}_i^{2 + r}(\theta_0)] \xrightarrow{\mathbb{P}} 0,
\label{eq: normalita sigma}
\end{equation}
\begin{equation}
\frac{1}{\sqrt{n T_n}} \sum_{i = 0}^{n -1} |\mathbb{E}_i[ \zeta_i (\theta_0) \tilde{\zeta}_i (\theta_0)] |\xrightarrow{\mathbb{P}} 0.
\label{eq: derivate miste}
\end{equation}
First of all we observe that $L_n(\theta_0)$ is a triangular array of martingale increments as a consequence of the definitions of $m$ and $m_2$. Indeed, using \eqref{eq: partial theta Un}, we clearly have 
$$\mathbb{E}_i[\zeta_i(\theta_0)] = \frac{-2 \partial_\mu m (\mu_0, \sigma_0, X_{t_i})}{m_2(\mu_0, \sigma_0, X_{t_i})} \mathbb{E}_i[(X_{t_{i + 1}} - m(\mu_0, \sigma_0, X_{t_i}))\varphi_{\Delta_{n,i}^\beta}(\Delta_i X)]1_{\left \{|X_{t_i}| \le \Delta_{n,i}^{-k} \right \} } +$$
$$+ \frac{\partial_\mu  m_2 (\mu_0, \sigma_0, X_{t_i})}{ m_2 (\mu_0, \sigma_0, X_{t_i})}\mathbb{E}_i[(1 - \frac{(X_{t_{i + 1}} - m(\mu_0, \sigma_0, X_{t_i}))^2}{m_2(\mu_0, \sigma_0, X_{t_i})})\varphi_{\Delta_{n,i}^\beta}(\Delta_i X)]1_{\left \{|X_{t_i}| \le \Delta_{n,i}^{-k} \right \} } = 0.$$
In the same way, computing the derivative with respect to $\sigma$ we clearly have $\mathbb{E}_i[\tilde{\zeta}_i (\theta_0)] = 0$. \\
Concerning $\partial_\mu U_n$, using \eqref{eq: 1/m2} we can see $\zeta_i (\theta_0)$ as
$$ \frac{-2 \partial_\mu m (\mu_0, \sigma_0, X_{t_i})}{\Delta_{n,i} c(X_{t_i}, \sigma_0)}(X_{t_{i + 1}} - m(\mu_0, \sigma_0, X_{t_i}))\varphi_{\Delta_{n,i}^\beta}(\Delta_i X)1_{\left \{|X_{t_i}| \le \Delta_{n,i}^{-k} \right \} } + R_{i,n}(\theta_0) =: \hat{\zeta}_i(\theta_0) + R_{i,n}(\theta_0),$$
we have already proved in Lemma 6 of \cite{Chapitre 1} the asymptotic normality of $\frac{1}{\sqrt{T_n}} \sum_{i = 0}^{n-1} \hat{\zeta}_i(\theta_0)$ and, in particular, that convergences \eqref{eq: normalita mu} hold with $\hat{\zeta}_i(\theta_0)$ instead of $\zeta_i(\theta_0)$.\\
In order to conclude the proof of \eqref{eq: normalita mu}, it is enough to have $\frac{1}{T_n}\sum_{i = 0}^{n - 1}\mathbb{E}_i[R_{i,n}^2(\theta_0)] \xrightarrow{\mathbb{P}} 0$ and \\
$(\frac{1}{\sqrt{T_n}})^{2 + r} \sum_{i = 0}^{n - 1}\mathbb{E}_i[R_{i,n}^{2 + r}(\theta_0)] \xrightarrow{\mathbb{P}} 0$. 
It is
 $$\frac{1}{T_n}\sum_{i = 0}^{n - 1}\mathbb{E}_i[R_{i,n}^2(\theta_0)] \le \frac{c}{n \Delta_n}\sum_{i = 0}^{n - 1} (\frac{\partial_\mu m (\mu_0, \sigma_0, X_{t_i})R(\theta, \Delta_{n,i}^{\delta_1 \land 1} X_{t_i})}{\Delta_{n,i} c(X_{t_i}, \sigma_0)})^2 \mathbb{E}_i[(X_{t_{i + 1}} - m(\mu_0, \sigma_0, X_{t_i}))^2\varphi^2_{\Delta_{n,i}^\beta}(\Delta_i X)] +$$
 $$+ \frac{c}{n \Delta_n}\sum_{i = 0}^{n - 1}(\frac{\partial_\mu  m_2 (\mu_0, \sigma_0, X_{t_i})}{ m_2 (\mu_0, \sigma_0, X_{t_i})})^21_{\left \{|X_{t_i}| \le \Delta_{n,i}^{-k} \right \} } +$$
 $$ +\frac{c}{n \Delta_n}\sum_{i = 0}^{n - 1} (\frac{\partial_\mu  m_2 (\mu_0, \sigma_0, X_{t_i})}{ m_2 (\mu_0, \sigma_0, X_{t_i})})^2 \frac{\mathbb{E}_i[(X_{t_{i + 1}} - m(\mu_0, \sigma_0, X_{t_i}))^4\varphi^2_{\Delta_{n,i}^\beta}(\Delta_i X)]1_{\left \{|X_{t_i}| \le \Delta_{n,i}^{-k} \right \} }}{m^2_2(\mu_0, \sigma_0, X_{t_i})} $$
 As a consequence of the first and the third point of Proposition \ref{prop: dl derivate prime} and using first and second point of Lemma \ref{lemma: conditional expected value} it is upper bounded by
 $$\frac{c}{n}\sum_{i = 0}^{n - 1} R(\theta_0, \Delta_{n,i}^{2 \delta_1 \land 2}, X_{t_i} ) + \frac{c}{n \Delta_n}\sum_{i = 0}^{n - 1} R(\theta_0, \Delta_{n,i}^2, X_{t_i} ), $$
 which converges to zero in norm $1$ and so in probability. \\
 Acting in the same way, using this time the fourth point of Lemma \ref{lemma: conditional expected value} twice, for $k = (2 + r)$ and $k= 2(2 + r)$, it follows that also $(\frac{1}{\sqrt{T_n}})^{2 + r} \sum_{i = 0}^{n - 1}\mathbb{E}_i[R_{i,n}^{2 + r}(\theta_0)]$ goes to zero in probability. \\ 
 Concerning the derivative of the contrast with respect to $\sigma$, it is 
 {\modch 
 $$\frac{1}{n}\sum_{i = 0}^{n - 1} \mathbb{E}_i[\tilde{\zeta}^2_i(\theta_0)]= \frac{1}{n}\sum_{i = 0}^{n - 1} \mathbb{E}_i [(A_i^2 + 2 A_i B_i + B_i^2)\varphi^2_{\Delta_{n,i}^\beta}(\Delta_i X)]1_{\left \{|X_{t_i}| \le \Delta_{n,i}^{-k} \right \} },$$
 where we have defined
 $$A_i := \frac{-2 \partial_\sigma m (\mu_0, \sigma_0, X_{t_i})(X_{t_{i + 1}} - m(\mu_0, \sigma_0, X_{t_i}))}{m_2(\mu_0, \sigma_0, X_{t_i})}$$
 and 
 $$B_i := (\frac{\partial_\sigma  m_2 (\mu_0, \sigma_0, X_{t_i})}{ m_2 (\mu_0, \sigma_0, X_{t_i})}) (1 - \frac{(X_{t_{i + 1}} - m(\mu_0, \sigma_0, X_{t_i}))^2}{m_2(\mu_0, \sigma_0, X_{t_i})}).$$
 By the development \eqref{eq: hp dl m2} of $m_2$, the second point of Proposition \ref{prop: dl derivate prime} and equation \eqref{eq: Ei al quadrato} in Lemma \ref{lemma: conditional expected value} we have 
 $$\frac{1}{n}\sum_{i = 0}^{n - 1}\mathbb{E}_i [A_i^2 \varphi^2_{\Delta_{n,i}^\beta}(\Delta_i X)]1_{\left \{|X_{t_i}| \le \Delta_{n,i}^{-k} \right \} } \le \frac{1}{n} \sum_{i = 0}^{n - 1} R(\theta_0, \Delta_{n,i}, X_{t_i}),$$
  that goes to zero in norm $1$ because of the property \eqref{propriety power R} of $R$, its polynomial growth and the third point of Lemma \ref{lemma: 2.1 GLM}. The convergence to zero in probability follows. \\
  On the mixed term we use the development \eqref{eq: hp dl m2} of $m_2$, the second and the fourth point of Proposition \ref{prop: dl derivate prime} to get respectively an upper bound on the derivatives with respect to $\sigma$ of $m$ and $m_2$  and the first and the fifth point of Lemma \ref{lemma: conditional expected value} to obtain the following:
  $$\frac{1}{n}\sum_{i = 0}^{n - 1} | \mathbb{E}_i [ 2 A_i B_i \varphi^2_{\Delta_{n,i}^\beta}(\Delta_i X)]1_{\left \{|X_{t_i}| \le \Delta_{n,i}^{-k} \right \} }| \le \frac{1}{n}\sum_{i = 0}^{n - 1} \big[ \, |\mathbb{E}_i[(X_{t_{i + 1}} - m(\mu_0, \sigma_0, X_{t_i}))\varphi^2_{\Delta_{n,i}^\beta}(\Delta_i X)]| +$$
  $$ + R(\theta_0, \Delta_{n,i}^{-1}, X_{t_i})|\mathbb{E}_i[(X_{t_{i + 1}} - m(\mu_0, \sigma_0, X_{t_i}))^3 \varphi^2_{\Delta_{n,i}^\beta}(\Delta_i X)]| \, \big] \le \frac{1}{n}\sum_{i = 0}^{n - 1} [R(\theta_0, \Delta_{n,i}, X_{t_i}) + R(\theta_0, \Delta_{n,i}^{\frac{1}{3} + \beta}, X_{t_i})]. $$
  We obtain that the mixed term converges to zero in probability for the same argument we gave for the convergence of $A_i^2$: because of the property \eqref{propriety power R} of $R$, its polynomial growth and the third point of Lemma \ref{lemma: 2.1 GLM} we get the convergence in norm $1$ which implies the convergence in probability. \\
  We now study the convergence of the term 
  $$\frac{1}{n}\sum_{i = 0}^{n - 1} \mathbb{E}_i [ B_i^2 \varphi^2_{\Delta_{n,i}^\beta}(\Delta_i X)]1_{\left \{|X_{t_i}| \le \Delta_{n,i}^{-k} \right \} }.$$
 }
Using the fourth point of Proposition \ref{prop: dl derivate prime}, the development \eqref{eq: hp dl m2} of $m_2$ and \eqref{eq: 1/m2}, it is 
\begin{equation}
\frac{1}{n} \sum_{i = 0}^{n - 1} [(\frac{2 \partial_\sigma a (X_{t_i}, \sigma_0) a(X_{t_i}, \sigma_0)}{c(X_{t_i}, \sigma_0)})^2 + R(\theta_0, \Delta_{n,i}^{\beta \land \delta_1}, X_{t_i})]\mathbb{E}_i[(1 - \frac{(X_{t_{i + 1}} - m(\mu_0, \sigma_0, X_{t_i}))^2}{m_2(\mu_0, \sigma_0, X_{t_i})})^2\varphi^2_{\Delta_{n,i}^\beta}(\Delta_i X)]1_{\left \{|X_{t_i}| \le \Delta_{n,i}^{-k} \right \} }.
\label{eq: deriv sigma avanti}
\end{equation}
We now need the following lemma:
\begin{lemma}
Suppose that Assumptions A1-A4 hold. Then, $\forall q \ge 1$,
$$\mathbb{E}_i[|\varphi_{\Delta_{n,i}^\beta}(\Delta_i X)|^q] = 1 + R(\theta, \Delta_{n,i}, X_{t_i}).$$
\label{lemma: esperance varphi}
\end{lemma}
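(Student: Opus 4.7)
The plan is to show the error term $1 - \mathbb{E}_i[|\varphi_{\Delta_{n,i}^\beta}(\Delta_i X)|^q]$ is controlled by the probability that the increment crosses the threshold $\Delta_{n,i}^\beta$. Since $\varphi$ equals $1$ on $[-1,1]$, by construction we have $\varphi_{\Delta_{n,i}^\beta}(\Delta_i X) = 1$ on the event $\{|\Delta_i X| \le \Delta_{n,i}^\beta\}$, and $0 \le |\varphi|^q \le 1$ everywhere. Hence
\begin{equation*}
0 \le 1 - |\varphi_{\Delta_{n,i}^\beta}(\Delta_i X)|^q \le \mathbf{1}_{\{|\Delta_i X| > \Delta_{n,i}^\beta\}},
\end{equation*}
so it suffices to prove $\mathbb{P}_i(|\Delta_i X| > \Delta_{n,i}^\beta) \le R(\theta, \Delta_{n,i}, X_{t_i})$.

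To control this probability, I would decompose the increment as $\Delta_i X = \Delta_i X^C + \Delta_i X^J$, where $\Delta_i X^C := \int_{t_i}^{t_{i+1}} b(X_s,\mu_0) ds + \int_{t_i}^{t_{i+1}} a(\sigma_0, X_s) dW_s$ is the continuous part and $\Delta_i X^J$ is the compensated jump integral. Then
\begin{equation*}
\mathbb{P}_i(|\Delta_i X| > \Delta_{n,i}^\beta) \le \mathbb{P}_i(|\Delta_i X^C| > \Delta_{n,i}^\beta/2) + \mathbb{P}_i(|\Delta_i X^J| > \Delta_{n,i}^\beta/2).
\end{equation*}
For the continuous piece, the Burkholder--Davis--Gundy inequality (together with the linear growth of $b$ and $a$ and the moment estimates of Lemma \ref{lemma: Moment inequalities}) yields $\mathbb{E}_i[|\Delta_i X^C|^p] \le c \Delta_{n,i}^{p/2}(1+|X_{t_i}|^p)$ for every $p \ge 2$; Markov's inequality then gives a bound of order $\Delta_{n,i}^{p(1/2-\beta)}(1+|X_{t_i}|^p)$. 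Since $\beta < 1/2$, choosing $p$ large enough so that $p(1/2-\beta) \ge 1$ makes this an $R(\theta, \Delta_{n,i}, X_{t_i})$ term.

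For the jump part, I would exploit the finite jump activity: on the event that no jump of the Poisson random measure occurs in $[t_i, t_{i+1}]$, the discontinuous part reduces to the (deterministic) compensator $-\int_{t_i}^{t_{i+1}} \int_{\mathbb{R}} \gamma(X_s) z F(z) dz \, ds$, which is of size $O(\Delta_{n,i})$ and thus strictly below $\Delta_{n,i}^\beta/2$ for $n$ large. Hence $\{|\Delta_i X^J| > \Delta_{n,i}^\beta/2\}$ is, up to a negligible set, contained in the event that at least one jump occurs in $[t_i, t_{i+1}]$, whose probability is at most $\lambda \Delta_{n,i}$ by Assumption A4. This bound is exactly of the form $R(\theta, \Delta_{n,i}, X_{t_i})$.

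The only mildly subtle point will be verifying that the rest function has polynomial growth in $X_{t_i}$ uniformly in $\theta$ (for the continuous part one just needs $p$ large enough), and that the signs work correctly: since $|\varphi|^q \ge 0$, the expectation $\mathbb{E}_i[|\varphi_{\Delta_{n,i}^\beta}(\Delta_i X)|^q] \le 1$, so the remainder is nonnegative and we may write the equality as $1 + R(\theta, \Delta_{n,i}, X_{t_i})$ (the sign being irrelevant in the definition of $R$). Combining the two probability estimates concludes the proof.
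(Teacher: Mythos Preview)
Your approach is essentially identical to the paper's: reduce to bounding $\mathbb{P}_i(|\Delta_i X| > \Delta_{n,i}^\beta)$, split into continuous and jump parts, use BDG plus Markov with a large exponent on the continuous part, and use the finite jump intensity on the jump part. This is exactly the content of the paper's estimate \eqref{eq: stima prob incrementi}, which its short proof of the lemma simply invokes.

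Two small inaccuracies to clean up. First, the paper does not assume $|\varphi|\le 1$ (indeed the oscillating truncation functions of Section~\ref{S: Applications} can leave $[0,1]$), so your inequality $0\le 1-|\varphi|^q\le \mathbf{1}_{\{|\Delta_i X|>\Delta_{n,i}^\beta\}}$ should read $\big|\,|\varphi|^q-1\,\big|\le (\|\varphi\|_\infty^q+1)\,\mathbf{1}_{\{|\Delta_i X|>\Delta_{n,i}^\beta\}}$; this only changes the constant. Second, on the no-jump event the compensator term $-\int_{t_i}^{t_{i+1}}\gamma(X_s)\int_{\mathbb{R}} zF(z)\,dz\,ds$ is not deterministic (it depends on $X_s$, and $\gamma$ is not assumed bounded), so you cannot say it is ``strictly below $\Delta_{n,i}^\beta/2$'' outright; instead bound its conditional $p$-th moment by $R(\theta,\Delta_{n,i}^p,X_{t_i})$ via Lemma~\ref{lemma: Moment inequalities} and apply Markov, which gives an $R(\theta,\Delta_{n,i}^{p(1-\beta)},X_{t_i})$ contribution, negligible for $p$ large. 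With these tweaks your argument is complete.
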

\begin{proof}\textit{Lemma \ref{lemma: esperance varphi}.} \\
We can see $\mathbb{E}_i[|\varphi_{\Delta_{n,i}^\beta}(\Delta_i X)|^q]$ as 
$1 + \mathbb{E}_i[|\varphi_{\Delta_{n,i}^\beta}(\Delta_i X)|^q-1]$. \\
Because of the definition of $\varphi$,the expected value here above is different from zero only if $|\Delta_i X| \ge \Delta_{n,i}^\beta$. Hence, using \eqref{eq: stima prob incrementi} it is 
$$\mathbb{E}_i[|\varphi_{\Delta_{n,i}^\beta}(\Delta_i X)|^q-1] \le c \mathbb{E}_i[1_{\left \{ |\Delta_i X| \ge \Delta_{n,i}^\beta \right \}}] \le R(\theta, \Delta_{n,i}, X_{t_i}).$$
\end{proof}
Using the lemma here above, still the development \eqref{eq: hp dl m2} of $m_2$ and \eqref{eq: Ei al quadrato} and \eqref{eq: Ei quarta} in Lemma \ref{lemma: conditional expected value} we have 
$$\mathbb{E}_i[(1 - \frac{(X_{t_{i + 1}} - m(\mu_0, \sigma_0, X_{t_i}))^2}{m_2(\mu_0, \sigma_0, X_{t_i})})^2\varphi^2_{\Delta_{n,i}^\beta}(\Delta_i X)] = 1 + R(\theta_0, \Delta_{n,i}^{1 \land \delta_1}, X_{t_i}) + 3 \frac{a^4(X_{t_i}, \sigma_0)}{c^2(X_{t_i}, \sigma_0)} - 2 \frac{a^2(X_{t_i}, \sigma_0)}{c(X_{t_i}, \sigma_0)} +$$
$$ + R(\theta_0, \Delta_{n,i}^{\delta_1 \land (\beta - \frac{1}{4})}, X_{t_i}) = 2 + R(\theta_0, \Delta_{n,i}^{\delta_1 \land (\beta - \frac{1}{4})}, X_{t_i}),$$ 
we remind that $c(x, \sigma) = a^2(x, \sigma)$. Replacing the last equation in \eqref{eq: deriv sigma avanti} we get
$$\frac{1}{n} \sum_{i = 0}^{n - 1} 2 \cdot \frac{4(\partial_\sigma a (X_{t_i}, \sigma_0))^2 }{c(X_{t_i}, \sigma_0)}1_{\left \{|X_{t_i}| \le \Delta_{n,i}^{-k} \right \} } + \frac{1}{n} \sum_{i = 0}^{n - 1}R(\theta_0, \Delta_{n,i}^{\beta \land \delta_1 \land (\beta - \frac{1}{4})}, X_{t_i}).$$
The first term here above converges to $8 \int_\mathbb{R} (\frac{\partial_\sigma a(x, \sigma_0)}{a(x, \sigma_0)})^2 \pi(dx) $ as a consequence of the third point of Proposition \ref{prop: ergodic} while the second one clearly goes to zero in norm 1 and so in probability thanks to the polynomial growth of $R$, its property \eqref{propriety power R} and the assumption we made $\beta > \frac{1}{4}$. It follows the first convergence of \eqref{eq: normalita sigma}. To obtain the second one we observe it is
$$\frac{1}{n^{1 + \frac{r}{2}}}\sum_{i = 0}^{n - 1} \mathbb{E}_i[\tilde{\zeta}^{2 + r}_i(\theta_0)] \le \frac{1}{n^{1 + \frac{r}{2}}}\sum_{i = 0}^{n - 1} (\frac{-2 \partial_\sigma m (\mu_0, \sigma_0, X_{t_i})}{m_2(\mu_0, \sigma_0, X_{t_i})})^{2 + r} 1_{\left \{|X_{t_i}| \le \Delta_{n,i}^{-k} \right \} }\mathbb{E}_i[(X_{t_{i + 1}} - m(\mu_0, \sigma_0, X_{t_i}))^{2+ r}\varphi^{2 + r}_{\Delta_{n,i}^\beta}(\Delta_i X)]+ $$
$$ + \frac{1}{n^{1 + \frac{r}{2}}} \sum_{i = 0}^{n - 1} (\frac{\partial_\sigma  m_2 (\mu_0, \sigma_0, X_{t_i})}{ m_2 (\mu_0, \sigma_0, X_{t_i})})^{2+ r} 1_{\left \{|X_{t_i}| \le \Delta_{n,i}^{-k} \right \} }(c + \frac{c}{m_2^{2 + r}(\mu_0, \sigma_0, X_{t_i})} \mathbb{E}_i[(X_{t_{i + 1}} - m(\mu_0, \sigma_0, X_{t_i}))^{2(2+ r)}\varphi^{2(2 + r)}_{\Delta_{n,i}^\beta}(\Delta_i X)]) \le $$
\begin{equation}
\le \Delta_n^{(1 + \frac{r}{2})\land (1 + \beta(2 + r))} \frac{1}{n^{1 + \frac{r}{2}}} \sum_{i = 0}^{n - 1} R(\theta_0, 1, X_{t_i}) + \frac{1}{n^{1 + \frac{r}{2}}} \sum_{i = 0}^{n - 1} R(\theta_0, 1, X_{t_i}) + \Delta_n^{0 \land (1 + 2\beta(2 + r) - (2 + r))}  \frac{1}{n^{1 + \frac{r}{2}}} \sum_{i = 0}^{n - 1}R(\theta_0, 1, X_{t_i}), 
\label{eq: deriv sigma 2+r}
\end{equation}
where we have acted like before using the development \eqref{eq: hp dl m2} on $m_2$ and the second and the fourth point of Proposition \ref{prop: dl derivate prime}. Besides, we have used the fourth point of Lemma \ref{lemma: conditional expected value} with $k= 2 + r$ and $k= 2(2 + r)$, respectively. It is now clear that the first two terms of \eqref{eq: deriv sigma 2+r} go to zero in norm $1$ and so in probability for $n\rightarrow \infty$. Concerning the third one, if the minimum between $0$ and $1 + 2\beta(2 + r) - (2 + r)$ is $0$ it is exactly like the second one and so we know it goes to zero in probability, otherwise it can be seen as $\frac{1}{(n\Delta_n)^\frac{r}{2}} \Delta_n^{1 + 2\beta(2 + r) - (2 + r) + \frac{r}{2}} \frac{1}{n}\sum_{i = 0}^{n - 1}R(\theta_0, 1, X_{t_i})$, that goes to zero since $n\Delta_n \rightarrow \infty$ for $n \rightarrow \infty$ and because of the fact that the exponent on $\Delta_n$ is always positive. Indeed $1 + 2\beta(2 + r) - (2 + r) + \frac{r}{2} > 0$ iff $2\beta(2 + r) > 1 + \frac{r}{2}$, that is $\beta > \frac{1 + \frac{r}{2}}{2(2 + r)} = \frac{1}{4}$. \\
To conclude, we prove the convergence \eqref{eq: derivate miste}. We have
$$\frac{1}{\sqrt{n T_n}} \sum_{i = 0}^{n - 1} |\mathbb{E}_i[\zeta_i(\theta_0) \tilde{\zeta}_i(\theta_0)]| =  \frac{1}{\sqrt{ n T_n}} \sum_{i = 0}^{n - 1}  |\mathbb{E}_i \big[ [(\frac{4 \partial_\mu m \, \partial_\sigma m}{m_2^2})(\mu_0, \sigma_0, X_{t_i})(X_{t_{i + 1}} - m(\mu_0, \sigma_0, X_{t_i}))^2 +$$
\begin{equation}
-2( \frac{\partial_\mu m \, \partial_\sigma m_2 + \partial_\mu m_2 \, \partial_\sigma m}{m_2^2})(\mu_0, \sigma_0, X_{t_i})(X_{t_{i + 1}} - m(\mu_0, \sigma_0, X_{t_i}))(1 - \frac{(X_{t_{i + 1}} - m(\mu_0, \sigma_0, X_{t_i}))^2}{m_2(\mu_0, \sigma_0, X_{t_i})}) + 
\label{eq: stima derivate miste}
\end{equation}
$$+ \frac{ \partial_\mu m_2 \, \partial_\sigma m_2}{m_2^2})(\mu_0, \sigma_0, X_{t_i})(1 - \frac{(X_{t_{i + 1}} - m(\mu_0, \sigma_0, X_{t_i}))^2}{m_2(\mu_0, \sigma_0, X_{t_i})})^2 ]\varphi^{2}_{\Delta_{n,i}^\beta}(\Delta_i X) \big]|1_{\left \{|X_{t_i}| \le \Delta_{n,i}^{-k} \right \} }. $$
Now using the four points of Proposition \ref{prop: dl derivate prime}, the first, second, third and fifth points of Lemma \ref{lemma: conditional expected value} and the lemma here above we get that \eqref{eq: stima derivate miste} is upper bounded by
$$\frac{1}{n\sqrt{\Delta_n}} \sum_{i = 0}^{n - 1} R(\theta_0, \Delta_{n,i}, X_{t_i}) + R(\theta_0, \Delta_{n,i}^{\frac{1}{3} + \beta}, X_{t_i}) + R(\theta_0, \Delta_{n,i}^{\frac{4}{3} + \beta}, X_{t_i}) \le \Delta_n^{ \beta - \frac{1}{6}} \frac{1}{n}\sum_{i = 0}^{n - 1} R(\theta_0, 1, X_{t_i}), $$
which converges to zero in norm $1$ and so in probability since we have chosen $\beta > \frac{1}{4} > \frac{1}{6}$.
\end{proof}

\subsubsection{Proof of Lemma \ref{lemma: derivate seconde contrasto}.}
\begin{proof}\textit{Point 1.} \\
We start showing the convergence of $C_n(\theta_0)$ to $B$. We observe that \begin{equation}
\partial^2_{\mu \sigma} U_n (\mu_0, \sigma_0) = \sum_{i = 0}^{n - 1} [\frac{-2(X_{t_{i + 1}} - m) \partial_{\mu \sigma}^2 m}{m_2} -2 \frac{\partial_\mu m \partial_\sigma m}{m_2} + \frac{2(X_{t_{i + 1}} - m)\partial_\mu m \partial_\sigma m_2}{m_2^2} + \frac{2(X_{t_{i + 1}} - m)\partial_\mu m_2 \partial_\sigma m}{m_2^2} +
\label{eq: derivate miste contrasto}
\end{equation}
$$  + \frac{2(X_{t_{i + 1}} - m)^2 \partial_\mu m_2 \partial_\sigma m_2}{m_2^3} + \frac{\partial_{\mu \sigma}^2 m_2}{m_2}(1 - \frac{(X_{t_{i + 1}} - m)^2}{m_2})- \frac{\partial_\mu m_2 \partial_\sigma m_2}{m_2^2}]  \varphi_{\Delta_{n,i}^\beta}(\Delta X_i) 1_{\left \{ |X_{t_i}| \le \Delta_{n,i}^{- k} \right \}}=: \sum_{j= 1}^7 \sum_{i = 0}^{n - 1} I_{i,j}^n$$
where, for shortness, we omit that $m$, $m_2$ and their derivatives are calculated in $(\mu_0, \sigma_0, X_{t_i})$. \\
In order to show that $\frac{1}{\sqrt{n T_n}}\partial^2_{\mu \sigma} U_n (\mu_0, \sigma_0) \xrightarrow{\mathbb{P}} 0$ we will use repeatedly Lemma 9 in \cite{Genon Catalot} and the estimation of the derivatives of $m$ and $m_2$ gathered in Propositions \ref{prop: dl derivate prime} and \ref{prop: derivate seconde}. \\
$\sum_{i = 0}^{n - 1} I_{i,1}^n$ goes to zero in probability for $n \rightarrow \infty$ because $I_{i,1}^n $ is centered and, using also the first point of Lemma \ref{lemma: conditional expected value}
$$\frac{1}{T_n} \frac{1}{n} \sum_{i = 0}^{n - 1} \mathbb{E}_i[( I_{i,1}^n)^2] \le \frac{1}{n \Delta_n} \frac{1}{n} \sum_{i = 0}^{n - 1} R(\theta, \Delta_{n,i}, X_{t_i})R(\theta, \Delta_{n,i}, X_{t_i}) \le \frac{1}{n \Delta_n} \frac{1}{n} \Delta_n^2  \sum_{i = 0}^{n - 1} R(\theta, 1, X_{t_i}), $$
which converges to zero in norm $1$ and so in probability as a consequence of the polynomial growth of $R$ and the third point of Lemma \ref{lemma: 2.1 GLM}. $\sum_{i = 0}^{n - 1} I_{i,2}^n$ goes to zero in norm $1$ and so in probability. Indeed, using the development \eqref{eq: hp dl m2} of $m_2$ and the first two point of Proposition \ref{prop: dl derivate prime}, it is
$$\mathbb{E}[\frac{1}{\sqrt{n T_n}}|\sum_{i = 0}^{n - 1} I_{i,2}^n|] \le \Delta_n^{- \frac{1}{2}} \frac{1}{n} \sum_{i = 0}^{n - 1} \mathbb{E}[|R(\theta, \Delta_n, X_{t_i})|] \le c \Delta_n^\frac{1}{2},$$
which goes to zero. \\
Concerning $I_{i,3}^n$, it is still centered and from the first and fourth points of Proposition \ref{prop: dl derivate prime} and the first point of Lemma \ref{lemma: conditional expected value} it follows
$$\frac{1}{T_n} \frac{1}{n} \sum_{i = 0}^{n - 1} \mathbb{E}_i[( I_{i,3}^n)^2] \le \frac{1}{n \Delta_n} \frac{1}{n} \sum_{i = 0}^{n - 1} R(\theta, 1, X_{t_i})R(\theta, \Delta_{n,i}, X_{t_i}) \le \frac{1}{n \Delta_n} \frac{1}{n} \Delta_n  \sum_{i = 0}^{n - 1} R(\theta, 1, X_{t_i}),$$
which converges to $0$ in norm $1$ and so in probability. Hence, $\frac{1}{ \sqrt{n T_n}} \sum_{i = 0}^{n - 1} I_{i,3}^n \xrightarrow{\mathbb{P}} 0$ from Lemma 9 in \cite{Genon Catalot}. \\
The same applies to $I_{i,4}^n$, which squared is upper bounded by
$\frac{1}{n \Delta_n} \frac{1}{n} \sum_{i = 0}^{n - 1} R(\theta, \Delta_{n,i}^2, X_{t_i})R(\theta, \Delta_{n,i}, X_{t_i}) \le \frac{1}{n \Delta_n} \frac{1}{n} \Delta^3_n  \sum_{i = 0}^{n - 1} R(\theta, 1, X_{t_i})$. \\
On $\sum_{i = 0}^{n - 1} I_{i,5}^n$ we prove the convergence in norm $1$ and so we have the convergence in probability: from the first point of Lemma \ref{lemma: conditional expected value}, the development \eqref{eq: hp dl m2} of $m_2$ and the third and the fourth points of Proposition \ref{prop: dl derivate prime}, it is
$$\frac{1}{\sqrt{n T_n}} \sum_{i = 0}^{n - 1} \mathbb{E}[|I_{i,5}^n| ]\le \frac{1}{n \sqrt{\Delta_n}} \sum_{i = 0}^{n - 1} R(\theta, \Delta_{n,i}, X_{t_i}) R(\theta, 1, X_{t_i}) \le \Delta_n^\frac{1}{2} \frac{1}{n} \sum_{i = 0}^{n - 1}R(\theta, 1, X_{t_i}),  $$
which goes to zero. \\
We observe that, as a consequence of the definition of $m_2$, $I_{i,6}^n$ is centered. In order to apply Lemma 9 in \cite{Genon Catalot} we evaluate its squared value, that is
$$\frac{1}{T_n} \frac{1}{n} \sum_{i = 0}^{n - 1} \mathbb{E}_i[(I_{i,6}^n)^2] \le \frac{1}{n \Delta_n} \frac{1}{n} \sum_{i = 0}^{n - 1}R(\theta, \Delta_{n,i}, X_{t_i}) R(\theta, 1, X_{t_i}) \le \Delta_n \frac{1}{n \Delta_n} \frac{1}{n} \sum_{i = 0}^{n - 1} R(\theta, 1, X_{t_i}),$$
where we have also used the estimation of the mixed second derivative of $m_2$ contained in Proposition \ref{prop: derivate seconde}. \\
To conclude the proof about the mixed derivative of the contrast function, we observe that $\sum_{i = 0}^{n - 1} I_{i,7}^n$ converges to $0$ in $L^1$ from the third and the fourth point of Proposition \ref{prop: dl derivate prime} and the boundedness of $\varphi$. We obtain
$$\frac{1}{ \sqrt{ n T_n}} \mathbb{E}[|\sum_{i = 0}^{n - 1} I_{i,7}^n|] \le \Delta_n^{- \frac{1}{2}} \frac{1}{n} \sum_{i = 0}^{n - 1} \mathbb{E}[R(\theta, \Delta_{n,i}, X_{t_i})] \le c \Delta_n^\frac{1}{2}, $$
that goes to $0$ as we wanted. \\
The next step is to prove that $\frac{1}{T_n} \partial^2_\mu U_n (\mu_0, \sigma_0) \xrightarrow{\mathbb{P}} -2 \int_\mathbb{R} (\frac{\partial_\mu b (x, \mu_0)}{a(x, \sigma_0)})^2 \pi(dx)$. 
In order to do it we compute $\partial^2_\mu U_n (\mu_0, \sigma_0)$ and we observe it is exactly like \eqref{eq: derivate miste contrasto} but all the derivatives are with respect to $\mu$. For such a reason we keep referring to \eqref{eq: derivate miste contrasto} and we write $\partial^2_\mu U_n (\mu_0, \sigma_0) =: \sum_{j= 1}^7 \sum_{i = 0}^{n - 1} \tilde{I}_{i,j}^n$.
We are going to show, in particular, that $\frac{1}{T_n}\sum_{i = 0}^{n - 1} \tilde{I}_{i,2}^n$ converges to the wanted integral, while $\frac{1}{T_n }(\sum_{i = 0}^{n - 1} \tilde{I}_{i,1}^n + \sum_{j= 3}^7 \sum_{i = 0}^{n - 1} \tilde{I}_{i,j}^n) \xrightarrow{\mathbb{P}} 0$. Indeed, we observe that $\tilde{I}_{i,1}^n$, $\tilde{I}_{i,3}^n$, $\tilde{I}_{i,4}^n$, $\tilde{I}_{i,6}^n$ are still centered and, using Lemma \ref{lemma: conditional expected value} and Propositions \ref{prop: dl derivate prime} and \ref{prop: derivate seconde} it is easy to show that their squared values are upper bounded in the following way:
$$\frac{1}{T_n^2} \sum_{i = 0}^{n - 1} \mathbb{E}_i[(\tilde{I}_{i,1}^n)^2] \le \frac{1}{n\Delta_n} \Delta_n^{-1} \frac{1}{n} \sum_{i = 0}^{n - 1} R(\theta, \Delta_{n,i}, X_{t_i})R(\theta,1, X_{t_i})\le \frac{1}{n\Delta_n} \frac{1}{n} \sum_{i = 0}^{n - 1}R(\theta,1, X_{t_i}),$$
that goes to zero in norm $1$ and so in probability since $n\Delta_n \rightarrow \infty$ for $n \rightarrow \infty$. 
$$\frac{1}{T_n^2}  \sum_{i = 0}^{n - 1} \mathbb{E}_i[(\tilde{I}_{i,3}^n)^2] \le \frac{1}{n\Delta_n} \Delta_n^{-1} \frac{1}{n} \sum_{i = 0}^{n - 1}R(\theta, \Delta_{n,i}, X_{t_i})R(\theta, \Delta_{n,i}^2, X_{t_i})\le \Delta_n^2 \frac{1}{n\Delta_n} \frac{1}{n} \sum_{i = 0}^{n - 1}R(\theta, 1, X_{t_i}) \xrightarrow{\mathbb{P}} 0.$$
Now $\tilde{I}_{i,4}^n$ and $\tilde{I}_{i,3}^n$ are exactly the same quantity and so the estimation here above clearly holds also for  $\tilde{I}_{i,4}^n$ instead of $\tilde{I}_{i,3}^n$. Concerning $\tilde{I}_{i,6}^n$, we have 
$$\frac{1}{T_n^2} \sum_{i = 0}^{n - 1} \mathbb{E}_i[(\tilde{I}_{i,6}^n)^2] \le \frac{1}{n\Delta_n} \Delta_n^{-1} \frac{1}{n} \sum_{i = 0}^{n - 1}R(\theta, \Delta_{n,i}^2, X_{t_i})R(\theta, 1, X_{t_i})\le \Delta_n \frac{1}{n\Delta_n} \frac{1}{n} \sum_{i = 0}^{n - 1}R(\theta, 1, X_{t_i}) \xrightarrow{\mathbb{P}} 0.$$
The application of Lemma 9 in \cite{Genon Catalot} gives us $\frac{1}{T_n} \sum_{i = 0}^{n - 1} (\tilde{I}_{i,1}^n + \tilde{I}_{i,3}^n + \tilde{I}_{i,4}^n + \tilde{I}_{i,6}^n )\xrightarrow{\mathbb{P}} 0.$
We now prove the convergence to $0$ in norm $1$ of $\frac{1}{T_n} \sum_{i = 0}^{n - 1} (\tilde{I}_{i,5}^n + \tilde{I}_{i,7}^n)$. Indeed, using again the first point of Lemma \ref{lemma: conditional expected value}, the development \eqref{eq: hp dl m2} of $m_2$ and the last two points of Proposition \ref{prop: derivate seconde} it is $\frac{1}{T_n} \sum_{i = 0}^{n - 1} \mathbb{E}[|\tilde{I}_{i,5}^n + \tilde{I}_{i,7}^n|] \le$
$$\le \frac{1}{n\Delta_n} \sum_{i = 0}^{n - 1} \mathbb{E}[R(\theta, \Delta_{n,i}, X_{t_i})R(\theta, \Delta_{n,i}, X_{t_i}) + R(\theta, \Delta_{n,i}^2, X_{t_i})]  \le \Delta_n \frac{1}{n} \sum_{i = 0}^{n - 1} \mathbb{E}[R(\theta,1, X_{t_i})] \le c \Delta_n,$$
which clearly goes to $0$. \\
Concerning the principal term $\frac{1}{T_n} \sum_{i = 0}^{n - 1} \tilde{I}_{i,2}^n$, we observe that using \eqref{eq: 1/m2} and the first point of Proposition \ref{prop: dl derivate prime}, it is 
$$\frac{(\partial_\mu m)^2}{m_2}(\mu_0, \sigma_0, X_{t_i}) = \frac{(\Delta_{n,i}\partial_\mu b(\mu_0, X_{t_i})+ R(\theta_0, \Delta_{n,i}^{\frac{5}{2} - \beta - \epsilon}, X_{t_i}))^2}{\Delta_{n, i} c(\sigma_0, X_{t_i})} (1 - \Delta_{n,i}^{\delta_1} \frac{r(X_{t_i})}{c(X_{t_i}, \sigma_0)} - \Delta_{n,i}r(\mu_0, \sigma_0, X_{t_i}) +$$
$$ +R(\theta_0, \Delta_{n,i}^{\bar{r}}, X_{t_i})) = \frac{\Delta_{n,i} (\partial_\mu b(\mu_0, X_{t_i}))^2}{c(\sigma_0, X_{t_i})} + R(\theta_0, \Delta_{n,i}^{(\frac{5}{2} - \beta - \epsilon) \land (1 + \delta_1)}, X_{t_i}),$$
with $\bar{r} = 2 \land (1 + \delta_2) \land 2 \delta_1$, as defined below \eqref{eq: 1/m2}. In the last equality we have used that the other terms are negligible.
Now we have that 
$$\frac{1}{T_n} \sum_{i = 0}^{n - 1}-2\frac{\Delta_{n,i} (\partial_\mu b(\mu_0, X_{t_i}))^2}{c(\sigma_0, X_{t_i})} \varphi_{\Delta_{n,i}^\beta}(\Delta X_i) 1_{\left \{ |X_{t_i}| \le \Delta_{n,i}^{- k} \right \}} \xrightarrow{\mathbb{P}}-2 \int_\mathbb{R} (\frac{\partial_\mu b (x, \mu_0)}{a(x, \sigma_0)})^2 \pi(dx), $$
as a consequence of the second point of Proposition \ref{prop: ergodic} while \\
$\frac{1}{n\Delta_n} \sum_{i = 0}^{n - 1}-2R(\theta_0, \Delta_{n,i}^{(\frac{5}{2} - \beta - \epsilon) \land (1 + \delta_1)}, X_{t_i})\varphi_{\Delta_{n,i}^\beta}(\Delta X_i) 1_{\left \{ |X_{t_i}| \le \Delta_{n,i}^{- k} \right \}} $ is upper bounded in norm $1$ by
$$\Delta_n^{(\frac{3}{2} - \beta - \epsilon) \land \delta_1} \frac{1}{n} \sum_{i = 0}^{n - 1} \mathbb{E}[R(\theta_0, 1, X_{t_i})] \le c \Delta_n^{(\frac{3}{2} - \beta - \epsilon) \land \delta_1}, $$
that converges to $0$ since the exponent on $\Delta_n$ is always positive. \\
To prove the first point of Lemma \ref{lemma: derivate seconde contrasto} we are left to show the convergence of $\frac{1}{n}\partial^2_\sigma U_n (\mu_0, \sigma_0)$. \\
Again, we still refer to \eqref{eq: derivate miste contrasto} observing that that the only difference is that all the derivatives are with respect to $\sigma$. We write $\partial^2_\sigma U_n (\mu_0, \sigma_0) =: \sum_{j= 1}^7 \sum_{i = 0}^{n - 1} \hat{I}_{i,j}^n$. \\
We keep using Lemma 9 in \cite{Genon Catalot} joint with the development \eqref{eq: hp dl m2} and Propositions \ref{prop: dl derivate prime} and \ref{prop: derivate seconde} to show that the centered terms go to zero in probability, that is 
$$\frac{1}{n} \sum_{i = 0}^{n -1}(\hat{I}_{i,1}^n + \hat{I}_{i,3}^n + \hat{I}_{i,4}^n +\hat{I}_{i,6}^n)\varphi_{\Delta_{n,i}^\beta}(\Delta X_i) 1_{\left \{ |X_{t_i}| \le \Delta_{n,i}^{- k} \right \}} \xrightarrow{\mathbb{P}} 0.$$
Moreover, 
$$\frac{1}{n} \sum_{i = 0}^{n -1}\mathbb{E}[|\hat{I}_{i,2}^n|] \le \frac{1}{n} \sum_{i = 0}^{n -1}\mathbb{E}[R(\theta, \Delta_{n,i}, X_{t_i})] \le c \Delta_n \rightarrow 0.$$
We are left to deal with the principal terms $\hat{I}_{i,5}^n$ and $\hat{I}_{i,7}^n$ and so we study the convergence of \\
$\frac{1}{n} \sum_{i = 0}^{n -1} \frac{(\partial_\sigma m_2)^2}{m_2^2}(\frac{2(X_{t_{i + 1}} - m)^2}{m_2} -1)\varphi_{\Delta_{n,i}^\beta}(\Delta X_i) 1_{\left \{ |X_{t_i}| \le \Delta_{n,i}^{- k} \right \}}$.
From the development \eqref{eq: hp dl m2} of $m_2$ and the development of $\partial_\sigma m_2$ stated in the fourth point of Proposition \ref{prop: dl derivate prime} it follows that the conditional expected value of the quantity here above is \\
$\frac{1}{n} \sum_{i = 0}^{n -1} \frac{(2 \Delta_{n,i} \partial_\sigma a(X_{t_i}, \sigma_0)a(X_{t_i}, \sigma_0))^2}{(\Delta_{n,i}a^2(X_{t_i}, \sigma_0))^2} (\frac{2(X_{t_{i + 1}} - m)^2}{\Delta_{n,i}a^2(X_{t_i}, \sigma_0)} - 1)1_{\left \{ |X_{t_i}| \le \Delta_{n,i}^{- k} \right \}}$  plus a negligible term that comes from the developments \eqref{eq: hp dl m2} and \eqref{eq: 1/m2} and it converges to zero in norm $1$ and so in probability. \\
The principal term is therefore such that, using {\modar the} second point of Proposition \ref{prop: LT} and the third of Proposition \ref{prop: ergodic}, we get 
$$\frac{1}{n} \sum_{i = 0}^{n -1}(\hat{I}_{i,5}^n + \hat{I}_{i,7}^n) \xrightarrow{\mathbb{P}} 4 \int_\mathbb{R} (\frac{\partial_\sigma a(x, \sigma_0)}{a(x, \sigma_0)})^2 \pi(dx).$$
\\
\textit{Point 2.} \\
 We start proving that $ \frac{1}{ \sqrt{n T_n}}\sup_{|\tilde{\theta}| \le \epsilon_n} |\partial^2_{\mu \sigma} U_n (\theta_0 +\tilde{\theta}) -  \partial^2_{\mu \sigma} U_n (\theta_0)|$ goes to $0$ in probability for $\epsilon_n$ that goes to $0$. \\
In order to do that, it is enough to show that the sequence $\frac{1}{\sqrt{n T_n}}\partial^2_{\mu \sigma} U_n (\theta)$ is tight, which is implied by $\sup_n \frac{1}{ \sqrt{n T_n}}\mathbb{E}[\sup_{\mu, \sigma}| \partial_\vartheta (\partial^2_{\mu \sigma} U_n (\mu, \sigma))|] < \infty$ (see Corollary B.1 in \cite{Shimizu thesis}), for $\vartheta = \mu$ or $\vartheta= \sigma$. \\
We observe it is 
$$\partial^3_{\mu \sigma \vartheta} U_n (\mu, \sigma) := \sum_{i = 0}^{n - 1} [ \frac{2 \partial_\vartheta m \partial^2_{\mu \sigma } m - 2(X_{t_{i + 1}} - m) \partial^3_{\mu \sigma \vartheta} m}{m_2} + \frac{2(X_{t_{i + 1}} - m) \partial^2_{\mu \sigma} m \partial_\vartheta m_2}{m_2^2} -2 \frac{(\partial^2_{\mu \vartheta} m \partial_\sigma m + \partial_\mu m \partial^2_{\sigma \vartheta} m)}{m_2} + $$
$$ + \frac{2 \partial_\mu m \partial_\sigma m \partial_\vartheta m_2}{m_2^2} - \frac{2 \partial_\vartheta m \partial_\mu m \partial_\sigma m_2}{m_2^2} + \frac{2 (X_{t_{i + 1}} - m)(\partial^2_{\mu \vartheta} m \partial_\sigma m_2 + \partial_\mu m \partial^2_{\sigma \vartheta} m_2)}{m_2^2} - \frac{4 (X_{t_{i + 1}} - m) \partial_\mu m \partial_\sigma m_2 \partial_\vartheta m_2}{m_2^3} +$$
$$- \frac{2 \partial_\vartheta m \partial_\mu m_2 \partial_\sigma m }{m_2^2} + \frac{2 (X_{t_{i + 1}} - m)(\partial^2_{\mu \vartheta} m_2 \partial_\sigma m + \partial_\mu m_2  \partial^2_{\sigma \vartheta} m)}{m_2^2} - \frac{4 (X_{t_{i + 1}} - m) \partial_\mu m_2 \partial_\sigma m \partial_\vartheta m_2}{m_2^3} +$$
$$- \frac{4(X_{t_{i + 1}} - m) \partial_\vartheta m \partial_\mu m_2 \partial_\sigma m_2 }{m_2^3 } + \frac{2 (X_{t_{i + 1}} - m)^2 (\partial^2_{\mu \vartheta} m_2 \partial_\sigma m_2 + \partial_\mu m_2 \partial^2_{\sigma \vartheta} m_2)}{m_2^3} - \frac{6 (X_{t_{i + 1}} - m)^2 \partial_\mu m_2 \partial_\sigma m_2 \partial_\vartheta m_2}{m_2^4} +$$
$$ + (\frac{\partial^3_{\mu \sigma \vartheta} m_2}{m_2} - \frac{\partial^2_{\mu \sigma}m_2 \partial_\vartheta m_2}{m_2^2})(1 - \frac{(X_{t_{i + 1}} - m)^2}{m_2}) + \frac{\partial^2_{\mu \sigma} m_2}{m_2}(\frac{2(X_{t_{i + 1}} - m)\partial_\vartheta m}{m_2} + \frac{(X_{t_{i + 1}} - m)^2 \partial_\vartheta m_2}{m_2^2}) + $$
$$- \frac{\partial^2_{\mu \vartheta} m_2 \partial_\sigma m_2 + \partial_\mu m_2 \partial^2_{\sigma \vartheta} m_2}{m_2^2} + \frac{2 \partial_\mu m_2 \partial_\sigma m_2 \partial_\vartheta m_2}{m_2^3}] \varphi_{\Delta_{n,i}^\beta}(\Delta X_i) 1_{\left \{ |X_{t_i}| \le \Delta_{n,i}^{- k} \right \}} = : \sum_{i = 0 }^{n - 1} \sum_{j = 1}^{17} I_{i, j}(\mu, \sigma, \vartheta).$$
Now using Assumption $Ad$, the estimation on the derivatives of $m$ and $m_2$ gathered in Propositions \ref{prop: dl derivate prime}, \ref{prop: derivate seconde} and \ref{prop: dervate terze} and the inequalities \eqref{eq: stima sup x-m}, \eqref{eq: fine stima sup x-m} and \eqref{eq: stima sup (x-m)^2} it follows 
$$\mathbb{E}[\sup_{\mu, \sigma}| \partial_\mu (\partial^2_{\mu \sigma} U_n (\mu, \sigma))|]  = \mathbb{E}[\sup_{\mu, \sigma}|  \sum_{i = 0 }^{n - 1} \sum_{j = 1}^{17} I_{i, j}(\mu, \sigma, \mu)|  ] \le c \, n \Delta_{n}^\frac{1}{2}.$$
In the same way we get
$$\mathbb{E}[\sup_{\mu, \sigma}| \partial_\sigma (\partial^2_{\mu \sigma} U_n (\mu, \sigma)|]  = \mathbb{E}[\sup_{\mu, \sigma}|  \sum_{i = 0 }^{n - 1} \sum_{j = 1}^{17} I_{i, j}(\mu, \sigma, \sigma)|  ] \le c \, n \Delta_{n,i}^\frac{1}{2}.$$
Hence, for both $\vartheta = \mu$ and $\vartheta = \sigma$ we can say it is $\sup_n \frac{1}{ \sqrt{n T_n}}\mathbb{E}[\sup_{\mu, \sigma}| \partial_\vartheta (\partial^2_{\mu \sigma} U_n (\mu, \sigma)|] \le c < \infty$; that implies the tightness of our sequence and so that $ \frac{1}{\sqrt{n T_n}}\sup_{|\tilde{\theta}| \le \epsilon_n} |\partial^2_{\mu \sigma} U_n (\theta_0 +\tilde{\theta}) -  \partial^2_{\mu \sigma} U_n (\theta_0)|$ goes to $0$ in probability for $\epsilon_n$ that goes to $0$. \\
To prove the convergence to $0$ in probability of $ \frac{1}{n}\sup_{|\tilde{\theta}| \le \epsilon_n} |\partial^2_{\sigma} U_n (\theta_0 +\tilde{\theta}) -  \partial^2_{\sigma} U_n (\theta_0)|$ for $\epsilon_n$ that goes to $0$ we act in the same way: we show that the sequence $\frac{1}{n} \partial^2_{\sigma} U_n (\theta)$ is tight through the criterion $\sup_n \frac{1}{n}\mathbb{E}[\sup_{\mu, \sigma}| \partial_\vartheta (\partial^2_{ \sigma} U_n (\mu, \sigma))|] < \infty$. \\
We observe that, computing the derivative with respect to $\vartheta$ of $\partial^2_{ \sigma} U_n (\mu, \sigma)$, we obtain $17$ terms analogous to the case just studied, with the only difference that also the derivatives that were with respect to $\mu$ are now with respect to $\sigma$. In particular, it is $\partial_\vartheta (\partial^2_{ \sigma} U_n (\mu, \sigma)) = \sum_{i = 0 }^{n - 1} \sum_{j = 1}^{17} I_{i, j}(\sigma, \sigma, \vartheta)$. \\
We still use Assumption $Ad$, the estimation on the derivatives of $m$ and $m_2$ gathered in Propositions \ref{prop: dl derivate prime}, \ref{prop: derivate seconde} and \ref{prop: dervate terze} and the inequalities \eqref{eq: stima sup x-m}, \eqref{eq: fine stima sup x-m} and \eqref{eq: stima sup (x-m)^2} to prove that $\mathbb{E}[\sup_{\mu, \sigma}| \partial_\sigma (\partial^2_{\sigma} U_n (\mu, \sigma)|]  < \infty$. Indeed, it is
$$\mathbb{E}[\sup_{\mu, \sigma}| \partial_\sigma (\partial^2_{\sigma} U_n (\mu, \sigma)|]  = \mathbb{E}[\sup_{\mu, \sigma}|  \sum_{i = 0 }^{n - 1} \sum_{j = 1}^{17} I_{i, j}(\sigma, \sigma, \sigma)|  ] \le c.$$
Moreover, since the order in which we compute the derivatives of the contrast function commute, we have
$$\mathbb{E}[\sup_{\mu, \sigma}| \partial_\mu (\partial^2_{\sigma} U_n (\mu, \sigma)|]  = \mathbb{E}[\sup_{\mu, \sigma}|  \sum_{i = 0 }^{n - 1} \sum_{j = 1}^{17} I_{i, j}(\sigma, \sigma, \mu)|  ] =  \mathbb{E}[\sup_{\mu, \sigma}|  \sum_{i = 0 }^{n - 1} \sum_{j = 1}^{17} I_{i, j}(\mu, \sigma, \sigma)|  ] \le  \sum_{i = 0 }^{n - 1} c \Delta_{n,i}^\frac{1}{2}.$$
We can therefore say that, for both $\vartheta = \mu$ and $\vartheta = \sigma$, it is $\sup_n \frac{1}{n}\mathbb{E}[\sup_{\mu, \sigma}| \partial_\vartheta (\partial^2_{\sigma} U_n (\mu, \sigma)|] \le c < \infty$; that implies the tightness. \\
We are now left to show that $\frac{1}{T_n} \sup_{|\tilde{\theta}| \le \epsilon_n}|\partial^2_\mu U_n(\theta_0 + \tilde{\theta}) - \partial^2_\mu U_n(\theta_0) | \xrightarrow{\mathbb{P}} 0$ for $\epsilon_n \rightarrow 0$. We still consider the notation introduced in the first point for which $\partial^2_\mu U_n (\theta) =: \sum_{j = 1}^7 \sum_{i = 0}^{n - 1} \tilde{I}_{i,j}^n (\theta)$ with $\partial^2_\mu U_n (\theta)$ that is as in \eqref{eq: derivate miste contrasto} but both the derivatives are calculated with respect to $\mu$. \\
From Proposition \ref{prop: punto 2 conv deriv seconda} we already know that $\frac{1}{T_n} \sum_{i = 0}^{n - 1} \tilde{I}_{i,j}^n (\theta)$ are tight sequences for $j \in \left \{ 1, 3, 4 \right \}$; having taken as $g_{i,n}(\theta, X_{t_i})$ respectively $\frac{-2 \partial^2_\mu m(\mu, \sigma, X_{t_i})}{m_2(\mu, \sigma, X_{t_i})}$ and $\frac{2 \partial_\mu m(\mu, \sigma, X_{t_i}) \partial_\mu m_2(\mu, \sigma, X_{t_i})}{m_2^2(\mu, \sigma, X_{t_i})}$, twice. We see that the assumptions required on $g_{i,n}$ hold as a consequence of the estimation on the derivatives of $m$ and $m_2$ gathered in Propositions \ref{prop: dl derivate prime} and \ref{prop: derivate seconde} and the development Ad of $m_2$. \\
We also show the tightness of the other terms proving that, for both $\vartheta= \mu$ and $\vartheta = \sigma$, \\
$\sup_n \frac{1}{T_n} \mathbb{E}[\sup_{\mu, \sigma} | \sum_{i = 0}^{n - 1} \partial_\vartheta ( \tilde{I}_{i,2}^n + \tilde{I}_{i,5}^n + \tilde{I}_{i,6}^n + \tilde{I}_{i,7}^n) |] \le c$. Indeed, using the estimation on the first derivatives of $m$ and $m_2$ gathered in Proposition \ref{prop: dl derivate prime} and the development Ad of $m_2$ it is
$$\sup_n \frac{1}{T_n} \mathbb{E}[\sup_{\mu, \sigma} | \sum_{i = 0}^{n - 1} \partial_\mu \tilde{I}_{i,2}^n|] \le  \sup_n \frac{c}{n \Delta_n} \sum_{i = 0}^{n - 1}(\Delta_{n,i} + \Delta_{n,i}^2) \le c,$$
$$\sup_n \frac{1}{T_n} \mathbb{E}[\sup_{\mu, \sigma} | \sum_{i = 0}^{n - 1} \partial_\sigma \tilde{I}_{i,2}^n|] \le  \sup_n \frac{c}{n \Delta_n} \sum_{i = 0}^{n - 1} (\Delta_{n,i} + \Delta_{n,i}) \le c.$$
In the same way, using Assumption $Ad$, the estimation on the derivatives of $m$ and $m_2$ gathered in Propositions \ref{prop: dl derivate prime}, \ref{prop: derivate seconde} and \ref{prop: dervate terze} and the inequalities \eqref{eq: stima sup x-m}, \eqref{eq: fine stima sup x-m} and \eqref{eq: stima sup (x-m)^2} it follows 
$$\sup_n \frac{1}{T_n} \mathbb{E}[\sup_{\mu, \sigma} | \sum_{i = 0}^{n - 1} \partial_\mu (\tilde{I}_{i,5}^n+ \tilde{I}_{i,6}^n + \tilde{I}_{i,7}^n)|] \le \sup_n \frac{c}{n \Delta_n}(\Delta_n^{\frac{5}{2} \land 2 \land 3} + \Delta_n^{1 \land 2 \land \frac{3}{2} \land 2} + \Delta_n^{2 \land 3}) \le c;  $$
$$\sup_n \frac{1}{n \Delta_n} \mathbb{E}[\sup_{\mu, \sigma} | \sum_{i = 0}^{n - 1} \partial_\sigma (\tilde{I}_{i,5}^n+ \tilde{I}_{i,6}^n + \tilde{I}_{i,7}^n)|] \le \sup_n \frac{c}{n \Delta_n}(\Delta_n^{\frac{5}{2} \land 2 \land 2} + \Delta_n^{1 \land 1 \land \frac{3}{2} \land 2} + \Delta_n^{1 \land 2}) \le c.  $$
We have therefore proved that the sequence $\frac{1}{T_n} \partial^2_\mu U_n (\theta)$ is tight, which implies the convergence to zero in probability of $\frac{1}{T_n}\sup_{|\tilde{\theta}| \le \epsilon_n} |\partial^2_{\mu} U_n (\theta_0 +\tilde{\theta}) -  \partial^2_{\mu} U_n (\theta_0)|$.  
\end{proof}

\subsubsection{Proof of Theorem \ref{th: normality}. }
\begin{proof}
By \eqref{eq: Taylor per normalita} we get
$$(\int_0^1 [C_n (\theta_0 + u (\hat{\theta}_n - \theta_0)) - C_n (\theta_0)] du + C_n (\theta_0))S_n = L_n(\theta_0).$$
We find that the matrix 
\begin{equation}
\int_0^1 [C_n (\theta_0 + u (\hat{\theta}_n - \theta_0)) - C_n (\theta_0)] du + C_n (\theta_0)
\label{eq: conv C}
\end{equation}
converges in probability to the nonsingular matrix B. Hence, taking the limit on both sides after multiplying by the inverse of \eqref{eq: conv C}, we see by the continuous mapping theorem that $S_n \xrightarrow{d} B^{-1}L \sim N(0, K^{-1}).$ \\
The asymptotic normality of $S_n$ is therefore proved.
\end{proof}

\subsection{Proof of Proposition \ref{Prop:normality_approximate_contrast}}\label{S: proof prop approx}
{\modch 
We observe that, having assumed $Ad$ on $\tilde{m}_2$ and as a consequence of the first point of $A_\rho$, the developments \eqref{eq: hp dl m2} and \eqref{eq: dl m} keep holding true with $\tilde{m}$ and $\tilde{m}_2$ instead of $m$ and $m_2$ with the only difference that the function $r_1(\mu, \sigma, x)$ in the development of $\tilde{m}$ contains also the rest function $R(\theta, \Delta_{n,i}^{\rho_1}, x)$.
Now it is possible to prove on $\tilde{m}$ and $\tilde{m}_2$ every result stated in Section \ref{S: contrast conv} following the proof we give with $\tilde{m}$ and $\tilde{m}_2$ replacing $m$ and $m_2$, since the only tools we use are the here above discussed developments. Moreover, the results stated in Section \ref{S: consistency} hold true also on $\tilde{m}$ and $\tilde{m}_2$ for the third point of $A_\rho$. \\
The substantial difference between $m$ and $m_2$ and their approximation is that we have defined $m$ and $m_2$ as in \eqref{eq: definition m} and \eqref{eq: definition m2} on purpose to make $L_n(\theta_0)$ a triangular array of martingale increments without requiring any constraint on the rate at which the step discretization has to go to zero. Defining $\tilde{U}_n (\mu, \sigma)$ the contrast function in which we have replaced $m$ and $m_2$ with their approximation $\tilde{m}$ and $\tilde{m}_2$, we have that
$$\tilde{L}_n(\theta_0) := \begin{pmatrix}
- \frac{1}{\sqrt{T_n}} \partial_\mu \tilde{U}_n (\mu_0, \sigma_0) \\
- \frac{1}{\sqrt{n}} \partial_\sigma \tilde{U}_n (\mu_0, \sigma_0)
\end{pmatrix}$$
is no longer a triangular array of martingale increments regardless and so we have to provide an alternative to Lemma \ref{lemma: normality derivative contrast}, which is gathered in the following lemma.
\begin{lemma}
Suppose that Assumptions A1-A8, Ad, $A_{\mbox{Step}}$ and $A \rho$ hold, with  $0 < k < k_0$, and that $\sqrt{n}\Delta_n^{\rho_1 -1/2} \rightarrow  0$ and $\sqrt{n}\Delta_n^{\rho_2 -1} \rightarrow  0$ as $n \rightarrow \infty$. Then, as $n \rightarrow \infty$,
$$\tilde{L}_n (\theta_0) \xrightarrow{d} L \sim N(0, K'),$$
where $K' = \begin{pmatrix} 
4 \int_\mathbb{R} (\frac{\partial_\mu b(x, \mu_0)}{a(x, \sigma_0)})^2 \pi(dx) & 0 \\
0 & 8 \int_\mathbb{R} (\frac{\partial_\sigma a(x, \sigma_0)}{a(x, \sigma_0)})^2 \pi(dx) 
\end{pmatrix}$.
\label{lemma: approx normality derivative contrast}
\end{lemma}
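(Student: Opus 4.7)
The plan is to decompose $\tilde L_n(\theta_0) = L_n(\theta_0) + D_n$, where $L_n(\theta_0)$ is the exact martingale score from Lemma \ref{lemma: normality derivative contrast} and $D_n$ collects the perturbation introduced by substituting $\tilde m,\tilde m_2$ for $m,m_2$. Since Lemma \ref{lemma: normality derivative contrast} already yields $L_n(\theta_0) \xrightarrow{d} N(0,K')$, Slutsky's lemma reduces the problem to proving $D_n \xrightarrow{\mathbb{P}} 0$, componentwise.

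Writing $\partial_\vartheta U_n$ as in \eqref{eq: partial theta Un}, each summand of $\partial_\vartheta \tilde U_n - \partial_\vartheta U_n$ is then expanded in a telescopic manner into three families of contributions: those driven by the derivative errors $\partial_\vartheta \tilde m - \partial_\vartheta m$ and $\partial_\vartheta \tilde m_2 - \partial_\vartheta m_2$, those driven by $\tilde m - m$, and those driven by $\tilde m_2 - m_2$. The unperturbed factors are controlled by the expansions of Propositions \ref{prop: dl derivate prime}--\ref{prop: dervate terze} together with the development $Ad$ of $m_2$, while the perturbation sizes come from $A\rho$. The derivative-error family is the easy one: the extra factor $\Delta_{n,i}^{1+\epsilon}$ provided by $A\rho(2)$, combined with the conditional moment bounds of Lemma \ref{lemma: conditional expected value}, gives after normalization by $\sqrt{T_n}$ (respectively $\sqrt n$) a term of size $O(\Delta_n^\epsilon)$, which vanishes.

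The main obstacle lies in the two remaining families, which do not inherit the centering that gave $L_n(\theta_0)$ its martingale structure and therefore must be estimated directly in $L^1$. For the $\mu$-component the dominant biased term is, schematically, $-2(\partial_\mu \tilde m / \tilde m_2)(m - \tilde m)\varphi_{\Delta_{n,i}^\beta}(\Delta_i X)\,1_{\{|X_{t_i}|\le\Delta_{n,i}^{-k}\}}$: using $\partial_\mu \tilde m \sim \Delta_{n,i}\partial_\mu b$, $\tilde m_2 \sim \Delta_{n,i} a^2$, and $|m-\tilde m|\le R(\theta,\Delta_{n,i}^{\rho_1},\cdot)$, its $L^1$ size per step is $O(\Delta_{n,i}^{\rho_1})$, so summation and division by $\sqrt{T_n}=\sqrt{n\Delta_n}$ produces a bound $O(\sqrt n\,\Delta_n^{\rho_1-1/2})\to 0$. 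For the $\sigma$-component the analogous dominant term arises from comparing the ratio $\partial_\sigma m_2/m_2$ and the factor $(X_{t_{i+1}}-m)^2/m_2$ with their tilded counterparts; invoking $A\rho(1)$ together with $Ad$, and using in particular the expansion \eqref{eq: 1/m2}, one obtains an $L^1$ size $O(\Delta_{n,i}^{\rho_2-1})$ per step, and summation followed by division by $\sqrt n$ gives $O(\sqrt n\,\Delta_n^{\rho_2-1})\to 0$. Both bounds exactly match the hypotheses of the proposition, and the remaining cross terms (quadratic in the perturbations or mixing them) are of strictly smaller order since $\rho_1,\rho_2>1$.

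Combining these estimates yields $D_n \xrightarrow{\mathbb{P}} 0$, and Slutsky's theorem applied to $\tilde L_n(\theta_0) = L_n(\theta_0) + D_n$ concludes the proof.
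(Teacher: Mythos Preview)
Your approach is correct and takes a genuinely different route from the paper's proof. The paper does \emph{not} decompose $\tilde L_n(\theta_0)=L_n(\theta_0)+D_n$; instead it applies the Hall--Heyde martingale CLT directly to the array built from $\tilde U_n$, observing that the quadratic-variation and Lindeberg conditions \eqref{eq: normalita mu}--\eqref{eq: derivate miste} carry over verbatim from the proof of Lemma~\ref{lemma: normality derivative contrast} once $m,m_2$ are replaced by $\tilde m,\tilde m_2$. The only genuinely new step in the paper is the verification of the asymptotic negligibility of the conditional bias, $\tfrac{1}{\sqrt{T_n}}\sum_i|\mathbb E_i[\zeta_i(\theta_0)]|\to 0$ and $\tfrac{1}{\sqrt{n}}\sum_i|\mathbb E_i[\tilde\zeta_i(\theta_0)]|\to 0$, which it obtains via the identities \eqref{eq: approx deriv 1}--\eqref{eq: approx deriv 2} and exactly the rate conditions you use.

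Your Slutsky route is arguably cleaner conceptually, since it recycles Lemma~\ref{lemma: normality derivative contrast} wholesale and isolates the perturbation analysis. The price is that you must also control the \emph{centered} pieces of $D_n$ (the martingale-difference parts with perturbed coefficients), not only the bias terms: for instance the contribution $[\tfrac{\partial_\mu\tilde m}{\tilde m_2}-\tfrac{\partial_\mu m}{m_2}](X_{t_{i+1}}-m)\varphi$ cannot be bounded in $L^1$ after normalization by $\sqrt{T_n}$ (that would give $\sqrt n\,\Delta_n^{\epsilon}$, which need not vanish), so your claimed size $O(\Delta_n^\epsilon)$ for the derivative-error family implicitly relies on the $L^2$/martingale orthogonality; making this explicit would tighten the argument. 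The paper sidesteps this issue entirely because the Hall--Heyde framework absorbs those centered fluctuations into the quadratic-variation condition.
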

\begin{proof}
The result follows from a combination of Theorem 3.2 and Theorem 3.4 in
\cite{HH} (c.f. also Section A.2 in the Appendix of \cite{Shimizu thesis}).
We get the lemma proven if, for a constant $r> 0$, the following convergences hold. \\
We define $\zeta_i$ and $\tilde{\zeta}_i$ such that
$\partial_\mu \tilde{U}_n (\mu_0, \sigma_0) =: \sum_{i = 0}^{n -1} \zeta_i (\theta_0)$ and $\partial_\sigma \tilde{U}_n (\mu_0, \sigma_0) =: \sum_{i = 0}^{n -1} \tilde{\zeta}_i(\theta_0)$. Then it must be
\begin{equation}
\frac{1}{\sqrt{T_n}}\sum_{i = 0}^{n -1} |\mathbb{E}_i[\zeta_i(\theta_0)] | \xrightarrow{\mathbb{P}} 0 \qquad \frac{1}{\sqrt{n}}\sum_{i = 0}^{n -1} |\mathbb{E}_i[\tilde{\zeta}_i(\theta_0)] | \xrightarrow{\mathbb{P}} 0,
\label{eq: triangular array}    
\end{equation}
\begin{equation}
\frac{1}{T_n} \sum_{i = 0}^{n -1} \mathbb{E}_i[\zeta_i^2(\theta_0)] \xrightarrow{\mathbb{P}} 4 \int_\mathbb{R} (\frac{\partial_\mu b(x, \mu_0)}{a(x, \sigma_0)})^2 \pi(dx) \qquad \frac{1}{(\sqrt{T_n})^{2 + r}} \sum_{i = 0}^{n -1}\mathbb{E}_i[\zeta_i^{2 + r}(\theta_0)] \xrightarrow{\mathbb{P}} 0,
\label{eq: approx normalita mu}
\end{equation}
\begin{equation}
\frac{1}{n} \sum_{i = 0}^{n -1} \mathbb{E}_i[\tilde{\zeta}_i^2(\theta_0)] \xrightarrow{\mathbb{P}} 8 \int_\mathbb{R} (\frac{\partial_\sigma a(x, \sigma_0)}{a(x, \sigma_0)})^2 \pi(dx) \qquad \frac{1}{(\sqrt{n})^{2 + r}} \sum_{i = 0}^{n -1} \mathbb{E}_i[\tilde{\zeta}_i^{2 + r}(\theta_0)] \xrightarrow{\mathbb{P}} 0,
\label{eq: approx normalita sigma}
\end{equation}
\begin{equation}
\frac{1}{\sqrt{n T_n}} \sum_{i = 0}^{n -1} |\mathbb{E}_i[ \zeta_i (\theta_0) \tilde{\zeta}_i (\theta_0)] |\xrightarrow{\mathbb{P}} 0.
\label{eq: approx derivate miste}
\end{equation}
It is enough to follow the proof of \eqref{eq: normalita mu}, \eqref{eq: normalita sigma} and \eqref{eq: derivate miste} with $\tilde{m}$ and $\tilde{m}_2$ instead of $m$ and $m_2$ to get \eqref{eq: approx normalita mu}, \eqref{eq: approx normalita sigma} and \eqref{eq: approx derivate miste}. \\
We are left to show \eqref{eq: triangular array}.
We observe that, by the definition of $m$ and the first point of $A_\rho$, we have 
$$|\mathbb{E}_i[(X_{t_{i + 1}} - \tilde{m} (\mu_0, \sigma_0 , X_{t_i}))\varphi_{\Delta_{n,i}^\beta}(\Delta_i X)] | =  | \mathbb{E}_i[(X_{t_{i + 1}} - m (\mu_0, \sigma_0 , X_{t_i}))\varphi_{\Delta_{n,i}^\beta}(\Delta_i X)] + $$
\begin{equation}
 + \mathbb{E}_i[(m (\mu_0, \sigma_0 , X_{t_i}) - \tilde{m} (\mu_0, \sigma_0 , X_{t_i}))\varphi_{\Delta_{n,i}^\beta}(\Delta_i X)] | \le R(\theta_0, \Delta_{n,i}^{\rho_1}, X_{t_i}).
\label{eq: approx deriv 1}
\end{equation}
In the same way, using also the definition of $m_2$ and the estimation which assesses the quality of the approximation of $m_2$ through $\tilde{m}_2$, still in the first point of $A_\rho$, we get
$$\mathbb{E}_i[(1 - \frac{(X_{t_{i + 1}} - \tilde{m} (\mu_0, \sigma_0 , X_{t_i}))^2}{\tilde{m}_2 (\mu_0, \sigma_0 , X_{t_i})})\varphi_{\Delta_{n,i}^\beta}(\Delta_i X)] =$$
$$ = \mathbb{E}_i[(1 - \frac{(X_{t_{i + 1}} - m(\mu_0, \sigma_0 , X_{t_i}))^2}{m_2 (\mu_0, \sigma_0 , X_{t_i})})\varphi_{\Delta_{n,i}^\beta}(\Delta_i X)] + $$
$$ + \mathbb{E}_i[( \frac{(X_{t_{i + 1}} - \tilde{m} (\mu_0, \sigma_0 , X_{t_i}))^2}{\tilde{m}_2 (\mu_0, \sigma_0 , X_{t_i})} - \frac{(X_{t_{i + 1}} - m(\mu_0, \sigma_0 , X_{t_i}))^2}{m_2 (\mu_0, \sigma_0 , X_{t_i})} )\varphi_{\Delta_{n,i}^\beta}(\Delta_i X)]=$$
$$= 0 + (\frac{1}{\tilde{m}_2 (\mu_0, \sigma_0 , X_{t_i})} - \frac{1}{{m}_2 (\mu_0, \sigma_0 , X_{t_i})})\mathbb{E}_i[(X_{t_{i + 1}} - m (\mu_0, \sigma_0 , X_{t_i}))^2\varphi_{\Delta_{n,i}^\beta}(\Delta_i X)] +$$
$$ + \mathbb{E}_i[\frac{(m (\mu_0, \sigma_0 , X_{t_i}) - \tilde{m} (\mu_0, \sigma_0 , X_{t_i}))^2 + 2 (m (\mu_0, \sigma_0 , X_{t_i}) - \tilde{m} (\mu_0, \sigma_0 , X_{t_i}))(X_{t_{i + 1}} - m(\mu_0, \sigma_0 , X_{t_i})) }{\tilde{m}_2 (\mu_0, \sigma_0 , X_{t_i})}\varphi_{\Delta_{n,i}^\beta}(\Delta_i X)].$$
Now, using also the development $A_d$ for $m_2$ and $\tilde{m}_2$ and the first and the third point of Lemma \ref{lemma: conditional expected value}, the absolute value of the equation here above is upper bounded by 
\begin{equation}
R(\theta_0, \Delta_{n,i}^{\rho_2 - 1}, X_{t_i}) + R(\theta_0, \Delta_{n,i}^{2 \rho_1 - 1}, X_{t_i}) + R(\theta_0, \Delta_{n,i}^{\rho_1}, X_{t_i})= R(\theta_0, \Delta_{n,i}^{(\rho_2 - 1) \land \rho_1}, X_{t_i}).
\label{eq: approx deriv 2}
\end{equation}
We compute hereafter the derivatives of $\tilde{U}_n$ with respect to $\mu$, obtaining
\begin{equation}
\mathbb{E}_i[\zeta_i(\theta_0)] = \frac{-2 \partial_\mu \tilde{m} (\mu_0, \sigma_0, X_{t_i})}{\tilde{m}_2(\mu_0, \sigma_0, X_{t_i})} \mathbb{E}_i[(X_{t_{i + 1}} - \tilde{m}(\mu_0, \sigma_0, X_{t_i}))\varphi_{\Delta_{n,i}^\beta}(\Delta_i X)]1_{\left \{|X_{t_i}| \le \Delta_{n,i}^{-k} \right \} } +
\label{eq: zeta}
\end{equation}
$$+ \frac{\partial_\mu  \tilde{m}_2 (\mu_0, \sigma_0, X_{t_i})}{ \tilde{m}_2 (\mu_0, \sigma_0, X_{t_i})}\mathbb{E}_i[(1 - \frac{(X_{t_{i + 1}} - \tilde{m}(\mu_0, \sigma_0, X_{t_i}))^2}{m_2(\mu_0, \sigma_0, X_{t_i})})\varphi_{\Delta_{n,i}^\beta}(\Delta_i X)]1_{\left \{|X_{t_i}| \le \Delta_{n,i}^{-k} \right \} }.$$
As a consequence of the third point of $A_\rho$, Proposition \ref{prop: dl derivate prime} still holds replacing the derivatives of $m$ and $m_2$ with the derivatives of $\tilde{m}$ and $\tilde{m}_2$. Therefore, using also $Ad$ and the equations \eqref{eq: approx deriv 1} and \eqref{eq: approx deriv 2}, we have 
$$\frac{1}{\sqrt{T_n}} \sum_{i = 0}^{n-1} |\mathbb{E}_i[\zeta_i(\theta_0)] |\le \frac{c}{\sqrt{n \Delta_n}}  \sum_{i = 0}^{n-1}(R(\theta_0, \Delta_{n,i}^{\rho_1}, X_{t_i}) + R(\theta_0, \Delta_{n,i}^{\rho_2 \land \rho_1 + 1}, X_{t_i})) \le \frac{\Delta_n^{(\rho_1 - \frac{1}{2}) \land (\rho_2 - \frac{1}{2})}}{\sqrt{n}} \sum_{i = 0}^{n-1}R(\theta_0, 1, X_{t_i}), $$
which converges to $0$ in norm $1$ and so in probability for $\sqrt{n} \Delta_n^{(\rho_1 - \frac{1}{2}) \land (\rho_2 - \frac{1}{2})} \rightarrow 0$ for $n \rightarrow \infty$, as we have required. \\
To show Lemma \ref{lemma: approx normality derivative contrast} holds true we have to prove finally that $\frac{1}{\sqrt{n}} \sum_{i = 0}^{n-1} |\mathbb{E}_i[\tilde{\zeta}_i(\theta_0)] | \xrightarrow{\mathbb{P}} 0$ for $n \rightarrow \infty$, recalling that $\mathbb{E}_i[\tilde{\zeta}_i(\theta_0)]$ is defined as \eqref{eq: zeta} but with the derivatives with respect to $\sigma$ which take now the place of the derivatives with respect to $\mu$. From Proposition \ref{prop: dl derivate prime}, $Ad$ and the equations \eqref{eq: approx deriv 1} and \eqref{eq: approx deriv 2}, it follows
$$\frac{1}{\sqrt{n}} \sum_{i = 0}^{n-1} \mathbb{E}_i[\tilde{\zeta}_i(\theta_0)] \le \frac{c}{\sqrt{n}}  \sum_{i = 0}^{n-1}(R(\theta_0, \Delta_{n,i}^{\rho_1}, X_{t_i}) + R(\theta_0, \Delta_{n,i}^{\rho_2 - 1 \land \rho_1}, X_{t_i})) \le \frac{\Delta_n^{\rho_1 \land (\rho_2 - 1)}}{\sqrt{n}} \sum_{i = 0}^{n-1}R(\theta_0, 1, X_{t_i}),$$
which converges to $0$ in norm $1$ and so in probability for $\sqrt{n} \Delta_n^{\rho_1 \land (\rho_2 - 1)} \rightarrow 0$ for $n \rightarrow \infty$, as we have required. \\
We get $\tilde{L}_n (\theta_0)$ is asymptotically normal, as we wanted.
\end{proof}
After having replaced Lemma \ref{lemma: normality derivative contrast} with the Lemma \ref{lemma: approx normality derivative contrast} just showed, it is enough to follow the proof of the asymptotic normality of the estimator $\hat{\theta_n}$
given in Sections \ref{S: contrast conv}$-$\ref{S: As norm} to get the asymptotic normality of $\tilde{\theta}_n$.

}

\appendix
\section{Appendix}
In this section we prove all the technical results we have introduced, starting from the preliminary results stated in Section \ref{S: Limit theorems}.
\subsection{Proof of limit theorems}\label{S: Proof_limit}
We first show Proposition \ref{prop: ergodic}, observing that its last two points are the discretized version of the first point of Lemma \ref{lemma: 2.1 GLM}.
\subsubsection{Proof of Proposition \ref{prop: ergodic}}
\begin{proof}
The first two points have already been proved in Proposition 3 of \cite{Chapitre 1}. \\
We want to show that $ \frac{1}{n} \sum_{i =0}^{n -1} f(X_{t_i}, \theta)$ converges in $L^2$ to $ \int_\mathbb{R} f(x, \theta) \pi (dx)$.\\
Since
$Var(\frac{1}{n} \sum_{i =0}^{n -1} f(X_{t_i}, \theta)) \le \frac{1}{n^2} \sum_{i =0}^{n -1}  \sum_{j =0}^{n -1} Cov(f(X_{t_i}, \theta), f(X_{t_j}, \theta)),$ we need to estimate the covariance. \\
We know that, under our assumptions, the process $X$ is $\beta$- mixing with exponential decay (see \cite{18 GLM}) that is $\exists \gamma >0$ such that $\beta_X (k) = O ( e^{- \gamma k} )$; with $\beta_X (k)$ as defined in Section 1.3.2 of \cite{Doukhan}. If a process is $\beta$- mixing, then it is also $\alpha$-mixing and so the following estimation holds (see Theorem 3 in Section 1.2.2 of \cite{Doukhan})
$$ |Cov(X_{t_i}, X_{t_j})| \le c \left \| X_{t_i} \right \|_p \left \| X_{t_j} \right \|_q \alpha^\frac{1}{r} (X_{t_i}, X_{t_j}) $$
with $p$, $q$ and $r$ such that $ \frac{1}{p} + \frac{1}{q} + \frac{1}{r} =1$.
Using that $\alpha (X_{t_i}, X_{t_j}) \le \beta_X (|{t_i}-{t_j}|) = O ( e^{- \gamma |{t_i}-{t_j}|} )$, in our case the inequality here above becomes 
$|Cov(f(X_{t_i}, \theta), f(X_{t_j}, \theta))| \le c e^{- \frac{1}{r} \gamma |t_i -t_j|}$,
where we have also used the polynomial growth of $f$ and the third point of Lemma \ref{lemma: 2.1 GLM} to include the two norms in the constant $c$. \\
We introduce a partition of $(0, T_n]$ based on the sets $A_k:= ( k \frac{T_n}{n}, (k+1) \frac{T_n}{n}]$, for which $(0, T_n] = \cup_{k=0}^{n-1} A_k$. 
Now each point $t_i$ in $(0, T_n]$ can be seen as $t_{k, h}$, where $k$ identifies the particular set $A_k$ to which the point belongs while, defining $M_k$ as $|A_k|$, $h$ is a number in $\left \{ 1, ... , M_k \right \}$ which enumerates the points in each set. It follows
$$\frac{c}{n^2} \sum_{i =0}^{n -1}  \sum_{j =0}^{n -1} e^{- \frac{1}{r} \gamma |t_i -t_j|} \le \frac{c}{n^2} \sum_{k_1 =0}^{n -1}  \sum_{k_2 =0}^{n -1} \sum_{h_1 =1}^{M_{k_1}}  \sum_{h_2 =1}^{M_{k_2}} e^{- \frac{1}{r} \gamma |t_{k_1, h_1} -t_{k_2, h_2}|} \le \frac{c e^{\frac{1}{r} \frac{T_n}{n}}}{n^2} \sum_{k_1 =0}^{n -1}  \sum_{k_2 =0}^{n -1} \sum_{h_1 =1}^{M_{k_1}}  \sum_{h_2 =1}^{M_{k_2}} e^{- \frac{1}{r} \gamma |k_1 - k_2| \frac{T_n}{n}}, $$
where the last inequality is a consequence of the following estimation: for each $k_1, k_2 \in \left \{ 0, ... , n-1 \right \}$ it is $|t_{k_1, h_1} -t_{k_2, h_2}| \ge |k_1 - k_2|\frac{T_n}{n} - \frac{T_n}{n}$. \\
Now we observe that the exponent does not depend on $h$ anymore, hence the last term here above can be upper bounded by $\frac{c e^{\frac{1}{r}  \frac{T_n}{n}}}{n^2} \sum_{k_1 =0}^{n -1}  \sum_{k_2 =0}^{n -1} M_{k_1} M_{k_2}e^{- \frac{1}{r} \gamma |k_1 - k_2|\frac{T_n}{n}} $. \\
Moreover, remarking that the length of each interval $A_k$ is $\frac{T_n}{n}$, it is easy to show that we can always upper bound $M_k$ with $\frac{T_n}{n} \frac{1}{\Delta_{min}}$, with $T_n = \sum_{i = 0}^{n - 1} \Delta_{n,i} \le n \Delta_{n}$ and so $M_k \le \frac{\Delta_{n}}{\Delta_{min}}$, that we have assumed bounded by a constant $c_1$. \\
Furthermore, still using that $T_n \le n \Delta_{n}$, we have $e^{\frac{1}{r}  \frac{T_n}{n}} \le e^{\frac{1}{r} \Delta_{n} } \le c$ because, by our hypothesis, $\Delta_{max}$ goes to $0$ for $n \rightarrow \infty$. To conclude, we have to show that 
$\frac{c }{n^2} \sum_{k_1 =0}^{n -1}  \sum_{k_2 =0}^{n -1} e^{- \frac{1}{r} \gamma |k_1 - k_2| \frac{T_n}{n}} \rightarrow 0$ for $n \rightarrow \infty$. We define $j := k_1 - k_2$ and we apply a change of variable, getting 
$$\frac{c }{n^2} \sum_{k_1 =0}^{n -1}  \sum_{k_2 =0}^{n -1} e^{- \frac{1}{r} \gamma |k_1 - k_2|\frac{T_n}{n}} \le \frac{c }{n^2} \sum_{j = -(n -1)}^{n -1} e^{- \frac{1}{r} \gamma |j|\frac{T_n}{n}} |n - j| \le \frac{c }{n}  \sum_{j = -(n -1)}^{n -1} e^{- \frac{1}{r} \gamma |j| \Delta_{min}} \le  \frac{c}{n (1 - e^{- \frac{1}{r} \gamma \Delta_{min}})} \le \frac{c}{T_n} , $$
that goes to $0$ for $n$ that goes to $\infty$. We therefore get $|\frac{1}{n} \sum_{i = 0}^{n - 1}f(X_{t_i}, \theta) - \int_\mathbb{R} f(x, \theta) \pi(dx)| \xrightarrow{\mathbb{P}} 0$. \\
In order to show the third point we observe that
$$|\frac{1}{n} \sum_{i = 0}^{n - 1}f(X_{t_i}, \theta)1_{ \left \{ |X_{t_i}| \le \Delta_{n,i}^{- k} \right \}} - \int_\mathbb{R} f(x, \theta) \pi(dx)| \le |\frac{1}{n} \sum_{i = 0}^{n - 1}f(X_{t_i}, \theta)1_{ \left \{ |X_{t_i}| \le \Delta_{n,i}^{- k} \right \}} - \frac{1}{n} \sum_{i = 0}^{n - 1}f(X_{t_i}, \theta)| + $$
$$ + |\frac{1}{n} \sum_{i = 0}^{n - 1}f(X_{t_i}, \theta) - \int_\mathbb{R} f(x, \theta) \pi(dx)|.$$
We have already proved that the second goes to $0$ in probability, while the first is \\ $|\frac{1}{n} \sum_{i = 0}^{n - 1}f(X_{t_i}, \theta)1_{ \left \{ |X_{t_i}| > \Delta_{n,i}^{- k} \right \}}|$, that converges to $0$ in $L^1$ as a consequence of the polynomial growth of $f$ and the third point of Lemma \ref{lemma: 2.1 GLM} and so in probability. \\
We act in the same way in order to show the fourth point, observing that, by the definition of $\varphi$, it is
\begin{equation}
|\frac{1}{n} \sum_{i = 0}^{n - 1}f(X_{t_i}, \theta)1_{ \left \{ |X_{t_i}| \le \Delta_{n,i}^{- k} \right \}}(\varphi_{\Delta_{n,i}^\beta}(\Delta X_i)- 1)| \le \frac{c}{n} \sum_{i = 0}^{n - 1}|f(X_{t_i}, \theta)|1_{ \left \{ |X_{t_i}| \le \Delta_{n,i}^{- k} \right \}} 1_{ \left \{ |X_{t_i}| \ge \Delta_{n,i}^{\beta} \right \}}. 
\label{eq: ora}
\end{equation}
We observe that, since $\Delta X_i^c = \Delta X_i - \Delta X_i^J$, if $|\Delta X_i| \ge \Delta_{n,i}^\beta$ and $|\Delta X_i^J| < \frac{\Delta_{n,i}^\beta}{2}$, then $|\Delta X_i^c|$ must be more than $\frac{\Delta_{n,i}^\beta}{2}$. Hence 
$$\mathbb{E}[1_{\left \{|\Delta X_i| \ge \Delta_{n,i}^\beta \right \}}] = \mathbb{E}[1_{\left \{|\Delta X_i| \ge \Delta_{n,i}^\beta, |\Delta X_i^J| < \frac{\Delta_{n,i}^\beta}{2} \right \}}] + \mathbb{E}[1_{\left \{|\Delta X_i| \ge \Delta_{n,i}^\beta, |\Delta X_i^J| \ge \frac{\Delta_{n,i}^\beta}{2} \right \}}] \le$$
\begin{equation}
\le \mathbb{P}(|\Delta X_i^c| \ge \frac{\Delta_{n,i}^\beta}{2}) + \mathbb{P}(|\Delta X_i^J| \ge \frac{\Delta_{n,i}^\beta}{2}) \le c \frac{\mathbb{E}[|\Delta X_i^c|^r]}{\Delta_{n,i}^{\beta r}} + R(\theta, \Delta_{n,i}, X_{t_i}) \le R(\theta, \Delta_{n,i}^{(\frac{1}{2} - \beta)r \land 1}, X_{t_i}) = R(\theta, \Delta_{n,i}, X_{t_i}).
\label{eq: stima prob incrementi}
\end{equation}
On the first probability here above we have used Tchebychev inequality and the fourth point of Lemma \ref{lemma: Moment inequalities}, for $|\Delta X_i^J| \ge  \frac{\Delta_{n,i}^\beta}{2}$ the fact that the intensity of jumps is finite and therefore the probability to have at least one jump bigger than $\frac{\Delta_{n,i}^\beta}{2}$ can be computed and it is of order $\Delta_n$. Moreover, by the arbitrariness of $r > 1$, we get that $R(\theta, \Delta_{n,i}^{(\frac{1}{2} - \beta)r }, X_{t_i})$ is negligible compared to $R(\theta, \Delta_{n,i}, X_{t_i})$. \\
From Holder inequality, the polynomial growth of $f$, the third point of Lemma \ref{lemma: 2.1 GLM} and \eqref{eq: stima prob incrementi} it follows that the right hand side of \eqref{eq: ora} goes to $0$ in norm $1$ and so in probability. The proposition is therefore proved. 
\end{proof}

We now prove Proposition \ref{prop: LT}, that is a consequence of Lemma \ref{lemma: conditional expected value}.
\subsubsection{Proof of Proposition \ref{prop: LT}}
\begin{proof}
In order to show that the first convergence holds, we define \\ 
$s_i^n := \frac{1}{T_n} f(X_{t_i}, \theta) \, (X_{t_{i+1}} - m(\mu, \sigma, X_{t_i}))^2 \, \varphi_{\Delta_{n,i}^\beta}(X_{t_{i+1}} - X_{t_i})1_{\left \{ |X_{t_i}| \le \Delta_{n,i}^{- k} \right \}}$.
From Lemma 9 in \cite{Genon Catalot}, if we show that 
$$\sum_{i = 0}^{n - 1}\mathbb{E}_i[s_i^n]\xrightarrow{\mathbb{P}} \int_\mathbb{R}f(x, \theta) a^2(x, \sigma_0) \pi(dx) \quad \mbox{and  }\sum_{i = 0}^{n - 1} \mathbb{E}_i[(s_i^n)^2]\rightarrow 0,$$
then the proposition is proved.
We observe that \eqref{eq: Ei al quadrato} yields
$$\sum_{i = 0}^{n - 1}\mathbb{E}_i[s_i^n] = \frac{1}{T_n} \sum_{i=0}^{n - 1}\Delta_{n,i} f(X_{t_i}, \theta)a^2(X_{t_i}, \sigma_0)1_{\left \{ |X_{t_i}| \le \Delta_{n,i}^{- k} \right \}} + \frac{1}{T_n} \sum_{i=0}^{n - 1}\Delta_{n,i} f(X_{t_i}, \theta) R(\theta_0, \Delta_{n,i}^{\beta}, X_{t_i}).  $$
The first term here above converges in probability to $\int_\mathbb{R}f(x, \theta) a^2(x, \sigma_0) \pi(dx)$ as a consequence of the first point of Proposition \ref{prop: ergodic}, while the second is upper bounded by \\ $\Delta_n^{\beta} \frac{1}{n} \sum_{i=0}^{n - 1}f(X_{t_i}, \theta) R(\theta_0, 1, X_{t_i}) $, which converges to zero in norm 1 (and so in probability) by the polynomial growth of both $R$ and $f$ and the third point of Lemma \ref{lemma: 2.1 GLM}. Moreover, using \eqref{eq: Ei quarta} and the fact that $\frac{1}{T_n} = O(\frac{1}{n \Delta_n})$, we have 
$$ \sum_{i=0}^{n - 1}|\mathbb{E}_i[(s_i^n)^2] | \le \frac{1}{(n\Delta_n)^2} \sum_{i=0}^{n - 1} (f(X_{t_i}, \theta))^2 R(\theta_0, \Delta_{n,i}^2, X_{t_i}) \le \frac{1}{n^2} \sum_{i=0}^{n - 1} f^2(X_{t_i}, \theta) R(\theta_0, 1, X_{t_i}), $$
which goes to zero in norm 1 and so in probability for $n \rightarrow \infty$ as a consequence of the polynomial growth of both $R$ and $f$ and the third point of Lemma \ref{lemma: 2.1 GLM}. The first point is therefore proved. In order to show the second point of Proposition \ref{prop: LT} is enough to act on the sequence
$\tilde{s}_i^n := \frac{1}{n} \frac{f(X_{t_i}, \theta)}{\Delta_{n,i}} \, (X_{t_{i+1}} - m(\mu, \sigma, X_{t_i}))^2 \, \varphi_{\Delta_{n,i}^\beta}(X_{t_{i+1}} - X_{t_i})1_{\left \{ |X_{t_i}| \le \Delta_{n,i}^{- k} \right \}}$ exactly like we have just did here above, with the only difference that the third point of Proposition \ref{prop: ergodic} has to be applied instead of the first one.
\end{proof}

\subsubsection{Proof of Lemma \ref{lemma: conditional expected value}}
\begin{proof}
Replacing $m(\mu, \sigma, X_{t_i})$ with its development \eqref{eq: dl m} and using the dynamic \eqref{eq: model} of $X$ we have
$$X_{t_{i + 1}} - m(\mu, \sigma, X_{t_i}) = \int_{t_i}^{t_{i + 1}} b(X_s, \mu_0) ds + \int_{t_i}^{t_{i + 1}} a(X_s, \sigma_0) dW_s + \int_{t_i}^{t_{i + 1}} \int_\mathbb{R} z \gamma(X_{s^-}) \tilde{\mu}(ds, dz) + R(\theta, \Delta_{n,i}, X_{t_i}) =$$
\begin{equation}
 =: \int_{t_i}^{t_{i + 1}} a(X_s, \sigma_0) dW_s + B_{i,n}.
\label{eq: reformulation X - m}
\end{equation}
In order to prove \eqref{eq: Ei al quadrato} we start considering 
$$\mathbb{E}_i[(X_{t_{i + 1}} - m(\mu, \sigma, X_{t_i}))^2 \varphi_{\Delta_{n,i}^\beta}(X_{t_{i+1}} - X_{t_i}) ] =\mathbb{E}_i[(\int_{t_i}^{t_{i + 1}} a(X_s, \sigma_0) dW_s)^2 \varphi_{\Delta_{n,i}^\beta}(X_{t_{i+1}} - X_{t_i}) ] + $$
$$ +\mathbb{E}_i[(B_{i,n})^2 \varphi_{\Delta_{n,i}^\beta}(X_{t_{i+1}} - X_{t_i}) ] +2 \mathbb{E}_i[B_{i,n}(\int_{t_i}^{t_{i + 1}} a(X_s, \sigma_0) dW_s) \varphi_{\Delta_{n,i}^\beta}(X_{t_{i+1}} - X_{t_i}) ]. $$
Now the first term on the right hand side here above is
$$\mathbb{E}_i[(\int_{t_i}^{t_{i + 1}} a(X_s, \sigma_0) dW_s)^2] + \mathbb{E}_i[(\int_{t_i}^{t_{i + 1}} a(X_s, \sigma_0) dW_s)^2 (\varphi_{\Delta_{n,i}^\beta}(X_{t_{i+1}} - X_{t_i}) -1) ] = $$
\begin{equation}
= \Delta_{n,i} a^2(X_{t_i}, \sigma_0) + \mathbb{E}_i[\int_{t_i}^{t_{i + 1}} [a^2(X_s, \sigma_0) - a^2(X_{t_i}, \sigma_0)] ds] + \mathbb{E}_i[(\int_{t_i}^{t_{i + 1}} a(X_s, \sigma_0) dW_s)^2 (\varphi_{\Delta_{n,i}^\beta}(X_{t_{i+1}} - X_{t_i}) -1) ].
\label{eq: a carre}
\end{equation}
Moreover, 
$$|\mathbb{E}_i[\int_{t_i}^{t_{i + 1}} [a^2(X_s, \sigma_0) - a^2(X_{t_i}, \sigma_0)] ds] + \mathbb{E}_i[(\int_{t_i}^{t_{i + 1}} a(X_s, \sigma_0) dW_s)^2 (\varphi_{\Delta_{n,i}^\beta}(X_{t_{i+1}} - X_{t_i}) -1) ]| \le $$
\begin{equation}
\le \int_{t_i}^{t_{i+ 1}} \mathbb{E}_i[|2a \partial_x a (X_u, \sigma_0)| |X_s - X_{t_i}|] ds + \mathbb{E}_i[(\int_{t_i}^{t_{i + 1}} a(X_s, \sigma_0) dW_s)^{2p}]^\frac{1}{p} \mathbb{E}_i[1_{\left \{|\Delta X_i| \ge \Delta_{n,i}^\beta \right \}}]^\frac{1}{q},
\label{eq: prima di stima varphi 1}
\end{equation}
where $X_u \in (X_s, X_{t_i})$ and we have used Holder inequality and the definition of $\varphi$, that is equal to 1 for $|\Delta X_i| < \Delta_{n,i}^\beta$.  \\
Using Cauchy-Schwartz inequality and the second point of Lemma \ref{lemma: Moment inequalities} on the first term of \eqref{eq: prima di stima varphi 1} and Burkholder-Davis-Gundy inequality and \eqref{eq: stima prob incrementi} on the second we have that the right hand side is upper bounded by
$$\int_{t_i}^{t_{i + 1}} R(\theta_0, \Delta_{n,i}^\frac{1}{2}, X_{t_i} ) ds + R(\theta_0, \Delta_{n,i}, X_{t_i} ) R(\theta_0, \Delta_{n,i}^{\frac{1}{q}}, X_{t_i} ) \le R(\theta_0, \Delta_{n,i}^\frac{3}{2}, X_{t_i} ) + R(\theta_0, \Delta_{n,i}^{2 - \epsilon}, X_{t_i} ),$$
where we have taken $q$ next to $1$. \\
Replacing in \eqref{eq: a carre} we get
\begin{equation}
\mathbb{E}_i[(\int_{t_i}^{t_{i + 1}} a(X_s, \sigma_0) dW_s)^2 \varphi_{\Delta_{n,i}^\beta}(X_{t_{i+1}} - X_{t_i}) ] = \Delta_{n,i} a^2(X_{t_i}, \sigma_0) + R(\theta_0, \Delta_{n,i}^{\frac{3}{2}}, X_{t_i} ).
\label{eq: finale a carre}
\end{equation}
Now we evaluate the contribution of $(B_{i,n})^2$:
$$\mathbb{E}_i[(B_{i,n})^2 \varphi_{\Delta_{n,i}^\beta}(X_{t_{i+1}} - X_{t_i})] \le c \mathbb{E}_i[(\int_{t_i}^{t_{i + 1}} b(X_s, \mu_0) ds)^2 \varphi_{\Delta_{n,i}^\beta}(X_{t_{i+1}} - X_{t_i}) ] + $$
$$ +c \mathbb{E}_i[(\int_{t_i}^{t_{i + 1}} \int_\mathbb{R} \gamma(X_{s^-}) z \tilde{\mu}(ds, dz))^2 \varphi_{\Delta_{n,i}^\beta}(X_{t_{i+1}} - X_{t_i}) ] + R(\theta, \Delta_{n,i}^2, X_{t_i}) \le c \Delta_{n,i} \int_{t_i}^{t_{i + 1}} \mathbb{E}_i[b^2(X_s, \mu)] ds + $$
\begin{equation}
+R(\theta_0, \Delta_{n,i}^{1 + 2 \beta}, X_{t_i}) = R(\theta_0, \Delta_{n,i}^2, X_{t_i}) + R(\theta_0, \Delta_{n,i}^{1 + 2\beta}, X_{t_i}) = R(\theta_0, \Delta_{n,i}^{1 + 2\beta}, X_{t_i}),
\label{eq: Bin carre}
\end{equation}
where we have used Jensen inequality, Lemma \ref{lemma: estim jumps} and the fact that $R(\theta_0, \Delta_{n,i}^2, X_{t_i})$ is always negligible compared to $ R(\theta_0, \Delta_{n,i}^{1 + 2\beta}, X_{t_i})$ since $2 > 1 + 2\beta $. \\
We observe that \eqref{eq: finale a carre} still holds with $1$ instead of $\varphi$ (see \eqref{eq: a carre} ). Using it, Cauchy-Schwartz inequality and \eqref{eq: Bin carre} it follows
$$\mathbb{E}_i[B_{i,n}(\int_{t_i}^{t_{i + 1}} a(X_s, \sigma_0) dW_s) \varphi_{\Delta_{n,i}^\beta}(X_{t_{i+1}} - X_{t_i}) ] \le c \mathbb{E}_i[(\int_{t_i}^{t_{i + 1}} a(X_s, \sigma_0) dW_s)^2]^\frac{1}{2} \mathbb{E}_i[(B_{i,n})^2\varphi^2_{\Delta_{n,i}^\beta}(X_{t_{i+1}} - X_{t_i})]^\frac{1}{2} \le$$
\begin{equation}
\le R(\theta_0, \Delta_{n,i}, X_{t_i})^\frac{1}{2} R(\theta_0, \Delta_{n,i}^{1 + 2\beta}, X_{t_i})^\frac{1}{2} = R(\theta_0, \Delta_{n,i}^{1 + \beta}, X_{t_i}).
\label{eq: double produit}
\end{equation}
From \eqref{eq: reformulation X - m}, \eqref{eq: finale a carre} - \eqref{eq: double produit} it follows \eqref{eq: Ei al quadrato}. \\
\\
Concerning \eqref{eq: Ei quarta}, we have 
\begin{equation}
\mathbb{E}_i[(\int_{t_i}^{t_{i + 1}} a(X_s, \sigma_0) dW_s)^4 \varphi_{\Delta_{n,i}^\beta}(\Delta X_i) ] = \mathbb{E}_i[(\int_{t_i}^{t_{i + 1}} a(X_s, \sigma_0) dW_s)^4 ] + \mathbb{E}_i[(\int_{t_i}^{t_{i + 1}} a(X_s, \sigma_0) dW_s)^4 (\varphi_{\Delta_{n,i}^\beta}(\Delta X_i) - 1) ].
\label{eq: scomposizione a quarta}
\end{equation}
Using Holder inequality and the definition of $\varphi$ we have that the second term here above is upper bounded by 
\begin{equation}
\mathbb{E}_i[(\int_{t_i}^{t_{i + 1}} a(X_s, \sigma_0) dW_s)^{4p} ]^\frac{1}{p} \mathbb{P}_i(|\Delta X_i| \ge \Delta_{n,i}^\beta)^\frac{1}{q} \le R(\theta_0, \Delta_{n,i}^2, X_{t_i})R(\theta_0, \Delta_{n,i}^{\frac{1}{q}}, X_{t_i}) = R(\theta_0, \Delta_{n,i}^{3 - \epsilon}, X_{t_i}),
\label{eq: stima varphi -1}
\end{equation}
where we have used BDG inequality, \eqref{eq: stima prob incrementi} and we have taken $q$ next to $1$. Moreover,
\begin{equation}
\mathbb{E}_i[(\int_{t_i}^{t_{i + 1}} a(X_s, \sigma_0) dW_s)^4 ] = \mathbb{E}_i[(\int_{t_i}^{t_{i + 1}} a(X_{t_i}, \sigma_0) dW_s)^4 ] + \mathbb{E}_i[(\int_{t_i}^{t_{i + 1}} [a(X_s, \sigma_0) - a(X_{t_i}, \sigma_0)] dW_s)^4 ] +
\label{eq: a quarta}
\end{equation}
$$+ \sum_{j = 1}^3 \binom{4}{j} \mathbb{E}_i[(\int_{t_i}^{t_{i + 1}} a(X_{t_i}, \sigma_0) dW_s)^j (\int_{t_i}^{t_{i + 1}} [a(X_s, \sigma_0) - a(X_{t_i}, \sigma_0)] dW_s)^{4-j} ].$$
Since the expected value of the fourth moment of the gaussian law is known we have 
\begin{equation}
\mathbb{E}_i[(\int_{t_i}^{t_{i + 1}} a(X_{t_i}, \sigma_0) dW_s)^4 ] = 3 \Delta_{n,i}^2 a^4(X_{t_i}, \sigma_0).
\label{eq: varianza stoc}
\end{equation}
On the second term of the right hand side of \eqref{eq: a quarta} we use again BDG inequality to get
$$\mathbb{E}_i[(\int_{t_i}^{t_{i + 1}} [a(X_s, \sigma_0) - a(X_{t_i}, \sigma_0)] dW_s)^4 ]\le {\modch c} \mathbb{E}_i[(\int_{t_i}^{t_{i + 1}} [a(X_s, \sigma_0) - a(X_{t_i}, \sigma_0)]^2 ds)^2 ] \le $$
\begin{equation}
\le {\modch c} \Delta_{n,i} \int_{t_i}^{t_{i + 1}} \left \| \partial_x a \right \|^4_\infty \mathbb{E}_i[|X_s - X_{t_i}|^4] ds \le c\Delta_{n,i} \int_{t_i}^{t_{i + 1}}|s - t_i|(1 + |X_{t_i}|^4) ds \le R(\theta_0, \Delta_{n,i}^3, X_{t_i}),  
\label{eq: stima incremento a}
\end{equation}
where we have also used Jensen inequality and the second point of Lemma \ref{lemma: Moment inequalities}. \\
Concerning the last term in the right hand side of \eqref{eq: a quarta}, from Holder inequality it is upper bounded by 
$$\sum_{j = 1}^3 \binom{4}{j} \mathbb{E}_i[(\int_{t_i}^{t_{i + 1}} a(X_{t_i}, \sigma_0) dW_s)^{j p_1}]^\frac{1}{p_1} \mathbb{E}_i[(\int_{t_i}^{t_{i + 1}} [a(X_s, \sigma_0) - a(X_{t_i}, \sigma_0)] dW_s)^{(4-j) p_2} ]^\frac{1}{p_2}.$$
Now we take $p_1 = \frac{4}{j}$ and so $p_2 = \frac{4}{4-j}$. Therefore, using also \eqref{eq: varianza stoc} and \eqref{eq: stima incremento a}, the expression here above is
$$\sum_{j = 1}^3 \binom{4}{j} \mathbb{E}_i[(\int_{t_i}^{t_{i + 1}} a(X_{t_i}, \sigma_0) dW_s)^{4}]^\frac{j}{4} \mathbb{E}_i[(\int_{t_i}^{t_{i + 1}} [a(X_s, \sigma_0) - a(X_{t_i}, \sigma_0)] dW_s)^{4} ]^\frac{4-j}{4} \le   $$
\begin{equation}
\le \sum_{j = 1}^3 \binom{4}{j}  R(\theta_0, \Delta_{n,i}^2, X_{t_i})^\frac{j}{4}  R(\theta_0, \Delta_{n,i}^3, X_{t_i})^\frac{4 - j}{4} \le \sum_{j = 1}^3 \binom{4}{j}  R(\theta_0, \Delta_{n,i}^{3 - \frac{j}{4}}, X_{t_i}) = R(\theta_0, \Delta_{n,i}^\frac{9}{4}, X_{t_i}),
\label{eq: stima termini incrociati a quarta}
\end{equation}
since when $j= 1$ and $j=2$ we get terms that are negligible if compared to $R(\theta_0, \Delta_{n,i}^\frac{9}{4}, X_{t_i})$. \\
Replacing \eqref{eq: stima varphi -1}, \eqref{eq: varianza stoc} - \eqref{eq: stima termini incrociati a quarta} in \eqref{eq: scomposizione a quarta} it follows
\begin{equation}
\mathbb{E}_i[(\int_{t_i}^{t_{i + 1}} a(X_s, \sigma_0) dW_s)^4 \varphi_{\Delta_{n,i}^\beta}(\Delta X_i) ] = 3 \Delta_{n,i}^2 a^4(X_{t_i}, \sigma_0) + R(\theta_0, \Delta_{n,i}^{ \frac{9}{4}}, X_{t_i}).
\label{eq: finale a quarta}
\end{equation}
We now study the contribution of $B_{i,n}$. First, 
$$\mathbb{E}_i[(B_{i,n})^4 |\varphi_{\Delta_{n,i}^\beta}(\Delta X_i)| ] \le c \mathbb{E}_i[(\int_{t_i}^{t_{i + 1}} b(X_s, \mu_0) ds)^4 |\varphi_{\Delta_{n,i}^\beta}(\Delta X_i)| ] + $$
$$ +c \mathbb{E}_i[(\int_{t_i}^{t_{i + 1}} \int_\mathbb{R} \gamma(X_{s^-}) z \tilde{\mu}(ds, dz))^4 |\varphi_{\Delta_{n,i}^\beta}(\Delta X_i)| ] + R(\theta, \Delta_{n,i}^4, X_{t_i}) \le c \Delta_{n,i}^3 \int_{t_i}^{t_{i + 1}} \mathbb{E}_i[b^4(X_s, \mu_0)] ds + $$
\begin{equation}
+R(\theta_0, \Delta_{n,i}^{1 + 4\beta}, X_{t_i}) = R(\theta_0, \Delta_{n,i}^4, X_{t_i}) + R(\theta_0, \Delta_{n,i}^{1 + 4\beta}, X_{t_i}) = R(\theta_0, \Delta_{n,i}^{1 + 4\beta}, X_{t_i}),
\label{eq: Bin quarta}
\end{equation}
where we have used Jensen inequality, Lemma \ref{lemma: estim jumps} and the fact that $R(\theta_0, \Delta_{n,i}^4, X_{t_i})$ is always negligible compared to $R(\theta_0, \Delta_{n,i}^{1 + 4 \beta}, X_{t_i})$ since $4 > 1 + 4\beta $. \\
Using \eqref{eq: reformulation X - m} we have that
\begin{equation}
\mathbb{E}_i[(X_{t_{i + 1}} - m(\mu, \sigma, X_{t_i}))^4 |\varphi_{\Delta_{n,i}^\beta}(\Delta X_i)| ] = \mathbb{E}_i[(\int_{t_i}^{t_{i + 1}} a(X_s, \sigma_0) dW_s)^4 |\varphi_{\Delta_{n,i}^\beta}(\Delta X_i)| ] + \mathbb{E}_i[(B_{i,n})^4 |\varphi_{\Delta_{n,i}^\beta}(\Delta X_i)| ] +
\label{eq: insieme quarta}
\end{equation}
$$+ \sum_{j = 1}^3 \binom{4}{j} \mathbb{E}_i[(\int_{t_i}^{t_{i + 1}} a(X_s, \sigma_0) dW_s)^j (B_{i, n})^{4 - j}|\varphi_{\Delta_{n,i}^\beta}(\Delta X_i)|]. $$
On the last term here above we act like we did in \eqref{eq: double produit}, using holder inequality and taking $p_1 = \frac{4}{j}$. It follows, using also \eqref{eq: a quarta}, \eqref{eq: varianza stoc}, \eqref{eq: stima incremento a}, \eqref{eq: stima termini incrociati a quarta} and \eqref{eq: Bin quarta}, that it is upper bounded by 
\begin{equation}
\sum_{j = 1}^3 \binom{4}{j} R(\theta_0, \Delta_{n,i}^{2}, X_{t_i})^\frac{j}{4} R(\theta_0, \Delta_{n,i}^{1 + 4 \beta}, X_{t_i})^{\frac{4 - j}{4}}= \sum_{j = 1}^3 \binom{4}{j} R(\theta_0, \Delta_{n,i}^{1 + 4 \beta + \frac{j}{4}(1 - 4 \beta)}, X_{t_i}).
\label{eq: quarta Bin e a }
\end{equation}
Since we have chosen $\beta > \frac{1}{4}$, the terms in which $j=1,2$ are negligible compared to the one in which $j=3$ and so we get $R(\theta_0, \Delta_{n,i}^{\frac{7}{4} + \beta}, X_{t_i})$. From \eqref{eq: finale a quarta}- \eqref{eq: quarta Bin e a } it follows \eqref{eq: Ei quarta}. \\
\\
In order to show \eqref{eq: stima Ei prima} we start considering $B_{i,n}$:
$$|\mathbb{E}_i[B_{i,n}\varphi^k_{\Delta_{n,i}^\beta}(\Delta X_i)]| \le R(\theta_0, \Delta_{n,i}, X_{t_i}) + c \mathbb{E}_i[|\int_{t_i}^{t_{i + 1}}b(X_s, \mu)ds|] + c \mathbb{E}_i[|\int_{t_i}^{t_{i + 1}} \int_\mathbb{R} z \gamma(X_{s^-})\tilde{\mu}(ds, dz)|] \le$$
\begin{equation}
\le R(\theta_0, \Delta_{n,i}, X_{t_i}) + c\mathbb{E}_i[\int_{t_i}^{t_{i + 1}}|b(X_s, \mu)|ds] + c \mathbb{E}_i[\int_{t_i}^{t_{i + 1}} (\int_\mathbb{R}|z| F(z) dz) |\gamma(X_{s^-})| ds] \le R(\theta_0, \Delta_{n,i}, X_{t_i}),  
\label{eq: stima Bin prima}
\end{equation}
having used the definition of $B_{i,n}$ given in \eqref{eq: reformulation X - m}, the boundedness of $\varphi^k$, the polynomial growth of both $b$ and $\gamma$ and the third point of Lemma \ref{lemma: Moment inequalities}. \\
Moreover,
$$|\mathbb{E}_i[(\int_{t_i}^{t_{i + 1}} a(X_s, \sigma_0) dW_s) \varphi^k_{\Delta_{n,i}^\beta}(\Delta X_i) ]| = |\mathbb{E}_i[\int_{t_i}^{t_{i + 1}} a(X_s, \sigma_0) dW_s] + \mathbb{E}_i[(\int_{t_i}^{t_{i + 1}} a(X_s, \sigma_0) dW_s)( \varphi^k_{\Delta_{n,i}^\beta}(\Delta X_i) -1) ]| \le $$
\begin{equation}
 \le R(\theta_0, \Delta_{n,i}^\frac{1}{2}, X_{t_i})\mathbb{E}_i[1_{\left \{|\Delta X_i| \ge \Delta_{n,i}^\beta \right \}}]^\frac{1}{q} \le  R(\theta_0, \Delta_{n,i}^{\frac{3}{2} - \epsilon}, X_{t_i}),
\label{eq: stima a prima}
\end{equation}
where we have used \eqref{eq: stima prob incrementi} and taken $q$ next to $1$.
From the inequality here above and \eqref{eq: stima Bin prima} it follows \eqref{eq: stima Ei prima}. \\
\\
Concerning \eqref{eq: x - m prop 2}; we have 
$$\mathbb{E}_i[|X_{t_{i + 1}} - m(\mu, \sigma, X_{t_i})|^k |\varphi_{\Delta_{n,i}^\beta}(\Delta X_i)|^{k'} ] \le c \mathbb{E}_i[|\int_{t_i}^{t_{i + 1}} a(X_s, \sigma_0) dW_s|^k |\varphi_{\Delta_{n,i}^\beta}(\Delta X_i)|^{k'} ] + c\mathbb{E}_i[|B_{i,n}|^k |\varphi_{\Delta_{n,i}^\beta}(\Delta X_i)|^{k'} ] \le $$
$$\le R(\theta_0, \Delta_{n,i}^\frac{k}{2}, X_{t_i}) + R(\theta_0, \Delta_{n,i}^k, X_{t_i}) + R(\theta_0, \Delta_{n,i}^{1 + k \beta }, X_{t_i}) = R(\theta_0, \Delta_{n,i}^{\frac{k}{2} \land (1 + k \beta)}, X_{t_i}),$$
where we have used on the first term here above the fact that $\varphi$ is bounded and BDG inequality while on the second we have acted like we did in \eqref{eq: Bin carre} or \eqref{eq: Bin quarta}, with $q$ that this time is equal to $k$. \\ \\
We now want to show the fifth and last point of the lemma. Using \eqref{eq: reformulation X - m} we have
$$(X_{t_{i + 1}} - m(\mu_0, \sigma_0, X_{t_i}))^3 = \sum_{j = 0}^3 \binom{3}{j} (\int_{t_i}^{t_{i + 1}} a (X_s, \sigma_0) dW_s)^j B_{i,n}^{3 - j}.$$
Therefore 
$$\mathbb{E}_i[(X_{t_{i + 1}} - m(\mu_0, \sigma_0, X_{t_i}))^3\varphi^{k'}_{\Delta_{n,i}^\beta}(\Delta X_i)] = \sum_{j = 0}^3 \binom{3}{j} \mathbb{E}_i[(\int_{t_i}^{t_{i + 1}} a (X_s, \sigma_0) dW_s)^j B_{i,n}^{3 - j}\varphi^{k'}_{\Delta_{n,i}^\beta}(\Delta X_i)].$$
We observe that, for $j=3$, it is 
$$\mathbb{E}_i[(\int_{t_i}^{t_{i + 1}} a (X_s, \sigma_0) dW_s)^3\varphi^{k'}_{\Delta_{n,i}^\beta}(\Delta X_i)] = \mathbb{E}_i[(\int_{t_i}^{t_{i + 1}} a (X_s, \sigma_0) dW_s)^3] + \mathbb{E}_i[(\int_{t_i}^{t_{i + 1}} a (X_s, \sigma_0) dW_s)^3(\varphi^{k'}_{\Delta_{n,i}^\beta}(\Delta X_i)-1)] \le$$
$$\le c \mathbb{E}_i[(\int_{t_i}^{t_{i + 1}} a (X_{t_i}, \sigma_0) dW_s)^3] +c \mathbb{E}_i[(\int_{t_i}^{t_{i + 1}} [a (X_s, \sigma_0)- a(X_{t_i}, \sigma_0)] dW_s)^3]+ R(\theta_0, \Delta_{n,i}^{\frac{5}{2} - \epsilon}, X_{t_i}). $$
We remark that the first term here above is centered while on the second we can act like we did on \eqref{eq: stima incremento a} (still with an exponent that is 3 instead of 4), obtaining 
$$\mathbb{E}_i[(\int_{t_i}^{t_{i + 1}} [a (X_s, \sigma_0)- a(X_{t_i}, \sigma_0)] dW_s)^3] \le R(\theta_0, \Delta_{n,i}^{\frac{5}{2}}, X_{t_i}).$$
For $j = 0$, instead, we get a term on which we act like we did in \eqref{eq: Bin carre} or \eqref{eq: Bin quarta}, with $q$ that this time is equal to $3$ and so we can upper bound it with $R(\theta_0, \Delta_{n,i}^{(1 + 3 \beta) \land 3 }, X_{t_i})$. \\
For $j = 1$ and $j = 2$ we use Holder inequality, getting
$$\mathbb{E}_i[(\int_{t_i}^{t_{i + 1}} a (X_s, \sigma_0) dW_s)^j B_{i,n}^{3 - j}\varphi^{k'}_{\Delta_{n,i}^\beta}(\Delta X_i)] \le \mathbb{E}_i[(\int_{t_i}^{t_{i + 1}} a (X_s, \sigma_0) dW_s)^{jp}]^\frac{1}{p} \mathbb{E}_i[B_{i,n}^{(3 - j) q}\varphi^{k'q}_{\Delta_{n,i}^\beta}(\Delta X_i)]^\frac{1}{q} \le$$
$$\le \mathbb{E}_i[(\int_{t_i}^{t_{i + 1}} a (X_s, \sigma_0) dW_s)^{3}]^\frac{j}{3} \mathbb{E}[B_{i,n}^{3}\varphi^{k'\frac{3}{3 - j}}_{\Delta_{n,i}^\beta}(\Delta X_i)]^{1 - \frac{j}{3}} = R(\theta_0, \Delta_{n,i}^{\frac{3}{2} \frac{j}{3}}, X_{t_i}  ) R(\theta_0, \Delta_{n,i}^{(1 + 3 \beta) (1 - \frac{j}{3})}, X_{t_i}  ) = $$
$$ = R(\theta_0, \Delta_{n,i}^{1 + 3 \beta + j (\frac{1}{6}- \beta)}, X_{t_i}  ).$$
Now, since $\beta > \frac{1}{4} > \frac{1}{6}$, the term we get for $j=1$ is negligible compared to the one we get for $j =2$, which is $R(\theta_0, \Delta_{n,i}^{ \frac{4}{3} + \beta}, X_{t_i}  )$. In conclusion we have 
$$\mathbb{E}_i[(X_{t_{i + 1}} - m(\mu_0, \sigma_0, X_{t_i}))^3\varphi^{k'}_{\Delta_{n,i}^\beta}(\Delta X_i)] = R(\theta_0, \Delta_{n,i}^{\frac{5}{2} - \epsilon}, X_{t_i}) + R(\theta_0, \Delta_{n,i}^{(1 + 3 \beta) \land 3 }, X_{t_i}) + $$
$$ + R(\theta_0, \Delta_{n,i}^{ \frac{4}{3} + \beta}, X_{t_i}  ) = R(\theta_0, \Delta_{n,i}^{ \frac{4}{3} + \beta}, X_{t_i}  ),$$
since we can always find an $\epsilon > 0$ for which $\frac{5}{2} - \epsilon > 1 + 3 \beta > \frac{4}{3} + \beta$. The result follows.
\end{proof}

\subsubsection{Proof of Lemma \ref{lemma: x-m con varphi'}}
\begin{proof}
We first of all observe that, for all $k\ge 1$, $|\varphi'_{\Delta_{n,i}^\beta}(X_{t_{i + 1}}^\theta - X_{t_i}^\theta)|^{k}$ is different from $0$ only if $|X_{t_{i + 1}}^\theta - X_{t_i}^\theta|\in [\Delta_{n,i}^\beta, 2\Delta_{n,i}^\beta]$. Recalling that from its definition \eqref{eq: reformulation X - m} $B_{i,n} = \int_{t_i}^{t_{i + 1}} b(X_s, \mu) ds + \int_{t_i}^{t_{i + 1}} \int_\mathbb{R} z \gamma(X_{s^-}) \tilde{\mu}(ds, dz) + R(\theta, \Delta_{n,i},X_{t_i})$,
it follows
\begin{equation}
\mathbb{E}[|X_{t_{i + 1}}^\theta - m(\mu, \sigma, X_{t_{i}})|^p |\varphi'_{\Delta_{n,i}^\beta}(X_{t_{i + 1}}^\theta - X_{t_i}^\theta)|^{k}] \le c \mathbb{E}[|\int_{t_i}^{t_{i + 1}} a(X_s^\theta, \sigma)dW_s|^p 1_{\left \{ |X_{t_{i + 1}}^\theta - X_{t_i}^\theta|\in [\Delta_{n,i}^\beta, 2\Delta_{n,i}^\beta]\right \}}] +  \label{eq: ref con varphi'}
\end{equation}
$$+ c\mathbb{E}[|B_{i,n}|^p 1_{\left \{ |X_{t_{i + 1}}^\theta - X_{t_i}^\theta|\in [\Delta_{n,i}^\beta, 2\Delta_{n,i}^\beta] \right \}}].$$
On the first term here above we use Holder inequality, \eqref{eq: BDG} and \eqref{eq: stima prob incrementi}, remarking that $\left \{ |X_{t_{i + 1}}^\theta - X_{t_i}^\theta| \in [\Delta_{n,i}^\beta, 2\Delta_{n,i}^\beta] \right \} \subset \left \{ |X_{t_{i + 1}}^\theta - X_{t_i}^\theta| \ge \Delta_{n,i}^\beta \right \} $. We get it is upper bounded by
$$\mathbb{E}[|\int_{t_i}^{t_{i + 1}} a(X_s^\theta, \sigma)dW_s|^{pp_1}]^\frac{1}{p_1}\mathbb{E}[1_{\left \{ |X_{t_{i + 1}}^\theta - X_{t_i}^\theta|\in [\Delta_{n,i}^\beta, 2\Delta_{n,i}^\beta] \right \}}]^\frac{1}{p_2} \le R(\theta, \Delta_{n,i}^{\frac{p}{2}},X_{t_i})R(\theta, \Delta_{n,i},X_{t_i})^\frac{1}{p_2} = R(\theta, \Delta_{n,i}^{\frac{p}{2} + 1 - \epsilon},X_{t_i}),$$
for all $\epsilon > 0$. In the last inequality we have taken $p_1$ big and $p_2$ next to $1$. \\
We now study the second term of \eqref{eq: ref con varphi'}. From the definition of $B_{i,n}$ given here above, Holder inequality, the polynomial growth of $b$ and still \eqref{eq: stima prob incrementi} we get that the second term of \eqref{eq: ref con varphi'} is upper bounded by $R(\theta,\Delta_{n,i}^{p + 1 - \epsilon},X_{t_i}) + \mathbb{E}[|\Delta X_i^J|^p1_{\left \{ |X_{t_{i + 1}}^\theta - X_{t_i}^\theta| \in [\Delta_{n,i}^\beta, 2\Delta_{n,i}^\beta] \right \}}]$.\\
We now recall that $\Delta X_i^c = (X_{t_{i + 1}}^\theta - X_{t_i}^\theta) - \Delta X_i^J$ and so when $|X_{t_{i + 1}}^\theta - X_{t_i}^\theta| \le 2 \Delta_{n,i}^{\beta}$ and $|\Delta X_i^J| \ge 4\Delta_{n,i}^{\beta}$, then $|\Delta X_i^c|$ must be more than 
$2 \Delta_{n,i}^{\beta}$. Hence
$$\mathbb{E}[|\Delta X_i^J|^p1_{\left \{ |X_{t_{i + 1}}^\theta - X_{t_i}^\theta|\in [\Delta_{n,i}^\beta, 2\Delta_{n,i}^\beta] \right \}}] \le \mathbb{E}[|\Delta X_i^J|^p1_{\left \{ |X_{t_{i + 1}}^\theta - X_{t_i}^\theta|\in [\Delta_{n,i}^\beta, 2\Delta_{n,i}^\beta], |\Delta X_i^J| \ge 4\Delta_{n,i}^{\beta} \right \}}] + $$
$$ + \mathbb{E}[|\Delta X_i^J|^p1_{\left \{ |X_{t_{i + 1}}^\theta - X_{t_i}^\theta|\in [\Delta_{n,i}^\beta, 2\Delta_{n,i}^\beta], |\Delta X_i^J| \le 4\Delta_{n,i}^{\beta} \right \}}] \le c\mathbb{E}[|\Delta X_i^J|^{p p_1}]^\frac{1}{p_1} \mathbb{P}(|\Delta X_i^c| \ge 2 \Delta_{n,i}^\beta)^\frac{1}{p_2} +$$
\begin{equation}
+ c \Delta_{n,i}^{\beta p} \mathbb{P}(|X_{t_{i + 1}}^\theta - X_{t_i}^\theta|\in [\Delta_{n,i}^\beta, 2\Delta_{n,i}^\beta], |\Delta X_i^J| \le 4\Delta_{n,i}^{\beta}) \le
\label{eq: per lemma jumps}
\end{equation}
$$\le R(\theta, \Delta_{n,i}^\frac{1}{p_1}, X_{t_i})R(\theta, \Delta_{n,i}^{\frac{(\frac{1}{2} - \beta)r}{p_2}}, X_{t_i}) + R(\theta, \Delta_{n,i}^{1 + \beta p}, X_{t_i}) = R(\theta, \Delta_{n,i}^{(\frac{1}{2} - \beta)r - \epsilon }, X_{t_i})+ R(\theta, \Delta_{n,i}^{1 + \beta p}, X_{t_i}) =  R(\theta, \Delta_{n,i}^{1 + \beta p}, X_{t_i}),$$
where we have used Kunita inequality (for $p p_1 \ge 2$, that holds since we take $p_1$ big and $p_2$ next to $1$), Tchebyschev inequality as we did in \eqref{eq: stima prob incrementi} on the first term and still \eqref{eq: stima prob incrementi} on the second. Moreover we have used that, by the arbitrariness of $r > 0$, we can always find $r$ and $\epsilon$ such that $(\frac{1}{2} - \beta)r - \epsilon > 1 + \beta p$. The result follows.
\end{proof}

\subsubsection{Proof of Lemma \ref{lemma: estim jumps}}
\begin{proof}
The case $q \ge 2$ has already been proved in Lemma 10 of \cite{Chapitre 1} so, we are going to focus on the case $q \in [1,2)$. \\
For all $n \in \mathbb{N}$ and $i \in \mathbb{N}$ we define the set on which all the jumps of $L$ on the interval $(t_i, t_{i+1}]$ are small:
$$N_n^i : = \left \{ |\Delta L_s| \le \frac{4 \Delta_{n,i}^\beta}{\gamma_{min}}; \quad \forall s \in (t_i, t_{i+1}]  \right \}.$$
We split the jumps on $N_{i,n}$ and its complementary, getting
\begin{equation}
\mathbb{E}_i[|\Delta {X}_i^J\varphi_{\Delta_{n,i}^\beta}(X_{t_{i+1}} - X_{t_i})|^q 1_{N^i_n} ] + \mathbb{E}_i[|\Delta {X}_i^J\varphi_{\Delta_{n,i}^\beta}(X_{t_{i+1}} - X_{t_i})|^q 1_{(N^i_n)^c} ]. 
\label{eq: salti con partizione}
\end{equation}
We now observe that, by the definition of $N^i_n$, the first term here above is upper bounded by
$$\mathbb{E}_i[|\int_{t_i}^{t_{i+1}} \int_{|z| \le \frac{4 \Delta_{n,i}^\beta}{\gamma_{min}}} z \, \gamma(X_{s^-}) \tilde{\mu}(ds,dz)|^q+ |\int_{t_i}^{t_{i+1}} \int_{|z| \ge \frac{4 \Delta_{n,i}^\beta}{\gamma_{min}}} |z| \, |\gamma(X_{s^-})| \bar{\mu}(ds,dz) |^q ].$$
From our assumptions on the jump density the second term here above is upper bounded by a $R(\theta, \Delta_{n,i}^q, X_{t_i})$ function while on the first one we use Lemma 2.1.5 of \cite{Protter}. We can therefore upper bound it with $$c \mathbb{E}_i[\int_{t_i}^{t_{i+1}} \int_{|z| \le \frac{4 \Delta_{n,i}^\beta}{\gamma_{min}}}|z|^q |\gamma(X_{s^-})|^q \bar{\mu}(ds,dz) ] \le R(\theta, \Delta_{n,i}^{1 + \beta q}, X_{t_i}),$$ 
having used again that $\bar{\mu}(ds,dz) = F(dz) ds$ and Assumption 4 on $F$. It follows
$$\mathbb{E}_i[|\Delta {X}_i^J\varphi_{\Delta_{n,i}^\beta}(\Delta X_i)|^q 1_{N^i_n} ] \le R(\theta, \Delta_{n,i}^q, X_{t_i}) + R(\theta, \Delta_{n,i}^{1 + \beta q}, X_{t_i}) = R(\theta, \Delta_{n,i}^q, X_{t_i}).$$
Regarding the second term of \eqref{eq: salti con partizione}, we have that $|\Delta X_i^J| \le |\Delta X_i|+ |\Delta X_i^c|$ and, as we have already remarked several times, by the definition of $\varphi$ it is different from zero only if $|\Delta X_i|^q \le c \Delta_{n,i}^{\beta q}$. It follows 
$$\mathbb{E}_i[|\Delta {X}_i|^q |\varphi_{\Delta_{n,i}^\beta}(\Delta X_i)|^q 1_{(N^i_n)^c} ] \le c \Delta_{n,i}^{\beta q}\mathbb{P}_i((N^i_n)^c) \le R(\theta, \Delta_{n,i}^{1 + \beta q}, X_{t_i}),$$
where the last inequality is a consequence of the following:
$$\mathbb{P}_i((N^i_n)^c) = \mathbb{P}_i(\exists s \in (t_i, t_{i+1}]\, : \, |\Delta L_s| > \frac{4 \Delta_{n,i}^\beta}{\gamma_{min}} ) \le c \int_{t_i}^{t_{i+1}} \int_{\frac{4 \Delta_{n,i}^\beta}{\gamma_{min}} }^{\infty} F(z) dz ds \le c \Delta_{n,i}.$$
In the same way
$$\mathbb{E}_i[|\Delta {X}_i^c|^q |\varphi_{\Delta_{n,i}^\beta}(\Delta X_i)|^q 1_{(N^i_n)^c} ] \le c \Delta_{n,i}^{\frac{1}{2} q - \epsilon}\Delta_{n,i}^{1 - \epsilon}(1 + |X_{t_i}|^c ) = R(\theta, \Delta_{n,i}^{1 + \frac{1}{2} q}, X_{t_i}).$$
Putting all pieces together we have 
$$\mathbb{E}_i[|\Delta {X}_i^J|^q |\varphi_{\Delta_{n,i}^\beta}(\Delta X_i)|^q  ] \le R(\theta, \Delta_{n,i}^q, X_{t_i}),$$
that is the result we wanted remarking that, for $q \in [1,2)$, $1 + \beta q  > q$ and so $\Delta_{n,i}^q= \Delta_{n,i}^{q \land (1 + \beta q)}$. 
\end{proof}

\subsubsection{Proof of Proposition \ref{prop: punto 2 conv deriv seconda}}
\begin{proof}
We want to prove the tightness of the sequence $S_n(\theta)$. Since the sum of tight sequences is still tight, we show the tightness of the sequence $S_{n1} (\theta)$ and $S_{n2} (\theta)$ which are such that $S_n (\theta) = S_{n1} (\theta) + S_{n2} (\theta)$:
$$S_{n1} (\theta) := \frac{1}{T_n} \sum_{i = 0}^{n - 1} (X_{t_{i + 1}} - m(\mu_0, \sigma_0, X_{t_i})) \varphi_{\Delta_{n,i}^\beta}(X_{t_{i + 1}} - X_{t_i}) g_{i,n}(\theta, X_{t_i}) \quad \mbox{and}$$
$$S_{n2} (\theta) := \frac{1}{T_n} \sum_{i = 0}^{n - 1} (m(\mu_0, \sigma_0, X_{t_i}) - m(\mu, \sigma, X_{t_i})) \varphi_{\Delta_{n,i}^\beta}(X_{t_{i + 1}} - X_{t_i}) g_{i,n}(\theta, X_{t_i}).$$
We prove that $S_{n1} (\theta)$ is tight using Kolmogorov criterion, we therefore want to show that inequalities analogous to \eqref{eq: 1 criterion tightness} and \eqref{eq: 2 criterion tightness} hold. Starting with the proof of \eqref{eq: 2 criterion tightness} we have that, using Burkholder and Jensen inequality,
\begin{equation}
\mathbb{E}[|S_{n1} (\theta_1) - S_{n1} (\theta_2)|^m] \le \frac{c n^{\frac{m}{2} - 1}}{(n \Delta_{n})^m}\sum_{i = 0}^{n - 1}\mathbb{E}[|(X_{t_{i + 1}} - m(\mu_0, \sigma_0, X_{t_i}))|^m |\varphi^m_{\Delta_{n,i}^\beta}(X_{t_{i + 1}} - X_{t_i})|g_{i,n}(\theta_1, X_{t_i}) - g_{i,n}(\theta_2, X_{t_i})|^m ].
\label{eq: kolm prop}
\end{equation}
We observe that, using finite-increments theorem and the assumption on the derivatives of $g_{i,n}$ with respect to the parameters, it is
\begin{equation}
|g_{i,n}(\theta_1, X_{t_i}) - g_{i,n}(\theta_2, X_{t_i})|^m \le |R(\theta, 1, X_{t_i}) |^m |\mu_1- \mu_2|^m + |R(\theta, 1, X_{t_i}) |^m |\sigma_1- \sigma_2|^m
\label{eq: estim g}
\end{equation}
where actually the function $R$ is computed in a point $\tilde{\theta}: = (\tilde{\mu}, \tilde{\sigma})$, with $\tilde{\mu} \in (\mu_1, \mu_2)$ and $\tilde{\sigma} \in (\sigma_1, \sigma_2)$ but, since the property \eqref{eq: definition R} of $R$ is uniform in $\theta$, we have chosen to write it simply as $R(\theta, 1, X_{t_i})$.
Replacing \eqref{eq: estim g} in \eqref{eq: kolm prop} and using the fourth point of Lemma \ref{lemma: conditional expected value} it follows
$$\mathbb{E}[|S_{n1} (\theta_1) - S_{n1} (\theta_2)|^m] \le \frac{c n^{\frac{m}{2} - 1}}{(n \Delta_{n})^m} n \Delta_{n}^{\frac{m}{2} \land (1 + m \beta )}(|\mu_1- \mu_2|^m +|\sigma_1- \sigma_2|^m) \le \frac{c }{(n \Delta_n)^\frac{m}{2}} \Delta_{n}^{0 \land (1 + m \beta - \frac{m}{2} )}(|\mu_1- \mu_2|^m +|\sigma_1- \sigma_2|^m), $$
with $\frac{1}{(n \Delta_n)^\frac{m}{2}} \Delta_{n}^{0 \land (1 + m \beta - \frac{m}{2} )} < c$ because $n \Delta_n$ is lower bounded by a constant and we can always find an $m \ge 2$ for which $1 + m \beta - \frac{m}{2}  > 0$ since $\beta \in (\frac{1}{4}, \frac{1}{2})$. Hence, \eqref{eq: 2 criterion tightness} is proved.  \\
Acting exactly in the same way but using this time the control on $g_{i,n}$ instead of on its derivatives we have also an estimation for $S_{n1}$ analogous to \eqref{eq: 1 criterion tightness}; the tightness of $S_{n1}$ follows. \\
Concerning $S_{n2}$ we observe that, for $\vartheta = \mu$ and $\vartheta = \sigma$, it is
$$|\partial_\vartheta S_{n2}(\theta)| \le c|\partial_\vartheta m(\mu, \sigma, X_{t_i})|| g_{i,n}(\theta, X_{t_i})| + c|m(\mu_0, \sigma_0, X_{t_i}) - m(\mu, \sigma, X_{t_i})| |\partial_\vartheta g_{i,n}(\theta, X_{t_i})|.$$
From the controls we have assumed on $g_{i,n}$ and its derivatives, the finite-increments theorem and the first and the second point of Proposition \ref{prop: dl derivate prime} we have $|\partial_\vartheta S_{n2}(\theta)| \le R(\theta, \Delta_{n,i}, X_{t_i})$. Therefore for both $\vartheta = \mu$ and $\vartheta = \sigma$, using also that $\frac{1}{T_n} = O(\frac{1}{n \Delta_n})$, we get
$$\sup_n \mathbb{E}[\sup_{\mu, \sigma}|\partial_\vartheta S_{n2}(\theta)|] \le \sup_n \frac{c}{T_n} \sum_{i = 0}^{n - 1} \mathbb{E}[\sup_{\mu, \sigma}|R(\theta, \Delta_{n,i}, X_{t_i} )] \le c.$$
The tightness of $S_{n2}$ (and therefore of $S_n$) follows.

\end{proof}

\subsection{Proof of derivatives of $m$ and $m_2$}\label{Ss:proof_der_m_m_2}
In order to prove the developments and the bounds on the derivatives of $m$ and $m_2$, the following lemmas will be useful. We point out that $X_t^\theta$ is $X_t^{\theta, x}$ and so the process starts in $0$: $X_0^{\theta,x} = x$.
\begin{lemma}
Suppose that Assumptions from 1 to 4 and A7 hold. Then, $\forall p \ge 2$ $\exists c > 0$: $\forall h \le \Delta_n$ $\forall x$ we have
\begin{equation}
\mathbb{E}[|\frac{\partial_\mu X^{\theta,x}_h}{h}|^p ] \le c(1 + |x|^c ), \qquad \mathbb{E}[|\frac{\partial^2_\mu X^{\theta,x}_h}{h}|^p ] \le c(1 + |x|^c ), 
\label{eq: derivate x rispetto mu}
\end{equation}
\begin{equation}
\mathbb{E}[|\frac{\partial_\sigma X^{\theta,x}_h}{h^\frac{1}{2}}|^p ] \le c(1 + |x|^c ), \qquad \mathbb{E}[|\frac{\partial^2_{\sigma \mu} X^{\theta,x}_h}{h^\frac{3}{2}}|^p ] \le c(1 + |x|^c ), 
\label{eq: derivata x rispetto sigma e mista}
\end{equation}
\begin{equation}
\mathbb{E}[|\frac{\partial^2_{\sigma} X^{\theta,x}_h}{h^\frac{1}{2}}|^p ] \le c(1 + |x|^c ), 
\label{eq: derivate seconde x rispetto a sigma}
\end{equation}
\begin{equation}
\mathbb{E}[|\frac{\partial^3_{\sigma} X^{\theta,x}_h}{h^\frac{1}{2}}|^p ] \le c(1 + |x|^c ), \qquad \mathbb{E}[|\frac{\partial^3_{\mu} X^{\theta,x}_h}{h}|^p ] \le c(1 + |x|^c )
\label{eq: derivate terze x rispetto a sigma e mu}
\end{equation}
\begin{equation}
\mathbb{E}[|\frac{\partial^3_{\sigma\mu \sigma} X^{\theta,x}_h}{h^\frac{3}{2}}|^p ] \le c(1 + |x|^c ), \qquad \mathbb{E}[|\frac{\partial^3_{\mu \mu \sigma} X^{\theta,x}_h}{h^\frac{3}{2}}|^p ] \le c(1 + |x|^c ), 
\label{eq: derivate terze x miste}
\end{equation}

\label{lemma: derivate x}
\end{lemma}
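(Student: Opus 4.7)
The plan is to derive linear SDEs for the derivative processes $\partial_\mu X^{\theta,x}$, $\partial_\sigma X^{\theta,x}$ and their higher order counterparts, solve them by a variation-of-constants formula based on the stochastic exponential of the linearized equation, and then read off the scaling in $h$ from whether each source term is a $ds$ integral (giving an $h$) or a $dW$ integral (giving an $h^{1/2}$).

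First, under Assumption A7, the coefficients $b,a,\gamma$ are as smooth as needed and the $x$-derivatives of $b,a,\gamma$ are bounded. Differentiating \eqref{eq: model} formally in $\mu$ yields, with $Y_t := \partial_\mu X_t^{\theta,x}$,
\begin{equation*}
dY_t = \bigl[\partial_\mu b(\mu,X_t) + \partial_x b(\mu,X_t)\, Y_t\bigr] dt + \partial_x a(\sigma,X_t)\, Y_t\, dW_t + \int_{\mathbb{R}\setminus\{0\}} \partial_x \gamma(X_{t^-})\, Y_{t^-}\, z\, \tilde{\mu}(dt,dz),
\end{equation*}
with $Y_0=0$, and similarly for $Z_t := \partial_\sigma X_t^{\theta,x}$, the only change being the replacement of the source term $\partial_\mu b\, dt$ by $\partial_\sigma a(\sigma,X_t)\, dW_t$. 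Let $\Psi_t$ denote the stochastic exponential solving $d\Psi_t = \Psi_t[\partial_x b\, dt + \partial_x a\, dW_t + \int \partial_x\gamma(X_{t^-}) z\, \tilde{\mu}(dt,dz)]$ with $\Psi_0 = 1$. Since the $x$-derivatives of $b,a,\gamma$ are bounded and $F$ has moments of all orders by A3(i), standard estimates give $\sup_{s \le h}\mathbb{E}[|\Psi_s|^p + |\Psi_s^{-1}|^p] \le c$ uniformly in $h \le \Delta_n$.

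For $Y_t$ the variation-of-constants formula (up to the usual Doléans-Dade jump corrections, which are of higher order here because $\partial_\mu$ does not act on the jump coefficient) gives $Y_t = \Psi_t \int_0^t \Psi_s^{-1}\partial_\mu b(\mu,X_s) ds$. Hence
\begin{equation*}
\Bigl|\tfrac{Y_h}{h}\Bigr| \le |\Psi_h|\, \tfrac{1}{h}\int_0^h |\Psi_s^{-1}|\,|\partial_\mu b(\mu,X_s)|\, ds,
\end{equation*}
and Hölder, A7 (polynomial growth of $\partial_\mu b$), Lemma \ref{lemma: Moment inequalities} and the moment bounds on $\Psi,\Psi^{-1}$ yield the first inequality in \eqref{eq: derivate x rispetto mu}. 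For $Z_t$, the source term is instead $\int_0^t \Psi_s^{-1}\partial_\sigma a(\sigma,X_s) dW_s$, so the Burkholder–Davis–Gundy inequality produces a factor $h^{1/2}$ instead of $h$, yielding the first bound in \eqref{eq: derivata x rispetto sigma e mista}.

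Higher derivatives are obtained by differentiating the linear SDEs one more time. For example $\partial_\mu^2 X_t$ satisfies the same linear SDE as $Y_t$ but with an extra source term of the form $[\partial_\mu^2 b + 2 \partial_{\mu x}^2 b\, Y_t + \partial_x^2 b\, Y_t^2]\, dt$, so again a $ds$ integral, which gives the $h$ scaling of the second estimate in \eqref{eq: derivate x rispetto mu} once one plugs in the already established bound $\mathbb{E}[|Y_t|^p] \le c h^p(1+|x|^c)$. The mixed derivative $\partial^2_{\sigma\mu} X_t$ picks up a source term containing $\partial_\mu b'$ together with $Z_t$, i.e.~a product of a $ds$ integrand and a $dW$ factor, producing exactly the $h \cdot h^{1/2} = h^{3/2}$ scaling in \eqref{eq: derivata x rispetto sigma e mista}. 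The remaining bounds \eqref{eq: derivate seconde x rispetto a sigma}, \eqref{eq: derivate terze x rispetto a sigma e mu}, \eqref{eq: derivate terze x miste} follow by the same principle: count how many differentiations in $\sigma$ appear in the outermost source term, each such differentiation contributing $h^{1/2}$, while $\mu$ differentiations contribute $h$.

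The routine but tedious part is to check that at every stage the extra source terms generated by successive differentiations (products of lower-order derivative processes, evaluated through Faà di Bruno-type expansions) retain polynomial growth in $x$ and have all moments. The main obstacle is thus not the induction itself but the careful control of the inverse stochastic exponential $\Psi^{-1}$ in the presence of jumps: one must argue that, under A4.1 and the boundedness of $\partial_x\gamma$, the jumps $1 + \partial_x\gamma(X_{s^-})z$ appearing in $\Psi$ stay bounded away from $0$ with probability approaching $1$ for $h$ small, which, combined with the finite jump activity and A3(i), gives the required $L^p$ bounds on $\Psi^{-1}$ uniformly for $h \le \Delta_n$.
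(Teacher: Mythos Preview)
Your approach via the stochastic exponential and variation-of-constants is genuinely different from the paper's: the paper never introduces $\Psi$ or $\Psi^{-1}$, but instead takes the $L^p$ norm of each side of the SDE for the derivative process directly, bounding the drift term by Jensen, the Brownian term by Burkholder--Davis--Gundy, and the jump term by Kunita's inequality, and then closes with Gronwall's lemma. That produces exactly the same scalings you identify, with essentially the same ``count the $ds$ versus $dW$ source terms'' heuristic, but without ever inverting the linearized flow.

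The difficulty with your route is the very step you flag at the end. You need $\sup_{s\le h}\mathbb{E}[|\Psi_s^{-1}|^p]\le c$, and for that you must control the multiplicative jumps $(1+\partial_x\gamma(X_{s^-})z)^{-1}$. Nothing in A1--A4 or A7 prevents $z$ from taking values arbitrarily close to $-1/\partial_x\gamma(X_{s^-})$: $\partial_x\gamma$ is bounded (A7.3) and $F$ has all polynomial moments (A3(i)), but neither gives integrability of $(1+\partial_x\gamma(x)z)^{-p}$ against $F$. Your ``probability approaching $1$'' argument only says that \emph{with high probability} no jump occurs on $[0,h]$; on the complementary event of probability $O(h)$, $|\Psi_h^{-1}|$ can be arbitrarily large, and you still need an $L^p$ bound, not just a high-probability bound. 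So as written this is a genuine gap, not a routine detail.

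The paper's Gronwall route bypasses the issue entirely: the jump contribution to $\mathbb{E}[|\partial_\vartheta X_h|^p]$ is estimated by Kunita's inequality as $c(1+h^{p/2-1})\int_0^h \mathbb{E}[|\partial_\vartheta X_s|^p]\,ds$, which feeds directly into Gronwall with no inverse anywhere. If you want to keep your representation, you would either need an extra assumption ensuring $|\partial_x\gamma(x)z|\le 1-\epsilon$ on the support of $F$, or reformulate using the two-parameter flow $\Psi_{t,s}$ and show its moments are bounded without passing through $\Psi_s^{-1}$---but at that point the direct Gronwall argument is both shorter and requires no additional hypotheses.
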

\begin{proof}\textit{Lemma \ref{lemma: derivate x}.} \\
Inequalities \eqref{eq: derivate x rispetto mu} have already been proved in Lemma 9 of \cite{Chapitre 1}. To show the first inequality of \eqref{eq: derivata x rispetto sigma e mista}, we observe that the dynamic of the process $\partial_\sigma X^{\theta, x}$ is known (cf. \cite{Derivative EDS}, section 5):  
\begin{equation}
\partial_\sigma X^{\theta, x}_h = \int_0^h \partial_x b(\mu, X^{\theta,x}_s)\partial_\sigma X^{\theta,x}_s ds + \int_0^h (\partial_x a(\sigma, X^{\theta,x}_s)\partial_\sigma X^{\theta,x}_s + \partial_\sigma a(\sigma, X^{\theta,x}_s)) dW_s + \int_0^h \int_\mathbb{R} \partial_x \gamma(X^{\theta,x}_{s^-}) \partial_\sigma X^{\theta,x}_s z \tilde{\mu}(dz, ds).
\label{eq: dynamic partial sigma x}
\end{equation}
From now on, we will drop the dependence of the starting point in order to make the notation easier. Taking the $L^p$ norm of \eqref{eq: dynamic partial sigma x} we get it is upper bounded by the sum of three terms. On the first one we use Jensen inequality and the fact that the derivatives of $b$ with rapport to $x$ are supposed bounded to obtain
\begin{equation}
\mathbb{E}[|\int_0^h (\partial_x b(\mu, X^{\theta}_s)\partial_\sigma X^{\theta}_s ds|^p] \le h^{p-1 } \int_0^h \mathbb{E}[|\partial_x b(\mu, X^{\theta}_s)|^p|\partial_\sigma X^{\theta}_s|^p] ds \le c h^{p - 1} \int_0^h \mathbb{E}[|\partial_\sigma X^{\theta}_s|^p] ds.
\label{eq: b in partial sigma}
\end{equation}
Let us now consider the stochastic integral. Using Burkholder-Davis-Gundy inequality we have 
$$\mathbb{E}[|\int_0^h (\partial_x a(\sigma, X^{\theta}_s)\partial_\sigma X^{\theta}_s + \partial_\sigma a(\sigma, X^{\theta}_s)) dW_s|^p] \le c\mathbb{E}[|\int_0^h (\partial_x a(\sigma,X^{\theta}_s))^2(\partial_\sigma X^{\theta}_s)^2 ds|^\frac{p}{2}] +$$
\begin{equation}
+ c\mathbb{E}[|\int_0^h (\partial_\sigma a(\sigma, X^{\theta}_s))^2 ds|^\frac{p}{2}] \le ch^{\frac{p}{2}- 1} \int_0^h \mathbb{E}[|\partial_\sigma X^{\theta}_s|^p] ds + ch^{\frac{p}{2}- 1} \int_0^h \mathbb{E}[(1 + |X_s^\theta|^c)] ds \le
\label{eq: a in partial sigma}
\end{equation}
$$\le ch^{\frac{p}{2}- 1} \int_0^h \mathbb{E}[|\partial_\sigma X^{\theta}_s|^p] ds + ch^{\frac{p}{2}}(1 + |x|^c), $$
where we have used Jensen inequality, the fact that, by A7, the derivatives of $a$ with rapport to $x$ are supposed bounded and those with rapport to $\sigma$ have polynomial growth and the second point of Lemma \ref{lemma: Moment inequalities}. \\
We now consider the third term on the right hand side of \eqref{eq: dynamic partial sigma x}, it can be estimated using Kunita inequality (cf. the Appendix of \cite{Protter}):
$$\mathbb{E}[|\int_0^h \int_\mathbb{R} \partial_x \gamma(X^\theta_{s^-})\partial_\sigma X^{\theta}_s z \tilde{\mu}(dz, ds)|^p] \le {\modch c} \mathbb{E}[\int_0^h \int_\mathbb{R} |\partial_x \gamma(X^\theta_{s^-})\partial_\sigma X^{\theta}_s|^p |z|^p \bar{\mu}(dz, ds)] +$$
$$ + {\modch c} \mathbb{E}[|\int_0^h \int_\mathbb{R} (\partial_x \gamma(X^\theta_{s^-})\partial_\sigma X^{\theta}_s)^2 z^2 \bar{\mu}(dz, ds)|^\frac{p}{2}] \le  {\modch c} \int_0^h\mathbb{E}[|\partial_x \gamma(X^\theta_{s^-})|^p|\partial_\sigma X^{\theta}_s|^p] ( \int_\mathbb{R} |z|^p F(z)dz) ds + $$
$$+ 
{\modch c} \mathbb{E}[|\int_0^h (\partial_x \gamma(X^\theta_{s^-})\partial_\sigma X^{\theta}_s)^2 (\int_\mathbb{R}z^2 F(z)dz) ds|^\frac{p}{2}]\le c\int_0^h\mathbb{E}[|\partial_\sigma X^{\theta}_s|^p] ds + 
c \mathbb{E}[|\int_0^h (\partial_\sigma X^{\theta}_s)^2 ds|^\frac{p}{2}], $$
where in the last two inequalities we have just used the definition of the compensated measure $\bar{\mu}$,the third point of Assumption 4 and the fact that the derivatives of $\gamma$ are supposed bounded. By the Jensen inequality we get
\begin{equation}
\mathbb{E}[|\int_0^h \int_\mathbb{R} \partial_x \gamma(X^\theta_{s^-})\partial_\sigma X^{\theta}_s z \tilde{\mu}(dz, ds)|^p] \le c(1 + h^{\frac{p}{2} - 1}) \int_0^h\mathbb{E}[|\partial_\sigma X^{\theta}_s|^p] ds.
\label{eq: estimation gamma expected norme p dotx }
\end{equation}
From \eqref{eq: b in partial sigma}, \eqref{eq: a in partial sigma} and \eqref{eq: estimation gamma expected norme p dotx }, it follows
$$\mathbb{E}[|\partial_\sigma X_h^\theta|^p] \le c(h^{p - 1} + h^{\frac{p}{2} - 1} + 1)\int_0^h\mathbb{E}[|\partial_\sigma X^{\theta}_s|^p] ds + ch^{\frac{p}{2}}(1 + |x|^c).$$
Gronwall Lemma gives us
$$\mathbb{E}[|\partial_\sigma X_h^\theta|^p] \le ch^{\frac{p}{2}}(1 + |x|^c) e^{c(h^{p - 1} + h^{\frac{p}{2} - 1} + 1)}, $$
we therefore obtain the first inequality in \eqref{eq: derivata x rispetto sigma e mista}. Concerning the second, we observe we can deduce the dynamic of the process $\partial_{\mu \sigma}X^\theta$ from \eqref{eq: dynamic partial sigma x}. It is
$$\partial^2_{\mu \sigma}X^\theta_h = \int_0^h (\partial^2_x b(\mu, X_s^\theta) \partial_\sigma X_s^\theta \partial_\mu X_s^\theta + \partial^2_{\mu x}b (\mu, X_s^\theta) \partial_\sigma X_s^\theta + \partial_x b(\mu, X_s^\theta) \partial^2_{\sigma \mu}X_s^\theta ) ds + \int_0^h (\partial_x^2 a (\sigma, X_s^\theta)\partial_\sigma X_s^\theta \partial_\mu X_s^\theta + $$
$$+ \partial_{\sigma x}^2 a (\sigma, X_s^\theta) \partial_\mu X_s^\theta + \partial_x  a (\sigma, X_s^\theta) \partial^2_{\sigma \mu}X_s^\theta) dW_s + \int_0^h \int_\mathbb{R} (\partial_x^2 \gamma(X^{\theta}_{s^-}) \partial_\sigma X_s^\theta \partial_\mu X_s^\theta + \partial_x \gamma(X^{\theta}_{s^-}) \partial^2_{\sigma \mu}X_s^\theta) z \tilde{\mu}(ds, dz) $$
On the $p$-norm of the first integral we use Jensen inequality, the fact that the derivatives with respect to $x$ are bounded and the estimation we have already proved on the $L^p$ norm of the derivatives of our process with respect to $\mu$ and $\sigma$. We get it is upper bounded by
$$ch^{p -1 } \int_0^h (\mathbb{E}[| \partial_\sigma X_s^\theta \partial_\mu X_s^\theta|^p] + \mathbb{E}[| \partial_\sigma X_s^\theta|^p] + \mathbb{E}[|\partial^2_{\sigma \mu}X_s^\theta|^p] ) ds \le c(h^{\frac{5}{2}p} + h^{\frac{3}{2}p})(1 +|x|^c) + ch^{p - 1}\int_0^h \mathbb{E}[|\partial^2_{\sigma \mu}X_s^\theta|^p] ds,$$
having also used Holder inequality. Acting in the same way on the stochastic integral we get it is upper bounded by
$$ch^{\frac{p}{2}}(h^{\frac{3}{2}p} + h^p)(1 +|x|^c) + ch^{\frac{p}{2} - 1}\int_0^h \mathbb{E}[|\partial^2_{\sigma \mu}X_s^\theta|^p] ds,$$
while we upper bound the third term in the dynamic of $\partial_{\sigma \mu}^2X^\theta$, acting as we did in order to show \eqref{eq: estimation gamma expected norme p dotx }, with
$$ch^{\frac{3}{2}p + 1}(1 +|x|^c) + c \int_0^h \mathbb{E}[|\partial^2_{\sigma \mu}X_s^\theta|^p] ds + ch^{2 p}(1 +|x|^c) + ch^{\frac{p}{2} - 1}\int_0^h \mathbb{E}[|\partial^2_{\sigma \mu}X_s^\theta|^p] ds.$$
In total we have, not considering the negligible terms, 
$$\mathbb{E}[|\partial^2_{\sigma \mu}X_h^\theta|^p] \le ch^{\frac{3}{2}p}(1 +|x|^c) + c(h^{p - 1} + h^{\frac{p}{2} - 1} + 1)\int_0^h \mathbb{E}[|\partial^2_{\sigma \mu}X_s^\theta|^p] ds.$$
From Gronwall Lemma it follows the second inequality of \eqref{eq: derivata x rispetto sigma e mista}, as we wanted. \\
We are left to show \eqref{eq: derivate seconde x rispetto a sigma}. Again, the dynamic of $\partial_\sigma^2 X^\theta$ is known:
$$\partial^2_{\sigma}X^\theta_h = \int_0^h (\partial^2_x b(\mu, X_s^\theta) (\partial_\sigma X_s^\theta)^2 + \partial_x b(\mu, X_s^\theta) \partial^2_{\sigma}X_s^\theta ) ds + \int_0^h (\partial_x^2 a (\sigma, X_s^\theta)(\partial_\sigma X_s^\theta)^2  + 2 \partial_{\sigma x}^2 a (\sigma, X_s^\theta) \partial_\sigma X_s^\theta + $$
\begin{equation}
 + \partial_x  a (\sigma, X_s^\theta) \partial^2_{\sigma}X_s^\theta + \partial^2_\sigma a(\sigma, X_s^\theta)) dW_s + \int_0^h \int_\mathbb{R} (\partial_x^2 \gamma(X^{\theta}_{s^-}) (\partial_\sigma X_s^\theta)^2 + \partial_x \gamma(X^{\theta}_{s^-}) \partial^2_{\sigma}X_s^\theta) z \tilde{\mu}(ds, dz).
\label{eq: dinamica partial2 sigma}
\end{equation}
Acting exactly like we did for the estimation of the $p$-moments of the processes $\partial_\sigma X^\theta$ and $\partial^2_{\sigma \mu}X^\theta$ we get
\begin{equation}
\mathbb{E}[|\partial^2_{\sigma}X^\theta_h|^p] \le c(1 +|x|^c)(h^{2p} + h^{\frac{3}{2}p}+ h^{p} + h^\frac{p}{2} + h^{p + 1}) + c(h^{p-1}+ h^{\frac{p}{2}-1} +1)\int_0^h \mathbb{E}[|\partial^2_{\sigma}X_s^\theta|^p] ds.
\label{eq: stima din partial 2 sigma X}
\end{equation}
Using Gronwall Lemma and remarking that the other terms are negligible compared to $ch^\frac{p}{2}(1 +|x|^c)$, we obtain the result wanted. \\
Concerning the third derivatives, it is easy to see that, writing the dynamics of $\partial^3_\sigma X^\theta_h$, $\partial^3_\mu X^\theta_h$, $\partial^3_{\sigma \mu \sigma} X^\theta_h$ and $\partial^3_{ \mu \mu \sigma} X^\theta_h$ the principal terms are such that their order are, respectively, $h^\frac{1}{2}$, $h$ and twice $h^\frac{3}{2}$. Acting exactly like before, \eqref{eq: derivate terze x rispetto a sigma e mu} and \eqref{eq: derivate terze x miste} follow.
\end{proof}

We are left to show one last proposition, before showing Propositions \ref{prop: dl derivate prime}, \ref{prop: derivate seconde} and \ref{prop: dervate terze}:
\begin{proposition}
Suppose that Assumptions 1 to 4 hold.
Moreover suppose that $(Z_h)_h$ is a family of random variables such that $\mathbb{E}[|Z_h|^p | X_0^\theta = x] \le c (1 + |x|^c)$. Then $\forall k \ge 1$ $\forall \epsilon > 0$, we have
$$ \sup_{h \in [0, \Delta_n]}\mathbb{E}[|Z_h||\varphi_{h^\beta}^{(k)}(X_{h}^\theta - x)||X_0^\theta = x] = R(\theta,h^{1 - \epsilon}, x).$$
We have used $\varphi_{h^\beta}^{(k)}(y)$ in order to denote the k-th derivative $\varphi^{(k)}(\frac{y}{h^\beta})$.
\label{prop: truc moche h}
\end{proposition}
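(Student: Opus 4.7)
The plan is to exploit the fact that $\varphi^{(k)}$ for $k\ge 1$ is supported away from $0$: since $\varphi\equiv 1$ on $[-1,1]$ and $\varphi\equiv 0$ on $[-2,2]^c$, all derivatives $\varphi^{(k)}$ vanish on $[-1,1]$ and on $[-2,2]^c$, so $|\varphi_{h^\beta}^{(k)}(X_h^\theta-x)|$ is bounded by $c\,\mathbf{1}_{\{|X_h^\theta-x|\ge h^\beta\}}$. The proposition therefore reduces to controlling
\[
\mathbb{E}\bigl[|Z_h|\,\mathbf{1}_{\{|X_h^\theta-x|\ge h^\beta\}}\,\big|\,X_0^\theta=x\bigr].
\]

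First I would apply Hölder's inequality with conjugate exponents $(p,q)$, $q=p/(p-1)$, to get
\[
\mathbb{E}\bigl[|Z_h|\,\mathbf{1}_{\{|X_h^\theta-x|\ge h^\beta\}}\,\big|\,X_0^\theta=x\bigr]\le \mathbb{E}\bigl[|Z_h|^p\,\big|\,X_0^\theta=x\bigr]^{1/p}\,\mathbb{P}\bigl(|X_h^\theta-x|\ge h^\beta\,\big|\,X_0^\theta=x\bigr)^{1/q}.
\]
The first factor is bounded by $c(1+|x|^c)^{1/p}$ directly from the hypothesis on $Z_h$. The main work is on the second factor: I would reuse the splitting already performed in \eqref{eq: stima prob incrementi} in the proof of Proposition \ref{prop: ergodic}, namely writing $X_h^\theta-x=\Delta X^c+\Delta X^J$ and observing that $\{|X_h^\theta-x|\ge h^\beta\}$ is contained in $\{|\Delta X^c|\ge h^\beta/2\}\cup\{|\Delta X^J|\ge h^\beta/2\}$. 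Chebyshev with an arbitrarily large moment $r$ applied to the continuous part (using Lemma \ref{lemma: Moment inequalities}) gives a probability bounded by $R(\theta,h^{(1/2-\beta)r},x)$, which is arbitrarily small since $\beta<1/2$, while the finiteness of the jump intensity yields a $R(\theta,h,x)$ bound for the jump part. Hence
\[
\mathbb{P}\bigl(|X_h^\theta-x|\ge h^\beta\,\big|\,X_0^\theta=x\bigr)\le R(\theta,h,x).
\]

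Combining the two factors, the bound becomes $c(1+|x|^c)^{1/p}R(\theta,h^{1/q},x)$; taking $q$ arbitrarily close to $1$ (equivalently $p$ arbitrarily large), the exponent $1/q$ approaches $1$ and the polynomial factor $(1+|x|^c)^{1/p}$ remains absorbable into the polynomial growth allowed in the definition of $R$. This produces the desired $R(\theta,h^{1-\epsilon},x)$ for any $\epsilon>0$, uniformly in $h\in[0,\Delta_n]$.

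There is no real obstacle here beyond bookkeeping: the only delicate point is that the $p$-moment hypothesis on $Z_h$ is stated for a fixed $p$, so one must check that the bound $c(1+|x|^c)$ is assumed for every $p$ (as is the standing convention for the $R$-type estimates used throughout the paper) in order to let $p\to\infty$ and gain the factor $h^{1-\epsilon}$ instead of $h^{1/q}$ with a fixed $q>1$. Under that reading, the proof is a direct Hölder$+$Chebyshev argument that simply reuses \eqref{eq: stima prob incrementi}.
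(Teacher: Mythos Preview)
Your proposal is correct and follows essentially the same route as the paper's proof: use that $\varphi^{(k)}$ is supported on $\{1\le|\zeta|\le 2\}$, apply H\"older with $p$ large and $q$ close to $1$, and invoke the probability bound \eqref{eq: stima prob incrementi} to obtain $R(\theta,h,x)^{1/q}=R(\theta,h^{1-\epsilon},x)$. Your remark on needing the $p$-moment hypothesis for arbitrarily large $p$ is a valid reading of the statement (and the paper implicitly makes the same assumption).
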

\begin{proof}\textit{Proposition \ref{prop: truc moche h}.} \\
Once again, $|\varphi_{h^\beta}^{(k)}(X_{h}^\theta - x)|$ is different from $0$ only if $|X_{h}^\theta - x| \in [h^\beta, 2 h^\beta]$. We can therefore use Holder inequality (with $p$ big and $q$ next to $1$) and \eqref{eq: stima prob incrementi} to get, $\forall h \in [0, \Delta_n]$, 
$$\mathbb{E}[|Z_h||\varphi_{h^\beta}^{(k)}(X_{h}^\theta - x)| |X_0^\theta = x]  \le \mathbb{E}[|Z_h|^p|X_0^\theta = x]^\frac{1}{p} \mathbb{E}[1_{\left \{ |X_{h}^\theta - x| \in [h^\beta, 2 h^\beta]\right \}} |X_0^\theta = x]^\frac{1}{q} \le R(\theta, h, x)^\frac{1}{q} = R(\theta, h^{1 - \epsilon},x).$$
\end{proof}
\subsubsection{Proof of Proposition \ref{prop: dl derivate prime}}
\begin{proof}
The first point has already been showed in Proposition 8 of \cite{Chapitre 1}, we start proving the second. Since by the homogeneity of the equation $m$ and $m_2$ depend only on the difference $t_{i+1} - t_i$ we can consider WLOG $\forall h \le \Delta_n$
\begin{equation}
m(\mu, \sigma, x): =\frac{\mathbb{E}[X_h^\theta \varphi_{h^\beta}(X_{h}^\theta - X_0^\theta)|X_0^\theta = x]}{\mathbb{E}[\varphi_{h^\beta}(X_h^\theta - X_0^\theta)|X_0^\theta = x]} = \frac{\mathbb{E}[X_h^\theta \varphi_{h^\beta}(X_{h}^\theta - x)]}{\mathbb{E}[\varphi_{h^\beta}(X_h^\theta -x)]}.
\label{eq: def m per prop derivate}
\end{equation}
Hence,
$$\partial_\sigma m(\mu, \sigma, x) = \frac{\mathbb{E}[(\partial_\sigma X_h^\theta) \varphi_{h^\beta}(X_{h}^\theta - x)] + \mathbb{E}[X_h^\theta h^{- \beta} (\partial_\sigma X_{h}^\theta) \varphi'_{h^\beta}(X_{h}^\theta - x)]}{\mathbb{E}[\varphi_{h^\beta}(X_h^\theta - x)]} - m(\mu, \sigma, x)\frac{\mathbb{E}[h^{- \beta} (\partial_\sigma X_{h}^\theta) \varphi'_{h^\beta}(X_{h}^\theta - x)]}{\mathbb{E}[\varphi_{h^\beta}(X_h^\theta - x)]}.$$
On the numerator of the second and third term we use Proposition \ref{prop: truc moche h} taking as ${\modch Z_h}$, respectively, $X_h^\theta \frac{\partial_\sigma X_{h}^\theta}{h^\frac{1}{2}}$ and $\frac{\partial_\sigma X_{h}^\theta}{h^\frac{1}{2}}$. We get, remarking moreover that from \eqref{eq: dl m} $m(\mu, \sigma, x)$ is $R(\theta, 1, x)$ and from Theorem  1 in \cite{Chapitre 1} also the denominator is lower bounded for $|x| < |h|^{- k}$,
\begin{equation}
|\partial_\sigma m(\mu, \sigma, x)| \le R(\theta, 1, x)|\mathbb{E}[(\partial_\sigma X_h^\theta) \varphi_{h^\beta}(X_{h}^\theta - x)] | +  R(\theta, h^{\frac{3}{2}- \beta - \epsilon}, x). 
\label{eq: estim partial sigma m}
\end{equation}
To estimate $|\mathbb{E}[(\partial_\sigma X_h^\theta) \varphi_{h^\beta}(X_{h}^\theta - x)] |$ we replace the dynamic \eqref{eq: dynamic partial sigma x} of $\partial_\sigma X_h^\theta$. On the first integral we use Holder inequality and \eqref{eq: b in partial sigma} to get
$$|\mathbb{E}[\int_0^h (\partial_x b(X^{\theta,x}_s, \mu)\partial_\sigma X^{\theta,x}_s ds \varphi_{h^\beta}(X_{h}^\theta - x)]| \le (c h^{p - 1}\int_0^h \mathbb{E}[|\partial_\sigma X^{\theta,x}_s|^p])^\frac{1}{p} \le c h^{\frac{3}{2}}(1 + |x|^c)= R(\theta, h^\frac{3}{2},x),$$
where in the last inequality we have also used the first inequality of \eqref{eq: derivata x rispetto sigma e mista}. On $\int_0^h \partial_x a(X^{\theta,x}_s, \sigma)\partial_\sigma X^{\theta,x}_s dW_s$ we use again Holder inequality, \eqref{eq: a in partial sigma} (considering only the estimation on its first term) and the first inequality of \eqref{eq: derivata x rispetto sigma e mista} to obtain
$$|\mathbb{E}[\int_0^h \partial_x a(X^{\theta,x}_s, \sigma)\partial_\sigma X^{\theta,x}_s dW_s \varphi_{h^\beta}(X_{h}^\theta - x)]|\le R(\theta, h, x).$$
Concerning $|\mathbb{E}[\int_0^h \partial_\sigma a(X^{\theta,x}_s, \sigma) dW_s \varphi_{h^\beta}(X_{h}^\theta - x)]|$, we act on it like we did in \eqref{eq: stima a prima}, with $\partial_\sigma a$ instead of $a$. We therefore get $|\mathbb{E}[\int_0^h \partial_\sigma a(X^{\theta,x}_s, \sigma) dW_s \varphi_{h^\beta}(X_{h}^\theta - x)]| \le R(\theta, h, x)$. To conclude the proof of this point we use on the jump part Holder inequality, \eqref{eq: estimation gamma expected norme p dotx } and the first inequality of \eqref{eq: derivata x rispetto sigma e mista}. We get
$$|\mathbb{E}[\int_0^h \int_\mathbb{R} \partial_x \gamma(X^\theta_{s^-})\partial_\sigma X^{\theta}_s z \tilde{\mu}(dz, ds)\varphi_{h^\beta}(X_{h}^\theta - x)]| \le (c(1 + h^{\frac{p}{2} - 1}) \int_0^h\mathbb{E}[|\partial_\sigma X^{\theta}_s|^p] ds)^\frac{1}{p} \le R(\theta, h^{(\frac{1}{p}+ \frac{1}{2}) \land 1}, x).$$
We can take $p=2$, finding $|\mathbb{E}[(\partial_\sigma X_h^\theta) \varphi_{h^\beta}(X_{h}^\theta - x)] | = R(\theta, h, x)$. Replacing it in \eqref{eq: estim partial sigma m} and observing that $\frac{3}{2} - \beta $ is always more than $1$, it follows the second point of Proposition \ref{prop: dl derivate prime}. \\
In order to prove the third and the fourth point we first of all need to compute the derivative of $m_2$ with respect to both the parameters. We can just write
$$\partial_\vartheta m_2(\mu, \sigma, x) = 2 \frac{\mathbb{E}[(X_h^\theta - m(\mu, \sigma, x))(\partial_\vartheta X_h^\theta - \partial_\vartheta m(\mu, \sigma,x)) \varphi_{h^\beta}(X_{h}^\theta - x) ]}{\mathbb{E}[\varphi_{h^\beta}(X_{h}^\theta - x)]} + $$
$$ + \frac{\mathbb{E}[(X_h^\theta - m(\mu, \sigma, x))^2 h^{- \beta}(\partial_\vartheta X_h^\theta) \varphi'_{h^\beta}(X_{h}^\theta - x)]}{\mathbb{E}[\varphi_{h^\beta}(X_{h}^\theta - x)]} - m_2(\mu, \sigma, x) h^{- \beta} \frac{\mathbb{E}[(\partial_\vartheta X_h^\theta) \varphi'_{h^\beta}(X_{h}^\theta - x)]}{\mathbb{E}[\varphi_{h^\beta}(X_{h}^\theta - x)]}= : I_{1,\theta} + I_{2,\theta} + I_{3,\theta}.$$
We are going to show that, considering the derivatives with respect to both $\mu$ and $\sigma$, $ I_{2,\theta}$ and $ I_{3,\theta}$ are negligible compared to $ I_{1,\theta}$. In order to prove it we use Theorem 1 of \cite{Chapitre 1} on the denominator of $I_{2,\theta}$ and $I_{3,\theta}$, while on the numerator of $I_{2,\theta}$ we use Holder inequality, \eqref{eq: derivate x rispetto mu} if we consider the derivative with respect to $\mu$ or the first equation of \eqref{eq: derivata x rispetto sigma e mista} if we consider the derivative with respect to $\sigma$ and Lemma \ref{lemma: x-m con varphi'}.
On the numerator of $ I_{3,\theta}$ we use Proposition \ref{prop: truc moche h} and we remind that, as a consequence of Ad, $m_2$ is a $R(\theta, h,x)$ function. It follows
\begin{equation}
|I_{2, \mu}+ I_{3, \mu}| \le R(\theta, h^{2 + \beta - \epsilon}, x) + R(\theta, h^{3 - \beta - \epsilon}, x) = R(\theta, h^{2 + \beta - \epsilon}, x);
\label{eq: I2 mu and I3 mu}
\end{equation}
\begin{equation}
|I_{2, \sigma}+ I_{3, \sigma}| \le  R(\theta, h^{\frac{3}{2} + \beta - \epsilon}, x) + R(\theta, h^{\frac{5}{2} - \beta - \epsilon}, x) = R(\theta, h^{\frac{3}{2} + \beta - \epsilon}, x).
\label{eq: I2 sigma and I3 sigma}
\end{equation}
Concerning $I_{1, \theta}$, its numerator is
$$2\mathbb{E}[(X_h^\theta - m(\mu, \sigma, x))\partial_\theta X_h^\theta \varphi_{h^\beta}(X_{h}^\theta - x) ] - 2\partial_\theta m(\mu, \sigma,x))\mathbb{E}[(X_h^\theta -  m(\mu, \sigma, x)) \varphi_{h^\beta}(X_{h}^\theta - x) ]= : I_{1,1}^\theta + I_{1,2}^\theta.$$
From the first two points we have already proved of Proposition \ref{prop: dl derivate prime} and the third point of Lemma \ref{lemma: conditional expected value} we get 
\begin{equation}
| I_{1,2}^\mu| \le R(\theta, h^2, x), \qquad | I_{1,2}^\sigma| \le R(\theta, h^2, x).
\label{eq: estim I12}
\end{equation}
Now we consider $I_{1,1}^\theta$ and we act differently depending on if we are dealing with the derivative with rapport to $\mu$ or those with rapport to $\sigma$. We start studying $I_{1,1}^\sigma$. Using a notation analogous to the one used in the proof of Lemma \ref{lemma: conditional expected value}, we set 
$$X_h^\theta -  m(\mu, \sigma, x) =: \int_0^h a(X_s^\theta, \sigma) dW_s + B_h$$
and so it turns we have
\begin{equation}
I_{1,1}^\sigma= 2\mathbb{E}[(\int_0^h a(X_s^\theta, \sigma) dW_s)\partial_\sigma X_h^\theta \varphi_{h^\beta}(X_{h}^\theta - x) ] + 2\mathbb{E}[(B_h)\partial_\sigma X_h^\theta \varphi_{h^\beta}(X_{h}^\theta - x) ].
\label{eq: I11 sigma after ref}
\end{equation}
On the second term here above we use Cauchy-Schwartz inequality, control analogous to \eqref{eq: Bin carre} and the first estimation in \eqref{eq: derivata x rispetto sigma e mista}, getting
\begin{equation}
\mathbb{E}[(B_h)\partial_\sigma X_h^\theta \varphi_{h^\beta}(X_{h}^\theta - x) ] \le c\mathbb{E}[(B_h)^2 \varphi^2_{h^\beta}(X_{h}^\theta - x) ]^\frac{1}{2} \mathbb{E}[(\partial_\sigma X_h^\theta)^2 ]^\frac{1}{2} \le R(\theta, h^{1 + 2 \beta},x)^\frac{1}{2}R(\theta,h^\frac{1}{2},x) = R(\theta, h^{1 + \beta}, x).
\label{eq: I11 sigma con Bin}
\end{equation}
To evaluate the first term on the right hand side of \eqref{eq: I11 sigma after ref} we replace the dynamic \eqref{eq: dynamic partial sigma x} of $\partial_\sigma X_h^\theta$, isolating the principal term: $\partial_\sigma X_h^\theta := \int_0^h \partial_\sigma a(X_s^\theta, \sigma) dW_s + G_\sigma.$
We get
\begin{equation}
\mathbb{E}[(\int_0^h a(X_s^\theta, \sigma) dW_s)(\int_0^h \partial_\sigma a(X_s^\theta, \sigma) dW_s)] + \mathbb{E}[(\int_0^h a(X_s^\theta, \sigma) dW_s)(\int_0^h \partial_\sigma a(X_s^\theta, \sigma) dW_s) ( \varphi_{h^\beta}(X_{h}^\theta - x)-1) ] +
\label{eq: main term I11 sigma}
\end{equation}
$$+ \mathbb{E}[(\int_0^h a(X_s^\theta, \sigma) dW_s)G_\sigma \varphi_{h^\beta}(X_{h}^\theta - x) ].$$
On the first term here above we add and substract both $a(x, \sigma)$ and $\partial_\sigma a(x, \sigma)$ getting a main term and three terms of increments. We observe it holds the following estimation, using Cauchy-Schwartz inequality, \eqref{eq: BDG} and the first point of Lemma \ref{lemma: Moment inequalities}
\begin{equation}
\mathbb{E}[(\int_0^h [a(X_s^\theta, \sigma) - a(x, \sigma)] dW_s)(\int_0^h \partial_\sigma a(X_s^\theta, \sigma) dW_s)]\le \mathbb{E}[(\int_0^h [a(X_s^\theta, \sigma) - a(x, \sigma)] dW_s)^2]^\frac{1}{2} \mathbb{E}[(\int_0^h \partial_\sigma a(X_s^\theta, \sigma) dW_s)^2]^\frac{1}{2} \le
\label{eq: estim increments I11 sigma}
\end{equation}
$$ \le c\mathbb{E}[\int_0^h (X_s - x)^2 ds ]^\frac{1}{2} R(\theta, h^\frac{1}{2}, x) \le { \modch R(\theta, h,x)} R(\theta, h^\frac{1}{2}, x)= { \modch R(\theta, h^{\frac{3}{2}},x)}. $$
We can act in the same way considering the increments of $\partial_\sigma a$ or the term on which we have the increments of both $a$ and $\partial_\sigma a$. It follows that the first term of \eqref{eq: main term I11 sigma} is
\begin{equation}
\mathbb{E}[(\int_0^h a(x, \sigma) dW_s)(\int_0^h \partial_\sigma a(x, \sigma) dW_s)] + { \modch R(\theta, h^{\frac{3}{2}},x)}= h \, a(x, \sigma) \partial_\sigma a(x, \sigma) + { \modch R(\theta, h^{\frac{3}{2}},x)}.
\label{eq: main term I11 sigma fine}
\end{equation}
On the second term of \eqref{eq: main term I11 sigma} we use Holder inequality twice (with $p$ big and $q$ next to $1$), \eqref{eq: BDG} twice and \eqref{eq: stima prob incrementi}. We get it is upper bounded by
$$\mathbb{E}[|(\int_0^h a(X_s^\theta, \sigma) dW_s)(\int_0^h \partial_\sigma a(X_s^\theta, \sigma) dWs)|^p]^\frac{1}{p} \mathbb{E}[(\varphi_{h^\beta}(X_{h}^\theta - x)-1)^q ]^\frac{1}{q} \le R(\theta, h, x)R(\theta, h, x)^\frac{1}{q}= R(\theta, h^{2 - \epsilon}, x) $$
Concerning the third term of \eqref{eq: main term I11 sigma}, we first of all use Cauchy-Schwartz inequality, the fact that $\varphi$ is bounded in absolute value and \eqref{eq: BDG} in order to estimate the stochastic integral while, to estimate the $2$-norm of $G_\sigma$, we use \eqref{eq: b in partial sigma}, the estimation \eqref{eq: a in partial sigma} about the negligible part of the stochastic integral and \eqref{eq: estimation gamma expected norme p dotx }. It follows it is upper bounded by
\begin{equation}
R(\theta, h^\frac{1}{2}, x)c(h^\frac{3}{2} + h ) = R(\theta, h^\frac{3}{2},x),  
\label{eq: stima I11 sigma con G}
\end{equation}
where we have also used the first estimation \eqref{eq: derivata x rispetto sigma e mista} to estimate the expected value. From \eqref{eq: I11 sigma con Bin}, \eqref{eq: main term I11 sigma fine} - \eqref{eq: stima I11 sigma con G} it follows $$I_{1,1}^\sigma = h \, a(x, \sigma) \partial_\sigma a(x, \sigma) + R(\theta, h^{1 + \beta},x).$$
Using also \eqref{eq: I2 sigma and I3 sigma} and \eqref{eq: estim I12} we get the development of $\partial_\sigma m_2$ we wanted. \\
We are left to prove the third point of Proposition \ref{prop: dl derivate prime}. It means, comparing it with \eqref{eq: I2 mu and I3 mu} and \eqref{eq: estim I12}, to prove that $|I_{1,1}^\mu|\le R(\theta, h^2,x)$. We observe that \eqref{eq: I11 sigma after ref} still holds with the derivative with respect to $\mu$ instead of those with respect to $\sigma$. We now recall that $B_h = \int_0^h b(X_s, \mu) ds + R(\theta,h,x) + \Delta X_h^J$ and we replace it in $\mathbb{E}[B_h(\partial_\mu X_s^\theta)\varphi_{h^\beta}(X_{h}^\theta - x)]$, getting
\begin{equation}
|\mathbb{E}[(\int_0^h b(X_s, \mu) ds)(\partial_\mu X_s^\theta)\varphi_{h^\beta}(X_{h}^\theta - x)] + R(\theta,h,x)\mathbb{E}[(\partial_\mu X_s^\theta)\varphi_{h^\beta}(X_{h}^\theta - x)] + \mathbb{E}[(\Delta X_h^J)(\partial_\mu X_s^\theta)\varphi_{h^\beta}(X_{h}^\theta - x)]|\le
\label{eq: below I11 mu}
\end{equation}
$$\le R(\theta, h^2, x) + R(\theta, h^{1 + \beta q},x)^\frac{1}{q} R(\theta, h, x) = R(\theta, h^2, x) + R(\theta, h^{2 + \beta - \epsilon}, x) = R(\theta, h^2, x),$$
where we have used Holder inequality (having taken $p$ big and $q$ next to $1$), the fact that $\varphi$ is bounded, the polynomial growth of $b$, the first estimation in \eqref{eq: derivate x rispetto mu} and Lemma \ref{lemma: estim jumps} (in a non-conditional form). Concerning the first term of \eqref{eq: I11 sigma after ref}, we still replace the dynamic of $\partial_\mu X_s^\theta := \int_0^h \partial_\mu b(X_s^\theta, \mu) ds + G_\mu$, where $G_\mu$ is the negligible part in the dynamic of $\partial_\mu X_s^\theta$ and it is such that 
$$\mathbb{E}[|G_\mu|^p]\le c(1 + h^{\frac{p}{2} - 1} + h^{p - 1})\int_0^h \mathbb{E}[|\partial_\mu X_s^\theta|^p] ds \le c(h^{p + 1} + h^{\frac{3}{2}p} + h^{2p})(1 + |x|^c) $$ (see Lemma 9 in \cite{Chapitre 1}). We have also used the first inequality of \eqref{eq: derivate x rispetto mu}. It follows
$\mathbb{E}[|G_\mu|^p]^\frac{1}{p} \le R(\theta, h^{1 + \frac{1}{p}},x),$ for $p \ge 2$.
The first term of \eqref{eq: I11 sigma after ref} is therefore
\begin{equation}
\mathbb{E}[(\int_0^h a(X_s^\theta, \sigma) dW_s)(\int_0^h \partial_\mu b(X_s^\theta, \mu) ds)] + \mathbb{E}[(\int_0^h a(X_s^\theta, \sigma) dW_s)(\int_0^h \partial_\mu b(X_s^\theta, \mu) ds) (1 - \varphi_{h^\beta}(X_{h}^\theta - x)) ] +
\label{eq: main term I11 mu}
\end{equation}
$$+ \mathbb{E}[(\int_0^h a(X_s^\theta, \sigma) dW_s)G_\mu \varphi_{h^\beta}(X_{h}^\theta - x) ].$$
Now on the first term here above we act like we did for the estimation of the derivative with respect to $\sigma$: we get
$$h \, \partial_\mu b(x, \mu)\mathbb{E}[\int_0^h a(X_s^\theta, \sigma) dW_s] + \mathbb{E}[(\int_0^h a(X_s^\theta, \sigma) dW_s)(\int_0^h [b(X_s^\theta, \mu) - b(x, \mu)]ds)]$$
Now we observe that the first expected value is $0$, while on the others we can use Cauchy-Schwartz inequality, \eqref{eq: BDG} and we estimate the increments like in \eqref{eq: estim increments I11 sigma}:
{ \modch $$  \mathbb{E}[(\int_0^h a(X_s^\theta, \sigma) dW_s)(\int_0^h [b(X_s^\theta, \mu) - b(x, \mu)]ds)] \le  \mathbb{E}[(\int_0^h a(X_s^\theta, \sigma) dW_s)^2]^\frac{1}{2} \mathbb{E}[(\int_0^h [b(X_s^\theta, \mu) - b(x, \mu)]ds)^2]^\frac{1}{2} \le $$
$$\le R(\theta, h^\frac{1}{2},x) \mathbb{E}[h \int_0^h (X_s^\theta - x)^2 ds ]^\frac{1}{2} \le R(\theta, h^\frac{1}{2},x) R(\theta, h^\frac{3}{2},x) =  R(\theta, h^2,x), $$
where we have also used the first point of Lemma \ref{lemma: Moment inequalities}.} \\
On the second term of \eqref{eq: main term I11 mu} we act like we did on the second term of \eqref{eq: main term I11 sigma}, using Holder inequality twice (with $p$ big and $q$ next to $1$), \eqref{eq: BDG}, the polynomial growth of both $a$ and $b$ and \eqref{eq: stima prob incrementi}. We get
$$\mathbb{E}[(\int_0^h a(X_s^\theta, \sigma) dW_s)(\int_0^h \partial_\mu b(X_s^\theta, \mu) ds) (1 - \varphi_{h^\beta}(X_{h}^\theta - x)) ] \le R(\theta, h^\frac{1}{2},x)R(\theta, h,x)R(\theta, h^{1 - \epsilon},x) = R(\theta, h^{\frac{5}{2}- \epsilon},x).$$
Using on the third term of \eqref{eq: main term I11 mu} Cauchy-Schwartz inequality, \eqref{eq: BDG}, the fact that $|\varphi|$ is bounded and the estimation of the $L^p$ norm of $G_\mu$ given above \eqref{eq: main term I11 mu} for $p= 2$, we get it is upper bounded in absolute value by $R(\theta,h^2,x)$. \\
It follows $|I_{1,1}^\mu| \le R(\theta, h^2,x)$, as we wanted.
\end{proof}
\subsubsection{Proof of Proposition \ref{prop: derivate seconde}}
\begin{proof}
We first of all write $\partial^2_{\mu \sigma} m$. Again, since by the homogeneity of the equation $m$ and $m_2$ depend only on the difference $t_{i + 1} - t_i$ we can consider, for all $h \le \Delta_n$, $m(\mu, \sigma, x)$ as in 
\eqref{eq: def m per prop derivate}. Hence, writing $\varphi$ for $\varphi_{h^\beta}(X_h^\theta - x)$ (and $\varphi^{(k)}$ for $\varphi_{h^\beta}^{(k)}(X_h^\theta - x)$), we have $\partial^2_{\mu \sigma} m (\mu, \sigma,x) =$
$$ \frac{\mathbb{E}[(\partial^2_{\mu \sigma} X_h^\theta) \varphi]}{\mathbb{E}[\varphi]} + \frac{2 h^{- \beta} \mathbb{E}[(\partial_\mu X_h^\theta)(\partial_\sigma X_h^\theta) \varphi']}{\mathbb{E}[\varphi]}- \frac{h^{- \beta} \mathbb{E}[(\partial_\mu X_h^\theta)\varphi']\mathbb{E}[(\partial_\sigma X_h^\theta)\varphi]}{(\mathbb{E}[\varphi])^2} +\frac{h^{- \beta} \mathbb{E}[X_h^\theta(\partial^2_{\mu \sigma} X_h^\theta)\varphi']}{\mathbb{E}[\varphi]} +$$
\begin{equation}
+ \frac{h^{- 2 \beta} \mathbb{E}[X_h^\theta(\partial_{\sigma} X_h^\theta)(\partial_{\mu} X_h^\theta)\varphi'']}{\mathbb{E}[\varphi]} -\partial_\mu m \frac{h^{- \beta} \mathbb{E}[(\partial_{\sigma} X_h^\theta)\varphi']}{\mathbb{E}[\varphi]} - \frac{h^{- 2 \beta} \mathbb{E}[X_h^\theta(\partial_{\sigma} X_h^\theta)\varphi'] \mathbb{E}[(\partial_{\mu} X_h^\theta)\varphi']}{(\mathbb{E}[\varphi])^2}  - \frac{h^{- \beta} m \mathbb{E}[(\partial^2_{\mu \sigma} X^\theta_h)\varphi']}{\mathbb{E}[\varphi]} +
\label{eq: derivata seconda mu sigma m}
\end{equation}
$$- \frac{h^{- 2\beta} m \mathbb{E}[(\partial_{\mu} X^\theta_h)(\partial_{\sigma} X^\theta_h)\varphi'']}{\mathbb{E}[\varphi]} + \frac{h^{- 2\beta} m \mathbb{E}[(\partial_{\sigma} X^\theta_h)\varphi']\mathbb{E}[(\partial_{\mu} X^\theta_h)\varphi']}{(\mathbb{E}[\varphi])^2} = : \sum_{j = 1}^{10} I_j^n.$$
Now on $I_2^n$ and $\sum_{j=4}^{10} I_j^n$ we use Proposition \ref{prop: truc moche h}, where $Z_h$ are respectively $\frac{\partial_\mu X_h^\theta}{h}\frac{\partial_\sigma X_h^\theta}{h^\frac{1}{2}}$,  $X_h^\theta \frac{\partial^2_{\mu \sigma}X_h^\theta}{h^\frac{3}{2}}$,  $X_h^\theta\frac{\partial_\mu X_h^\theta}{h}\frac{\partial_\sigma X_h^\theta}{h^\frac{1}{2}}$,  $\frac{\partial_\sigma X_h^\theta}{h^\frac{1}{2}}$, $X_h^\theta\frac{\partial_\sigma X_h^\theta}{h^\frac{1}{2}}$ in the first and $\frac{\partial_\mu X_h^\theta}{h}$ in the second expected value of $I_7^n$,  $\frac{\partial^2_{\mu \sigma}X_h^\theta}{h^\frac{3}{2}}$,  $\frac{\partial_\mu X_h^\theta}{h}\frac{\partial_\sigma X_h^\theta}{h^\frac{1}{2}}$,  $\frac{\partial_\sigma X_h^\theta}{h^\frac{1}{2}}$ in the first expected value of $I_{10}^n$ and $\frac{\partial_\mu X_h^\theta}{h^\frac{1}{2}}$ in the second one. Recalling moreover that $|m(\mu, \sigma,x)| \le R(\theta, 1,x)$, $|\partial_\mu m(\mu, \sigma, x)| \le R(\theta,h, x)$ and the denominator gives always $R(\theta,1,x)$ it follows
\begin{equation}
|I_2^n + \sum_{j=4}^{10} I_j^n | \le R(\theta, h^{\frac{5}{2}- \beta- \epsilon},x)+R(\theta, h^{\frac{5}{2}- 2\beta - \epsilon},x) +R(\theta, h^{3- \beta- \epsilon},x)+ R(\theta, h^{\frac{7}{2}- 2\beta- \epsilon},x) = R(\theta, h^{\frac{5}{2}- 2\beta- \epsilon},x). 
\label{eq: stima I2 - I10 partial mu sigma m}
\end{equation}
On $I_{1}^n$ we use Holder inequality, the fact that $|\varphi|$ is bounded, second inequality in \eqref{eq: derivata x rispetto sigma e mista} and still the fact that the denominator is $R(\theta, 1,x)$ to get
\begin{equation}
|I_1^n| \le R(\theta,h^\frac{3}{2},x).
\label{eq: estim I1 in partial mu sigma m}
\end{equation}
We now deal with the numerator of $I_3^n$, the denominator is still $R(\theta, 1,x)$ as a consequence of Lemma \ref{lemma: esperance varphi}. In the second expected value we apply Holder inequality, the boundedness of $|\varphi|$ and first inequality of \eqref{eq: derivata x rispetto sigma e mista} while on the first we use Proposition \ref{prop: truc moche h} with $Z_h$ that this time is $\frac{\partial_\mu X_h^\theta}{h}$. We get
\begin{equation}
|I_3^n| \le R(\theta,h^{\frac{5}{2}- \beta - \epsilon},x).
\label{eq: I3 in partial mu sigma m}
\end{equation}
From \eqref{eq: stima I2 - I10 partial mu sigma m}, \eqref{eq: estim I1 in partial mu sigma m} and \eqref{eq: I3 in partial mu sigma m} it follows the first inequality in \eqref{eq: deriv seconde miste e sigma m}. \\
In order to prove the second inequality in \eqref{eq: deriv seconde miste e sigma m} we compute $\partial^2_\sigma m(\mu, \sigma,x)$, getting $10$ terms as in \eqref{eq: derivata seconda mu sigma m} in which the derivatives are always with respect to $\sigma$. We therefore say that $\partial^2_\sigma m(\mu, \sigma, x) : = \sum_{j = 1}^{10} \tilde{I}_j^n$. On $\tilde{I}_2^n$ and  $\sum_{j = 4}^{10}  \tilde{I}_j^n$ we still use Proposition \ref{prop: truc moche h} taking as $Z$ respectively $\frac{(\partial_\sigma X_h^\theta)^2}{h}$, $X_h^\theta \frac{\partial^2_{\sigma} X_h^\theta}{h^\frac{1}{2}}$,  $X_h^\theta\frac{(\partial_\sigma X_h^\theta)^2}{h}$,  $\frac{\partial_\sigma X_h^\theta}{h^\frac{1}{2}}$, $X_h^\theta\frac{\partial_\sigma X_h^\theta}{h^\frac{1}{2}}$ in the first and $\frac{\partial_\sigma X_h^\theta}{h^\frac{1}{2}}$ in the second expected value of $\tilde{I}_7^n$,  $\frac{\partial^2_{ \sigma}X_h^\theta}{h^\frac{1}{2}}$,  $\frac{(\partial_\sigma X_h^\theta)^2}{h}$,  $\frac{\partial_\sigma X_h^\theta}{h^\frac{1}{2}}$ in both the first and the second expected values of $\tilde{I}_{10}^n$. Recalling also that $|\partial_\sigma m(\mu, \sigma, x)| \le R(\theta, \Delta_{n,i},x)$ it follows
\begin{equation}
|\tilde{I}_2^n + \sum_{j=4}^{10} \tilde{I}_j^n | \le R(\theta, h^{2- \beta- \epsilon},x)+R(\theta, h^{\frac{3}{2} - \beta - \epsilon},x) +R(\theta, h^{2- 2\beta- \epsilon},x) +R(\theta, h^{\frac{5}{2}- \beta - \epsilon},x)+ R(\theta, h^{3- 2\beta- \epsilon},x) = R(\theta, h^{\frac{3}{2} - \beta - \epsilon},x). 
\label{eq: stima I2 - I10 partial 2 sigma m}
\end{equation}
Concerning the numerator of $\tilde{I}_3^n$, using Holder inequality, first inequality in \eqref{eq: derivata x rispetto sigma e mista}, the boundedness of $\varphi$ and Proposition \ref{prop: truc moche h} for $Z = \frac{\partial_\sigma X_h^\theta}{h^\frac{1}{2}}$ it follows
\begin{equation}
|\tilde{I}_3^n| \le R(\theta, h^{2 - \beta - \epsilon}, x).
\label{eq: estim tilde I3}
\end{equation}
We now have to study $\tilde{I}_1^n$. We replace the dynamic \eqref{eq: dinamica partial2 sigma} isolating the principal term: $\partial^2_\sigma X_h^\theta =: \int_0^h \partial^2_\sigma a(\sigma, X_s^\theta) dW_s + G_{\sigma \sigma}$. \\
$G_{\sigma \sigma}$ is the negligible part and, as a consequence of \eqref{eq: stima din partial 2 sigma X}, (in which we recall that $ch^{\frac{p}{2}}(1 + |x|^c )$ comes from the principal term), we already know that
\begin{equation}
\mathbb{E}[|G_{\sigma \sigma}|^p] \le c(1 +|x|^c)(h^{2p} + h^{\frac{3}{2}p}+ h^{p} + h^{p + 1}) + c(h^{p-1}+ h^{\frac{p}{2}-1} +1)\int_0^h \mathbb{E}[|\partial^2_{\sigma}X_s^\theta|^p] ds \le c(1 +|x|^c)h^{\frac{p}{2} +1}.
\label{eq: estim Gsigma sigma}
\end{equation}
Therefore, $\forall p \ge 2$, $\mathbb{E}[|G_{\sigma \sigma}|^p]^\frac{1}{p} \le R(\theta, h^{\frac{1}{2} + \frac{1}{p}}, x)$. \\
We can see $\tilde{I}_1^n$ in the following way:
$$\tilde{I}_1^n = \mathbb{E}[\int_0^h \partial^2_\sigma a(\sigma, X_s^\theta) dW_s] + \mathbb{E}[(\int_0^h \partial^2_\sigma a(\sigma, X_s^\theta) dW_s)(\varphi_{h^\beta}(X_h^\theta - x)-1)] + \mathbb{E}[G_{\sigma \sigma}\varphi_{h^\beta}(X_h^\theta - x)].$$
The first expected value is zero, on the second we use Holder inequality, \eqref{eq: BDG} and \eqref{eq: stima prob incrementi} to get it is upper bounded by $R(\theta, h^{\frac{3}{2} - \epsilon},x)$. On the third term here above we use Cauchy-Schwartz inequality, the boundedness of $\varphi$ and \eqref{eq: estim Gsigma sigma} for $p=2$ getting it is $R(\theta, h, x)$. \\
It follows second inequality in \eqref{eq: deriv seconde miste e sigma m}. Equation \eqref{eq: derivata seconda mu m} has already been proved in Proposition 8 in \cite{Chapitre 1}. \\
Concerning the second derivatives of $m_2$, it is $\partial^2_{\sigma \mu} m_2 =$

$$ = \frac{2\mathbb{E}[(\partial_\mu X_h^\theta - \partial_\mu m )(\partial_\sigma X_h^\theta - \partial_\sigma m) \varphi]}{\mathbb{E}[\varphi]} + \frac{2 \mathbb{E}[(X_h^\theta - m)(\partial^2_{\mu \sigma} X_h^\theta - \partial^2_{\mu \sigma} m)\varphi]}{\mathbb{E}[\varphi]} + \frac{2 \mathbb{E}[(X_h^\theta - m )(\partial_{\mu} X_h^\theta - \partial_{\mu} m)h^{- \beta} \, \partial_\sigma X_h^\theta \,\varphi']}{\mathbb{E}[\varphi]} +$$
$$ - \frac{2 \mathbb{E}[(X_h^\theta - m)(\partial_{\mu} X_h^\theta - \partial_{\mu} m)h^{- \beta} \,\varphi] \mathbb{E}[\partial_\sigma X_h^\theta \,\varphi']}{(\mathbb{E}[\varphi])^2} + \frac{2 \mathbb{E}[(X_h^\theta - m )(\partial_{\sigma} X_h^\theta - \partial_{\sigma} m)h^{- \beta} \, \partial_\mu X_h^\theta \,\varphi']}{\mathbb{E}[\varphi]} + $$
\begin{equation}
+ \frac{\mathbb{E}[(X_h^\theta - m )^2(\partial_{\mu} X_h^\theta) h^{- 2 \beta} \, (\partial_\sigma X_h^\theta) \,\varphi'']}{\mathbb{E}[\varphi]} + \frac{ \mathbb{E}[(X_h^\theta - m)^2 (\partial^2_{\mu \sigma} X_h^\theta) h^{- \beta} \varphi']}{\mathbb{E}[\varphi]} - \frac{ \mathbb{E}[(X_h^\theta - m)^2(\partial_{\mu} X_h^\theta) h^{- \beta} \,\varphi'] \mathbb{E}[h^{- \beta }\partial_\sigma X_h^\theta \,\varphi']}{(\mathbb{E}[\varphi])^2} + 
\label{eq: partial2 mu sigma m2}
\end{equation}
$$ - \partial_\sigma m_2 \frac{\mathbb{E}[h^{- \beta }\partial_\mu X_h^\theta \,\varphi']}{\mathbb{E}[\varphi]} - m_2 \frac{\mathbb{E}[h^{- 2\beta }(\partial_\mu X_h^\theta) (\partial_\sigma X_h^\theta) \,\varphi'']}{\mathbb{E}[\varphi]}  - m_2 \frac{\mathbb{E}[h^{- \beta }(\partial^2_{\mu \sigma}X_h^\theta)\,\varphi']}{\mathbb{E}[\varphi]} + m_2\frac{\mathbb{E}[h^{- 2\beta }(\partial_\mu X_h^\theta) \,\varphi'] \mathbb{E}[(\partial_\sigma X_h^\theta)\varphi']}{(\mathbb{E}[\varphi])^2} =$$
$$=: \sum_{j = 1}^{12} I_j^n. $$ 
On $I_1^n$ we use the first and the second point of Proposition \ref{prop: dl derivate prime} to say that both the derivatives of $m$ are $R(\theta, h, x)$. From the boundedness of $\varphi$, Lemma \ref{lemma: esperance varphi}  and the estimation of the derivatives of $X$ gathered in Lemma \ref{lemma: derivate x} it follows
$$|-2 \mathbb{E}[(\partial_\mu X_h^\theta - \partial_\mu m ) \partial_\sigma m \varphi] | \le R(\theta, h^2,x).$$
We now have to study 
$$2 \mathbb{E}[\partial_\mu X_h^\theta \partial_\sigma X_h^\theta \varphi] - 2 \partial_\mu m \mathbb{E}[\partial_\sigma X_h^\theta \varphi] = : I_{1,1}^n + I_{1,2}^n. $$
We start considering $I_{1,2}^n$. As we have already done after \eqref{eq: I11 sigma con Bin}, we see $\partial_\sigma X_h^\theta$ as $\int_0^h \partial_\sigma a(X_s^\theta, \sigma) dW_s + G_\sigma$, hence
$$|I_{1,2}^n| \le R(\theta, h,x)|\mathbb{E}[(\int_0^h \partial_\sigma a(X_s^\theta, \sigma) dW_s)(\varphi - 1)] + \mathbb{E}[G_\sigma \varphi]| \le R(\theta, h,x)[R(\theta, h^{\frac{3}{2} - \epsilon},x) + R(\theta, h,x)] = R(\theta, h^2,x),$$
where in the last inequality we have used on the first term Holder inequality, Burkholder - Davis - Gundy inequality and \eqref{eq: stima prob incrementi} while on the second we have used Cauchy - Schwartz inequality and the fact that the $2$- norm of $G_\sigma$ is upper bounded by a $R(\theta, h,x)$ function as a consequence of \eqref{eq: b in partial sigma}, \eqref{eq: a in partial sigma} and \eqref{eq: estimation gamma expected norme p dotx }. \\
Concerning $I_{1,1}^n$, replacing again $\partial_\sigma X_h^\theta$ and using the estimation on the $2$-norm of $G_\sigma$ as we have already done it follows 
$$|I_{1,1}^n| \le c|\mathbb{E}[(\int_0^h \partial_\sigma a(X_s^\theta, \sigma) dW_s)\partial_\mu X_h^\theta \varphi]| + R(\theta, h^2,x).$$
Now we observe we have already proved in the conclusion of Proposition \ref{prop: dl derivate prime}, starting below \eqref{eq: below I11 mu}, that 
$I_{1,1}^\mu := 2 \mathbb{E}[[(\int_0^h a(X_s^\theta, \sigma) dW_s)\partial_\mu X_h^\theta \varphi]$ is such that $|I_{1,1}^\mu| \le R(\theta, h^2,x) $. 
Acting exactly in the same way, considering now $\partial_\sigma a(X_s^\theta, \sigma)$ in the stochastic integral instead of $a(X_s^\theta, \sigma)$, we get that $|I_{1,1}^n| \le R(\theta, h^2,x)$ and so, using also Lemma \ref{lemma: esperance varphi}, $|I_{1}^n| \le R(\theta, h^2,x)$. \\
Considering $I_2^n$, it is $I_2^n =: I_{2,1}^n + I_{2,2}^n$, where
$I_{2,1}^n := \frac{2 \mathbb{E}[(X_h^\theta - m)(\partial^2_{\mu \sigma} X_h^\theta)\varphi]}{\mathbb{E}[\varphi]}$ and \\ $I_{2,2}^n = : (- \partial^2_{\mu \sigma} m)\frac{2 \mathbb{E}[(X_h^\theta - m )\varphi]}{\mathbb{E}[\varphi]}$. \\
Now on $I_{2,1}^n$ we use Cauchy-Schwartz inequality, first point of Lemma \ref{lemma: conditional expected value} and \eqref{eq: derivata x rispetto sigma e mista} getting
$|I_{2,1}^n| \le R(\theta, h^2,x)$, while on $I_{2,2}^n$ we use the third point of Lemma \ref{lemma: conditional expected value} and \eqref{eq: derivata seconda mu sigma m} we have just proved in order to obtain $|I_{2,2}^n| \le R(\theta, h^\frac{5}{2},x)$.
The application of Holder inequality, Lemma \ref{lemma: x-m con varphi'}, Lemma \ref{lemma: derivate x} and the first point of Proposition \ref{prop: dl derivate prime} on the numerator of $I_3^n$ gives us 
$$|I_3^n| \le h^{- \beta }R(\theta, h^{1 + \beta p},x)^\frac{1}{p} R(\theta, h^\frac{3}{2},x).$$
It is enough to take $p$ next to $1$ to get it is negligible compared to $R(\theta, h^2,x)$. We act on the first expected value of $I_4^n $ like we did on $I_2^n $ while on the second we use Proposition \ref{prop: truc moche h}. It yields
$$|I_4^n| \le h^{- \beta } R(\theta, h^{\frac{3}{2}- \epsilon}, x) R(\theta, h^{\frac{3}{2}}, x ) = R(\theta, h^{3 - \beta- \epsilon},x).$$
On $I_5^n$ we act like we did on $I_3^n$, hence
$$|I_5^n| \le h^{- \beta }R(\theta, h^{1 + \beta p},x)^\frac{1}{p} R(\theta, h^{\frac{3}{2} - \epsilon},x),$$
that is negligible. In the same way
$$|I_6^n| \le h^{- 2\beta }R(\theta, h^{1 + 2\beta p},x)^\frac{1}{p} R(\theta, h^{\frac{3}{2}- \epsilon},x),$$
$$|I_7^n| \le h^{- \beta }R(\theta, h^{1 + 2\beta p},x)^\frac{1}{p} R(\theta, h^{\frac{3}{2}- \epsilon},x) \quad \mbox{ and}$$
$$|I_8^n| \le h^{- 2\beta }R(\theta, h^{1 + 2\beta p},x)^\frac{1}{p} R(\theta, h^{1- \epsilon},x) R(\theta, h^{\frac{3}{2}- \epsilon}, x).$$
We recall that $|\partial_\sigma m_2 (\mu, \sigma,x)| \le R(\theta, h, x)$ and $| m_2 (\mu, \sigma,x)| \le R(\theta, h, x)$, therefore
$$|I_9^n| \le h^{- \beta} R(\theta, h, x) R(\theta, h^{2 - \epsilon}, x) = R(\theta, h^{3 - \beta - \epsilon},x),$$
$$|I_{10}^n| \le h^{- 2\beta} R(\theta, h, x) R(\theta, h^{\frac{5}{2} - \epsilon}, x) = R(\theta, h^{\frac{7}{2} - 2\beta - \epsilon},x),$$
$$|I_{11}^n| \le h^{- \beta} R(\theta, h, x) R(\theta, h^{\frac{5}{2}- \epsilon}, x) = R(\theta, h^{\frac{7}{2} - \beta - \epsilon},x),$$
$$|I_{12}^n| \le h^{- 2\beta} R(\theta, h, x)R(\theta, h^{2 - \epsilon}, x) R(\theta, h^{\frac{3}{2} - \epsilon}, x) = R(\theta, h^{\frac{9}{2} - 2\beta - \epsilon},x).$$
First inequality in \eqref{eq: deriv seconde miste e mu m2} follows. In order to show the second one we should compute $\partial_\mu^2 m_2(\mu, \sigma,x)$. Since it is exactly like $\partial_{\mu \sigma}^2 m_2(\mu, \sigma,x)$ but with all the derivatives with respect to $\mu$, we still refer to \eqref{eq: partial2 mu sigma m2} and we will write
$\partial^2_\mu m_2(\mu, \sigma, x) =: \sum_{j = 1}^{12} \tilde{I}_j^n$. \\
On $\tilde{I}_1^n$, $\sum_{j = 3}^{12} \tilde{I}_j^n$ we act exactly like we did here above, getting
$$|\tilde{I}_{1}^n + \sum_{j = 3}^{12} \tilde{I}_j^n| \le R(\theta, h^2,x) + h^{- \beta } R(\theta, h^{1 + \beta p},x)^\frac{1}{p}R(\theta, h^2,x) + h^{-\beta} R(\theta, h^{\frac{3}{2}},x) R(\theta, h^{2  - \epsilon},x) + h^{- \beta } R(\theta, h^{1 + \beta p},x)^\frac{1}{p}R(\theta, h^{2},x) + $$
$$ + h^{- 2 \beta}R(\theta, h^{1 + 2\beta p},x)^\frac{1}{p}R(\theta, h^{2},x) + h^{- \beta } R(\theta, h^{1 + 2\beta p},x)^\frac{1}{p}R(\theta, h,x) + h^{- 2\beta } R(\theta, h^{1 + 2\beta p},x)^\frac{1}{p}R(\theta, h,x) R(\theta, h^{2  - \epsilon},x) +$$
$$  + h^{- \beta }R(\theta, h^2,x)R(\theta, h^{2  - \epsilon},x) + h^{- 2\beta }R(\theta, h,x)R(\theta, h^{3  - \epsilon},x) +h^{- \beta }R(\theta, h,x)R(\theta, h^{2  - \epsilon},x) + h^{- 2\beta }R(\theta, h,x)R(\theta, h^{4  - \epsilon},x),$$
that is a $R(\theta, h^2, x)$ function as a consequence of the fact that, choosing $p$ next to $1$, all terms are negligible compared to the first one. \\
We now deal with $\tilde{I}_2^n$. We still need to split it in $\tilde{I}_{2,1}^n$ and $\tilde{I}_{2,2}^n$, where
$\tilde{I}_{2,1}^n := \frac{2 \mathbb{E}[(X_h^\theta - m )(\partial^2_{\mu} X_h^\theta)\varphi]}{\mathbb{E}[\varphi]}$ and \\ $\tilde{I}_{2,2}^n = : (- \partial^2_{\mu} m)\frac{2 \mathbb{E}[(X_h^\theta - m )\varphi]}{\mathbb{E}[\varphi]}$. \\
Using on $\tilde{I}_{2,2}^n$ Lemma \ref{lemma: conditional expected value} and \eqref{eq: derivata seconda mu m} we obtain $|\tilde{I}_{2,2}^n| \le R(\theta, h^2,x)$. \\
In order to estimate $\tilde{I}_{2,1}^n$ we isolate the principal term in the dynamic of $\partial^2_\mu X_h^\theta$, getting 
$\partial^2_\mu X_h^\theta = \int_0^h \partial^2_\mu b(X_s^\theta, \mu) ds + G_{\mu \mu}$, where $G_{\mu \mu}$ is the negligible part and it is such that, for $p \ge 2$,
\begin{equation}
\mathbb{E}[|G_{\mu \mu}|^p] \le c(1 + |x|^c )(h^{2p}+ h^{p +1}) = R(\theta, h^{p + 1}, x),
\label{eq: stima G mu mu}
\end{equation}
as showed in the proof of Lemma 9 in \cite{Chapitre 1}. \\
Replacing in the definition of $\tilde{I}_{2,1}^n$ we have 
$$|\tilde{I}_{2,1}^n| \le |\frac{2 \mathbb{E}[ G_{\mu \mu}(X_h^\theta - m ) \varphi]}{\mathbb{E}[\varphi]} | + |\frac{2 \mathbb{E}[\int_0^h \partial^2_\mu b(X_s^\theta, \mu) ds (X_h^\theta - m ) \varphi]}{\mathbb{E}[\varphi]} |.$$
Now on the first term here above we use Cauchy-Schwartz inequality, the first point of Lemma \ref{lemma: conditional expected value} and \eqref{eq: stima G mu mu} obtaining it is upper bounded by $R(\theta, h^\frac{3}{2},x)R(\theta, h^\frac{1}{2},x) = R(\theta, h^2,x)$, while the second one is upper bounded by
$$ ch \partial^2_\mu b(x, \mu) | \mathbb{E}[ (X_h^\theta - m ) \varphi] |  + c | \mathbb{E}[\int_0^h[ \partial^2_\mu b(X_s^\theta, \mu) - \partial^2_\mu b(x, \mu)] ds (X_h^\theta - m ) \varphi] | \le$$
$$ \le h R(\theta, h, x) + R(\theta, h^\frac{3}{2},x)R(\theta, h^\frac{1}{2},x) = R(\theta, h^2,x), $$
where we have also used Lemma \ref{lemma: esperance varphi} in order to say that the denominator is lowed bounded by $1 + R(\theta, h, x)$, Cauchy-Schwartz inequality, the first point of Lemma \ref{lemma: conditional expected value} and an estimation analogous to \eqref{eq: stima incremento a} for the increments of the derivative of $b$. It follows $|\tilde{I}_{2,1}^n| \le R(\theta, h^2, x)$. \\
Second inequality in \eqref{eq: deriv seconde miste e mu m2} follows. \\ \\
We now want to show \eqref{eq: deriv seconda sigma m2}. To do it, we need to compute $\partial^2_\sigma m_2(\mu, \sigma, x)$. Again, we still refer to \eqref{eq: partial2 mu sigma m2} considering all the derivatives with respect to $\sigma$, writing
$\partial^2_\sigma m_2(\mu, \sigma,x) = : \sum_{j = 1}^{12} \hat{I}_j^n$. From $\hat{I}_3^n$ to $\hat{I}_{12}^n$ all the terms are negligible. Indeed, acting like we did on both $\sum_{j = 3}^{12} I_j^n$ and $\sum_{j = 3}^{12} \tilde{I}_j^n$ we have 
$$|\sum_{j = 3}^{12} \hat{I}_j^n| \le h^{- \beta} R(\theta, h^{1 + \beta p},x)^\frac{1}{p} R(\theta, h,x) + h^{- \beta} R(\theta, h^{1 + \beta p},x)^\frac{1}{p} R(\theta, h^{2 - \epsilon},x) +  h^{- \beta} R(\theta, h^{1 + \beta p},x)^\frac{1}{p} R(\theta, h,x) +$$
$$+  h^{- 2\beta} R(\theta, h^{1 + 2\beta p},x)^\frac{1}{p} R(\theta, h,x) +  h^{- \beta} R(\theta, h^{1 + 2\beta p},x)^\frac{1}{p} R(\theta, h^\frac{1}{2},x) +  h^{-2 \beta} R(\theta, h^{1 + 2\beta p},x)^\frac{1}{p} R(\theta, h^\frac{1}{2},x)R(\theta, h^{\frac{3}{2}  - \epsilon},x) +$$
$$+ h^{- \beta} R(\theta, h,x)R(\theta, h^{\frac{3}{2}  - \epsilon},x) + h^{- 2\beta} R(\theta, h,x)R(\theta, h^{2  - \epsilon},x) +  h^{- \beta} R(\theta, h,x)R(\theta, h^{\frac{3}{2}  - \epsilon},x) +  h^{- 2\beta} R(\theta, h,x)R(\theta, h^{3  - 2\epsilon},x),$$
that is always negligible compared to $R(\theta, h^{\frac{3}{2}, x})$ taking $p $ next to $1$. We now consider $\hat{I}_1^n$. 
Its numerator is 
$$\mathbb{E}[(\partial_\sigma X_h^\theta)^2 \varphi ] + (\partial_\sigma m)^2\mathbb{E}[ \varphi ] -2(\partial_\sigma m)\mathbb{E}[(\partial_\sigma X_h^\theta) \varphi ] = : \hat{I}_{1,1}^n + \hat{I}_{1,2}^n + \hat{I}_{1,3}^n.$$
We start considering $\hat{I}_{1,1}^n$ , in which we replace the dynamic of $\partial_\sigma X_h^\theta: = \int_0^h \partial_\sigma a(\sigma, X_s^\theta) dW_s + G_\sigma$, where we have already seen that, for $p \ge 2$, $\mathbb{E}[|G_\sigma|^p] \le c (h^\frac{3p}{2} + h^p + h^{\frac{p}{2} + 1})(1 + |x|^c) = R(\theta, h^{\frac{p}{2} + 1}, x)$.
We therefore can say that, taking $p= 2$ and using \eqref{eq: BDG}, we have 
$$\hat{I}_{1,1}^n = \mathbb{E}[(\int_0^h \partial_\sigma a(\sigma, X_s^\theta) dW_s)^2 \varphi] + R(\theta, h^\frac{3}{2},x) + R(\theta, h^2,x).$$
The first term here above can be seen as
$$\mathbb{E}[(\int_0^h \partial_\sigma a(\sigma, X_s^\theta) dW_s)^2 (\varphi-1)] + \mathbb{E}[\int_0^h [(\partial_\sigma a)^2(\sigma, X_s^\theta) - (\partial_\sigma a)^2(\sigma, x)] ds] + h(\partial_\sigma a)^2(\sigma, x).$$
Acting like we did in \eqref{eq: a carre} - \eqref{eq: finale a carre}, we have 
$$|\mathbb{E}[(\int_0^h \partial_\sigma a(\sigma, X_s^\theta) dW_s)^2 (\varphi-1)] + \mathbb{E}[\int_0^h [(\partial_\sigma a(\sigma, X_s^\theta))^2 - (\partial_\sigma a(\sigma, x))^2] ds]| \le R(\theta, h^\frac{3}{2},x)$$
and so we get
$$\hat{I}_{1,1}^n = h (\partial_\sigma a(\sigma, x))^2 + R(\theta, h^\frac{3}{2},x).$$
On $\hat{I}_{1,2}^n$ we use the second point of Proposition \ref{prop: dl derivate prime} and Lemma \ref{lemma: esperance varphi} to get
$|\hat{I}_{1,2}^n| \le R(\theta, h^2, x).$
In the same way, from second point of Proposition \ref{prop: dl derivate prime}, the boundedness of $\varphi$ and \eqref{eq: derivata x rispetto sigma e mista} we get $|\hat{I}_{1,3}^n| \le R(\theta, h^\frac{3}{2}, x)$.
It follows $\hat{I}_{1}^n = h (\partial_\sigma a(\sigma, x))^2 + R(\theta, h^\frac{3}{2},x).$ \\
We now study $\hat{I}_{2}^n $, that the denominator is still $1+ R(\theta, h,x)$ as a consequence of Lemma \ref{lemma: esperance varphi} while the numerator can be seen as 
$$2\mathbb{E}[(X_h^\theta - m)\partial^2_\sigma X^\theta_h \varphi] - 2\partial^2_\sigma m \mathbb{E}[(X_h^\theta - m)\varphi] = : \hat{I}_{2,1}^n + \hat{I}_{2,2}^n.$$
In order to deal with $\hat{I}_{2,1}^n$ we consider the reformulation \eqref{eq: reformulation X - m}, so we have 
$2 \mathbb{E}[(\int_0^h a(X_s^\theta, \sigma) dW_s)(\partial^2_{\sigma} X_h^\theta)\varphi] + 2 \mathbb{E}[(B_h)(\partial^2_{\sigma} X_h^\theta)\varphi]$. \\
The second, as a consequence of Holder inequality, the definition of $B_h$, Lemma \ref{lemma: estim jumps} used to estimate the jumps and the first inequality in \eqref{eq: derivata x rispetto sigma e mista} is upper bounded by $R(\theta, h^{(1 + \beta p) \land p },x)^\frac{1}{p} R(\theta, h^\frac{1}{2}, x)$. Taking $p$ next to $1$ it follows its order is $h^{\frac{3}{2}}$. \\
In order to study $\mathbb{E}[(\int_0^h a(X_s^\theta, \sigma) dW_s)(\partial^2_{\sigma} X_h^\theta)\varphi]$ we introduce once again the notation used above \eqref{eq: estim Gsigma sigma}, for which $\partial^2_{\sigma} X_h^\theta = \int_0^h \partial^2_\sigma a(\sigma, X_s^\theta) dW_s + G_{\sigma \sigma}$. 
Hence, we have 
$$\mathbb{E}[(\int_0^h a(X_s^\theta, \sigma) dW_s)(\int_0^h \partial^2_\sigma a(\sigma, X_s^\theta) dW_s)\varphi] + \mathbb{E}[(\int_0^h a(X_s^\theta, \sigma) dW_s)(G_{\sigma \sigma})\varphi].$$
The second term here above is, using Cauchy-Schwartz inequality, \eqref{eq: BDG}, the boundedness of $\varphi$ and \eqref{eq: estim Gsigma sigma}, just $R(\theta, h^\frac{3}{2}, x)$ while the first one is 
$$\mathbb{E}[(\int_0^h a(X_s^\theta, \sigma) dW_s)(\int_0^h \partial^2_\sigma a(\sigma, X_s^\theta) dW_s)(\varphi-1)] + \mathbb{E}[\int_0^h a(X_s^\theta, \sigma) \partial^2_\sigma a(\sigma, X_s^\theta) ds] = R(\theta, h^2,x) +$$
$$ + \mathbb{E}[\int_0^h [a(X_s^\theta, \sigma)- a(x, \sigma)] \partial^2_\sigma a(\sigma, X_s^\theta) ds] + a(x, \sigma)\mathbb{E}[\int_0^h [\partial^2_\sigma a(\sigma, X_s^\theta)- \partial^2_\sigma a(\sigma,x)] ds)] + h a(x, \sigma)\partial^2_\sigma a(\sigma, x), $$
where we have also used Holder inequality, \eqref{eq: BDG} and \eqref{eq: stima prob incrementi}. Now we observe that
$$|\mathbb{E}[\int_0^h [a(X_s^\theta, \sigma)- a(x, \sigma)] \partial^2_\sigma a(\sigma, X_s^\theta) ds]| \le c\int_0^h \mathbb{E}[|X_s^\theta - x|^2]^\frac{1}{2}\mathbb{E}[|\partial^2_\sigma a(\sigma, X_s^\theta)|^2]^\frac{1}{2} ds \le \int_0^h R(\theta, h^\frac{1}{2},x) = R(\theta, h^\frac{3}{2}, x).  $$
Acting in the same way on $a(x, \sigma)\mathbb{E}[\int_0^h [\partial^2_\sigma a(\sigma, X_s^\theta)- \partial^2_\sigma a(\sigma,x)] ds)]$ it follows that 
$$\hat{I}_{2,1}^n = h a(x, \sigma)\partial^2_\sigma a(\sigma, x) + R(\theta, h^\frac{3}{2},x).$$
On $\hat{I}_{2,2}^n$ we use the second inequality in \eqref{eq: deriv seconde miste e sigma m} and the third point of Lemma \ref{lemma: conditional expected value} and so we obtain 
$$|\hat{I}_{2,2}^n| \le R(\theta, h^2, x),$$
the result follows.
\end{proof}

\subsubsection{Proof of Proposition \ref{prop: dervate terze}}
\begin{proof}
We define $\partial^2_\mu m_2(\mu, \sigma, x)$ as $\sum_{j = 1}^{12} \tilde{I}_j^n = : \sum_{j = 1}^{12} \frac{N_{\tilde{I}_j^n}}{D_{\tilde{I}_j^n}}$ and we recall that, as stated in Proposition \ref{prop: derivate seconde}, $|\tilde{I}_j^n| \le R(\theta, \Delta_{n,i}^2,x)$  . We can therefore see its derivative with respect to $\mu$ in the following way:
\begin{equation}
\partial^3_\mu m_2(\mu, \sigma, x) = \sum_{j = 1}^{12}( \frac{\partial_\mu N_{\tilde{I}_j^n}}{D_{\tilde{I}_j^n}} - \tilde{I}_j^n \frac{ \partial_\mu D_{\tilde{I}_j^n}}{D_{\tilde{I}_j^n}}).
\label{eq: proof deriv terze m2}
\end{equation}
We start considering the second term here above: we remind that $D_{\tilde{I}_j^n}$ can be $\mathbb{E}[\varphi]$ or $(\mathbb{E}[\varphi])^2$. In both cases its derivative is, using Proposition \ref{prop: truc moche h}, a $R(\theta, h^{2 - \beta - \epsilon}, x)$ function; which makes the second term of \eqref{eq: proof deriv terze m2} upper bounded by $\sum_{j = 1}^{12}|\tilde{I}_j^n| R(\theta, h^{2 - \beta - \epsilon}, x) \le R(\theta, h^{4 - \beta - \epsilon},x)$, as a consequence of the second inequality of \eqref{eq: deriv seconde miste e mu m2}. \\
Concerning the first term of \eqref{eq: proof deriv terze m2}, we know that $D_{\tilde{I}_j^n}$ is a $R(\theta,1, x)$ function because of Lemma \ref{lemma: esperance varphi}, while the magnitude of the derivative of the numerator does not get bigger since the order of $X_h^\theta$ and $m$ remains the same by deriving them once more (as gathered in the second inequality of \eqref{eq: derivate terze x rispetto a sigma e mu} and in Remark 10 of \cite{Chapitre 1}) and the fact that, by deriving $\varphi$, there comes out $h^{- \beta} \varphi' \partial_\mu X_h^\theta$. From the first inequality in \eqref{eq: derivate x rispetto mu} such terms are negligible compared to the ones we have already studied. \\
The first term of \eqref{eq: proof deriv terze m2} is therefore still a $R(\theta, h^2, x)$ function. \\
The same reasoning applies to the study of the other seven third derivatives.
\end{proof}

\subsection{Development of $m_2 (\mu, \sigma,x)$}\label{Ss:dev_m_2}
In order to prove our main results we need a development of $m_2$, that we stated in Ad and we find, under the choice of a particular oscillating function $\varphi$, in Propositions \ref{prop: dl m2 intensita finita}, \ref{prop: dl m2 particolare per appl} and \ref{prop: dl alla capitolo 1 m2}. \\
The main tools is the iteration of the Dynkin's formula that provides us the following expansion for every function $f$: $\mathbb{R} \rightarrow \mathbb{R}$ such that $f$ is in $C^{2(k+1)}$:
\begin{equation}
\mathbb{E}[f(X^\theta_{t_{i+1}})| X^\theta_{t_i} = x] = \sum_{j=0}^k \frac{\Delta_{n,i}^j}{j!}A^jf(x) + \int_{t_i}^{t_{i+1}} \int_{t_i}^{u_1} ... \int_{t_i}^{u_k} \mathbb{E}[A^{k+1}f(X^\theta_{u_{k+1}})| X^\theta_{t_i} = x]\, du_{k+1} ... du_2\, du_1 
\label{eq: Dynkin formula beginning}
\end{equation} 
where $A$ denotes the generator of the diffusion, setting $A^0 = Id$. $A$ is the sum of the continuous and discrete part: $A: = {\modch \bar{A}}_c + A_d$, with
$${\modch \bar{A}}_cf(x)= \frac{1}{2}a^2(x, \sigma)f''(x) + \bar{b}(x, \mu)f'(x);$$
$\bar{b}(x, \mu) = b(x, \mu) - \int_{\mathbb{R}} z\gamma(x)  F(z) dz$
and
$$A_df(x)= \int_{\mathbb{R}} (f(x + \gamma(x)z) - f(x))\, F(z) dz. $$
\subsubsection{Proof of Proposition \ref{prop: dl m2 intensita finita}.}
\begin{proof}
We first of all observe that, by the definition of $m_2$ and $m$ it is
$$m_2(\mu, \sigma, x) = \frac{\mathbb{E}[(X_{t_{i + 1}}^\theta)^2 \varphi_{\Delta_{n,i}^\beta}(X_{t_{i + 1}}^\theta - X^\theta_{t_i}) | X^\theta_{t_i}= x ]}{\mathbb{E}[\varphi_{\Delta_{n,i}^\beta}(X_{t_{i + 1}}^\theta - X^\theta_{t_i}) | X^\theta_{t_i}= x ]} - (m(\mu, \sigma,x))^2.$$
It has already been showed in Proposition 2 in \cite{Chapitre 1} that
$$m(\mu, \sigma, x) = x + \sum_{k = 1}^{\lfloor \beta(M + 2) \rfloor} A_{K_1}^{(k)}(x) \frac{\Delta_{n,i}^k}{k!} + R(\theta, \Delta_{n,i}^{\beta(M + 2)}, x),$$
where $A_{K_1}^{(k)}(x) = \bar{A}_c (h_1)(x)$, with $h_1(y) = (y - x)$. \\
Acting like we did in Proposition 2 of \cite{Chapitre 1}, we want to find a development for 
$$\frac{\mathbb{E}[(X_{t_{i + 1}}^\theta)^2 \varphi_{\Delta_{n,i}^\beta}(X_{t_{i + 1}}^\theta - X^\theta_{t_i}) | X^\theta_{t_i}= x ]}{\mathbb{E}[\varphi_{\Delta_{n,i}^\beta}(X_{t_{i + 1}}^\theta - X^\theta_{t_i}) | X^\theta_{t_i}= x ]} =: \frac{n_{\Delta_{n,i}}(x)}{d_{\Delta_{n,i}}(x)}. $$
First we focus on the expression of $n$. We define the set of rest functions $\mathcal{R}^p$ in the following way:
$$\mathcal{R}^p := \left \{ r(x, y, \Delta_{n,i}^p, \theta) \mbox{ s. t. } \forall l \ge 0, \forall l' \in \left \{ 0,1 \right \} \, , \exists c ,  |\frac{\partial^l \, \partial^{l'}}{\partial y^l \, \partial \vartheta^{l'}} r(x, y, \Delta_{n,i}^p, \theta) |\le c (1 + |x|^{c} + |y|^{c}) \Delta_{n,i}^p, \mbox{ for } \vartheta= \mu, \, \vartheta = \sigma  \right \}.$$
It is worth noting that, if $r \in \mathcal{R}^p$, then both ${\modch \bar{A}}_c r$ and $A_d r$ are in $\mathcal{R}^p$, where the integral-differential operators ${\modch \bar{A}}_c$ and $A_d$ are applied with respect to the second variable $y$ (see details in proof of Proposition 2 in \cite{Chapitre 1}). \\
We also introduce the set $\tilde{\mathcal{F}}^p$:
$$\tilde{\mathcal{F}}^p : = \left \{ \tilde{g}(y, \theta) \, \, s.t. \, \tilde{g}(y, \theta) = \sum_{k = 0}^p \varphi^{(k)}((y - x) \Delta_{n,i}^{- \beta}) \Delta_{n,i}^{- k \beta} (\sum_{j = 0}^k h_{k,j}(x,y, \theta) \Delta_{n,i}^{\beta j})  \right \}$$
where, $\forall k, j$, $\forall l \ge 0$, $\forall l' \in \left \{ 0,1 \right \}$, $\exists c$ such that, for $\vartheta = \mu$ and $\vartheta= \sigma$, $\sup_{\theta \in \Theta}| \frac{\partial^l \, \partial^{l'}}{\partial y^l \, \partial \vartheta^{l'}} h_{k,j}(x,y, \theta)| \le c (1 + |x|^{c} + |y|^{c})$, with $c$ that depends on $k,j,l$ and $l'$. By the same proof as in Proposition 2 of \cite{Chapitre 1} it is possible to prove that, if $\tilde{g} \in \tilde{\mathcal{F}}^p$ then ${\modch \bar{A}}_c\tilde{g} \in \tilde{\mathcal{F}}^{p + 2}$ and, for all $\tilde{g} \in \tilde{\mathcal{F}}^p$, 
\begin{equation}
A_d \tilde{g}(y, \theta) = -\lambda \tilde{g} (y, \theta) + r(x,y, \Delta_{n,i}^{\beta(M + 2 - p)}, \theta).
\label{eq: adg in tildeF}
\end{equation}
It has also been proved that, as a consequence of the equation here above, the following relation holds:
$$A_{i_N} \circ ... \circ A_{i_1} \tilde{g}(y) = {\modch \bar{A}}_c^{l(i_1, ... , i_N)} \tilde{g} (y) (- \lambda)^{N - l(i_1, ... , i_N) } + r(x, y, \Delta_{n,i}^{\beta(M +2) - 2 \beta l(i_1, ... , i_N)}),$$
{\modch for $\tilde{g} \in \tilde{\mathcal{F}}_0$ and} with $l(i_1, ... , i_N)$ the number of $c$ in $\left \{ i_1, ... , i_N \right \}$, the iterations considered. \\ 
We observe that $\bar{g}(y): = y^2 \varphi ((y - x) \Delta_{n,i}^{ -\beta})$ belongs to $\tilde{\mathcal{F}}_0$. To find its development through Dynkin's formula  we can act exactly like we did in Proposition 2 of \cite{Chapitre 1}.\\ 
We get that the principal term in the development of the numerator is 
\begin{equation}
\sum_{l = 0}^N \frac{\Delta_{n,i}^l}{l!} A^{(l)}_{K_2} (x) \sum_{k' = 0 }^{N - l}\frac{ \Delta_{n,i}^{k'} (- \lambda)^{k'}}{k'!} + r(x,x, \Delta_{n,i}^{\beta(M + 2 )}, \theta),
\label{eq: acg risolto}
\end{equation}
with $A_{K_2}(x) = \bar{A}_c(h_2)(x)$ for $h_2(y) = y^2$ and $r(x,y, \Delta_{n,i}^{\beta(M + 2 )}, \theta) \in \mathcal{R}^{\beta (M + 2)}$. The integral rest term in the Dynkin formula is
\begin{equation}
|\int_{t_i}^{t_{i+1}} \int_{t_i}^{u_1} ... \int_{t_i}^{u_N}\mathbb{E}[A^{N + 1} h_2 (X_{u_{N + 1}}) | X_{t _i} = x ] du_{N + 1}... du_2 du_1| \le R(\theta, \Delta_{n,i}^{(1 - 2 \beta ) (N + 1)}, x). 
\label{eq: rest in g}
\end{equation}
Using \eqref{eq: acg risolto} and \eqref{eq: rest in g} we have the following development:
\begin{equation}
n_{\Delta_{n,i}}(x) = \sum_{l = 0}^N \frac{\Delta_{n,i}^l}{l!} A^{(l)}_{K_2}(x) \sum_{k' = 0 }^{N - l}\frac{ \Delta_{n,i}^{k'} (- \lambda)^{k'}}{k'!} + R(\theta, \Delta_{n,i}^{\beta (M + 2)}, x) + R(\theta, \Delta_{n,i}^{(1 - 2 \beta ) (N + 1)}, x).
\label{eq: dl numeratore intenita finita}
\end{equation}
If $(N + 1)(1 - 2 \beta) \ge \beta (M + 2)$, it entails
\begin{equation}
n_{\Delta_{n,i}}(x) = \sum_{l = 0}^{\lfloor \beta(M + 2) \rfloor} \frac{\Delta_{n,i}^l}{l!} A^{(l)}_{K_2}(x) \sum_{k' = 0 }^{\lfloor \beta(M + 2) \rfloor} \frac{ \Delta_{n,i}^{k'} (- \lambda)^{k'}}{k'!} + R(\theta, \Delta_{n,i}^{\beta (M + 2)}, x).
\label{eq: fine num}
\end{equation}
To get the control \eqref{eq: tesi derivate int finita} on the derivatives of \eqref{eq: fine num}, we show that one can differentiate with respect to the parameters the remainder term. \\
We remark that, as $r(x,y, \Delta_{n,i}^{\beta(M + 2)}, \theta)$ is in $\mathcal{R}^{\beta(M + 2)}$ and because of the definition of such a set, we clearly have that for $\vartheta = \mu$ and $\vartheta= \sigma$, the function $\partial_\vartheta r(x,y, \Delta_{n,i}^{\beta(M + 2)}, \theta)$ is still a $R(\theta, \Delta_{n,i}^{\beta (M + 2)}, x)$ function. \\
Concerning the derivatives of the integral rest, we have that, since $A^{N + 1} \bar{g}  \in \tilde{\mathcal{F}}^{2(N + 1)}$,
$$A^{N + 1} \bar{g} (y) = \sum_{k = 0}^{2N} \varphi^{(k)} ((y -x) \Delta_{n,i}^{- \beta}) \Delta_{n,i}^{- \beta k} (\sum_{j = 0}^k \tilde{h}_{k,j} (x, y, \theta)\Delta_{n,i}^{\beta j});$$
where $\tilde{h}_{k,j} $ are polynomial functions of $a$, $b$ and their derivatives. Then, for $\vartheta = \mu$ and $\vartheta = \sigma$ it is 
$$\partial_\vartheta \mathbb{E}[A^{N + 1} \bar{g} (X_{u_{N + 1}}) | X_{t _i} = x ] = \mathbb{E}[ \partial_\vartheta A^{N + 1} \bar{g} (X_{u_{N + 1}}) + \partial_X A^{N + 1} \bar{g} (X_{u_{N + 1}}) \partial_\vartheta X_{u_{N + 1}}| X_{t _i} = x ].$$
We use an upper bound on the conditional moments of the derivative of $X$ with respect to both the parameters (see Lemma \ref{lemma: derivate x} in the Appendix) and we act as for \eqref{eq: rest in g} to get a control of the integral rest.
From the computation of $\partial_X A^{N + 1} \bar{g} (X_{u_{N + 1}}) $ it shows up an extra $\Delta_{n,i}^{- \beta}$ but we can always choose $N$ such that the derivatives of the integral rest remain negligible compared to $R(\theta, \Delta_{n,i}^{\beta(M + 2)},x)$. \\
We now consider the denominator $d_{\Delta_{n,i}}(x)$: since it is exactly like it was in the development of $m$, we have already proved in Proposition 2 of \cite{Chapitre 1} that its expansion is
\begin{equation}
d_{\Delta_{n,i}}(x) = \sum_{k = 0}^{\lfloor \beta(M + 2) \rfloor} \frac{\Delta_{n,i}^k}{k!}(- \lambda)^k + r(x, x, \Delta_{n,i}^{\beta(M+2)}), 
\label{eq: d int finita}
\end{equation}
where $\lambda$ is the intensity of jumps defined in the fourth point of Assumption 4 and $r$ is a rest function which belongs to $\mathcal{R}^{\beta(M + 2)}$. \\
Acting exactly as we have done on the numerator we get that the derivatives of $r(x, x, \Delta_{n,i}^{\beta(M+2)})$ are still $R(\theta, \Delta_{n,i}^{\beta(M + 2)},x)$ functions and that the derivatives of the integral rest remain negligible compared to $R(\theta, \Delta_{n,i}^{\beta(M + 2)},x)$. \\
From the expansion of $d_{\Delta_{n,i}}(x)$ we can say that there exists $k_0 > 0$ such that for $|x| \le \Delta_{n,i}^{-k_0}$, $d_{\Delta_{n,i}}(x) \ge \frac{1}{2}$ $\forall n, i \le n$: we are avoiding the possibility that the denominator is in the neighborhood of $0$. Hence, for $|x| \le \Delta_{n,i}^{k_0} $, the development of $m_2$ is 
\begin{equation}
\frac{n_{\Delta_{n,i}}(x)}{d_{\Delta_{n,i}}(x)} - (m(\mu, \sigma,x))^2 = \sum_{l = 0}^{\lfloor \beta(M + 2) \rfloor} \frac{\Delta_{n,i}^l}{l!} A^{(l)}_{K_2} (x) - (x + \sum_{k = 1}^{\lfloor \beta(M + 2) \rfloor} A_{K_1}^{(k)}(x) \frac{\Delta_{n,i}^k}{k!})^2 + r(x,x, \Delta_{n,i}^{\beta (M + 2)}, \theta).
\label{eq: dl tout ordre m}
\end{equation}
The expansion \eqref{eq: dl m2 intensita finita} follows after remarking that $A_{K_2}^{(0)}(x) = 0 $. \\
Moreover, since the rest term in the development of $m$ comes from the same place as the rest in the fraction just studied, also its derivatives with respect to $\mu$ and $\sigma$ remain $R(\theta, \Delta_{n,i}^{\beta (M + 2)}, x)$ functions. The result follows.
\end{proof}

We now prove Proposition \ref{prop: dl alla capitolo 1 m2}, Proposition \ref{prop: dl m2 particolare per appl} is a consequence of it.

\subsubsection{Proof of Proposition \ref{prop: dl alla capitolo 1 m2}}
We recall that 
$$m_2 (\mu, \sigma, x) = \frac{\mathbb{E}[(X_{t_{i+1}}^\theta - m (\mu, \sigma,X_{t_i}))^2 \varphi_{\Delta_{n,i}^\beta}(X_{t_{i+1}}^\theta - X_{t_i}^\theta)|X_{t_i}^\theta = x]}{\mathbb{E}
	[\varphi_{\Delta_{n,i}^\beta}(X_{t_{i+1}}^\theta - X_{t_i}^\theta)|X_{t_i}^\theta = x]}
  := 
\frac{\tilde{n}_{\Delta_{n,i}}(x)}{\tilde{d}_{\Delta_{n,i}}(x)}.$$
The expansion \eqref{eq: dl alla capitolo 1 m2} in Proposition \ref{prop: dl alla capitolo 1 m2} is a consequence of the following two expansions for $\tilde{n}_{\Delta_{n,i}}(x)$ and $\tilde{d}_{\Delta_{n,i}}(x)$,
	\begin{multline}\label{E:dev_n_tilde}
\tilde{n}_{\Delta_{n,i}}(x)	=  \Delta_{n, i} a^2(x,\sigma) + \frac{\Delta_{n,i}^{1 + 3 \beta}}{\gamma(x)} \int_\mathbb{R} v^2 \varphi(v) F(\frac{v \Delta_{n,i}^\beta}{\gamma(x)}) dv +  \Delta_{n,i}^2[ 3\bar{b}^2(x, \mu) +  h_2 (x, \theta) -\lambda a^2(x,\sigma)] \\
 + \frac{\Delta_{n,i}^{2 + \beta}a^2(x, \sigma)}{2 \gamma(x)}\int_{u : |u| \le 2} [2\varphi(u)+u \varphi'(u)+
 u^2 \varphi''(u)] F(\frac{u\Delta_{n,i}^\beta}{\gamma(x)}) du + R (\theta, \Delta_{n,i}^{(3 - 2 \beta) \land (2 + \beta)},x),
	\end{multline}
\begin{equation}\label{E:dev_d_tilde}
\tilde{d}_{\Delta_{n,i}}(x)= 1 - \Delta_{n, i} \lambda + \frac{\Delta_{n, i}^{1+\beta}}{\gamma(x)}
\int_{u : |u| \le 2} \varphi(u) F(\frac{u\Delta_{n,i}^\beta}{\gamma(x)}) du + R(\theta,\Delta_{n, i}^{2-2\beta},x).
\end{equation}
The fact that the order of the remainder term  in \eqref{eq: dl alla capitolo 1 m2} is unchanged by derivation with respect to the parameters  $\mu$ and $\sigma$, will be a consequence of a similar property for the remainder terms appearing in 
\eqref{E:dev_n_tilde}--\eqref{E:dev_d_tilde}.

\begin{proof}[ Proof of \eqref{E:dev_n_tilde}]
In order to develop the numerator  $\tilde{n}_{\Delta_{n,i}}(x)$, we define the function $h_{i,n}(y) := (y - m(\mu, \sigma,x))^2 \varphi_{\Delta_{n,i}^\beta}(y - x)$ and we use Dynkin formula \eqref{eq: Dynkin formula beginning} on it, up to third order. It becomes
\begin{equation}
\mathbb{E}[h_{i,n}(X^\theta_{t_{i+1}})| X^\theta_{t_i} = x] = h_{i,n}(x) + \Delta_{n,i} A h_{i,n}(x) +\frac{1}{2} \Delta_{n,i}^2 A^2h_{i,n}(x) + \frac{1}{6} \int_{t_i}^{t_{i + 1}} \int_{t_i}^{u_1} \int_{t_i}^{u_2} \mathbb{E}[A^3 h_{i,n}(X^\theta_{u_3})| X^\theta_{t_i} = x] du_3 du_2 du_1. 
\label{eq: dynkin per prop applicazioni}
\end{equation}
 We now successively study the contribution of each term in the Dynkin's development.

By the definition of $h_{i,n}$, we have
$h_{i,n}(x) = ( x - m(\mu, \sigma, x))^2$. 

We recall that $A h_{i,n}(x) = {\modch \bar{A}}_c h_{i,n}(x) + A_d h_{i,n}(x)  $, where ${\modch \bar{A}}_c h_{i,n}(x) = \frac{1}{2} a^2(x, \sigma)h''_{i,n}(x) + \bar{b}(x, \mu) h'_{i,n}(x) $ and 
$$A_d h_{i,n}(x) = \int_\mathbb{R} [h_{i,n}(x + z \gamma(x)) - h_{i,n}(x)] F(z) dz = \int_\mathbb{R} (x + z \gamma(x) - m(\mu, \sigma,x))^2 \varphi_{\Delta_{n,i}^\beta}(z \gamma(x)) F(z) dz + $$
\begin{equation}
- (x  - m(\mu, \sigma,x))^2 \int_\mathbb{R} \varphi_{\Delta_{n,i}^\beta}(0) F(z) dz = (x  - m(\mu, \sigma,x))^2 \int_\mathbb{R} [\varphi_{\Delta_{n,i}^\beta}(z \gamma(x)) - \varphi_{\Delta_{n,i}^\beta}(0)] F(z) dz  + 
\label{eq: bar A d}
\end{equation}
$$+ \gamma^2(x) \int_\mathbb{R} z^2 \varphi_{\Delta_{n,i}^\beta}(z \gamma(x)) F(z) dz + 2 (x  - m(\mu, \sigma,x)) \gamma(x) \int_\mathbb{R} z \varphi_{\Delta_{n,i}^\beta}(z \gamma(x)) F(z) dz.$$
Using now the boundedness of $\varphi$, the fact that $\int_\mathbb{R} F(z) dz = \lambda $ and the development of $m$ \eqref{eq: dl m}, we have that the first term here above is a $R(\theta, \Delta_{n,i}^2, x)$ function and, from Proposition \ref{prop: dl derivate prime}, also its derivatives with respect to both the parameters are $R(\theta, \Delta_{n,i}^2, x)$ functions. \\
On the second term of \eqref{eq: bar A d} we apply the change of variable $v := \frac{\gamma(x) z}{\Delta_{n,i}^\beta}$, getting $\frac{\Delta_{n,i}^{3 \beta}}{\gamma(x)} \int_\mathbb{R} v^2  \varphi(v) F(\frac{v \Delta_{n,i}^\beta}{\gamma (x)}) dv$. On the third we use the definition of $\varphi$ for which $\varphi(\zeta) = 0$ for $\zeta$ such that $| \zeta| \ge 2$ and again the development of $m$ to get it is upper bounded by
\begin{equation}
|c (x - m (\mu, \sigma,x)) \gamma(x) \int_{z : |z| \le \frac{2 \Delta_{n,i}^\beta}{\gamma (x)}} z F(z) dz |\le R(\theta, \Delta_{n,i}^{1 + \beta},x).
\label{eq: stima supporto varphi z}
\end{equation}
Again, we can calculate the derivatives with respect to $\vartheta$ for both $\vartheta = \mu$ and $\vartheta = \sigma$, getting a term that is still a $R(\theta, \Delta_{n,i}^{1 + \beta},x)$ function by Proposition \ref{prop: dl derivate prime}. \\
It follows
$$A_d h_{i,n}(x) = R(\theta, \Delta_{n,i}^2, x) + \Delta_{n,i}^{2 \beta} \int_\mathbb{R} v^2  \varphi(v) F(\frac{v \Delta_{n,i}^\beta}{\gamma (x)}) dv + R(\theta, \Delta_{n,i}^{1 + \beta},x).$$
In order to compute ${\modch \bar{A}}_c h_{i,n}(x)$ we need the derivatives of $h_{i,n}$, they are
$$h'_{i,n}(y) = 2(y - m(\mu, \sigma,x))\varphi_{\Delta_{n,i}^\beta}(y - x) + (y - m(\mu, \sigma,x))^2 \Delta_{n,i}^{- \beta}\varphi'_{\Delta_{n,i}^\beta}(y - x), $$
\begin{equation}
h''_{i,n}(y) = 2 \varphi_{\Delta_{n,i}^\beta}(y - x) + 4(y - m(\mu, \sigma,x))\Delta_{n,i}^{- \beta}\varphi'_{\Delta_{n,i}^\beta}(y - x) + (y - m(\mu, \sigma,x))^2 \Delta_{n,i}^{- 2 \beta}\varphi''_{\Delta_{n,i}^\beta}(y - x), 
\label{eq: derivata seconda hin}
\end{equation}
$$h^{(3)}_{i,n}(y) = 6 \Delta_{n,i}^{- \beta}\varphi'_{\Delta_{n,i}^\beta}(y - x) + 6(y - m(\mu, \sigma,x))\Delta_{n,i}^{-2 \beta}\varphi''_{\Delta_{n,i}^\beta}(y - x) + (y - m(\mu, \sigma,x))^2 \Delta_{n,i}^{- 3 \beta}\varphi^{(3)}_{\Delta_{n,i}^\beta}(y - x), $$
$$h^{(4)}_{i,n}(y) = 12 \Delta_{n,i}^{-2 \beta}\varphi''_{\Delta_{n,i}^\beta}(y - x) + 8(y - m(\mu, \sigma,x))\Delta_{n,i}^{-3 \beta}\varphi^{(3)}_{\Delta_{n,i}^\beta}(y - x) + (y - m(\mu, \sigma,x))^2 \Delta_{n,i}^{- 4 \beta}\varphi^{(4)}_{\Delta_{n,i}^\beta}(y - x); $$
we have calculated the derivatives up to the fourth because they will be useful in the sequel. \\
Replacing the first two derivatives, calculated in $x$, it follows
\begin{equation}
{\modch \bar{A}}_c h_{i,n}(x) = a^2(x, \sigma) + 2(x - m(\mu, \sigma,x)) \bar{b}(x, \mu).
\label{eq: Achin(x)}
\end{equation}
Therefore we have 
\begin{equation}
\mathbb{E}_i[h_{i,n}(X^\theta_{t_{i + 1}})] = (x - m(\mu, \sigma,x))^2 + \Delta_{n,i}a^2(x, \sigma) +  \frac{\Delta_{n,i}^{1 + 3\beta}}{\gamma(x)} \int_\mathbb{R} v^2 \varphi(v) F(\frac{v \Delta_{n,i}^\beta}{\gamma(x)}) dv + 2 \Delta_{n,i}(x - m(\mu, \sigma,x)) \bar{b}(x, \mu) +
\label{eq: hin primo ordine riassunto}
\end{equation}
$$+ R(\theta, \Delta_{n,i}^{2 + \beta}, x) +\frac{1}{2} \Delta_{n,i}^2 A^2h_{i,n}(x) + \frac{1}{6} \int_{t_i}^{t_{i + 1}} \int_{t_i}^{u_1} \int_{t_i}^{u_2} \mathbb{E}[A^3 h_{i,n}(X^\theta_{u_3})| X^\theta_{t_i} = x] du_3 du_2 du_1.$$

We now study $A^2h_{i,n}(x)$. We recall it is 
$$A^2h_{i,n}(x) = {\modch \bar{A}}^2_c h_{i,n}(x) + {\modch \bar{A}}_c A_d h_{i,n}(x) + A_d {\modch \bar{A}}_c h_{i,n}(x) + A^2_d h_{i,n}(x).$$
We observe that we can write ${\modch \bar{A}}_c^2 h_{i,n}(x)$  as
 $\sum_{j=1}^4 h_j(x, \theta) h^{(j)}_{i,n}(x),$
where, for each $j \in \left \{ 1, 2, 3, 4 \right \}$, $h_j$ is a function of $a$, $\bar{b}$ and their derivatives up to second order: $h_1= \frac{1}{2} a^2\bar{b}^{''} + \bar{b}\bar{b}'$, $h_2 = \frac{1}{2} a^2(a')^2 + \frac{1}{2} a^3a'' + a^2\bar{b}' + aa'\bar{b} + \bar{b}^2 $, $h_3= a^3a' + a^2\bar{b}$ and $h_4 = \frac{1}{4} a^4$.
Moreover, recalling that by the definition of $\varphi$ we have $\varphi_{\Delta_{n,i}^\beta}(0) = 1$ and $\varphi^{(k)}_{\Delta_{n,i}^\beta}(0) = 0$ $\forall k \ge 1$, it follows $h_{i,n}' (x) = 2 (x - m (\mu, \sigma,x))$, $h_{i,n}''(x) = 2$ and $h_{i,n}^{(l)} = 0$ $\forall l \ge 3$. We obtain
\begin{equation}
{\modch \bar{A}}^2_c h_{i,n}(x) = 2 h_1(x, \theta) (x - m (\mu, \sigma, x)) + 2 h_2 (x, \theta) = R(\theta, \Delta_{n,i},x) + 2 h_2 (x, \theta),
\label{eq: A2c}
\end{equation}
where the last equality is a consequence of the development \eqref{eq: dl m} of $m$. \\
Concerning ${\modch \bar{A}}_c A_d h_{i,n}(x)$, it is $\frac{1}{2} a^2(x, \sigma)(A_d h_{i,n}(x))'' + \bar{b}(x, \mu)(A_d h_{i,n}(x))'$. We start considering 
$$(A_d h_{i,n}(x))' = \int_\mathbb{R} [h_{i,n}' (x + z \gamma(x)) (1 + z \gamma'(x)) - h_{i,n}'(x)] F(z) dz.$$
We now observe that, $\forall k \ge 1$ and $\forall y \in \mathbb{R}$, $|y - m(\mu, \sigma, x)|^k |\varphi_{\Delta_{n,i}^\beta}(y - x )| \le (|y - x|^k + |x - m(\mu, \sigma, x)|^k)|\varphi_{\Delta_{n,i}^\beta}(y - x )|$.
We have that $|y - x|^k |\varphi_{\Delta_{n,i}^\beta}(y - x )| \le c \Delta_{n,i}^{k \beta}|\varphi_{\Delta_{n,i}^\beta}(y - x )|$ as a consequence of the definition of $\varphi$ while, using the development \eqref{eq: dl m} of $m$ , it follows $|x - m(\mu, \sigma, x)|^k|\varphi_{\Delta_{n,i}^\beta}(y - x )| \le R(\theta, \Delta_{n,i}^k,x)$. Putting the pieces together it is 
\begin{equation}
|y - m(\mu, \sigma, x)|^k |\varphi_{\Delta_{n,i}^\beta}(y - x )| \le R(\theta, \Delta_{n,i}^{\beta k},x) +  R(\theta, \Delta_{n,i}^k,x) = R(\theta, \Delta_{n,i}^{\beta k},x);
\label{eq: stima diff times varphi}
\end{equation}
it is easy to remark that the same reasoning holds with the derivatives of $\varphi$ instead of $\varphi$. We underline that from the estimation \eqref{eq: stima diff times varphi} here above and the computation of the derivatives of $h_{i,n}$ we get that, $\forall l \ge 0$, each term of the $l$-derivative of $h_{i,n}$ has the same size. Indeed, each time we derive $\varphi$ an extra $\Delta_{n,i}^{- \beta}$ turns out but we can recover it on the basis of \eqref{eq: stima diff times varphi}. In particular, $\forall l \ge 0$ it follows
\begin{equation}
\left \| h_{i,n}^{(l)} \right \|_\infty \le R( \theta, \Delta_{n,i}^{\beta (2 - l)},x).
\label{eq: estim norma inf deriv hin}
\end{equation}
Therefore we obtain 
\begin{equation}
|(A_d h_{i,n}(x))'| \le c\Delta_{n,i}^\beta (2 \lambda + c \left \| \gamma' \right \|_\infty ).
\label{eq: estim Adhin'}
\end{equation}
Concerning the second derivative of $A_d h_{i,n}(x)$, it is 
$$(A_d h_{i,n}(x))'' = \int_\mathbb{R}[h''_{i,n}(x + z \gamma(x))(1 + z \gamma'(x))^2 + h'_{i,n}(x + z \gamma(x))z \gamma''(x) - h''_{i,n}(x)] F(z) dz =$$
$$= \int_\mathbb{R}[h''_{i,n}(x + z \gamma(x))- h''_{i,n}(x) + 2 z \gamma'(x) h''_{i,n}(x + z \gamma(x)) + z^2 (\gamma'(x))^2 h''_{i,n}(x + z \gamma(x)) + h'_{i,n}(x + z \gamma(x))z \gamma''(x)] F(z) dz = \sum_{j = 1}^5 I_j.$$
On $I_3$ and $I_4$ we act like we did on the integral in \eqref{eq: stima supporto varphi z}, observing that in the computation of $h_{i,n}''$ we always have $\varphi$ or its derivatives which make the integrals different from $0$ only for $|z| \le c \Delta_{n,i}^\beta$. On $I_5$ we use \eqref{eq: estim norma inf deriv hin} for $l = 1$, getting
$|I_5| \le R(\theta, \Delta_{n,i}^\beta, x)$. We observe that, by the computation of $h_{i,n}''(x)$ we obtain $I_2 = - 2 \lambda$. \\
To conclude the study of ${\modch \bar{A}}_c A_d h_{i,n}(x)$ we have to deal with  $I_1$:
\begin{equation}
\int_\mathbb{R} h_{i,n}''(x + z \gamma(x)) F(z) dz = \int_\mathbb{R} 2 \varphi_{\Delta_{n,i}^\beta} (z \gamma(x)) F (z) dz + 4 \Delta_{n,i}^{- \beta }(x - m (\mu, \sigma, x)) \int_\mathbb{R} \varphi'_{\Delta_{n,i}^\beta} (z \gamma(x)) F (z) dz + 
\label{eq: int hin''}
\end{equation}
$$+ {\modch 4} \Delta_{n,i}^{- \beta } \int_\mathbb{R} z \gamma(x) \varphi'_{\Delta_{n,i}^\beta} (z \gamma(x)) F (z) dz + \Delta_{n,i}^{-2 \beta }\int_\mathbb{R}((x - m (\mu, \sigma, x)) + z \gamma(x))^2  \varphi''_{\Delta_{n,i}^\beta} (z \gamma(x)) F (z) dz.$$
Applying the change of variable $u := \frac{z \gamma(x)}{\Delta_{n,i}^\beta}$ and recalling that from the development of $m$ it follows $|x - m (\mu, \sigma, x)|^k \le R(\theta, \Delta_{n,i}^k, x)$ for each $k\ge 1$, we obtain
$$\int_\mathbb{R} h_{i,n}''(x + z \gamma(x)) F(z) dz = \frac{\Delta_{n,i}^\beta}{\gamma(x)} \int_\mathbb{R} (2 \varphi(u) + u \varphi'(u) + u^2 \varphi''(u)) F(\frac{u \Delta_{n,i}^\beta}{\gamma(x)}) du + R(\theta, \Delta_{n,i}^{(1 - \beta) \land (2 - 2 \beta) \land (1 - 2 \beta)},x).$$
It is worth noting that the magnitude of the first term in the left hand side of the equation here above depends on the density $F$. \\
Remarking also that $\varphi(u) = 0$ for each $u$ such that $|u| \ge 2$, it follows
\begin{equation}
(A_d h_{i,n}(x))'' = \frac{\Delta_{n,i}^\beta}{\gamma(x)} \int_{u : |u| \le 2} (2 \varphi(u) + u \varphi'(u) + u^2 \varphi''(u)) F(\frac{u \Delta_{n,i}^\beta}{\gamma(x)}) du - 2 \lambda + R(\theta, \Delta_{n,i}^{(1 - 2 \beta) \land \beta}, x).
\label{eq: stima adhin''}
\end{equation}
From the definition of ${\modch \bar{A}}_c A_d h_{i,n}(x)$, \eqref{eq: estim Adhin'} and \eqref{eq: stima adhin''} we get
\begin{equation}
{\modch \bar{A}}_c A_d h_{i,n}(x) = \frac{\Delta_{n,i}^\beta a^2(x, \sigma)}{2 \gamma(x)} \int_\mathbb{R} (2 \varphi(u) + u \varphi'(u) + u^2 \varphi''(u)) F(\frac{u \Delta_{n,i}^\beta}{\gamma(x)}) du - a^2(x, \sigma) \lambda + R(\theta, \Delta_{n,i}^{(1 - 2 \beta) \land \beta}, x).
\label{eq: AcAd}
\end{equation}
Now we deal with
\begin{equation}
A_d {\modch \bar{A}}_c h_{i,n}(x) = \int_\mathbb{R} [{\modch \bar{A}}_c h_{i,n}(x + z \gamma (x)) - {\modch \bar{A}}_c h_{i,n}(x)] F(z) dz.
\label{eq: AdAc start}
\end{equation}
From \eqref{eq: Achin(x)} it follows
\begin{equation}
\int_\mathbb{R} {\modch \bar{A}}_c h_{i,n}(x) F(z) dz = \lambda a^2(x, \sigma) + 2 \lambda (x - m (\mu, \sigma,x))\bar{b}(x, \mu) = \lambda a^2(x, \sigma) + R(\theta, \Delta_{n,i},x),
\label{eq: int achin}
\end{equation}
where we have also used the development \eqref{eq: dl m} of $m$. Moreover,
\begin{equation}
\int_\mathbb{R} {\modch \bar{A}}_c h_{i,n}(x + z \gamma(x)) F(z) dz = \int_\mathbb{R} [\frac{1}{2} a^2(x + z \gamma(x), \sigma) h_{i,n}''(x + z \gamma(x)) + \bar{b}(x + z \gamma(x), \mu)h_{i,n}'(x + z \gamma(x))] F(z) dz.
\label{eq: int achin spostato}
\end{equation}
Acting on the first term of the right hand side of the equation here above exactly like we did in \eqref{eq: int hin''} we get it is equal to
\begin{equation}
\frac{\Delta_{n,i}^\beta}{2\gamma(x)} \int_{u : |u| \le 2}a^2(x + u \Delta_{n,i}^\beta, \sigma) (2 \varphi(u) + u \varphi'(u) + u^2 \varphi''(u)) F(\frac{u \Delta_{n,i}^\beta}{\gamma(x)}) du  + R(\theta, \Delta_{n,i}^{1 - 2 \beta}, x).
\label{int hin'' per adac}
\end{equation}
We upper bound the second term of the right hand side of \eqref{eq: int achin spostato}, instead, using \eqref{eq: estim norma inf deriv hin} for $l=1$. It yields
\begin{equation}
|\int_\mathbb{R} \bar{b}(x + z \gamma(x), \mu)h_{i,n}'(x + z \gamma(x)) F(z) dz | \le R (\theta, \Delta_{n,i}^\beta,x),
\label{eq: int negl adac}
\end{equation}
where we have also used the polynomial growth of $b$. Replacing in \eqref{eq: AdAc start} the equations \eqref{eq: int achin} - \eqref{eq: int negl adac} we obtain 
\begin{equation}
A_d {\modch \bar{A}}_c h_{i,n}(x) = \frac{\Delta_{n,i}^\beta}{2 \gamma(x)} \int_{u : |u| \le 2}a^2(x + u \Delta_{n,i}^\beta, \sigma) (2 \varphi(u) + u \varphi'(u) + u^2 \varphi''(u)) F(\frac{u \Delta_{n,i}^\beta}{\gamma(x)}) du - \lambda a^2(x, \sigma) + R(\theta, \Delta_{n,i}^{(1 - 2 \beta) \land \beta}, x).
\label{eq: AdAc}
\end{equation}
We have to study $A_d A_d h_{i,n}(x)$. It is
\begin{equation}
| A_d^2 h_{i,n}(x)| = |\int_\mathbb{R} [A_d h_{i,n}(x + z \gamma(x)) - A_d h_{i,n}(x)] F(z) dz| \le \int_\mathbb{R}\left  \| (A_d h_{i,n})' \right \|_ \infty |z \gamma(x)| F(z) dz \le R(\theta, \Delta_{n,i}^\beta,x),
\label{eq: AdAd}
\end{equation}
where we have used the definition $A_d h_{i,n}'$ and \eqref{eq: estim norma inf deriv hin}, remarking that the estimation \eqref{eq: estim Adhin'} holds also in no matter which $y \in \mathbb{R}$. \\
From \eqref{eq: A2c}, \eqref{eq: AcAd}, \eqref{eq: AdAc} and \eqref{eq: AdAd} it follows
\begin{equation}
\frac{1}{2} \Delta_{n,i}^2 A^2 h_{i,n}(x) = \Delta_{n,i}^2 h_2 (x, \theta) - \lambda \Delta_{n,i}^2 a^2 (x, \sigma) + 
\label{eq: riassunto second order}
\end{equation}
$$+ \frac{\Delta_{n,i}^{2 + \beta}}{4 \gamma(x)}\int_{u : |u| \le 2}[a^2(x, \sigma) + a^2(x + u \Delta_{n,i}^\beta, \sigma)] (2 \varphi(u) + u \varphi'(u) + u^2 \varphi''(u)) F(\frac{u\Delta_{n,i}^\beta}{\gamma(x)}) du + R (\theta, \Delta_{n,i}^{(3 - 2 \beta) \land (2 + \beta)},x).$$

To complete the study of the numerator of $m_2$ we need to estimate $\frac{1}{6} \int_{t_i}^{t_{i + 1}} \int_{t_i}^{u_1} \int_{t_i}^{u_2} \mathbb{E}[A^3 h_{i,n}(X^\theta_{u_3})| X^\theta_{t_i} = x] du_3 du_2 du_1$. We introduce the following norm
 on $\mathcal{C}^p$ functions, with $p \ge 0$, $c>0$:
$$\left \| f \right \|_{\infty, c, p} := \sum_{k = 0}^p \sup_{y \in \mathbb{R}
} | \frac{f^{(k)}(y)}{(1 + |y| )^c} |.$$
We observe it is 
$$\int_\mathbb{R} | f(y + z \gamma(y))| F(z) dz = \int_\mathbb{R} \frac{| f(y + z \gamma(y))| (1 + | y + z \gamma(y)| )^{\tilde{c}}}{(1 + | y + z \gamma(y)| )^{\tilde{c}}} 
 F(z) dz \le \left  \| f \right \|_{ \infty , \tilde{c}, 0} \int_\mathbb{R}(1 + | y + z \gamma(y)| )^{\tilde{c}} F(z) dz.$$
 We can therefore evaluate the norm of $A_d f$, getting
$$\left  \| A_d f \right \|_{ \infty , \tilde{c}, 0} =  \left  \| \frac{A_d f}{(1 +| y| )^{\tilde{c}}} \right \|_{ \infty} \le 
 c
\left  \| f \right \|_{ \infty , \tilde{c}, 0} (\frac{ \int_\mathbb{R} (1 +| y|^{\tilde{c}} +   (1+|y|)^{\tilde{c}} |z|^{\tilde{c}
		} ) F(z) dz}{(1 +| y| )^{\tilde{c}}} + \lambda) 
\le c \left  \| f \right \|_{ \infty , \tilde{c}, 0} .$$
By similar computations on the derivatives of $A_d f$ we obtain, $\forall p \ge 0$,  with  $p \le 4$, $\left  \| A_d f \right \|_{ \infty , \tilde{c}, p} \le c \left  \| f \right \|_{ \infty , \tilde{c}, p} $ (similar calculations are in Theorem 2.3 of \cite{Mies}). \\
In order to find an upper bound for the norm of ${\modch \bar{A}}_c f$ we observe that, from the polynomial growth of both the coefficients $a$ and $b$, it follows
$$|\frac{{\modch \bar{A}}_c f}{(1 +| y| )^{\tilde{c} + 2}}| = |\frac{\frac{1}{2} a^2(y, \sigma) f''(y)}{(1 +| y| )^{\tilde{c} + 2}} + \frac{b(y, \mu) f'(y)}{(1 +| y| )^{\tilde{c} + 2}}| \le \frac{c}{(1 +| y| )^{ \tilde{c}}}|f''(y)| +  \frac{c}{(1 +| y| )^{ \tilde{c}}}|f'(y)|.$$
Hence, we deduce that $\left  \| {\modch \bar{A}}_c f \right \|_{ \infty , \tilde{c} + 2, 0}  \le c \left  \| f \right \|_{ \infty , \tilde{c}, 2}$. Acting again with similar computation on the following derivatives, as done detailing in the proof of Theorem 2.3 in \cite{Mies}, we get $\left  \| {\modch \bar{A}}_c f \right \|_{ \infty , \tilde{c} + 2, p}  \le c \left  \| f \right \|_{ \infty , \tilde{c}, p + 2}$. \\
We want to use the estimations on $\left  \| A_d f \right \|_{ \infty , \tilde{c}, p}$ and $\left  \| {\modch \bar{A}}_c f \right \|_{ \infty , \tilde{c}, p}$ and equation \eqref{eq: estim norma inf deriv hin} to evaluate each term of \\
$\mathbb{E}[A^3 h_{i_n} (X_{u_3}^\theta)| X_{t_i}^\theta = x]$ but ${\modch \bar{A}}_c^3 h_{i,n}$. We observe it is, for $\tilde{c} \ge 4$,
$$\mathbb{E}_i[A_d^3 h_{i,n} (X_{u_3}^\theta)] \le \left  \| A_d^3 h_{i,n} \right \|_{ \infty , \tilde{c}, 0} \mathbb{E}_i[(1 + |X_{u_3}^\theta|)^{\tilde{c}}] \le c\left  \| h_{i,n} \right \|_{ \infty , \tilde{c}, 0} R(\theta, 1, x) \le R(\theta, \Delta_{n,i}^{2 \beta},x); $$
$$\mathbb{E}_i[A_d A_d {\modch \bar{A}}_c h_{i,n} (X_{u_3}^\theta)] \le \left  \| A_d A_d {\modch \bar{A}}_c h_{i,n} \right \|_{ \infty , \tilde{c}, 0} \mathbb{E}_i[(1 + |X_{u_3}^\theta|)^{\tilde{c}}] \le c\left  \| h_{i,n} \right \|_{ \infty ,\tilde{c} - 2, 2} R(\theta, 1, x) \le R(\theta, 1,x); $$
remarking that the same estimation holds for $A_d {\modch \bar{A}}_c A_d h_{i_n}$ and ${\modch \bar{A}}_c A_d A_d h_{i_n}$.
Moreover we have 
$$\mathbb{E}_i[{\modch \bar{A}}_c {\modch \bar{A}}_c A_d h_{i,n} (X_{u_3}^\theta)] \le \left  \| {\modch \bar{A}}_c {\modch \bar{A}}_c A_d h_{i,n} \right \|_{ \infty , \tilde{c}, 0} \mathbb{E}_i[(1 + |X_{u_3}^\theta|)^{\tilde{c}}] \le c\left  \| h_{i,n} \right \|_{ \infty , \tilde{c}- 4, 4} R(\theta, 1, x) \le R(\theta, \Delta_{n,i}^{- 2 \beta},x) $$
and we can upper bound ${\modch \bar{A}}_c A_d {\modch \bar{A}}_c h_{i,n}$ and $A_d {\modch \bar{A}}_c {\modch \bar{A}}_c h_{i,n}$ with the same quantity. \\
We are now left to study $\mathbb{E}_i[{\modch \bar{A}}_c^3 h_{i,n} (X_{u_3}^\theta)]$.
As we have already done considering ${\modch \bar{A}}_c^2 h_{i,n} (x)$, we see ${\modch \bar{A}}_c^3 h_{i,n}$ as linear combination of the derivatives of $h_{i,n}$: ${\modch \bar{A}}_c^3 h_{i,n}(y) := \sum_{j = 1}^6 h_j(y, \theta) h_{i,n}^{(j)}(y)$ where for each $j \in \left \{ 1, ... , 6 \right \}$ $h_j$ is a function of $a$, $b$ and their derivatives up to fourth order. \\
Using a conditional version of Proposition \ref{prop: truc moche h} we get $\forall \epsilon > 0$, $\forall j \ge 3$
$\mathbb{E}_i[|h_j(X_{u_3}^\theta, \theta) h_{i,n}^{(j)}(X_{u_3}^\theta)|] \le R(\theta, \Delta_{n,i}^{1 + (2 - j)\beta - \epsilon},x)$. \\
Concerning the first two terms of the sum, we have
$$|\mathbb{E}_i[h_1(X_{u_3}^\theta, \theta) h_{i,n}'(X_{u_3}^\theta) + h_2(X_{u_3}^\theta, \theta) h_{i,n}''(X_{u_3}^\theta)]| \le \left \| h_{i,n}' \right \|_\infty \mathbb{E}_i[|h_1(X_{u_3}^\theta, \theta)|] +\left \| h_{i,n}'' \right \|_\infty \mathbb{E}_i[|h_2(X_{u_3}^\theta, \theta)|] \le R(\theta, 1,x), $$
which follows from \eqref{eq: estim norma inf deriv hin} and from the polynomial growth of $a$, $b$ and their derivatives, which constitute the functions $h_1$ and $h_2$. Putting all the pieces together we get
$$\frac{1}{6} \int_{t_i}^{t_{i + 1}} \int_{t_i}^{u_1} \int_{t_i}^{u_2} \mathbb{E}[A^3 h_{i,n}(X^\theta_{u_3})| X^\theta_{t_i} = x] du_3 du_2 du_1 \le R(\theta, \Delta_{n,i}^{(3 - 2 \beta) \land (4 - 4 \beta - \epsilon)},x) = R(\theta, \Delta_{n,i}^{3 - 2 \beta},x).$$
From \eqref{eq: hin primo ordine riassunto}, \eqref{eq: riassunto second order} and the equation here above it follows $\mathbb{E}_i[h_{i,n}(X^\theta_{t_{i + 1}})] =$
\begin{equation}
 =  (x - m(\mu, \sigma,x))^2 + \Delta_{n,i}a^2(x, \sigma) +  \frac{\Delta_{n,i}^{1 + 3 \beta}}{\gamma(x)} \int_\mathbb{R} v^2 \varphi(v) F(\frac{v \Delta_{n,i}^\beta}{\gamma(x)}) dv + 2 \Delta_{n,i}(x - m(\mu, \sigma,x)) \bar{b}(x, \mu) +  \Delta_{n,i}^2 h_2 (x, \theta) +
\label{eq: num quasi finito}
\end{equation}
$$- \lambda \Delta_{n,i}^2 a^2 (x, \sigma) + \frac{\Delta_{n,i}^{2 + \beta}}{4 \gamma(x)}\int_{u : |u| \le 2}[a^2(x, \sigma) + a^2(x + u \Delta_{n,i}^\beta, \sigma)] (2 \varphi(u) + u \varphi'(u) + u^2 \varphi''(u)) F(\frac{u\Delta_{n,i}^\beta}{\gamma(x)}) du + R (\theta, \Delta_{n,i}^{(3 - 2 \beta) \land (2 + \beta)},x).$$
The expansion \eqref{E:dev_n_tilde} follows from \eqref{eq: num quasi finito} and \eqref{E:dev_m_chap1}, with the smoothness of $z\mapsto a^2(z,\sigma)$, and
	$\int_{\mathbb{R}} F(z) dz <\infty$.

\end{proof}
\begin{proof}[ Proof of \eqref{E:dev_d_tilde}]
Concerning the denominator of $m_2$, we still use Dynkin formula up to third order, this time on $f_{i,n}(y) : = \varphi_{\Delta_{n,i}^\beta}(y -x)$. We observe that, by the building, $f_{i,n}(x) = 1$ and $f_{i,n}^{(k)}(x) = 0$ for each $k \ge 1$. Hence, ${\modch \bar{A}}_c f_{i,n}(x) = 0$ and 
$$A_d f_{i,n}(x) = \int_\mathbb{R}[f_{i,n}(x + \gamma(x) z) -1] F(z) dz = \int_{ z : |z | \le \frac{2 \Delta_{n,i}^\beta}{|\gamma(x)|} } \varphi_{\Delta_{n,i}^\beta}(z \gamma(x)) F(z) dz - \lambda.$$
As we have already done we can see the first term here above, after the change of variable $u := \frac{z \gamma(x)}{\Delta_{n,i}^\beta}$, as $\frac{\Delta_{n,i}^\beta}{\gamma(x)} \int_{u : |u| \le 2 } \varphi(u) F(\frac{u \Delta_{n,i}^\beta}{\gamma(x)}) du$, which order depends on the density $F$. \\ 
Concerning the study of $A^2 f_{i,n}(x)$, we first of all remark that ${\modch \bar{A}}^2_c f_{i,n}(x) = 0$. \\
Moreover, we observe it is $f_{i,n}^{(k)}(y) = \Delta_{n,i}^{- \beta k} \varphi_{\Delta_{n,i}^\beta}^{(k)} (y - x)$ and so by the boundedness of $\varphi$ and its derivatives we get that, for each $k \ge 1$,
\begin{equation}
\left \| f_{i,n}^{(k)} \right \|_\infty \le R(\theta, \Delta_{n,i}^{- \beta k},x).
\label{eq: stima deriv fin}
\end{equation}
We therefore have, $\forall y \in \mathbb{R}$, 
$$|(A_d f_{i,n} (y))'| = |\int_\mathbb{R} f_{i,n}' (y + z \gamma(y))(1 + z \gamma'(y)) F(z)dz - \lambda f_{i,n}' (y) | \le R(\theta, \Delta_{n,i}^{- \beta},y) $$
and, in the same way, $|(A_d f_{i,n} (y))''| \le R(\theta, \Delta_{n,i}^{- 2 \beta},y)$. It follows 
$$|{\modch \bar{A}}_c A_d f_{i,n} (x)| = |\frac{1}{2} a^2(x, \sigma)(A_d f_{i,n} (x))'' + b(x,\mu )(A_d f_{i,n} (x))'| \le R(\theta, \Delta_{n,i}^{- 2 \beta},x);$$
$$|A_d {\modch \bar{A}}_c f_{i,n} (x)| = |\int_\mathbb{R} {\modch \bar{A}}_c f_{i,n} (x + z \gamma(x)) F(z) dz | \le R(\theta, \Delta_{n,i}^{- 2 \beta},x)$$
and, using also finite-increments theorem, $|A_d A_d f_{i,n} (x)| \le R(\theta, \Delta_{n,i}^{- \beta},x).$
Putting pieces together we have $|\frac{1}{2} \Delta_{n,i}^2 A^2 f_{i,n} (x)| \le R(\theta, \Delta_{n,i}^{2 - 2 \beta},x)$. \\
Considering the integral rest of the Dynkin formula , we act like we did in the study of the numerator, passing through the use of the norm $\left \| . \right \|_{\infty, c, p}$ and the estimation of the derivatives of $f_{i,n}$ gathered in \eqref{eq: stima deriv fin}. It yields
$$\mathbb{E}_i[A_d^3 f_{i_n} (X_{u_3}^\theta)] \le R(\theta, 1,x), \qquad \mathbb{E}_i[A_d A_d {\modch \bar{A}}_c f_{i_n} (X_{u_3}^\theta)] \le R(\theta, \Delta_{n,i}^{- 2 \beta},x),$$
$$\mathbb{E}_i[A_d {\modch \bar{A}}_c {\modch \bar{A}}_c f_{i_n} (X_{u_3}^\theta)] \le R(\theta, \Delta_{n,i}^{- 4 \beta},x), \qquad \mathbb{E}_i[{\modch \bar{A}}_c^3 f_{i_n} (X_{u_3}^\theta)] \le R(\theta, \Delta_{n,i}^{1 - 6 \beta - \epsilon},x),$$
having also used Proposition \ref{prop: truc moche h} to get the last one estimation here above. It turns out the denominator is
\begin{equation}
1 - \Delta_{n,i} \lambda + \frac{\Delta_{n,i}^{1 +\beta}}{\gamma(x)} \int_{u : |u| \le 2 } \varphi(u) F(\frac{u \Delta_{n,i}^\beta}{\gamma(x)}) du+ R(\theta, \Delta_{n,i}^{2 - 2 \beta}, x);
\label{eq: esp den}
\end{equation}
since we can always find an $\epsilon > 0$ for which $4 - 6 \beta - \epsilon > 2 - 2 \beta$ and we have $3 - 4 \beta > 2 - 2 \beta$.
 This concludes the proof of the expansion \eqref{E:dev_d_tilde}.
\end{proof}

To conclude the proof of the Proposition \ref{prop: dl alla capitolo 1 m2} we are left to show that the derivatives with respect to both the parameters of the rest terms
 in the expansions \eqref{E:dev_n_tilde}--\eqref{E:dev_d_tilde} are still rest functions and their order remains the same. \\ 
We observe that up to the development of second order in Dynkin formula, the rest functions R are totally explicit in our computation and so it is possible to calculate its derivatives with respect to both $\mu$ and $\sigma$. As we have already seen during the proof, we can use the estimations on $\partial_\vartheta m(\mu, \sigma, x)$ for $\vartheta = \mu$ and $\vartheta = \sigma$ gathered in Proposition \ref{prop: dl derivate prime} and the fact that $(x - m(\mu, \sigma,x))$ is a $R(\theta, \Delta_{n,i},x)$ function to get that size of $h_{i,n}$ and of the rest functions does not change after having derived with respect to the parameters. \\
Concerning the integral rest coming from the third order of the Dynkin formula, we have that
$$\partial_\vartheta \mathbb{E}_i[A^3 h_{i,n}(X^\theta_{u_3})] = \mathbb{E}_i[\partial_\vartheta A^3 h_{i,n}(X^\theta_{u_3})] + \mathbb{E}_i[\partial_X A^3 h_{i,n}(X^\theta_{u_3}) \partial_\vartheta X].$$
On the first term of the right hand side here above we can act exactly like we did on $\mathbb{E}_i[ A^3 h_{i,n}(X^\theta_{u_3})]$ getting a rest function whose order does not change, while from the computation of $\partial_X A^3 h_{i,n}(X^\theta_{u_3})$ an extra $\Delta_{n,i}^{- \beta}$ appears but, since from Lemma \ref{lemma: derivate x}  the norm $1$ of $\partial_\vartheta X$ is $R(\theta, \Delta_{n,i},x)$ for $\vartheta = \mu$ and $R(\theta, \Delta_{n,i}^\frac{1}{2},x)$ for $\vartheta = \sigma$, it is enough to use previously Holder inequality and observe that both $\frac{1}{2} - \beta$ and $1 - \beta$ are positive to get that the second term here above is negligible compared to the first.

\subsubsection{Proof of Proposition \ref{prop: dl m2 particolare per appl}}
Proposition  \ref{prop: dl m2 particolare per appl} is a particular case in which Proposition \ref{prop: dl alla capitolo 1 m2} holds. The proof relies on the fact that the intensity $F$ is supposed to be $\mathcal{C}^1$ and so we can move from $F(\frac{u \Delta_{n,i}^\beta}{\gamma(x)})$ to $F(0)$ through finite-increments theorem.
\begin{proof}
From \eqref{eq: dl alla capitolo 1 m2} we get the proposition proved remarking that
$$\Delta_{n,i}^{1 + 2 \beta} \int_\mathbb{R} u^2 \varphi(u) F(\frac{u \Delta_{n,i}^\beta}{\gamma(x)}) du = \Delta_{n,i}^{1 + 2 \beta} \int_\mathbb{R} u^2 \varphi(u) F(0) du + \Delta_{n,i}^{1 + 2 \beta} \int_\mathbb{R} u^2 \varphi(u)[ F(\frac{u \Delta_{n,i}^\beta}{\gamma(x)})- F(0)] du$$
and, from finite-increments theorem, the last term is in absolute value upper bounded by \\
$c \Delta_{n,i}^{1 + 3 \beta} \frac{1}{|\gamma(x)|}\int_\mathbb{R} |u|^3 |\varphi(u)| |F'(\tilde{u})| du= R(\theta, \Delta_{n,i}^{1 + 3 \beta},x)$, where $\tilde{u} \in (0, \frac{u \Delta_{n,i}^\beta}{\gamma(x)})$. \\
Moreover, by the smoothness on $F$ we have required, it follows that terms in \eqref{eq: dl alla capitolo 1 m2} whose size depends on the density $F$ are now upper bounded by a  $R(\theta, \Delta_{n,i}^{2 + \beta}, x)$ function. It yields \eqref{eq: dl particolare m2}.
\end{proof}
\vspace{1 cm}
On behalf of all authors, the corresponding author states that there is no conflict of interest.

\end{document}